\newif\ifdebug
\newcommand{\printname}[1]{\smash{\makebox[0pt]{\hspace{-1.0in}\raisebox{8pt}{\tiny #1}}}}
\newcommand{\Label}[1]{\ifdebug{\label{#1}\printname{#1}}\else{\label{#1}}\fi}
\theoremstyle{plain}
\newtheorem{theo}{Theorem}[section]
\newtheorem{lemm}[theo]{Lemma}
\newtheorem{coro}[theo]{Corollary}
\newtheorem{prop}[theo]{Proposition}
\theoremstyle{remark}
\newtheorem{rema}[theo]{Remark}
\theoremstyle{definition}
\newtheorem{defn}[theo]{Definition}
\newtheorem{exam}[theo]{Example}
\def\C{\mathbb C}
\def\Z{\mathbb Z}
\def\R{\mathbb R}
\def\co{\colon\thinspace}
\DeclareMathOperator{\Hom}{\mathrm{Hom}}
\DeclareMathOperator{\pos}{\mathrm{pos}}
\DeclareMathOperator{\id}{\mathrm{id}}
\DeclareMathOperator{\im}{\mathrm{im}}
\DeclareMathOperator{\Aut}{\mathrm{Aut}}
\DeclareMathOperator{\Ad}{\mathrm{Ad}}
\DeclareMathOperator{\forB}{for_B}
\def\g{\mathfrak{g}}
\def\h{\mathfrak{h}}
\def\F{\mathcal{F}}
\begin{document}
\title[Towards transverse toric geometry]{Towards transverse toric geometry}

\author[H.~Ishida]{Hiroaki Ishida}
\address{Department of Mathematics and Computer Science, Graduate School of Science and Engineering, Kagoshima University}
\email{ishida@sci.kagoshima-u.ac.jp}

\date{\today}
\thanks{The author is supported by JSPS Grant-in-Aid for Young Scientists (B) 16K17596}
\keywords{complex manifold, toric variety, transverse K\"ahler form, basic cohomology, basic Dolbeault cohomology}
\subjclass[2010]{Primary 14M25, Secondary 53C12, 53D20, 57S25, 57R30}

\begin{abstract}
	We study certain foliated complex manifolds that behave similarly to complete nonsingular toric varieties. We classify them by combinatorial objects that we call marked fans. We describe the basic cohomology algebras of them in terms of corresponding marked fans. We also study the basic Dolbeault cohomology algebras of them. 
\end{abstract}

\maketitle 
%\tableofcontents 

\section{Introduction}\Label{sec:intro}
	A toric variety is a normal algebraic variety over the complex numbers $\C$ with an effective action of an algebraic torus having an open dense orbit. On the other hand, a fan is a collection of cones in a real vector space with the origin as vertex satisfying certain conditions. A fundamental theorem in toric geometry states that the category of toric varieties is equivalent to the category of rational fans. 
	
	In this paper, we study certain foliated manifolds that behave similarly to complete nonsingular toric varieties. As a combinatorial counterpart to such a foliated manifold, we introduce the notion of \emph{marked fan}. A marked fan is a fan in a finite dimensional vector space equipped with information of generators of $1$-cones. 

	The main concerns in this paper are the followings. 
	\begin{enumerate}
		\item A compact connected manifold $M$ equipped with a maximal action of a compact torus $G$. 
		\item The canonical foliation $F$ on a compact connected complex manifold $M$. 
	\end{enumerate}
	
	We say that an effective action of a compact torus $G$ on a smooth connected manifold $M$ is \emph{maximal} if there exists a point $x\in M$ such that $\dim G+ \dim G_x =\dim M$, where $G_x$ denotes the isotropy subgroup at $x\in M$ of $G$. All compact connected complex manifolds equipped with maximal actions of comact tori are completely classified in \cite{Ishida}. There are rich examples of such manifolds. One can see that compact complex tori, complete nonsingular toric varieties, Hopf manifolds and Calabi-Eckmann manifolds admit maximal actions of compact tori. More interesting examples are LVM manifolds, LVMB manifolds and moment-angle manifolds with complex structures. LVM manifolds are non-K\"ahler compact complex manifolds, constructed in \cite{Meersseman} and the acronyms stand for L\'opez de Medrano-Verjovsky \cite{LdM-Verjovsky} and Meersseman. LVMB manifolds are generalizations of LVM manifolds, constructed by Bosio in \cite{Bosio}. LVMB manifolds are obtained as the quotients of complements of coordinate subspace arrangements in projective spaces $\C P^n$ by free, proper, and holomorphic $\C^m$-actions. They are naturally equipped with actions of compact tori which are maximal. Moment-angle manifolds are topological manifolds equipped with actions of compact tori, and constructed from simplicial complexes. In \cite{Panov-Ustinovsky} and \cite{Tambour}, it is shown that even dimensional moment-angle manifolds coming from star-shaped spheres carry invariant complex structures. 
		
	The canonical foliation $F$ on a complex manifold $M$ is a holomorphic foliation generated by a local free action of a certain subgroup $H'$ of the group $\Aut (M)$ of all biholomorphisms on $M$. For more precise, let $\g$ be the Lie algebra of a maximal compact torus $G$ of $\Aut  (M)$ and $J$ the complex structure of $\Aut (M)$. The Lie subalgebra $\h' := \g \cap J\g$ is a complex subalgebra. One can see that the corresponding Lie group $H':= \exp (\h') \subset \Aut (M)$ acts on $M$ local freely and does not depend on the choice of the maximal compact torus $G$. Each leaf of the canonical foliation $F$ is an $H'$-orbit and vice versa. The canonical foliations on LVMB manifolds have been studied in \cite{Battaglia-Zaffran} from a different viewpoint. In \cite{Battaglia-Zaffran}, under some assumptions, it has been shown that the basic betti numbers of an LVMB manifold with respect to the canonical foliation coincides with the $h$-vector of a certain simplicial sphere. It also has been shown that the basic cohomology algebras of certain LVMB manifolds are generated by degree $2$ elements, as well as the cohomology algebras of complete nonsingular toric varieties. 

	If the action of a compact torus $G$ on a compact connected complex manifold $M$ is maximal and preserves the complex structure on $M$, then the torus $G$ can be regarded as a maximal torus of the group $\Aut (M)$ of all biholomorphisms of $M$. 
	\begin{exam}[Complete nonsingular toric varieties]
		Let $M$ be a complete nonsingular toric variety of complex dimension $n$. Let $G^\C$ be the algebraic torus of complex dimension $n$ acting on $M$. Let $G$ be the maximal compact torus of $G^\C$. $G$ is a maximal compact torus of $\Aut (M)$. Since the action of $G^\C$ is effective, we have that $\h' = \g \cap J\g = \{0\}$. Therefore, every leaf of the canonical foliation $F$ on $M$ is a point. The leaf space $M/F$ is a complete nonsingular toric variety $M$ itself. 
	\end{exam}
	\begin{exam}[Compact complex tori]
		Let $\Gamma$ be a lattice of $\C^n$. Let $G$ be the compact complex torus torus $\C^n/\Gamma$ of real dimension $2n$. Let $M = G$. We define the action of $G$ on $M$ is given as the group operation. Then the action of $G$ on $M$ is maximal and preserves the complex structure. For a fundamental vector field $X_v$ on $M$ generated by an element $v \in \C^n$, we have that $JX_v$ is the fundamental vector field $X_{\sqrt{-1}v}$. Therefore we have $\h' = \g \cap J\g = \g$. Thus the canonical foliation $F$ on $M$ has exactly one leaf $M$. The leaf space $M/F$ is a point and it can be regarded as the toric variety of dimension $0$. 
	\end{exam}
	\begin{exam}[Hopf surfaces]
		Let $\alpha_1,\alpha_2 \in \C$ such that $1<|\alpha_1|\leq |\alpha_2|$. Define an action of $\Z$ on $\C^2 \setminus \{0\}$ to be $k\cdot (z_1,z_2) := (\alpha_1^kz_1,\alpha_2^kz_2)$ for $k \in \Z$, $(z_1,z_2)\in \C^2 \setminus \{0\}$. The quotient manifold $M := \C^2 \setminus \{0\}/\Z$ is a complex manifold that is diffeomorphic to $S^3 \times S^1$. We equip the action of $(\R /\Z)^3 =:G$ on $M$ as follows. Let $\widetilde{\alpha}_1, \widetilde{\alpha}_2 \in \C$ such that $e^{2\pi\sqrt{-1}\widetilde\alpha_i} = \alpha_i$ for $i=1,2$. We define an action of $\R^3$ on $M$ by
		\begin{equation*}
			(t_1,t_2,s) \cdot [z_1,z_2] := [e^{2\pi\sqrt{-1}(t_1 + \widetilde{\alpha}_1s)}z_1, e^{2\pi\sqrt{-1}(t_2+\widetilde{\alpha}_2s)}z_2]
		\end{equation*}
		for $(t_1,t_2,s) \in \R^3$ and $[z_1,z_2] \in M$, 
		where $[z_1,z_2]$ denotes the equivalence class of $(z_1,z_2) \in \C^2\setminus \{0\}$. The action of $\R^3$ above on $M$ preserves the complex structure on $M$ and it has the global stabilizers $\Z^3$. Thus the action of $\R^3$ descends to an action of the compact torus $G$ that preserves the complex structure on $M$. One can see that the orbit through $[1,0]$ has dimension $2$. Thus, the action of $G$ on $M$ is maximal. By definition of $\h'$, we have that $\h' \subset \g = \R^3$ has the basis vectors $v_1 = (\operatorname{Re}(\widetilde{\alpha}_1), \operatorname{Re}(\widetilde{\alpha}_2), -1)$ and $v_2 = (\operatorname{Im}(\widetilde{\alpha}_1), \operatorname{Im}(\widetilde{\alpha}_2), 0)$. On the other hand, $\h' = \{ v \in \R^3 \mid  v \cdot (v_1\times v_2) = 0\}$. Thus $\h'$ is a Lie algebra of a subtorus of $G$ if and only if the fractions of entries of $v_1 \times v_2$ are rational. One can see that the fractions of entries of $v_1 \times v_2$ are rational if and only if there exist positive integers $n_1,n_2$ such that $\alpha_1^{n_1} = \alpha_2^{n_2}$. In other words, if there are no positive integers $n_1,n_2$ such that $\alpha_1^{n_1} = \alpha_2^{n_2}$, then the canonical foliation $F$ on $M$ has a leaf that is not closed. The leaf space $M/F$ is not Hausdorff unless $\alpha_1^{n_1}= \alpha_2^{n_2}$ for some positive integers $n_1$ and $n_2$. 
		
	\end{exam}
	There are $3$ main results in this paper. The first main result states that complex manifolds with maximal torus actions (up to an equivalence relation which is slightly stronger than the notion of transversally equivalence on foliated manifolds) can be described by the corresponding marked fans. The second main result states that the basic cohomology with respect to the canonical foliation of a complex manifold with a maximal torus action admits the same formula as complete nonsingular toric varieties. To show this, we introduce the \emph{basic forgetful map} $\forB \co H^*_G(M) \to H^*_B(M)$, where $H^*_G(M)$ denotes the equivariant cohomology of the $G$-manifold $M$. We will see that $\forB$ is surjective and describe the kernel of $\forB$. The third main result states that the basic Dolbeault cohomology group $H^{p,q}_B(M)$ of our space concentrates to the center. Namely, we will see that $H^{p,q}_B(M) = 0$ if $p\neq q$. To see this complex geometric result, we use Minkowsky sum of polytopes and localization of equivariant coholomogies.

	This paper is organized as follows. Section \ref{sec:preliminaries1} is preliminaries. In Subsection \ref{subsec:canonical}, we review the canonical foliations. In Subsection \ref{subsec:maximal}, we briefly review the equivalence of categories between $\mathcal{C}_1$ and $\mathcal{C}_2$, where $\mathcal{C}_1$ is the category of complex manifolds with maximal torus actions, and $\mathcal{C}_2$ is the combinatorial counter part of $\mathcal{C}_1$. In Section \ref{sec:ete}, we study the notion of transverse equivalence in $\mathcal{C}_1$. In Section \ref{sec:fund}, we introduce marked fans and show the first main result. Section \ref{sec:preliminaries2} is preliminaries for the second and third main results.  In Subsection \ref{subsec:minimal}, we review the complex structure of a tubular neighborhood of a minimal orbit. In Subsection \ref{subsec:transverse}, we briefly review transverse K\"ahler forms and  moment maps. In Section \ref{sec:cohomology}, we study the additive structure of the basic cohomology and the relation between the basic cohomology and the equivariant cohomology through the basic forgetful map. In Section \ref{sec:DJ}, we show the second main result. We describe the basic cohomology algebra in terms of the corresponding marked fan. In Section \ref{sec:basicDolbeault}, we show the third main result. 
	
\section{Preliminaries 1}\Label{sec:preliminaries1}
\subsection{Canonical foliations}\Label{subsec:canonical}
	Let $M$ be a compact connected complex manifold and $G$ a maximal compact torus of the group $\Aut (M)$ of all automorphisms on $M$. $\Aut (M)$ is a complex Lie group and its Lie algebra can be identified with the vector space $\mathfrak{X}(M)$ of all holomorphic vector fields on $M$ (see \cite{Bochner-Montgomery} for detail). Denote by $\g$ the Lie algebra of $G$ and  by $J$ the complex structure on $M$.  Put $\g^\C := \g\otimes_\R \C$ and denote by $v+\sqrt{-1}v'$ instead of $v\otimes 1+v'\otimes \sqrt{-1}$ for $v,v' \in \g$ for short. We define the $\C$-subspace $\h$ of $\g^\C$ by 
	\begin{equation}\Label{eq:h}
		\h := \{ v+\sqrt{-1}v' \in \g^\C \mid X_v+JX_{v'} = 0 \},
	\end{equation}
	where $X_v$ denotes the fundamental vector field generated by $v \in \g$. 
	Let $p \co \g^\C \to \g$ be the first projection $(\g\otimes 1) \oplus (\g \otimes \sqrt{-1}) \to \g$. We think of $\g$ as a real Lie subalgebra of $\mathfrak{X}(M)$ via $v \mapsto X_v$. Then, $p(\h)$ is a commutative subalgebra of $\mathfrak{X}(M)$ invariant under $J$ because $p(\h) = \g \cap J \g$. For $v \in p(\h)$ and $x \in M$, we denote by $(X_v)_x$ the value of the fundamental vector field $X_v$ at $x \in M$. 
	\begin{prop}\Label{prop:localfree}
		For $v \in p(\h)$ and $x \in M$, $(X_v)_x = 0$ if and only if $X_v=0$. 
	\end{prop}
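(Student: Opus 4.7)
The plan is to reduce the claim to an equivariant linearization at the vanishing point and then finish by the identity principle for holomorphic vector fields. The ``if'' direction is immediate. For the converse, assume $(X_v)_x = 0$. Since $v \in p(\h) = \g \cap J\g$, choose $v' \in \g$ with $v + \sqrt{-1}v' \in \h$; by \eqref{eq:h} this means $X_v = -JX_{v'}$. Because $J_x$ is an isomorphism of $T_x M$, applying it to both sides yields $(X_{v'})_x = 0$ as well, so both $v$ and $v'$ lie in the isotropy Lie algebra $\g_x$.

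The main step is to linearize at $x$. By Bochner's linearization theorem applied to the compact torus $G_x^0$, there is a $G_x^0$-equivariant diffeomorphism between a neighborhood of $x$ and a neighborhood of $0$ in $T_x M$ on which $G_x^0$ acts via its isotropy representation $\rho \co G_x^0 \to GL_\C(T_x M)$. Decomposing $T_x M$ into weight spaces $\bigoplus_\alpha W_\alpha$, one has $\rho_*(w)|_{W_\alpha} = \sqrt{-1}\alpha(w)\cdot \id$ with $\alpha(w) \in \R$ for $w \in \g_x$. Linearizing $X_v = -JX_{v'}$ at $x$---the contribution from derivatives of $J$ drops out because $X_{v'}$ itself vanishes at $x$---gives the endomorphism identity $\rho_*(v) = -J_x \circ \rho_*(v')$ on $T_x M$. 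Restricted to $W_\alpha$, on which $J_x$ acts as multiplication by $\sqrt{-1}$, this becomes the scalar equation $\sqrt{-1}\alpha(v) = \alpha(v')$; since the left-hand side is purely imaginary and the right-hand side real, both must vanish for every weight $\alpha$. Hence $\rho_*(v) = 0$, so $\exp(tv)$ acts as the identity on $T_x M$ and therefore, through the Bochner chart, on the entire neighborhood of $x$, forcing $X_v \equiv 0$ there.

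Finally, since $G$ acts by biholomorphisms, $X_v$ is a real holomorphic vector field on the connected complex manifold $M$; its $(1,0)$-part has holomorphic coefficients that vanish on the non-empty open set produced above, hence on all of $M$ by the identity principle. Therefore $X_v \equiv 0$. The main obstacle I anticipate is carrying out the linearization of $JX_{v'}$ at a common zero of $X_v$ and $X_{v'}$ and extracting the dichotomy $\sqrt{-1}\alpha(v) = \alpha(v')$ between a purely imaginary and a real quantity; once this observation is in place, Bochner's theorem and analyticity conclude the argument.
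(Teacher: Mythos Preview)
Your proof is correct and shares the paper's core idea---the isotropy representation at $x$ cannot support the ``holomorphic-yet-unitary'' structure forced by $v \in p(\h)$---but the execution differs in two respects. First, where you linearize $X_v = -JX_{v'}$ explicitly and extract the dichotomy $\sqrt{-1}\alpha(v) = \alpha(v')$ on each weight space, the paper argues more abstractly: it forms the complex one-parameter group $H' = \{\varphi_s\psi_t\}$ generated by $X_v$ and $JX_v$, observes that its closure $T \subset G$ fixes $x$, and then invokes the general fact that any holomorphic unitary representation of a connected complex Lie group is trivial. Your weight-space computation is precisely that fact made explicit in this setting. Second, to pass from local triviality to global vanishing, the paper uses the slice theorem together with effectiveness of the $T$-action on the connected manifold $M$ (if $T$ acts trivially near $x$ it acts trivially everywhere, hence $T$ and $H'$ are trivial), whereas you invoke the identity principle for holomorphic vector fields directly. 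Both globalizations are standard; yours is slightly more self-contained since it avoids introducing the closure $T$, while the paper's route has the minor advantage of concluding in one stroke that the full group $H'$ (not just $\exp(\R v)$) is trivial.
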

	\begin{proof}
		Suppose that $(X_v)_x=0$. Let $\varphi_s$ be the partial flow of $X_v$ at time $s$ and $\psi_t$ the partial flow of $JX_v$ at time $t$. Denote by $T$ the closure of the complex Lie group $H' := \{ \varphi_s \psi_t \mid s, t \in \R\}$. Since $p(\h) \subseteq \g$, we have that $T$ is a subtorus of $G$. Since the action of $H'$ fixes $x \in M$, the action of $T$ also fixes $x \in M$. We consider the isotropy representation $T_xM$ of $T$. Since $T$ is compact, we have that $T_xM$ is a unitary representation of $T$. In particular, $T_xM$ is a unitary representation of $H'$. Since $H'$ is a complex Lie group and the action of $H'$ on $M$ is holomorphic, we have that $T_xM$ is a holomorphic representation of $H'$. For a connected complex Lie group, any holomorphic unitary representation is trivial. Therefore $T_xM$ is a trivial representation of $H'$. Since $T$ is a closure of $H'$, we have that $T_xM$ is a trivial representation of $T$. This together with the connectedness of $M$ yields that $T$-fixes all $y \in M$ by slice theorem. Since $T$ is a subtorus of $\Aut (M)$, the action of $T$ on $M$ is effective. Therefore $T$ is the trivial subgroup of $\Aut (M)$. Hence $H'$ is trivial. Thus we have that if $(X_v)_x = 0$ for $v \in p(\h)$ then $X_v=0$. This shows the ``if" part. The ``only if" part is obvious. The proposition is proved. 
	\end{proof}
	\begin{prop}\Label{prop:central}
		 The followings hold.
		 \begin{enumerate}
		 	\item Elements in $p(\h)$ centralize $\mathfrak{X}(M)$. 
			\item $p(\h)$ does not depend on the choice of a maximal compact torus $G$ of $\Aut (M)$. 
		 \end{enumerate}
	\end{prop}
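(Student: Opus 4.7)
The plan is to prove (1) by adapting the ``holomorphic unitary representation is trivial'' argument of Proposition \ref{prop:localfree} from the isotropy representation on $T_xM$ to the adjoint representation on $\mathfrak{X}(M)$, and then to deduce (2) from (1) together with the conjugacy of maximal compact tori.

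For (1), fix $v \in p(\h)$, so that both $X_v$ and $JX_v$ lie in $\g \subset \mathfrak{X}(M)$. Form the connected complex Lie subgroup $H' := \{\exp(sX_v + tJX_v) : s,t \in \R\}$ of $\Aut (M)$, and let $T$ denote its closure, which is contained in $G$ and is therefore a compact torus. Since $\Aut (M)$ is a complex Lie group, the adjoint representation $\Ad \co \Aut (M) \to GL(\mathfrak{X}(M))$ is holomorphic, and since $M$ is compact, $\mathfrak{X}(M)$ is finite-dimensional by Bochner--Montgomery. Averaging any Hermitian form on $\mathfrak{X}(M)$ over the compact group $T$ produces a $T$-invariant Hermitian inner product, so $\Ad|_{H'}$ becomes a holomorphic unitary representation of the connected complex Lie group $H'$. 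Exactly as in the proof of Proposition \ref{prop:localfree}, such a representation must be trivial. Differentiating at the identity gives $\ad (X_v) = 0$, which is to say $X_v \in Z(\mathfrak{X}(M))$.

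For (2), let $G$ and $G'$ be two maximal compact tori of $\Aut (M)$. Since tori are connected, both lie in the identity component $\Aut (M)^\circ$, in which all maximal compact tori are conjugate; pick $g \in \Aut (M)^\circ$ with $G' = g G g^{-1}$, so that $\g' = \Ad(g)\,\g$. Because $\Ad(g)$ is $\C$-linear with respect to $J$ (it is the differential of a biholomorphism of $\Aut (M)$), we also have $J\g' = \Ad(g)\,J\g$, whence $p(\h') = \Ad(g)(\g \cap J\g) = \Ad(g)\,p(\h)$. By part (1), $p(\h) \subset Z(\mathfrak{X}(M))$, on which $\Ad(\Aut (M)^\circ)$ acts trivially, and therefore $p(\h') = p(\h)$.

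The only delicate step is applying the holomorphic-unitary triviality principle to the adjoint representation rather than to an isotropy representation; this is essentially a repackaging of the argument already carried out in Proposition \ref{prop:localfree}, so no substantively new obstacle arises.
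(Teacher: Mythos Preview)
Your proof is correct and follows essentially the same approach as the paper's: for (1), you exploit that the adjoint representation of the complex subgroup $H'$ on $\mathfrak{X}(M)$ is simultaneously holomorphic and unitary (hence trivial), and for (2) you combine conjugacy of maximal compact tori with the $\C$-linearity of $\Ad_g$ and the centrality established in (1). The only cosmetic differences are that the paper treats all of $H' = \exp_G(p(\h))$ at once rather than one $v$ at a time, and that you are slightly more explicit about working in the identity component $\Aut(M)^\circ$ in part (2) --- a worthwhile precision, since the conclusion that $\Ad_g$ fixes the center pointwise really uses that $g$ lies in the connected component.
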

	\begin{proof}
		Let $\exp_G \co \g \to G$ denote the exponential map. 
		We define the subgroup $H'$ of $G$ by $H' := \exp_G(p(\h))$. Since $G$ is compact, $\mathfrak{X}(M)$ is a unitary representation of $G$. In particular, $\mathfrak{X}(M)$ is a unitary representation of $H'$. Since $H'$ is a holomorphic subgroup of $\Aut (M)$, we have that $\mathfrak{X}(M)$ is a holomorphic representation of $H'$. Therefore $\mathfrak{X}(M)$ is a trivial representation of $H'$, showing Part (1). 
		
		Let $G'$ be another maximal compact torus of $\Aut (M)$ and $\g'$ the Lie algebra of $\g'$. As before, we think of $\g$ and $\g'$ as real Lie subalgebras of $\mathfrak{X}(M)$. Since $\Aut (M)$ is a Lie group, there exists $g \in \Aut (M)$ such that $gGg^{-1} = G'$ (see \cite[Chapter XV, Section 3]{Hochschild}). In particular we have $\Ad _g(\g) = \g'$. Since $\Ad_g$ is $\C$-linear, we have that $J(\Ad _g(\g)) = \Ad _g(J\g)$. Since $p(\h) = \g\cap J\g$, we have that $\Ad _g (p(\h)) = p'(\h')$, where $p' \co {\g'} ^\C \to \g'$ denotes the projection. On the other hand, by (1), we have that $\Ad _g$ is the identity on $p(\h)$. Therefore $p(\h)$ does not depend on the choice of $G$, proving Part (2). 
	\end{proof}
	By Proposition \ref{prop:localfree}, we have a holomorphic foliation $F$ on $M$ whose leaves are generated by $p(\h)$. The definition of $F$ is intrinsic by Proposition \ref{prop:central}. We call $F$ \emph{the canonical foliation} on $M$. 

\subsection{Complex manifolds with maximal torus actions}\Label{subsec:maximal}
	In this subsection, we briefly recall the classification of complex manifolds with maximal torus actions given in \cite{Ishida} for reader's convenience. 
	Let $M$ be a connected smooth manifold equipped with an effective action of a compact torus $G$. We say that the $G$-action on $M$ is \emph{maximal} if there exists a point $x \in M$ such that $\dim
	G+\dim G_x = \dim M$. 
	If the action of $G$ on $M$ is maximal, then we can think of $G$ as a maximal compact torus of the group of diffeomorphisms on $M$ (see \cite[Lemma 2.2]{Ishida}). 
	 
	Let $\mathcal{C}_1$ denote the class that consists of all triples $(M,G,y)$ satisfying the followings.
	\begin{enumerate}
	 	\item $M$ is a compact connected complex manifold.
		\item $G$ is a compact torus acting on $M$. The $G$-action on $M$ is maximal and preserves the complex structure on $M$. In particular, $G$ is a maximal compact torus of $\Aut (M)$. 
		\item $y \in M$ satisfies that $G_y = \{1\}$. 
	\end{enumerate}
	For $(M_1,G_1,y_1), (M_2,G_2,y_2) \in \mathcal{C}_1$, the hom-set $\Hom _{\mathcal{C}_1} ((M_1,G_1,y_1), (M_2,G_2,y_2))$ is defined to be the set of pairs $(f, \alpha)$ satisfying the followings.
	\begin{enumerate}
	 	\item $\alpha \co G_1 \to G_2$ is a smooth homomorphism. 
		\item $f$ is an $\alpha$-equivariant holomorphic map. Namely, for all $x \in M_1$ and all $g\in G_1$, we have $f(g\cdot x) = \alpha(g)\cdot f(x)$. 
		\item $f(y_1) = y_2$. 
	\end{enumerate}
	Then the class $\mathcal{C}_1$ and the hom-sets $\Hom_{\mathcal{C}_1}(-,-)$ form a category. 
	 
	As a combinatorial counterpart of $\mathcal{C}_1$, we consider the followings. Let $\mathcal{C}_2$ denote the class that consists of triples $(\Delta, \h, G)$ satisfying the followings.
	\begin{enumerate}
	 	\item $G$ is a compact torus. 
		\item Let $\g$ be the Lie algebra of $G$ and $\exp_G \co \g \to G$ the exponential map. We think of $\ker \exp_G\subset \g$ a lattice of $\g$. Then, $\Delta$ is a nonsingular fan in $\g$ with respect to the lattice $\ker \exp_G$. 
		\item As before, we denote by $\g^\C = \g \otimes_\R \C$ and decompose $\g^\C$ as $\g^\C = (\g \otimes 1) \oplus (\g \otimes \sqrt{-1})$. Let $p \co (\g \otimes 1) \oplus (\g \otimes \sqrt{-1}) \to \g$ be the first projection. Then the restriction $p|_{\h} \co \h \to \g$ of $p$ to $\h$ is injective.
		\item Let $q \co \g \to \g/p(\h)$ be the quotient map. Then 
		\begin{equation*}
			q(\Delta) := \{ q(\sigma) \subset \g/p(\h) \mid \sigma \in \Delta\}
		\end{equation*}
		is a complete fan and the map $\Delta \to q(\Delta)$ given by $\sigma \mapsto q(\sigma)$ is bijective. 
	 \end{enumerate}
	For $(\Delta_1,\h_1,G_1), (\Delta_2,\h_2,G_2) \in \mathcal{C}_2$, the hom-set $\Hom _{\mathcal{C}_2} ((\Delta_1,\h_1,G_1), (\Delta_2,\h_2,G_2))$ is defined to be the set of all smooth homomorphism $\alpha \co G_1 \to G_2$ satisfying the followings.
	\begin{enumerate}
	  	\item Let $\g_1$ and $\g_2$ denote the Lie algebras of $G_1$ and $G_2$, respectively. The differential $d\alpha \co \g_1 \to \g_2$ induces a morphism of fans $\Delta_1$ and $\Delta_2$. Namely, for any $\sigma_1 \in \Delta_1$, there exists $\sigma_2 \in \Delta_2$ such that $d\alpha(\sigma_1) \subset \sigma_2$. 
		\item The linear map $d\alpha^\C := d\alpha \otimes \id_\C \co \g_1 ^\C \to \g_2^\C$ satisfies that $d\alpha^\C (\h_1) \subset \h_2$. 
	\end{enumerate}
	Then the class $\mathcal{C}_2$ and the hom-sets $\Hom_{\mathcal{C}_2}(-,-)$ form a category. 
	
	To $(M,G,y) \in \mathcal{C}_1$, we can assign $(\Delta, \h, G)=:\F_1(M,G,y) \in \mathcal{C}_2$ as follows. A \emph{characteristic submanifold} of $M$ is a complex codimension $1$ connected component of the fixed point set by the action restricted to a $1$-dimensional subtorus of $G$. If there is no characteristic submanifold (in this case, the action of $G$ on $M$ is free and simply transitive), then $\Delta$ is the fan that consists of the $0$-dimensional cone $\{0\}$ in $\g$ only. Let $M_1,\dots, M_k$ be characteristic submanifolds of $M$. Let $G_i$ denote the $1$-dimensional subtorus of $G$ that fixes $M_i$ pointwise. Since the codimension of $M_i$ is $1$, the normal bundle $TM|_{M_i}/TM_i$ is a complex line bundle on $M_i$. For any $x \in M_i$, the normal space $(TM|_{M_i}/TM_i)_x$ is a representation of $G_i$. Since $G_i$ fixes $M_i$ pointwise and the action is effective, we have that $(TM|_{M_i}/TM_i)_x$ is a faithful $G_i$-representation. We denote by $\mu_i \co G_i \to S^1$ the character of the representation $(TM|_{M_i}/TM_i)_x$ of $G_i$. Since $\mu_i$ is a faithful representation, there exists the inverse $\lambda_i \co S^1 \to G_i$ of $\mu_i$. Since each $G_i$ is a subtorus of $G$, we obtain an element $\lambda_i \in \Hom (S^1,G)$ for each $i$. 
	We identify $\Hom(S^1,G)$ with $\ker \exp_G$ as follows. First, we have a smooth isomorphism $\psi \co \R/\Z \to S^1$ given by $[t] \to e^{2\pi\sqrt{-1}t}$ for $[t] \in \R/\Z$.  
	For $\gamma \in \ker \exp_G$, we have a smooth homomorphism $\lambda_\gamma \co \R/\Z \to G$ given by $[t] \mapsto \exp_G (t\gamma)$. The composition $\lambda_\gamma \circ\psi^{-1}$ is an element in $\Hom (S^1,G)$ and we have an isomorphism $\ker \exp_G \to \Hom (S^1,G)$ given by $\gamma \mapsto \lambda_\gamma \circ\psi^{-1}$. Through this isomorphism, we identify $\Hom (S^1,G)$ with $\ker \exp_G$. As a result, we have elements $\lambda_1,\dots, \lambda_k \in \g$. The fan $\Delta$ is given by 
	\begin{equation*}
		\Delta := \left\{\pos (\lambda_i \mid i \in I) \mid \bigcap_{i \in I} M_i \neq \emptyset, I \subset \{1,\dots, k\} \right\}, 
	\end{equation*}
	where $\pos(\lambda_i \mid i \in I)$ denotes the cone spanned by $\lambda_i$, $i \in I$. $\h$ is given as \eqref{eq:h}. Then one can see that $(\Delta, \h, G) = \F_1(M,G,y) \in \mathcal{C}_2$. For $(M_1,G_1,y_1), (M_2,G_2,y_2) \in \mathcal{C}_1$ and a morphism $(f,\alpha) \in \Hom_{\mathcal{C}_1}( (M_1,G_1,y_1), (M_2,G_2,y_2))$, one can see that $\alpha =:\F_1(f,\alpha) \in \Hom_{\mathcal{C}_2}(\F_1(M_1,G_1,y_1), \F_2(M_2,G_2,y_2))$. As a result, we have a covariant functor $\F_1 \co \mathcal{C}_1 \to \mathcal{C}_2$. 
	
	Conversely, we construct an object $(M,G,y) =: \F_2(\Delta,\h,G)$ in $\mathcal{C}_1$ from $(\Delta,\h,G) \in \mathcal{C}_2$ as follows. Let $X(\Delta)$ denote the toric variety associated with $\Delta$ and $G^\C$ the complexification of the compact torus $G$. Let $\exp_{G^\C} \co \g^\C \to G^\C$ denote the exponential map. The quotient $M$ of $X(\Delta)$ by $H := \exp_{G^\C}(\h) \subset G^\C$ is a compact complex manifold and the action of $G$ on $X(\Delta)$ descends to the maximal action on $X(\Delta)/H$. Let $\pi \co X(\Delta)\to M$ be the quotient map. Then we put $y:= \pi(1_{G^\C})$, where $1_{G^\C}$ is the unit of $G^\C \subset X(\Delta)$. Let $(\Delta_1,\h_1,G_1), (\Delta_2,\h_2,G_2) \in \mathcal{C}_2$. For a morphism $\alpha \in \Hom ((\Delta_1,\h_1,G_1), (\Delta_2,\h_2,G_2))$, since $d\alpha$ is a morphism of fans $\Delta_1$ and $\Delta_2$, we have an $\alpha$-equivariant holomorphic map (toric morphism) $\varphi_\alpha \co X(\Delta_1) \to X(\Delta_2)$ such that $\varphi_{\alpha}(1_{G_1^\C}) = 1_{G_2^\C}$. $\varphi_\alpha$ descends to an $\alpha$-equivariant holomorphic map $f_\alpha \co X(\Delta_1)/H_1 \to X(\Delta_2)/H_2$. Namely, $(f_\alpha, \alpha) =: \F_2(\alpha) \in \Hom (\F_2(\Delta_1,\h_1,G_1),\F_2(\Delta_2,\h_2,G_2))$. As a result, we have a covariant functor $\F_2 \co \mathcal{C}_2 \to \mathcal{C}_1$. 
	
	It has been shown that the functors $\F_1$ and $\F_2$ are (weak) inverses to each other. In particular, $\mathcal{C}_1$ and $\mathcal{C}_2$ are equivalent as categories. 
	
\section{Transversally equivalence}\Label{sec:ete}
	Let $(M_1,F_1)$ and $(M_2,F_2)$ be smooth manifolds with foliations $F_1$ on $M_1$ and $F_2$ on $M_2$. We say that $(M_1,F_1)$ and $(M_2,F_2)$ are \emph{transversally equivalent} if there exist a foliated manifold $(M_0,F_0)$ and a surjective submersion $f _i \co M_0 \to M_i$ for $i=1,2$ such that 
	\begin{enumerate}
		\item $f_i^{-1}(x_i)$ is connected for all $x_i \in M_i$ and
		\item the preimage of every leaf of $F_i$ by $f_i$ is a leaf of $F_0$
	\end{enumerate}
	(see \cite[Definition 2.1]{Molino} for detail). 
	The notion of transversally equivalence is an equivalence relation on foliated manifolds. We restrict our attention to the category $\mathcal{C}_1$ and canonical foliations. 
	\begin{defn}\Label{defn:equivalent}
		Let $(M_1,G_1,y_1), (M_2,G_2,y_2) \in \mathcal{C}_1$. We say that $(M_1,G_1,y_1)$ and $(M_2,G_2,y_2)$ are \emph{equivalent} if there exist $(M_0,G_0,y_0) \in \mathcal{C}_1$ and morphisms $(f_i, \alpha_i) \in \Hom_{\mathcal{C}_1}((M_0,G_0,y_0), (M_i,G_i,y_i))$ for $i=1,2$ satisfying the followings.
		\begin{enumerate}
			\item $\ker \alpha_i$ is connected. 
			\item $f_i \co M_0 \to M_i$ is a principal $\ker \alpha_i$-bundle. 
		\end{enumerate}
	\end{defn}
	We remark that $(M_i,G_i,y_i)$ and $(M_0,G_0,y_0)$ are also equivalent in Definition \ref{defn:equivalent}. Definition \ref{defn:equivalent} determines an equivalence relation on $\mathcal{C}_1$. 
	
	\begin{prop}\Label{prop:equivalence}
		Let $(M,G,y), (M_0,G_0,y_0) \in \mathcal{C}_1$. Let $F$ and $F_0$ be the canonical foliation on $M$ and $M_0$, respectively. Let a morphism 
		\begin{equation*}
			(f,\alpha) \in \Hom_{\mathcal{C}_1}((M_0,G_0,y_0),(M,G,y))
		\end{equation*} satisfy the followings.
		\begin{enumerate}
			\item $\ker \alpha$ is connected. 
			\item $f \co M_0 \to M$ is a principal $\ker \alpha$-bundle. 
		\end{enumerate}
		Then, 
		\begin{enumerate}
			\item $f^{-1}(x)$ is connected for all $x \in M$ and
		\item the preimage of every leaf of $F$ by $f$ is a leaf of $F_0$.
		\end{enumerate}
	\end{prop}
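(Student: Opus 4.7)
For Part (1), I will simply observe that since $f\colon M_0\to M$ is a principal $\ker\alpha$-bundle and $\ker\alpha$ is connected by hypothesis, each fiber $f^{-1}(x)$ is homeomorphic to $\ker\alpha$, hence connected.

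My plan for Part (2) is to reduce the leaf-level statement to two Lie-algebraic claims and then reconstruct leaves from them. Set $\mathfrak{k}:=\mathrm{Lie}(\ker\alpha)\subset\g_0$, $H':=\exp_G(p(\h))$, and $H'_0:=\exp_{G_0}(p(\h_0))$, so that leaves of $F$ and $F_0$ are the $H'$- and $H'_0$-orbits, respectively. The two claims I will establish are
\begin{equation*}
\mathfrak{k}\subseteq p(\h_0) \quad \text{and}\quad d\alpha(p(\h_0))=p(\h).
\end{equation*}
Granted these, the equality $f^{-1}(H'\cdot x)=H'_0\cdot\tilde x$ follows directly. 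For $\supseteq$, I would use $d\alpha^\C(\h_0)\subseteq\h$ (derived from $f$ being $\alpha$-equivariant and holomorphic) to conclude $\alpha(H'_0)\subseteq H'$. For $\subseteq$, given $\tilde y\in f^{-1}(H'\cdot x)$ with $f(\tilde y)=\exp(v)\cdot x$ and $v\in p(\h)$, lift $v$ to some $v_0\in p(\h_0)$ via the second claim; then $\tilde y$ and $\exp(v_0)\cdot\tilde x$ lie in the same $f$-fiber, so $\tilde y=\exp(\kappa)\exp(v_0)\cdot\tilde x$ for some $\kappa\in\mathfrak{k}$, and $\exp(\kappa)\in H'_0$ by the first claim.

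The technical heart is the following key lemma, which I would prove first: any $G_0$-invariant holomorphic vector field $Y$ on $M_0$ that is tangent to every fiber of $f$ equals $X_{\kappa_0}^0$ for a unique constant $\kappa_0\in\mathfrak{k}$. Since $\ker\alpha$ acts freely on fibers, I can write $Y=X_{w(\tilde x)}^0$ for a smooth $w\colon M_0\to\mathfrak{k}$; $G_0$-invariance of $Y$ forces $w$ to be $K$-invariant, so $w$ descends to a smooth $\bar w\colon M\to\mathfrak{k}$. For the constancy of $\bar w$ I would use the holomorphicity of $Y$: writing $\bar w=\sum_\alpha \bar w^\alpha\kappa_\alpha$ in a real basis of $\mathfrak{k}$ and working in a local trivialization of $f$, the equation $\bar\partial Y=0$ reads $\sum_\alpha \bar\partial\bar w^\alpha\otimes Z_\alpha=0$ with $Z_\alpha$ the $(1,0)$-part of $X_{\kappa_\alpha}^0$; since the $\R$-linear map $\mathfrak{k}\to T^{(1,0)}(\mathrm{fiber})$, $\kappa\mapsto Z_\kappa$, is an isomorphism between spaces of equal real dimension, each $\bar w^\alpha$ must be holomorphic on the compact connected base $M$ and therefore constant.

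With the lemma, the first claim follows by applying it to $Y=JX_v^0$ for $v\in\mathfrak{k}$: this $Y$ is $G_0$-invariant (since $X_v^0$ is and $G_0$ preserves $J$) and tangent to fibers (as fibers are $J$-invariant complex submanifolds), so $JX_v^0=X_{w_0}^0$, giving $v\in\g_0\cap J\g_0=p(\h_0)$. For the second claim, I would first establish the surjectivity of $\alpha$ by a dimension count using the maximality of both actions and the injection $(G_0)_{\tilde z}\hookrightarrow G_{f(\tilde z)}$; then, given $v\in p(\h)$ with $X_v=JX_{v'}$, lifts $w,w'\in\g_0$ of $v,v'$ make $X_w^0-JX_{w'}^0$ an $f$-vertical $G_0$-invariant holomorphic vector field, which by the lemma equals $X_{\kappa_0}^0$ for some $\kappa_0\in\mathfrak{k}$, so $w-\kappa_0\in p(\h_0)$ maps to $v$. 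The main obstacle will be the constancy step in the key lemma: $G_0$-invariance alone does not suffice since generic $G_0$-orbits have positive codimension, and the decisive ingredient is the interplay between the holomorphicity of $JX_v^0$, the structure of the vertical tangent bundle of the principal $\ker\alpha$-bundle, and compactness of $M$.
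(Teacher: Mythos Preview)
Your approach is genuinely different from the paper's and, modulo one gap, works. The paper argues combinatorially: it invokes Theorem~\ref{theo:principal} (the fan-theoretic characterization of principal bundles in $\mathcal{C}_1$) to show that $d\alpha$ carries primitive generators of $1$-cones bijectively, then uses the completeness of the projected fans to prove that the induced map $\overline{d\alpha}\colon \g_0/p_0(\h_0)\to \g/p(\h)$ is an isomorphism, from which $d\alpha^{-1}(p(\h))=p_0(\h_0)$ follows. Your route bypasses the fan data entirely and stays within the geometry of holomorphic vector fields; the two Lie-algebraic claims $\mathfrak{k}\subseteq p_0(\h_0)$ and $d\alpha(p_0(\h_0))=p(\h)$ are together equivalent to the paper's $d\alpha^{-1}(p(\h))=p_0(\h_0)$, and your reconstruction of leaves from them is correct.

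The gap is in the proof of the key lemma, at the constancy step. From $\sum_\alpha \bar\partial\bar w^\alpha\otimes Z_\alpha=0$ you conclude $\bar\partial\bar w^\alpha=0$ using that the $Z_\alpha$ form an $\R$-basis of $T^{1,0}(\text{fiber})$. But the tensor product is over $\C$, and the $Z_\alpha$ are $2m$ vectors in the $m$-dimensional complex space $T^{1,0}(\text{fiber})$ (where $2m=\dim_\R\mathfrak{k}$), so they satisfy $m$ $\C$-linear relations; $\R$-independence does not let you separate the summands. Concretely, if $\dim\mathfrak{k}=2$ and $Z_2=cZ_1$ with $c\notin\R$, the equation only gives $\bar\partial\bar w^1+c\,\bar\partial\bar w^2=0$.

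The lemma is nevertheless true, and the cleanest fix is: a $G_0$-invariant holomorphic vector field on $M_0$ is automatically $G_0^\C$-invariant (the map $G_0^\C\to\mathfrak{X}(M_0)$, $g\mapsto g_*Y-Y$, is holomorphic and vanishes on the totally real $G_0$, hence identically); then $w$ is $G_0^\C$-invariant, and since $M_0$ has an open dense $G_0^\C$-orbit (Lemma~\ref{lemm:dimension}(1)), $w$ is constant. Alternatively, in the local picture one can first show the ``complex structure'' functions $c_{ij}$ relating the $Z_\alpha$ over $\C$ are themselves globally holomorphic (since the $Z_\alpha$ are), hence constant on the compact $M_0$, and then use reality of $\bar w^\alpha$ to finish. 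Either way, the obstacle you flagged is real, but its resolution does not require $\R$-linear independence to do the work you asked of it.
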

	Before the proof of Proposition \ref{prop:equivalence}, we recall the following: 
	\begin{theo}[{\cite[Theorem 11.1]{Ishida}}]\Label{theo:principal}
		Let $(M,G,y), (M_0,G_0,y_0) \in \mathcal{C}_1$. Let $(\Delta,\h,G) := \F_1(M,G,y)$ and $(\Delta_0,\h_0, G_0) := \F_1(M_0,G_0,y_0)$. Let 
		\begin{equation*}
			(f,\alpha) \in \Hom_{\mathcal{C}_1}((M,G,y), (M_0,G_0,y_0)).
		\end{equation*}
		Let $\Delta^{(1)}$ and ${\Delta}_0^{(1)}$ denote the set of $1$-cones in $\Delta$ and $\Delta_0$, respectively. Then, $f \co M \to M_0$ is a principal $\ker \alpha$-bundle if and only if $\alpha$ is surjective and $d\alpha \co \g \to \g_0$ induces a one-to-one correspondence from the primitive generators of $1$-cones in $\Delta$ to the primitive generators of $1$-cones in $\Delta'$. 
	\end{theo}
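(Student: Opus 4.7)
The plan is to prove each implication in turn; the forward direction by direct examination of stabilizers and characteristic submanifolds under $f$, the reverse direction by toric lifts.

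For ($\Rightarrow$), assume $f$ is a principal $\ker\alpha$-bundle. Freeness of the $\ker\alpha$-action gives $G_x\cap\ker\alpha=\{1\}$ for all $x\in M$. A short orbit calculation shows $\alpha^{-1}(G_{0,f(x)})=\ker\alpha\cdot G_x$, so $\alpha$ restricts to an isomorphism $G_x\xrightarrow{\sim}G_{0,f(x)}$ at every $x$. Applying this at a maximal-isotropy point of $M_0$ and at one of $M$ (both exist by maximality of the respective actions) shows $\dim M-\dim G=\dim M_0-\dim G_0$; combined with $\dim M=\dim M_0+\dim\ker\alpha$ this gives $\dim\alpha(G)=\dim G_0$, so $\alpha$ is surjective.

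For each characteristic submanifold $M_i$ with stabilizer $G_i$ and primitive generator $\lambda_i$, the freeness of $\ker\alpha$ makes $\alpha|_{G_i}$ an isomorphism onto a $1$-subtorus $\alpha(G_i)\subset G_0$. Since $f|_{M_i}$ is a submersion whose fibers are free $\ker\alpha$-orbits, $f(M_i)$ is a connected closed $G_0$-invariant codimension-$2$ submanifold fixed pointwise by $\alpha(G_i)$; by the $\mathcal{C}_1$-structure of $M_0$ this forces $f(M_i)=N_{j(i)}$ for a characteristic submanifold with stabilizer $K_0^{(j(i))}=\alpha(G_i)$. The principal-bundle structure of $f|_{M_i}$ makes the normal bundle of $M_i$ pull back from that of $N_{j(i)}$, so the character of $G_i$ on the former factors as $\mu_{j(i)}^{-1}\circ\alpha|_{G_i}$; inverting these characters, which is how the primitive generators $\lambda_i$ and $\mu_{j(i)}$ are defined, yields $d\alpha(\lambda_i)=\mu_{j(i)}$. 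Bijectivity of $i\mapsto j(i)$: injectivity because $f^{-1}(N_j)$ is a principal $\ker\alpha$-bundle over the connected base $N_j$, hence itself connected of codimension $2$, so at most one $M_i$ fits inside it; surjectivity because at a generic $x\in f^{-1}(N_j)$ the stabilizer $G_x$ maps isomorphically onto $K_0^{(j)}$, and the connected component through $x$ of its fixed locus is a characteristic $M_i$ with $j(i)=j$.

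For ($\Leftarrow$), assume $\alpha$ is surjective and $d\alpha$ induces a bijection on primitive generators. This means that every cone of $\Delta$ is mapped bijectively (as a lattice cone with its primitive generators) onto a cone of $\Delta_0$; in each affine toric chart this renders $\varphi_\alpha\co X(\Delta)\to X(\Delta_0)$ a trivial $\ker\alpha^\C$-bundle, so globally $\varphi_\alpha$ is a principal $\ker\alpha^\C$-bundle. Passing to quotients, using $\alpha^\C(H)\subset H_0$ from $d\alpha^\C(\h)\subset\h_0$, the induced map $f\co X(\Delta)/H\to X(\Delta_0)/H_0$ inherits a principal-bundle structure whose structure group is the compact form $\ker\alpha$. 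The main obstacle is precisely this last step: one must verify that the $H$- and $H_0$-quotients preserve the principal-bundle property and that the structure group reduces from the complex $\ker\alpha^\C$ to the compact $\ker\alpha$. The essential lattice-level checks are $H\cap\ker\alpha^\C=\{1\}$ and that the natural map $\ker\alpha\to\ker\alpha^\C/(\ker\alpha^\C\cap H_0)$ is an isomorphism; both follow from combining the injectivity of $p|_{\h}$ built into $(\Delta,\h,G)\in\mathcal{C}_2$ with the primitive-generator bijection.
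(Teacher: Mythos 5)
This theorem is quoted in the present paper from the reference \cite[Theorem 11.1]{Ishida}; the paper under review supplies no proof of its own, so I can only assess your argument on its merits.

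In the forward direction your strategy (inspecting stabilizers and characteristic submanifolds under $f$) is sound, but the opening step has a circularity. From $G_x\cap\ker\alpha=\{1\}$ and the orbit identity $\alpha^{-1}(G_{0,f(x)})=\ker\alpha\cdot G_x$ you can only conclude that $\alpha|_{G_x}$ is an isomorphism onto $G_{0,f(x)}\cap\alpha(G)$, not onto all of $G_{0,f(x)}$; upgrading to the latter already requires the surjectivity of $\alpha$ that you are in the middle of proving. The fix is to drop the isomorphism claim and argue directly by inequalities: at a point $x$ of maximal isotropy, $\dim G_x=\dim M-\dim G$, injectivity of $\alpha|_{G_x}$ gives $\dim G_{0,f(x)}\geq \dim M-\dim G$, and the effectiveness inequality for $G_0$ on $M_0$ together with $\dim M_0=\dim M-\dim\ker\alpha$ forces $\dim G_0\leq\dim G-\dim\ker\alpha=\dim\alpha(G)$. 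Similarly, the injectivity of $i\mapsto j(i)$ does not need connectedness of $f^{-1}(N_j)$ (which would fail if $\ker\alpha$ were disconnected, a case the theorem allows); it follows more directly from $f^{-1}(N_j)=\ker\alpha\cdot M_i=M_i$ since $M_i$ is $G$-invariant and in particular $\ker\alpha$-invariant. Modulo these repairs the forward half is essentially complete.

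The reverse direction, however, has a genuine gap, and you partially acknowledge it. After observing that $\varphi_\alpha\co X(\Delta)\to X(\Delta_0)$ is a $\ker\alpha^\C$-principal bundle in the toric charts, you need to descend to the $H$- and $H_0$-quotients, and this is exactly where the argument stops. The ``lattice-level checks'' you list do not close it: the expression $\ker\alpha^\C\cap H_0$ does not parse (these subgroups live in different ambient groups, $G^\C$ and $G_0^\C$), and even after correcting it to $\ker\alpha^\C\cap H$, the condition $H\cap\ker\alpha^\C=\{1\}$ together with an isomorphism $\ker\alpha\to\ker\alpha^\C/(\ker\alpha^\C\cap H)$ is not sufficient. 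The crucial missing verification is that $d\alpha^\C(\h)=\h_0$, equivalently $\alpha^\C(H)=H_0$; without it one only has $\alpha^\C(H)\subset H_0$, so the fibre of $f$ over a point $[z_0]\in X(\Delta_0)/H_0$ is $\left[(\alpha^\C)^{-1}(H_0)\cdot z\right]$, which need not reduce to a single $\ker\alpha$-orbit. Deriving $d\alpha^\C(\h)=\h_0$ from the hypotheses ($\alpha$ surjective and the bijection on primitive generators, using the defining properties of objects in $\mathcal{C}_2$) is precisely the nontrivial content of this implication, and it is not carried out. As written, the proposal proves only one direction of the equivalence.
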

	
	\begin{proof}[proof of Proposition \ref{prop:equivalence}]
		Since $\ker \alpha$ is connected and $f \co M_0 \to M$ is a principal $\ker \alpha$-bundle, we have that $f^{-1}(x)$ is connected for all $x \in M$. 
	
		Let $p_0 \co \g_0^\C \to \g_0$ and $p \co \g^\C \to \g$ be the projections. Let $q_0 \co \g_0 \to \g_0/p_0(\h_0)$ and $q \co \g \to \g/p(\h)$ be the quotient maps. 
		Since $d\alpha^\C (\h_0) \subset \h$ and $p\circ d\alpha^\C = d\alpha  \circ p_0$, we have $d\alpha(p_0(\h_0)) \subset p(\h)$. 
		Thus $d\alpha \co \g_0 \to \g$ induces a linear map $\overline{d\alpha} \co \g_0/p_0(\h_0) \to \g/p(\h)$.
		We show that $\overline{d\alpha}$ is an isomorphism.
		Since $\alpha$ is surjective, so is $d\alpha$. Since $d\alpha$ is surjective, so is $\overline{d\alpha}$.
		Let $\overline{v} \in \g_0/p_0(\h_0)$ be a nonzero element. Since $q_0(\Delta_0)$ is complete, we have that there exists $\sigma \in \Delta_0$ such that $\overline{v} \in q_0(\sigma)$. Let $v \in \sigma$ such that $q_0(v) = \overline{v}$. 
		Let $\lambda_1,\dots, \lambda_k$ be the primitive generators of $\sigma$. Since $\sigma$ is nonsingular, $\lambda_1,\dots, \lambda_k$ are linearly independent. 
		Therefore there uniquely exist $c_1,\dots, c_k \in \R$ such that $v = \sum_j c_j\lambda_j$ and one of $c_1,\dots, c_k$ is nonzero.
		By Theorem \ref{theo:principal}, each $d\alpha(\lambda_j)$ is a primitive generator of $1$-cone in $\Delta$.
		Since $d\alpha$ induces a morphism of fans $\Delta_0$ and $\Delta$, $d\alpha(\sigma)$ should be a cone in $\Delta$ and   $d\alpha(\lambda_1),\dots, d\alpha(\lambda_k)$ are primitive generators of $d\alpha(\sigma)$. 
		Therefore $d\alpha(v) = \sum_jc_jd\alpha(\lambda_j)$ is nonzero, because $d\alpha(\lambda_1),\dots, d\alpha(\lambda_k)$ are linearly independent and one of $c_1,\dots, c_k$ is nonzero. 
		Thus $d\alpha(v) \in d\alpha(\sigma) \setminus \{0\}$. Since $q$ gives a one-to-one correspondence between $\Delta$ and $q(\Delta)$, it follows from $d\alpha(v) \in d\alpha(\sigma) \setminus \{0\}$  that $q\circ d\alpha (v) \neq 0$. It turns out that $\overline{d\alpha}(\overline{v}) \neq 0$. Therefore $\overline{d\alpha}$ is injective.
		
		We  show that $d\alpha^{-1}(p(\h)) = p_0(\h_0)$. Since $d\alpha(p_0(\h_0)) \subset p(\h)$, we have $d\alpha^{-1}(p(\h)) \supset p_0(\h_0)$. To show the opposite inclusion, let $v \in d\alpha^{-1}(p(\h))$. Then $d\alpha(v)\in p(\h)$. Thus $q\circ d\alpha(v) = 0$. On the other hand, we have $q\circ d\alpha(v) = \overline{d\alpha}\circ q_0(v)$. Therefore $\overline{d\alpha}\circ q_0(v)=0$. Since $\overline{d\alpha}$ is injective, we have $q_0(v)=0$. This yields that $v \in p_0(\h_0)$. Therefore $d\alpha^{-1}(p(\h)) = p_0(\h_0)$. 
		
		Let $L$ be a leaf of $F$. Since each leaf of $F$ is an orbit with respect to the action of $G$ restricted to $\exp_{G}(p(\h)) =: H'$, we have $L = H' \cdot x$ for some $x \in L$.  Since $f \co M_0 \to M$ is a principal $\ker \alpha$-bundle, there exits $x_0 \in M_0$ such that $f(x_0) = x$. The leaf through $x_0$ is the orbit of the action of $G_0$ restricted to $\exp_{G_0}(p_0(\h_0)) =: H_0'$. We show that $f^{-1}(L) = H_0'\cdot x_0$. Let $x_0' \in H_0' \cdot x_0$. Then there exists $h_0' \in H_0'$ such that $x_0' =h_0' \cdot x_0$. Since $f$ is $\alpha$-equivariant, we have  $f(x_0') = \alpha(h_0') \cdot f(x_0) =  \alpha(h_0') \cdot x$. Since $d\alpha^{-1}(p(\h)) = p_0(\h_0)$, we have $\alpha(h_0') \in H'$ and hence $f(x_0') \in L$. Thus we have the inclusion $f^{-1}(L) \supset H_0'\cdot x_0$. To show the opposite inclusion, let $x_0'' \in f^{-1}(L)$. Then $f(x_0'') \in L$. Thus there exists $h' \in H'$ such that $x = h' \cdot f(x_0'')$. Since $\ker \alpha$ is connected and $d\alpha^{-1}(p(\h)) = p_0(\h_0)$, there exists $\widetilde{h_0'} \in H_0'$ such that $\alpha(\widetilde{h_0'}) = h'$. Then we have $f(\widetilde{h_0'}\cdot x_0'') = h \cdot f(x_0'') = x = f(x_0)$. Since $f$ is a principal $\ker \alpha$-bundle, there exists $k \in \ker \alpha$ such that $x_0 = k\cdot (\widetilde{h_0'}\cdot x_0'')$. Since $\ker \alpha$ is connected and $d\alpha^{-1}(p(\h)) = p_0(\h_0)$, we have $\ker\alpha \subset H_0'$. Therefore $k\cdot \widetilde{h_0'} \in H_0'$. Since $x_0 = k\cdot (\widetilde{h_0'}\cdot x_0'') = (k\cdot \widetilde{h_0'}) \cdot x_0''$, we have  $x_0'' = (k\cdot \widetilde{h_0'})^{-1} \cdot x_0 \in H_0' \cdot x_0$. Therefore $f^{-1}(L) =H_0'\cdot x_0$. It turns out that the preimage of every leaf of $F$ by $f$ is a leaf of $F_0$. The theorem is proved. 
	\end{proof}
	\begin{theo}\Label{theo:equivalence}
		Let $(M_1,G_1,y_1), (M_2,G_2,y_2) \in \mathcal{C}_1$. Let $F_1$ and $F_2$ be the canonical foliation on $M_1$ and $M_2$, respectively. If $(M_1,G_1,y_1)$ and $(M_2,G_2,y_2)$ are equivalent, then $(M_1,F_1)$ and $(M_2,F_2)$ are transversally equivalent. 
	\end{theo}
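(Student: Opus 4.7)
The proof is essentially an immediate application of Proposition \ref{prop:equivalence} to both legs of the equivalence. Here is how I would structure it.

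Since $(M_1,G_1,y_1)$ and $(M_2,G_2,y_2)$ are equivalent, Definition \ref{defn:equivalent} furnishes an object $(M_0,G_0,y_0) \in \mathcal{C}_1$ together with morphisms $(f_i,\alpha_i) \in \Hom_{\mathcal{C}_1}((M_0,G_0,y_0),(M_i,G_i,y_i))$ for $i=1,2$, such that $\ker \alpha_i$ is connected and $f_i \co M_0 \to M_i$ is a principal $\ker \alpha_i$-bundle. Let $F_0$ denote the canonical foliation on $M_0$; this is the data of the foliated manifold one needs for the definition of transverse equivalence.

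The first thing to check is that each $f_i$ is a surjective submersion, but this is automatic: a principal bundle is both surjective and a submersion. It then remains to verify the two axioms in the definition of transverse equivalence, namely connectedness of the fibers $f_i^{-1}(x_i)$ for $x_i \in M_i$, and the fact that preimages of leaves of $F_i$ are leaves of $F_0$. Both of these assertions are precisely the conclusions of Proposition \ref{prop:equivalence}, applied with $(M,G,y) = (M_i, G_i, y_i)$ and $(f,\alpha) = (f_i,\alpha_i)$, whose hypotheses are exactly the two conditions in Definition \ref{defn:equivalent}.

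Applying Proposition \ref{prop:equivalence} twice (once for each $i$), we obtain the foliated manifold $(M_0,F_0)$ and surjective submersions $f_i \co M_0 \to M_i$ satisfying both axioms, which is the definition of $(M_1,F_1)$ and $(M_2,F_2)$ being transversally equivalent. The only ``work" is verifying that Definition \ref{defn:equivalent} feeds directly into Proposition \ref{prop:equivalence}; there is no additional obstacle, so the argument is short and essentially bookkeeping. The real content has already been absorbed into Proposition \ref{prop:equivalence}, whose proof required the fan-theoretic description of principal bundles from Theorem \ref{theo:principal} in order to compare the infinitesimal generators $p_0(\h_0)$ and $p(\h)$ of the two canonical foliations.
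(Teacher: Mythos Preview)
Your proof is correct and takes essentially the same approach as the paper: the paper's proof is the single sentence ``It follows from Proposition \ref{prop:equivalence} immediately,'' and you have simply spelled out what ``immediately'' means by unpacking Definition \ref{defn:equivalent} and applying Proposition \ref{prop:equivalence} to each leg $i=1,2$.
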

	\begin{proof}
		It follows from Proposition \ref{prop:equivalence} immediately. 
	\end{proof}
\section{The fundamental theorems}\Label{sec:fund}
	\begin{defn}
		A \emph{marked fan} is a quadruple $(\widetilde{V}, \widetilde{\Gamma}, \widetilde{\Delta}, \widetilde{\lambda})$ satisfying the followings.
		\begin{enumerate}
			\item $\widetilde{V}$ is a finite dimensional $\R$-vector space. 
			\item $\widetilde{\Gamma}$ is a finitely generated subgroup of $\widetilde{V}$ that spans $\widetilde{V}$ linearly. 
			\item $\widetilde{\Delta}$ is a fan and each cone in $\widetilde\Delta$ is generated by finite elements in $\widetilde{\Gamma}$. 
			\item Let $\widetilde{\Delta}^{(1)}$ denote the set of all $1$-cones in $\widetilde{\Delta}$. $\widetilde{\lambda}$ is a function $\widetilde{\lambda} \co \widetilde{\Delta}^{(1)} \to \widetilde{\Gamma}$ such that $\widetilde{\lambda} (\rho)$ is a generator of $\rho$ for $1$-cone $\rho \in \widetilde{\Delta}$. 
		\end{enumerate}
		Moreover if the fan $\widetilde{\Delta}$ is simplicial, we say that the marked fan $(\widetilde{V}, \widetilde{\Gamma}, \widetilde{\Delta}, \widetilde{\lambda})$ is simplicial. If $\widetilde{\Delta}$ is complete, we say that the marked fan $(\widetilde{V}, \widetilde{\Gamma}, \widetilde{\Delta}, \widetilde{\lambda})$ is complete. We denote by $\widetilde{\mathcal{C}}_2$ the class that consists of complete simplicial marked fans. 
	\end{defn}
	\begin{defn}
		We say that marked fans $(\widetilde{V}_1,\widetilde{\Gamma}_1,\widetilde{\Delta}_1,\widetilde{\lambda}_1)$ and $(\widetilde{V}_2,\widetilde{\Gamma}_2,\widetilde{\Delta}_2,\widetilde{\lambda}_2)$ are \emph{isomorphic} if there exists a linear isomorphism $\varphi \co \widetilde{V}_1\to \widetilde{V}_2$ that satisfies the followings.
		\begin{enumerate}
			\item $\varphi (\widetilde{\Gamma}_1) = \widetilde{\Gamma}_2$.
			\item $\varphi$ induces an isomorphism of fans $\widetilde{\Delta}_1$ and $\widetilde{\Delta}_2$. Namely, $\varphi(\widetilde{\sigma}_1) \in \widetilde{\Delta}_2$ for all $\widetilde{\sigma}_1 \in \widetilde{\Delta}_1$ and the map $\widetilde{\Delta}_1 \to \widetilde{\Delta}_2$ given by $\widetilde{\sigma}_1 \mapsto \varphi(\widetilde{\sigma}_1)$ is bijective. We denote the bijection $\widetilde{\Delta}_1\to \widetilde{\Delta}_2$ by the same symbol $\varphi$.
			\item $\widetilde{\lambda}_2 \circ \varphi|_{\widetilde{\Delta}_1^{(1)}} = \varphi \circ \widetilde{\lambda}_1$. 
		\end{enumerate}
	\end{defn}
	To each $(M,G,y) \in \mathcal{C}_1$, we can assign a complete simplicial marked fan $(\widetilde{V}, \widetilde{\Gamma}, \widetilde{\Delta}, \widetilde{\lambda}) \in \widetilde{\mathcal{C}}_2$ as follows: Let $(\Delta,\h,G) := \F_1(M,G,y)$. As before, we denote by $p \co \g^\C \to \g$ the projection and $q\co \g \to \g/p(\h)$ the quotient map. For each $1$-cone $\rho \in \Delta^{(1)}$, we denote by $\lambda(\rho) \in \ker \exp_G$ the primitive generator of $\rho$. Under this notation, 
	\begin{enumerate}
		\item $\widetilde{V} := \g/p(\h)$. 
		\item $\widetilde{\Gamma} := q(\ker \exp_G)$. 
		\item $\widetilde{\Delta} := q(\Delta)$. 
		\item $\widetilde{\lambda}(q(\rho)) := q(\lambda(\rho))$ for $\rho \in \Delta^{(1)}$. 
	\end{enumerate}
	We denote by $\widetilde{\mathcal{F}}_1 \co \mathcal{C}_1 \to \widetilde{\mathcal{C}}_2$ the assignment above. 
	Theorems \ref{theo:fundamental} and \ref{theo:ess.surjective} below tell us that $\widetilde{\mathcal{F}}_1$ gives an interpretation between $\mathcal{C}_1$ and $\widetilde{\mathcal{C}}_2$. 
	\begin{theo}\Label{theo:fundamental}
		Let $(M_1,G_1,y_1),(M_2,G_2,y_2) \in \mathcal{C}_1$. Then, $(M_1,G_1,y_1)$ and $(M_2,G_2,y_2)$ are equivalent if and only if $\widetilde{\mathcal{F}}_1(M_1,G_1,y_1)$ and $\widetilde{\mathcal{F}}_1(M_2,G_2,y_2)$ are isomorphic. 
	\end{theo}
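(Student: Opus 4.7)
My plan is to prove the two directions of Theorem \ref{theo:fundamental} by transporting structure through the categorical equivalence $\mathcal{C}_1 \simeq \mathcal{C}_2$. Throughout I write $(\Delta_i, \h_i, G_i) := \F_1(M_i, G_i, y_i)$ and $(\widetilde V_i, \widetilde \Gamma_i, \widetilde \Delta_i, \widetilde \lambda_i) := \widetilde{\F}_1(M_i, G_i, y_i)$. For the \emph{only if} direction, suppose $(M_0, G_0, y_0) \in \mathcal{C}_1$ together with morphisms $(f_i, \alpha_i)$ realizes the equivalence of Definition \ref{defn:equivalent}. The proof of Proposition \ref{prop:equivalence} already establishes that $d\alpha_i$ descends to a linear isomorphism $\overline{d\alpha_i} \co \widetilde V_0 \to \widetilde V_i$; it remains to verify that $\overline{d\alpha_i}$ respects the remaining marked-fan data. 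Surjectivity of $\alpha_i$ as a homomorphism of compact tori gives $d\alpha_i(\ker \exp_{G_0}) = \ker \exp_{G_i}$ and hence $\overline{d\alpha_i}(\widetilde \Gamma_0) = \widetilde \Gamma_i$. Theorem \ref{theo:principal} supplies a bijection between the primitive generators of $1$-cones in $\Delta_0$ and in $\Delta_i$; combined with the bijections $q_j \co \Delta_j \to q_j(\Delta_j)$ built into the definition of $\mathcal{C}_2$, this upgrades to a bijection $\widetilde \Delta_0 \to \widetilde \Delta_i$ of fans intertwining the markings $\widetilde \lambda$. The composite $\overline{d\alpha_2} \circ \overline{d\alpha_1}^{-1}$ is the desired isomorphism of marked fans.

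For the \emph{if} direction, given an isomorphism $\varphi \co \widetilde V_1 \to \widetilde V_2$ of marked fans, I aim to construct an intermediate $(\Delta_0, \h_0, G_0) \in \mathcal{C}_2$ together with morphisms $\alpha_i \co (\Delta_0, \h_0, G_0) \to (\Delta_i, \h_i, G_i)$ such that $\alpha_i$ is surjective with connected kernel and $d\alpha_i$ is a bijection on primitive generators of $1$-cones. Applying $\F_2$ and Theorem \ref{theo:principal} then yields principal-$\ker \alpha_i$-bundle morphisms $\F_2(\Delta_0, \h_0, G_0) \to (M_i, G_i, y_i)$, verifying Definition \ref{defn:equivalent}. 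The combinatorial model is the fiber product $\g_0 := \{(v_1, v_2) \in \g_1 \oplus \g_2 \mid \varphi(q_1(v_1)) = q_2(v_2)\}$ with $\h_0 := \h_1 \oplus \h_2$, which lies in $\g_0^\C$ because each $\h_i \subset p_i(\h_i)^\C \subset (\ker q_i)^\C$; the cones of $\Delta_0$ are indexed by matched pairs $\sigma_i \in \Delta_i$ satisfying $\varphi(q_1(\sigma_1)) = q_2(\sigma_2)$, and the primitive generator of the $1$-cone of $\Delta_0$ lifting $\widetilde \rho \in \widetilde \Delta^{(1)}$ is the pair $(\widetilde \lambda_1(\varphi^{-1} \widetilde \rho), \widetilde \lambda_2(\widetilde \rho)) \in \g_0$. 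The projections $d\alpha_i \co \g_0 \to \g_i$ then carry primitive generators to primitive generators and satisfy $\overline{d\alpha_2} \circ \overline{d\alpha_1}^{-1} = \varphi$.

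The main technical obstacle is the choice of lattice $L_0 := \ker \exp_{G_0} \subset \g_0$ realizing a compact torus $G_0$. The requirement $d\alpha_i(L_0) \subset L_i$ forces $L_0 \subset (L_1 \oplus L_2) \cap \g_0$, and the connectedness of $\ker \alpha_i$ amounts to $d\alpha_i(L_0) = L_i$, as one sees by analyzing the component group $L_i / d\alpha_i(L_0)$ of $\ker \alpha_i = d\alpha_i^{-1}(L_i) / L_0$. When $\widetilde \Gamma$ is a full lattice of $\widetilde V$ the choice $L_0 := (L_1 \oplus L_2) \cap \g_0$ works directly: it is a full lattice of $\g_0$, both equalities $d\alpha_i(L_0) = L_i$ follow from $\varphi(\widetilde \Gamma_1) = \widetilde \Gamma_2$, and the nonsingularity of $\Delta_0$ with respect to $L_0$ is inherited from that of $\Delta_1, \Delta_2$ via the linear independence in $\widetilde V$ of the generators of the simplicial cones of $\widetilde \Delta$. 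In the general transcendental case where $\widetilde \Gamma$ may be dense in $\widetilde V$, this lattice has rank strictly less than $\dim \g_0$, and the construction must be modified; one approach is to enlarge the ambient Lie algebra so that $p_0(\h_0)$ absorbs the excess ``irrational'' directions while $\widetilde V_0 \cong \widetilde V$ is preserved, thereby making the natural lattice full. Verifying that the modified $(\Delta_0, \h_0, G_0)$ lies in $\mathcal{C}_2$ (in particular the injectivity of $p_0|_{\h_0}$ and the completeness of $q_0(\Delta_0)$) and that the morphisms $\alpha_i$ still satisfy the hypotheses of Theorem \ref{theo:principal} is the technical heart of the argument.
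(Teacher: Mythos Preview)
Your \emph{only if} direction matches the paper's, modulo a small slip: surjectivity of $\alpha_i$ alone does not give $d\alpha_i(\ker\exp_{G_0}) = \ker\exp_{G_i}$; you also need $\ker\alpha_i$ connected (which you have from Definition~\ref{defn:equivalent}).

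Your \emph{if} direction has a genuine gap in the irrational case, and your proposed fix points the wrong way. You take the fiber product at the level of vector spaces, $\g_0 = \g_1 \times_{\widetilde V} \g_2$, and then correctly observe that $(L_1 \oplus L_2) \cap \g_0$ need not be full-rank when $\widetilde\Gamma$ is not discrete. Enlarging the ambient Lie algebra cannot help: the targets $G_i$ are fixed, and there is no way to map an enlarged $G_0$ onto them while keeping $d\alpha_i$ a bijection on primitive generators. The paper instead \emph{shrinks}: it takes the fiber product at the level of lattices,
\[
\Gamma_0 := \{(\gamma_1,\gamma_2) \in \ker\exp_{G_1} \times \ker\exp_{G_2} \mid \varphi(q_1(\gamma_1)) = q_2(\gamma_2)\},
\]
and lets $\g_0$ be the $\R$-linear span of $\Gamma_0$ in $\g_1 \times \g_2$. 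Then $\Gamma_0$ is full-rank in $\g_0$ by construction, $G_0 := \exp_{G_1\times G_2}(\g_0)$ is a compact subtorus, and one sets $\h_0 := \{(v_1,v_2) \in \h_1 \times \h_2 \mid (p_1(v_1), p_2(v_2)) \in \g_0\}$, which is generally a proper subspace of your $\h_1 \oplus \h_2$. Your matched pairs of primitive generators lie in $\Gamma_0$ by the marking compatibility, so your description of $\Delta_0$ survives, and the remaining verifications---that $(\Delta_0,\h_0,G_0) \in \mathcal{C}_2$, that $d\alpha_i(\Gamma_0) = \ker\exp_{G_i}$ so $\ker\alpha_i$ is connected, and that Theorem~\ref{theo:principal} applies---go through uniformly, with no rational/irrational case split. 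Your construction coincides with the paper's precisely when $\widetilde\Gamma$ is discrete in $\widetilde V$.
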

	\begin{theo}\Label{theo:ess.surjective}
		$\widetilde{\mathcal{F}}_1$ is essentially surjective. Namely, for any $(\widetilde{V}, \widetilde{\Gamma}, \widetilde{\Delta}, \widetilde{\lambda}) \in \widetilde{\mathcal{C}}_2$, there exists $(M,G,y) \in \mathcal{C}_1$ such that $(\widetilde{V}, \widetilde{\Gamma}, \widetilde{\Delta}, \widetilde{\lambda})$ and  $\widetilde{\mathcal{F}}_1(M,G,y)$ are isomorphic. 
	\end{theo}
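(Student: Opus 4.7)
The plan is to construct a triple $(\Delta, \h, G) \in \mathcal{C}_2$ whose image $\widetilde{\F}_1(\F_2(\Delta, \h, G))$ is isomorphic to the given datum $(\widetilde{V}, \widetilde{\Gamma}, \widetilde{\Delta}, \widetilde{\lambda})$; then $(M, G, y) := \F_2(\Delta, \h, G)$ witnesses essential surjectivity. The idea is to take one coordinate in $\g$ for every $1$-cone of $\widetilde{\Delta}$ (plus a few extras for lattice and parity reasons), define $\Delta$ as the obvious combinatorial lift of $\widetilde{\Delta}$ to standard basis vectors, and build $\h$ by equipping $\ker q$ with a complex structure.

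Concretely, enumerate $\widetilde{\Delta}^{(1)} = \{\rho_1, \dots, \rho_k\}$, set $\gamma_i := \widetilde{\lambda}(\rho_i)$, and extend to a family $\gamma_1, \dots, \gamma_s$ generating $\widetilde{\Gamma}$ as an abelian group (it is finitely generated and torsion-free since it sits in the real vector space $\widetilde{V}$), repeating one of the $\gamma_i$ if necessary so that $s - \dim_\R \widetilde{V}$ is even. Put $\g := \R^s$ with lattice $\Lambda := \Z^s$, $G := \g/\Lambda$, and let $q \co \g \to \widetilde{V}$ be the linear surjection with $q(e_i) := \gamma_i$, so that $q(\Lambda) = \widetilde{\Gamma}$. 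Define
\[
\Delta := \left\{ \pos(e_i \mid i \in I) \;\Big|\; I \subset \{1, \dots, k\},\ \pos(\gamma_i \mid i \in I) \in \widetilde{\Delta} \right\}.
\]
Since $\widetilde{\Delta}$ is simplicial, the admissible index sets $I$ form an abstract simplicial complex, so $\Delta$ is a simplicial fan, and because its cones are spanned by subsets of a lattice basis, $\Delta$ is nonsingular with respect to $\Lambda$; by construction $q$ restricts to a bijection $\Delta \to \widetilde{\Delta}$. Writing $K := \ker q$, the dimension $\dim_\R K = s - \dim_\R \widetilde{V}$ is even by our choice of $s$, so one can pick an $\R$-linear complex structure $\psi \co K \to K$, i.e.\ $\psi^2 = -\id_K$, and set
\[
\h := \{ v + \sqrt{-1}\,\psi(v) \mid v \in K \} \subset \g^\C,
\]
which is routinely checked to be a $\C$-subspace with $p|_\h$ injective and $p(\h) = K$.

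It then remains to verify that $(\Delta, \h, G)$ meets all four axioms of $\mathcal{C}_2$ and that the induced linear isomorphism $\overline{q} \co \g/p(\h) = \R^s/K \to \widetilde{V}$ is an isomorphism of marked fans. Both are a matter of unwinding definitions: $\overline{q}$ carries $q(\Lambda) = \widetilde{\Gamma}$ to $\widetilde{\Gamma}$ and $q(\Delta)$ to $\widetilde{\Delta}$, and since the primitive $\Lambda$-generator of $\pos(e_i) \in \Delta^{(1)}$ is $e_i$ itself, the marking is $\overline{q}(e_i) = \gamma_i = \widetilde{\lambda}(\rho_i)$, as required. The only conceptual step is the parity adjustment of $s$, which is forced by the fact that any complex subspace $\h$ on which $p$ is injective has $\dim_\R p(\h)$ even; this is painlessly handled by duplicating a generator of $\widetilde{\Gamma}$, after which constructing $\psi$ is a free choice and the remainder of the argument is bookkeeping.
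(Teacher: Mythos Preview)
Your proposal is correct and follows essentially the same approach as the paper: lift $\widetilde{\Delta}$ to the standard simplicial fan in $\R^s$ via one coordinate per marked generator (plus extras to generate $\widetilde{\Gamma}$ and fix parity), then build $\h$ so that $p(\h)=\ker q$. The only cosmetic difference is that you package the choice of $\h$ as picking a complex structure $\psi$ on $K$, whereas the paper picks an explicit basis $b_1,\dots,b_{N_3},b_1',\dots,b_{N_3}'$ of $K$ and sets $\h=\operatorname{span}_\C\{b_j+\sqrt{-1}\,b_j'\}$; these are the same construction in different clothing.
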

	\begin{proof}[Proof of Theorem \ref{theo:fundamental}]
		Suppose that $(M_1,G_1,y_1),(M_2,G_2,y_2) \in \mathcal{C}_1$ are equivalent. Then there exist $(M_0,G_0,y_0) \in \mathcal{C}_1$ and $(f_i,\alpha_i) \in \Hom _{\mathcal{C}_1}((M_0,G_0,y_0),(M_i,G_i,y_i))$ for $i=1,2$ such that 
		\begin{enumerate}
			\item $\ker \alpha_i$ is connected and 
			\item $f_i \co M_0 \to M_i$ is a principal $\ker \alpha_i$-bundle. 
		\end{enumerate}
		We show that $\widetilde{\F}_1(M_0,G_0,y_0)$ and $\widetilde{\F}_1(M_i,G_i,y_i)$ are isomorphic. 
		
		For $j=0,1,2$, let $(\Delta_j,\h_j,G_j) := \F_1 (M_j,G_j,y_j)$. Let $p_j \co \g_j^\C \to \g_j$ be the projection, $q_j \co \g_j \to \g_j/p_j(\h_j)$ the quotient map and $\exp_{G_j} \co \g_j \to G_j$ the exponential map. We have that $\overline{d\alpha_i} \co \g_0/p_0(\h_0) \to \g_i/p_i(\h_i)$  is an isomorphism (see the proof of Proposition \ref{prop:equivalence}). By Theorem \ref{theo:principal}, we have that $\alpha_i$ is surjective. Since $\ker \alpha_i$ is connected and $\alpha_i$ is surjective, we have $d\alpha_i(\ker \exp_{G_0}) = \ker \exp_{G_i}$. Since $q_i \circ d\alpha_i = \overline{d\alpha_i}\circ q_0$, we have $\overline{d\alpha_i} (q_0(\ker \exp_{G_0})) = q_i(\ker \exp_{G_i})$. Let $\sigma_0 \in \Delta_0$. Since $d\alpha_i$ induces a morphism of fans $\Delta_0$ and $\Delta_i$ and one-to-one correspondence between primitive generators by Theorem \ref{theo:principal}, we have $d\alpha_i(\sigma_0) \in \Delta_i$ and $d\alpha_i$ induces a bijection $\Delta_0 \to \Delta_i$ via $\sigma_0 \mapsto d\alpha_i(\sigma_0)$. Since $q_j \co \g_j \to \g_j/p_j(\h_j)$ induces a bijection $\Delta_j \to q_j(\Delta_j)$ via $\sigma_j \mapsto q_j(\sigma_j)$, we have $\overline{d\alpha_i} \co \g_0/p_0(\h_0) \to \g_i/p_i(\h_i)$ induces a bijection $q_0(\Delta_0) \to q_i(\Delta_i)$. For $\rho_j \in \Delta_j^{(1)}$, we denote by $\lambda_j(\rho_j)$ the primitive generator of $\rho_j$. Since $d\alpha_i$ induces a one-to-one correspondence between primitive generators, we have 
		\begin{equation}\Label{eq:lambda}
			d\alpha_i (\lambda_0(\rho_0)) = \lambda_i(d\alpha_i(\rho_0))
		\end{equation} 
		for $\rho_0 \in \Delta_0^{(1)}$. Applying $q_i$ to the left hand side of \eqref{eq:lambda}, we have 
%		\begin{equation*}
			$q_i(d\alpha_i(\lambda_0(\rho_0))) = \overline{d\alpha_i}(q_0(\lambda_0(\rho_0)))$.
%		\end{equation*}
		Thus we have 
		\begin{equation}\Label{eq:qlambda}
			\overline{d\alpha_i}(q_0(\lambda_0(\rho_0))) = q_i(\lambda_i(d\alpha_i(\rho_0))).
		\end{equation}
		Let $\widetilde{\lambda}_j \co q_j(\Delta_j)^{(1)} \to q_j(\ker \exp_{G_j})$ be the map given by $\widetilde{\lambda}_j(q_j(\rho_j)) = q_j(\lambda_j(\rho_j))$ for $\rho_j \in \Delta_j^{(1)}$. By \eqref{eq:qlambda}, we have 
		\begin{equation*}
			\overline{d\alpha_i} \circ \widetilde{\lambda}_0(q_0(\rho_0)) = \widetilde{\lambda}_i (q_i(d\alpha_i(\rho_0))) = \widetilde{\lambda}_i (\overline{d\alpha_i}(q_0(\rho_0))),  
		\end{equation*}
		that is, $\overline{d\alpha_i} \circ \widetilde{\lambda}_0 = \widetilde{\lambda}_i \circ \overline{d\alpha_i}$. Therefore $\widetilde{\F}_1(M_0,G_0,y_0)$ and $\widetilde{\F}_1(M_i,G_i,y_i)$ are isomorphic. 
		
		Conversely, suppose that $\widetilde{\F}_1(M_1,G_1,y_1)$ and $\widetilde{\F}_1(M_2,G_2,y_2)$ are isomorphic. Then there exists a linear isomorphism $\varphi \co \g_1/p_1(\h_1) \to \g_2/p_2(\h_2)$ satisfying the followings.
		\begin{enumerate}
			\item $\varphi(q_1(\ker \exp_{G_1})) = q_2(\ker \exp_{G_2})$. 
			\item $\varphi$ induces an isomorphism of fans $q_1(\Delta_1) \to q_2(\Delta_2)$. 
			\item $\widetilde{\lambda}_2 \circ \varphi|_{q_1(\Delta_1)^{(1)}} = \varphi \circ \widetilde{\lambda}_1$. 
		\end{enumerate}
		We construct $(\Delta_0, \h_0, G_0) \in \mathcal{C}_2$ and apply Theorem \ref{theo:principal} in order to show that $(M_1,G_1,y_1)$ and $(M_2,G_2,y_2)$ are equivalent. 
		Define 
		\begin{equation*}
			\Gamma_0 := \{ (\gamma_1,\gamma_2) \in \ker \exp_{G_1} \times \ker \exp_{G_2} \mid \varphi(q_1(\gamma_1)) = q_2(\gamma_2)\}
		\end{equation*}
		and denote by $\g_0$ the linear hull of $\Gamma_0$ in $\g_1 \times \g_2$. Let $\exp_{G_1 \times G_2} \co \g_1 \times \g_2 \to G_1 \times G_2$ be the exponential map. Then $G_0:= \exp_{G_1\times G_2} (\g_0)$ is a subtorus of $G_1 \times G_2$ and $\Gamma_0$ coincides with the kernel of the exponential map $\exp_{G_0} \co \g_0 \to G_0$. For each $\sigma_1 \in \Delta_1$, there uniquely exists $\sigma_2 \in \Delta_2$ such that $\varphi(q_1(\sigma_1)) = q_2(\sigma_2)$. We denote such $\sigma_2$ by $\Phi(\sigma_1)$ for $\sigma _1 \in \Delta_1$. 
		Then, for each $\rho_1 \in \Delta_1^{(1)}$, the element $\lambda_0(\rho_1) := (\lambda_1(\rho_1), \lambda_2(\Phi (\rho_1)))$ is a primitive element in $\Gamma_0$. Suppose that a cone $\sigma_1\in \Delta_1$ is the Minkowsky sum $\sigma_1 = \rho_{1,1} + \dots +\rho_{1,k}$ of $1$-cones $\rho_{1,1}, \dots, \rho_{1,k} \in \Delta_1^{(1)}$. Then we denote by $\Psi(\sigma_1)$ the cone in $\g_0$ spanned by $\lambda_0(\rho_{1,1}), \dots, \lambda_0(\rho_{1,k})$. Under this notation, we have a nonsingular fan $\Delta_0 := \{ \Psi({\sigma_1}) \subset \g_0 \mid \sigma_1 \in \Delta_1\}$. Let $\alpha_i \co G_0 \to G_i$ be the projection $G_1 \times G_2 \to G_i$ restricted to $G_0 \subset G_1 \times G_2$ for $i=1,2$. Then $\alpha_1 \co G_0 \to G_1$ and  $\alpha_2 \co G_0 \to G_2$ induce morphisms $d\alpha_1 \co \Delta_0 \to \Delta_1$ and $d\alpha_2 \co \Delta_0 \to \Delta_2$ of fan. In addition, these morphisms of fan induce  bijections between cones in fans. Moreover, $d\alpha_i \co \mathfrak{g}_0 \to \mathfrak{g}_i$ induces a one to one correspondence from the primitive generators of $1$-cones in $\Delta_0$ and the primitive generators of $1$-cones in $\Delta_i$. 
		
		Define
	\begin{equation*}
		\mathfrak{h}_0 := \{ (v_1,v_2) \in \mathfrak{h}_1 \times \mathfrak{h}_2 \mid (p_1(v_1),p_2(v_2)) \in \mathfrak{g}_0\}. 
	\end{equation*}
	Then $\mathfrak{h}_0$ is a $\C$-subspace of $\mathfrak{g}_0^\C \subset \g_1^\C \times \g_2^\C$. Moreover, the restriction $p_0|_{\mathfrak{h}_0}$ of the projection $p_0 \co \mathfrak{g}_0^\C \to \mathfrak{g}_0$ is injective because $p_1|_{\mathfrak{h}_1}$ and $p_2|_{\mathfrak{h}_2}$ both are injective. 
	Since $q_i$ and $d\alpha_i$ both are surjective, we have that $q_i\circ d\alpha_i \co \mathfrak{g}_0 \to \g_i/p_i(\h_i)$ is surjective for $i=1,2$. We claim that 
	\begin{equation}\Label{eq:2equalities}
		\ker q_1\circ d\alpha_1=  \ker q_2\circ d\alpha_2 = p_0(\mathfrak{h}_0).
	\end{equation} 
	Since $q_2\circ d\alpha_2 = \varphi \circ q_1 \circ d\alpha_1$ and $\varphi$ is an isomorphism, we have that the first equality of \eqref{eq:2equalities} holds. For the second equality of \eqref{eq:2equalities}, let $(\gamma_1,\gamma_2) \in \ker q_2\circ d\alpha_2$. Then we have $q_2(\gamma_2) = 0$ and $\gamma_2 \in p_2(\mathfrak{h}_2)$. Since $q_2\circ d\alpha_2 = \varphi \circ q_1 \circ d\alpha_1$, we have $f\circ q_1(\gamma_1) = 0$. Since $\varphi$ is an isomorphism, we have $q_1(\gamma_1)= 0$. Thus we have $\gamma_1 \in p_1(\h_1)$. Therefore $(\gamma_1, \gamma_2) \in p_0(\h_0)$. Therefore $\ker q_2 \circ d\alpha_2 \subset p_0(\h_0)$. Conversely, let $(\gamma_1,\gamma_2) \in p_0(\h_0)$. Then $q_2\circ d\alpha_2(\gamma_1,\gamma_2) = q_2(\gamma_2)$. Since $\gamma_2 \in p_2(\h_2)$, we have $q_2(\gamma_2)=0$. Therefore $(\gamma_1,\gamma_2) \in \ker q_2\circ d\alpha_2$, showing the opposite inclusion $\ker q_2 \circ d\alpha_2 \supset p_0(\h_0)$. 
	
	Let $q_0 \co \g_0 \to \g_0/p_0(\h_0)$ be the quotient map. Since $d\alpha_1 \co \g_0 \to \g_1$ satisfies that $\ker q_1\circ d\alpha_1 = p_0(\mathfrak{h}_0)$, $d\alpha_1$ induces an isomorphism $\overline{d\alpha_1} \co \g_0/p_0(\h_0) \to \g_1/p_1(\h_1)$ such that $\overline{d\alpha_1} \circ q_0 = q_1\circ d\alpha_1$. Since the maps $\Delta_0 \to \Delta_1$ given by $\sigma_0 \mapsto d\alpha_1(\sigma_0)$ and $\Delta_1 \to q_1(\Delta_1)$ given by $\sigma_1 \mapsto q_1(\sigma_1)$ are bijective, we have that the composition $\Delta_0 \to q_1(\Delta_1)$ given by $\sigma_0 \mapsto q_1 \circ d\alpha_1(\sigma_0)$ is bijective. Since $q_0 = \overline{d\alpha_1}^{-1}\circ q_1 \circ d\alpha_1$ and $\overline{d\alpha_1}^{-1}$ is an isomorphism, we have that 
	\begin{equation*}
		q_0(\Delta_0) = \{q_0(\sigma_0) \mid \sigma_0 \in \Delta_0\}
	\end{equation*}
	is a complete fan in $\g_0/p_0(\h_0)$ and the map $\Delta_0 \to q_0(\Delta_0)$ given by $\sigma _0 \mapsto q_0(\sigma_0)$ is bijective. Therefore we have $(\Delta_0,\h_0, G_0) \in \mathcal{C}_2$. 

	Since $\h_0 \subset \h_1 \times \h_2 \subset \g_1^\C \times \g_2^\C$ and $d\alpha_i^\C$ is nothing but the restriction of the projection $\g_1 ^\C \times \g_2^\C \to \g_i^\C$ to $\g_0^\C$, we have  $d\alpha_i^\C(\h_0) \subset \h_i$. Thus we have $\alpha_i \in \Hom _{\mathcal{C}_2}((\Delta_0,\h_0,G_0), (\Delta_i,\h_i,G_i))$. Since $\alpha_i$ is surjective and $d\alpha_i \co \g_0 \to \g_i$ induces a one-to-one correspondence from the primitive generator of $1$-cones in $\Delta_0$ to the primitive generators of $1$-cones in $\Delta_i$, there exist $(M_0,G_0,y_0) \in \mathcal{C}_1$ and $\alpha_i$-equivariant holomorphic map $f_i \co M_0 \to M_1$ such that $f_i(y_0) = y_i$ and $f_i$ is a $\ker \alpha_i$-principal bundle by Theorem \ref{theo:principal} and by the equivalence of categories $\mathcal{C}_1$ and $\mathcal{C}_2$. 
	
	It remains to show that $\ker \alpha_i$ is connected for $i=1,2$. Since $\alpha_i \co G_0 \to G_i$ is surjective, we have that $\ker \alpha_i$ is connected if and only if $d\alpha_i(\ker \exp_{G_0}) = \ker \exp_{G_i}$. Remark that $\ker \exp_{G_0} = \Gamma_0$. Let $\gamma_1 \in \ker \exp_{G_1}$. Since $\varphi(q_1(\ker \exp_{G_1})) = q_2(\ker \exp_{G_2})$, there exists $\gamma_2 \in \ker \exp_{G_2}$ such that $q_2(\gamma_2) = \varphi \circ q_1(\gamma_1)$. Then we have $(\gamma_1,\gamma_2) \in \Gamma_0$, 
	showing that $d\alpha_i (\ker \exp_{G_0}) = \ker \exp_{G_1}$. The same argument works for that $d\alpha_i (\ker \exp_{G_0}) = \ker \exp_{G_2}$, and we have that $\ker \alpha_i$ is connected. The theorem is proved.  
	\end{proof}
	\begin{proof}[Proof of Theorem \ref{theo:ess.surjective}]
		Let $(\widetilde{V}, \widetilde{\Gamma}, \widetilde{\Delta}, \widetilde{\lambda}) \in \widetilde{\mathcal{C}}_2$. Let $\widetilde{\rho}_1,\dots, \widetilde{\rho}_m$ be all $1$-dimensional cones in $\widetilde{\Delta}$. 
		Let $K$ be the abstract simplicial complex given by
		\begin{equation*}
			K := \{ \{i_1,\dots, i_k\} \subset \{1,\dots, m\} \mid \rho_{i_1} + \dots +\rho_{i_k} \in \widetilde{\Delta}\}.
		\end{equation*} 
		For $j=1,\dots, m$, put $\widetilde{\gamma}_j := \widetilde{\lambda}(\rho_j)$. Suppose that $\dim \widetilde{V} = N_1$. 
		Since $\widetilde{\Gamma}$ is finitely generated, there exist a positive integer $N_2$ and $\widetilde{\gamma}_{m+1}, \dots, \widetilde{\gamma}_{N_2} \in \widetilde{\Gamma}$ such that $\widetilde{\gamma}_1,\dots, \widetilde{\gamma}_m, \widetilde{\gamma}_{m+1},\dots, \widetilde{\gamma}_{N_2}$ generates $\widetilde{\Gamma}$, $m \leq N_2$ and $N_2 - N_1$ is nonnegative  even integer. 
		For $j=1,\dots, N_2$, let $e_j$ denote the $j$-th standard basis vector of $\R^{N_2}$. 
		The collection of cones 
		\begin{equation*}	
			\Delta := \{ \R_{\geq 0}e_{i_1} + \dots +\R_{\geq 0}e_{i_k} \mid \{i_1,\dots, i_k\} \in K\}
		\end{equation*}
		is a nonsingular fan with respect to the lattice $\Z^{N_2}$. 
		
		Let $\psi \co \R^{N_2} \to \widetilde{V}$ be the linear map given by $\psi (e_j) = \widetilde{\gamma}_j$ for $j=1,\dots, N_2$. Then the map $\Delta \to \widetilde{\Delta}$ given by $\sigma \mapsto \psi(\sigma)$ is bijective. 
		Since $\widetilde{\gamma}_1,\dots, \widetilde{\gamma}_m$ generates $\widetilde{\Gamma}$, we have that $\psi$ is surjective.  
		Put $N_3 := (N_2-N_1)/2 =  \dim \ker \psi / 2$. Let $b_1,\dots, b_{N_3}, b'_1,\dots, b'_{N_3}$ be basis vectors of $\ker \psi$. Let $\h$ be the $\C$-subspace of $(\R^{N_2})^\C =  \C^{N_2}$ spanned by $b_1+\sqrt{-1}b_1',\dots, b_{N_3}+\sqrt{-1}b_{N_3}'$. Let $p \co (\R^{N_2})^\C = \C^{N_2} \to \R^{N_2}$ be the projection. We claim that the restriction $p|_{\h}$ is injective. Suppose that 
		\begin{equation}\Label{eq:b}
			p\left(\sum_{j=1}^{N_3}\alpha_j(b_j+\sqrt{-1}b_j')\right) =0, \quad \alpha_j \in \C.
		\end{equation}
		Let $\alpha_j = a_j + \sqrt{-1}a_j'$, $a_j,a_j' \in \R$. Then \eqref{eq:b} yields that $\sum_{j=1}^{N_3} a_jb_j-a_j'b_j' = 0$. Since $b_1,\dots, b_{N_3}, b_1',\dots, b_{N_3}$ are basis vectors of $\ker \psi$, we have $\alpha_j = 0$ for all $j$. Therefore $p|_{\h}$ is injective. 
		
		Since $p(b_j + \sqrt{-1}b_j') = b_j$ and $p(-\sqrt{-1}(b_j + \sqrt{-1}b_j')) = b_j'$, we have $p(\h) \subset \ker \psi$. On the other hand, $p(v + \sqrt{-1}v')$ is a linear combination of $b_1,\dots, b_{N_3}, b_1',\dots, b_{N_3}'$ for $v+\sqrt{-1}v' \in \h$. Therefore we have $p(\h) = \ker \psi$. Let $q \co \R^{N_2} \to \R^{N_2}/p(\h)$. Then $\psi$ induces the linear isomorphism $\overline{\psi} \co \R^{N_2}/p(\h) \to \widetilde{V}$ such that $\overline{\psi} \circ q= \psi$. Therefore $q (\Delta) = \overline{\psi}^{-1}(\widetilde{\Delta})$ is a complete fan and the map $\Delta \to q(\Delta)$ given by $\sigma \mapsto q(\sigma)$ is bijective. 
		Thus we have $(\Delta, \h, \R^{N_2}/\Z^{N_2}) \in \mathcal{C}_2$. $\F_2(\Delta, \h, \R^{N_2}/\Z^{N_2})\in \mathcal{C}_1$ is what we wanted, proving the theorem. 
	\end{proof}
\section{Preliminaries 2}\Label{sec:preliminaries2}
\subsection{Structures around minimal orbits}\Label{subsec:minimal}
	Let $M$ be a connected manifold equipped with an effective action of $G$. 
	Since the action of $G$ on $M$ is effective, we have $\dim G_x + \dim G \leq \dim M$
	for any $x$. This inequality is equivalent to the inequality
	\begin{equation}\Label{eq:minimal}
		\dim G\cdot x \geq 2\dim G - \dim M. 
	\end{equation}
	We say that a $G$-orbit $G \cdot x$ is \emph{minimal} if the equality of \eqref{eq:minimal} holds. Then, the effective action of $G$ is maximal if and only if there exists a minimal orbit $G\cdot x$. 
	\begin{lemm}[{\cite[Lemma 2.3]{Ishida}}]\Label{lemm:minimal}
		Let $M$ be a connected manifold equipped with a maximal action of a compact torus $G$. Let $G\cdot x$ be a minimal orbit. Then the followings hold.
		\begin{enumerate}
			\item The isotropy subgroup $G_x$ of $G$ at $x$ is connected. 
			\item $G\cdot x$ is a connected component of the fixed point set of the action of $G$ restricted to $G_x$ on $M$. 
			\item Each minimal orbit is isolated. In particular, there are finitely many minimal orbits if $M$ is compact. 
		\end{enumerate}
	\end{lemm}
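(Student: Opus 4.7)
The plan is to apply the slice theorem at a point $x$ on a minimal orbit and reduce all three claims to linear-algebraic questions about the isotropy representation. Setting $a := \dim G_x$, the minimality identity $\dim(G\cdot x) = 2\dim G - \dim M$ combined with $\dim V = \dim M - \dim(G\cdot x)$ (for the slice $V := T_xM/T_x(G\cdot x)$) yields $\dim V = 2a$. Effectiveness of the $G$-action on $M$ also forces the slice representation of $G_x$ on $V$ to be effective, since its kernel would fix a neighborhood of $G\cdot x$ pointwise and hence, by connectedness of $M$ together with the fact that the fixed set of a compact Lie group is a closed submanifold, act trivially on $M$. So part (1) reduces to showing that a compact abelian Lie group $K$ of dimension $a$ acting effectively and linearly on $\R^{2a}$ must be connected.

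To carry that out, I would write $K \cong T^a \times F$ with $F$ finite abelian. The identity component $T^a$ acts on $V \cong \C^a$ via $a$ characters $\mu_1,\dots,\mu_a$; effectiveness of the $T^a$-action forces these characters to generate the full weight lattice of $T^a$ over $\Z$ (otherwise a nontrivial finite subgroup of $T^a$ would act trivially), and in particular they form a $\Z$-basis. After a unitary change of coordinates, $T^a$ becomes the standard diagonal torus in $U(a)$, whose centralizer in $GL(V)$ is itself. Hence $K$ as a whole maps into this diagonal torus, the finite factor $F$ maps into $T^a$, and the resulting redundancy produces kernel elements unless $F=\{1\}$. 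This gives connectedness of $G_x$. Parts (2) and (3) then follow from the local model $G\times_{G_x} V$: because $G$ is abelian, the $G_x$-action on the local model is $g_x \cdot [g,v] = [g, g_x v]$, so the $G_x$-fixed set in the neighborhood is $\{[g,v] : v \in V^{G_x}\}$, and the basis property of the $\mu_i$ forces $V^{G_x} = \{0\}$. Thus near $G\cdot x$ the fixed set of $G_x$ is exactly $G\cdot x$, proving (2).

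For part (3), the isotropy at $[g,v]$ in the local model is $(G_x)_v$, and the existence of a nearby minimal orbit would require $\dim (G_x)_v = a$; by the connectedness established in (1) this forces $(G_x)_v = G_x$, whence $v \in V^{G_x} = \{0\}$, so $G\cdot x$ is the unique minimal orbit meeting the tubular neighborhood. Compactness of $M$ then immediately yields finiteness. The step I expect to be the most delicate is the weight-lattice argument inside (1): showing that the $a$ characters $\mu_1,\dots,\mu_a$ generate the full character lattice of $T^a$ (rather than just a finite-index sublattice) is exactly what rules out a nontrivial finite component $F$, and this is the place where one must be careful to distinguish effectiveness of the $K$-action from effectiveness of only its identity component.
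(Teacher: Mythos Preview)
The paper does not actually prove this lemma: it is quoted verbatim as \cite[Lemma 2.3]{Ishida} and used as a black box, so there is no ``paper's own proof'' to compare against. Your argument, however, is correct and is essentially the standard one.

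A couple of small clarifications are worth recording. First, the claim that the centralizer of the diagonal torus ``in $GL(V)$ is itself'' is not literally true: in $GL_{2a}(\R)$ (or $GL_a(\C)$) the centralizer of the standard $(S^1)^a$ is $(\C^*)^a$. What makes your argument go through is that $G_x$ is compact, so after choosing a $G_x$-invariant inner product the whole representation lands in $O(2a)$, and $F$ (commuting with $T^a$) preserves the isotypic decomposition and hence the induced complex structure, so $\rho(F)\subset U(a)$; inside $U(a)$ the centralizer of the maximal torus is indeed the torus itself. Alternatively and more directly: $\rho(F)\subset (\C^*)^a$ consists of finite-order elements, hence lies in $(S^1)^a=\rho(T^a)$, and effectiveness then kills $F$.

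Second, the step you flagged as delicate is in fact immediate: effectiveness of $K=G_x$ on $V$ trivially restricts to effectiveness of the subgroup $T^a$, and effectiveness of $T^a$ on $\C^a$ via characters $\mu_1,\dots,\mu_a$ is exactly the statement that the $\mu_i$ form a $\Z$-basis of the character lattice (otherwise a finite subgroup of $T^a$ lies in the kernel). The genuinely essential point is the dimension count $\dim V=2a$, which forces exactly $a$ nontrivial weights and no trivial summand; you have this. Parts (2) and (3) then follow from $V^{G_x}=\{0\}$ exactly as you wrote.
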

	
	Until the end of this subsection, let $(M,G,y) \in \mathcal{C}_1$ and let $G\cdot x$ be a minimal orbit. By Lemma \ref{lemm:minimal} (2), we have that $G\cdot x$ is a connected component of the fixed point set of the action of $G$ restricted to $G_x$ on $M$. This together with the assumption that the $G$-action on $M$ preserves the complex structure implies that $G\cdot x$ is a holomorphic submanifold of $M$. 
	Let $G^\C$ be the complexification of $G$. Since $M$ is compact and the action of $G$ preserves the complex structure $J$ on $M$, we have the holomorphic $G^\C$-action on $M$. The global stabilizers form a holomorphic closed subgroup 
	\begin{equation*}
		H := \{ g \in G^\C \mid g\cdot x' = x' \text{ for all } x' \in M\}
	\end{equation*} 
	of $G^\C$. The Lie algebra of $H$ is nothing but $\h$ given by \eqref{eq:h}. By definition, we have an effective holomorphic action of the quotient group $G^\C/H$ on $M$. 
	We denote by $G^M$ the quotient group $G^\C/H$. 	
	\begin{lemm}[{\cite[Lemma 4.6 and Equation (4.4)]{Ishida}}]\Label{lemm:dimension}
		The followings hold.
		\begin{enumerate}
			\item There exists an open dense $G^\C$-orbit in $M$. In particular, we have $\dim H =2\dim G-dim M = \dim G \cdot x$. 
			\item We have the decomposition $\g^\C = \g \oplus \sqrt{-1}\g_x \oplus \h$. 
		\end{enumerate}
	\end{lemm}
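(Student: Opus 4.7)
The plan is to prove part (1) via a slice-theoretic analysis near the minimal orbit $G\cdot x$, and then to deduce the decomposition in part (2) from a dimension count combined with the injectivity of $p|_{\h}$.

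For (1), I would first use Lemma \ref{lemm:minimal}(2) together with the fact that the $G$-action preserves $J$ to conclude that $G\cdot x$ is a holomorphic submanifold, so its normal bundle is a complex $G_x$-equivariant vector bundle. The real rank of this bundle is $\dim M - \dim G\cdot x = 2(\dim M - \dim G) = 2\dim G_x$, and effectiveness of the $G$-action forces the slice representation on the normal fiber $N_x$ to be faithful. Since $G_x$ is a compact torus whose real dimension equals the complex dimension of $N_x$, faithfulness means the weights of $N_x$ form a $\Z$-basis of the character lattice $\Hom(G_x,S^1)$; in particular they are $\R$-linearly independent, so the $G_x^\C$-orbit through a generic point of $N_x$ is open. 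Next I would invoke the equivariant tubular neighborhood theorem to identify a neighborhood of $G\cdot x$ with the total space of the normal bundle $G$-equivariantly and holomorphically, then combine transitivity of $G^\C$ on the complex submanifold $G\cdot x = G^\C/G_x^\C$ with the open $G_x^\C$-orbit on each fiber to produce an open $G^\C$-orbit in $M$. Connectedness of $M$ forces this orbit to be dense. Finally, since $H$ coincides with the $G^\C$-stabilizer of any point in the open dense orbit, $\dim_\R H = 2\dim G - \dim M = \dim G\cdot x$.

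For (2), the real dimensions of the three summands are $\dim G$, $\dim G_x = \dim M - \dim G$, and $\dim_\R \h = 2\dim G - \dim M$, which sum to $2\dim G = \dim_\R \g^\C$. It then suffices to verify that the sum is direct. Given a relation $v_1 + \sqrt{-1}v_2 + (v_3 + \sqrt{-1}v_3') = 0$ with $v_1\in \g$, $v_2\in \g_x$, and $v_3 + \sqrt{-1}v_3' \in \h$, separating real and imaginary parts gives $v_3 = -v_1$ and $v_3' = -v_2$, whence $v_1 + \sqrt{-1}v_2 \in \h$ and therefore $v_1 \in p(\h)$. The defining relation of $\h$ reads $X_{v_1} + JX_{v_2} = 0$; evaluating at $x$ and using $v_2\in \g_x$ yields $(X_{v_1})_x = 0$. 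Proposition \ref{prop:localfree} now forces $X_{v_1}\equiv 0$, so $v_1 = 0$ by effectiveness of the $G$-action; then $X_{v_2} = 0$ gives $v_2 = 0$, and finally $v_3 = v_3' = 0$.

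The main obstacle will be the slice-theoretic step in (1): one must verify that the numerical coincidence between $\dim_\R G_x$ and $\dim_\C N_x$ forced by minimality, together with faithfulness of the isotropy representation, really does produce a local open $G^\C$-orbit through a point transverse to $G\cdot x$. Once this is secured, globalization via connectedness is immediate, and (2) becomes a routine bookkeeping verification that uses only the injectivity input from Proposition \ref{prop:localfree}.
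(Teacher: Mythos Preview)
The paper does not supply its own proof of this lemma; it is quoted verbatim from \cite[Lemma 4.6 and Equation (4.4)]{Ishida} and stated without argument. So there is no in-paper proof to compare against, and your proposal should be assessed on its own terms.

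Your argument for part (2) is clean and correct: the dimension count is right, and the directness verification is a nice application of Proposition~\ref{prop:localfree}. This is a perfectly good self-contained proof.

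For part (1), the strategy is sound but one step is underspecified. You write that you will ``invoke the equivariant tubular neighborhood theorem to identify a neighborhood of $G\cdot x$ with the total space of the normal bundle $G$-equivariantly and holomorphically.'' The standard smooth tubular neighborhood theorem does not give a holomorphic identification, and the holomorphic statement you want is precisely Proposition~\ref{prop:slice}, which in \cite{Ishida} is proved \emph{after} the present lemma---so citing it here risks circularity. The fix is local: at the fixed point $x$ of the compact torus $G_x$, Bochner's linearization theorem gives holomorphic coordinates in which the $G_x$-action is linear. In those coordinates the holomorphic vector fields $JX_v$ for $v\in\g_x$ are also linearized, so the $(G_x)^\C$-action is the standard linear one on $T_xM$, and your weight argument produces an open $(G_x)^\C$-orbit in a neighborhood of $x$. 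Combining this with the $G$-action along the orbit (noting that $(X_v)_x$ for $v\in\g$ together with the tangent vectors to the open $(G_x)^\C$-orbit already span $T_zM$ at a nearby generic $z$) yields the open $G^\C$-orbit without needing a global holomorphic tube. Once you have one open orbit, density follows from connectedness of $M$ and the fact that orbit closures of a complex torus action stratify $M$.

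In short: part (2) is complete; part (1) is correct in outline, but you should replace the appeal to a holomorphic tubular neighborhood by Bochner linearization at $x$ to avoid depending on Proposition~\ref{prop:slice}.
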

	Since $G\cdot x$ is a holomorphic submanifold of $M$, we have that the vector field $JX_v$ tangents to $G\cdot x$ for all $v\in \g$, where $X_v$ denotes the fundamental vector field generated by $v \in \g$. Thus $G\cdot x$ is invariant under the action of $G^M$. Since $G$ acts on $G\cdot x$ transitively, $G^M$ also acts on $G\cdot x$ transitively. Therefore we have that $G\cdot x$ is biholomorphic to the homogeneous space $G^M/(G^M)_x$. 
	\begin{lemm}[{\cite[Lemma 4.8]{Ishida}}]\Label{lemm:gx}
		Let $\g_x$ be the Lie algebra of $G_x$ and $(\g^M)_x$ the Lie algebra of the isotropy subgroup $(G^M)_x$ of $G^M$. The natural map $(\g_x)^\C \hookrightarrow \g^\C/\h$ induces an isomorphism $(G_x)^\C \to (G^M)_x$.
	\end{lemm}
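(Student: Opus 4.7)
The plan is to construct the group-level map as the restriction of the projection $G^\C \twoheadrightarrow G^\C/H = G^M$ to the complex subgroup $(G_x)^\C \subset G^\C$, and then show it is a bijection onto $(G^M)_x$ by combining a dimension count at the Lie algebra level with a connectedness argument.

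First, I would verify that the resulting homomorphism $\Phi \co (G_x)^\C \to G^M$ lands in $(G^M)_x$: an element $g \in G_x$ fixes $x$ in $M$ by definition, and for $v \in \g_x$, the element $\exp_{G^\C}(\sqrt{-1}v)$ acts on $M$ through the flow of $JX_v$, which vanishes at $x$ because $X_v(x)=0$. The differential of $\Phi$ at the identity is then precisely the map $(\g_x)^\C \to \g^\C/\h$ appearing in the statement. From the direct sum decomposition $\g^\C = \g \oplus \sqrt{-1}\g_x \oplus \h$ of Lemma \ref{lemm:dimension}(2) one immediately reads off $(\g_x)^\C \cap \h = 0$, so $d\Phi$ is injective.

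Next, a dimension count shows $d\Phi$ is an isomorphism onto $(\g^M)_x$. By minimality, $\dim_\R G_x = \dim M - \dim G$; by Lemma \ref{lemm:dimension}(1), $\dim_\R H = 2\dim G - \dim M$, so $\dim_\R G^M = \dim M$. Since $G^M$ acts transitively on $G\cdot x$ (as noted in the paragraph preceding the lemma) and $\dim_\R G\cdot x = 2\dim G - \dim M$, we obtain
\[
\dim_\R (G^M)_x = \dim M - (2\dim G - \dim M) = 2\dim G_x = \dim_\R (\g_x)^\C,
\]
so injectivity forces $d\Phi$ to be an isomorphism onto $(\g^M)_x$.

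It remains to pass from a Lie algebra isomorphism to a group isomorphism. Since $G_x$ is connected by Lemma \ref{lemm:minimal}(1) and is a subtorus of $G$, its complexification $(G_x)^\C$ is connected; together with invertibility of $d\Phi$ this shows that $\Phi$ has open image containing the identity, so the image covers the full identity component $(G^M)_x^0$. The main obstacle will be to verify the remaining two items: that $\Phi$ has trivial kernel, and that $(G^M)_x$ is connected. For the kernel, I would combine the Cartan decomposition $(G_x)^\C = G_x \cdot \exp(\sqrt{-1}\g_x)$ with uniqueness of the polar decomposition in $G^\C = G \cdot \exp(\sqrt{-1}\g)$ to pin down an element of $(G_x)^\C \cap H$ by its compact and noncompact parts; the identities $\g \cap \h = 0$ and $\sqrt{-1}\g_x \cap \h = 0$ read off from the decomposition then force both parts to be trivial. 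For connectedness of $(G^M)_x$, I would invoke the long exact homotopy sequence of the fibration $(G^M)_x \hookrightarrow G^M \twoheadrightarrow G\cdot x$ and compare with the analogous sequence for the connected isotropy $G_x \hookrightarrow G \twoheadrightarrow G\cdot x$, using the surjection $G \twoheadrightarrow G\cdot x$ to control $\pi_0((G^M)_x)$.
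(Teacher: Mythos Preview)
The paper does not prove this lemma: it is quoted from \cite[Lemma~4.8]{Ishida} with no argument given here, so there is no in-paper proof to compare against. Your Lie-algebra steps are correct --- injectivity of $(\g_x)^\C \hookrightarrow \g^\C/\h$ from the decomposition of Lemma~\ref{lemm:dimension}(2), the dimension count for $(\g^M)_x$ via transitivity of $G^M$ on $G\cdot x$, and connectedness of $(G^M)_x$ from the long exact sequence of $G^M \to G\cdot x$ together with the factorization of $\pi_1(G)\twoheadrightarrow \pi_1(G\cdot x)$ through $\pi_1(G^M)$.

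The genuine gap is in the kernel step. Writing $h \in (G_x)^\C \cap H$ in polar form $h = g\cdot \exp(\sqrt{-1}v)$ only lets you treat the two factors separately if $H$ is stable under the Cartan involution of $G^\C$, i.e.\ if $\h$ is stable under complex conjugation on $\g^\C$. It is not: if both $v+\sqrt{-1}v'$ and its conjugate $v-\sqrt{-1}v'$ lie in $\h$, then $X_v+JX_{v'}=0=X_v-JX_{v'}$ forces $v=v'=0$, so $\h\cap\overline{\h}=0$ whenever $\h\neq 0$. Consequently the Lie-algebra identities $\g\cap\h=0$ and $\sqrt{-1}\g_x\cap\h=0$ do not upgrade to the group-level conclusion you need.

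A clean repair uses what you have already established: $(G_x)^\C$ fixes $x$, hence acts on $T_xM$, and any element of $(G_x)^\C\cap H$ acts trivially on $M$ and therefore on $T_xM$. But the induced $(G_x)^\C$-representation on the normal space $T_xM/T_x(G\cdot x)$ is faithful --- the $G_x$-representation already is, since $G$ acts effectively and $G_x$ fixes the abelian orbit $G\cdot x$ pointwise, and faithfulness passes to the holomorphic extension because the characters of a compact torus and of its complexification coincide. Hence $(G_x)^\C\cap H=\{1\}$.
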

	The normal space $T_xM/T_x(G\cdot x)$ is a faithful representation space of $(G_x)^\C$. Thus there exist $\alpha_1,\dots, \alpha_k \in \Hom ((G_x)^\C,\C^*)$ such that 
	\begin{equation*}
		T_xM/T_x(G\cdot x) \cong \C_{\alpha_1} \oplus \dots \oplus \C_{\alpha_k}
	\end{equation*}
	as $(G_x)^\C$-representations, where $\C_\alpha$ denotes the complex $1$-dimensional representation of the weight $\alpha \in \Hom ((G_x)^\C,\C^*)$. Since $\dim T_xM/T_x(G\cdot x) = \dim (G_x)^\C$ and $T_xM/T_x(G\cdot x)$ is faithful, we have that $\alpha_1,\dots, \alpha_k$ form a basis of $\Hom((G_x)^\C,\C^*)$. 
	\begin{prop}[{\cite[Proposition 4.9]{Ishida}}]\Label{prop:slice}
		There uniquely exists a minimal $G^M$-invariant open neighborhood $N(G\cdot x) \subset M$ of $G\cdot x$ that is $G^M$-equivariantly biholomorphic to 
		\begin{equation*}
			G^M \times _{(G_x)^\C} \bigoplus_{i=1}^k \C_{\alpha_i}. 
		\end{equation*}
	\end{prop}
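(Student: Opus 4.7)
The plan is to first identify the candidate target as the holomorphic normal bundle of $G\cdot x$, then produce a $G^M$-equivariant biholomorphism locally near $G\cdot x$, and finally extend it to the full associated bundle using the $(G_x)^\C$-action.

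First, I would show that the holomorphic normal bundle $\nu$ of $G\cdot x$ in $M$ is $G^M$-equivariantly biholomorphic to $G^M \times_{(G_x)^\C} \bigoplus_{i=1}^k \C_{\alpha_i}$. Since $G\cdot x$ is a holomorphic submanifold (by Lemma \ref{lemm:minimal} together with $G$-invariance of $J$) and the $G^M$-action on $M$ is holomorphic, $\nu$ is a $G^M$-equivariant holomorphic vector bundle. The $G^M$-action on the base $G\cdot x = G^M/(G_x)^\C$ is transitive (Lemma \ref{lemm:gx}), so $\nu$ is a homogeneous holomorphic bundle, and hence equals the associated bundle built from its fiber $\nu_x = T_xM/T_x(G\cdot x)$ as a $(G_x)^\C$-representation; by construction this fiber decomposes as $\bigoplus_{i=1}^k \C_{\alpha_i}$.

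Next, I would construct a $G^M$-equivariant holomorphic embedding of a neighborhood of the zero section of the associated bundle into $M$. By averaging over $G_x$ I obtain a $G_x$-invariant Hermitian metric near $x$; the classical Bochner linearization theorem then produces a $G_x$-equivariant biholomorphism $\Phi$ from a neighborhood of $0$ in $T_xM$ onto a neighborhood of $x$ in $M$, in which the $G_x$-action is linear. Since the $(G_x)^\C$-action is defined as the complexification of $G_x$ through $J$ both on $M$ and on $T_xM$, $\Phi$ is automatically $(G_x)^\C$-equivariant where defined. Restricting to a slice $S \subset \Phi(\text{dom}(\Phi))$ transverse to $G\cdot x$ and using the translation action of $G^M$, the map $G^M\times_{(G_x)^\C} S \to M$, $[g,s] \mapsto g\cdot s$, is a $G^M$-equivariant biholomorphism onto an open neighborhood of $G\cdot x$. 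To extend the slice to all of $\bigoplus_{i=1}^k \C_{\alpha_i}$, I would use the $(G_x)^\C$-action: for any $v = \sum c_i e_i$ there exists $t \in (G_x)^\C$ with $t^{-1}\cdot v$ in the domain of $\Phi$, and I define the image as $t\cdot \Phi(t^{-1}\cdot v)$, which is well defined and holomorphic by $(G_x)^\C$-equivariance of $\Phi$.

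For uniqueness, any $G^M$-invariant open neighborhood of $G\cdot x$ biholomorphic to $G^M \times_{(G_x)^\C} \bigoplus_{i=1}^k \C_{\alpha_i}$ is the $G^M$-saturation of its intersection with a transverse slice at $x$, which must then equal $\bigoplus_{i=1}^k \C_{\alpha_i}$; since the slice is uniquely determined by the linearization $\Phi$, the saturation $N(G\cdot x)$ is uniquely specified. The main obstacle is the middle step: guaranteeing that Bochner's linearization is simultaneously holomorphic, $G_x$-equivariant, and extendable equivariantly to the entire complex torus $(G_x)^\C$, so that scaling by $(G_x)^\C$ yields a globally well-defined biholomorphism onto an open set of $M$ rather than a multivalued map. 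This requires that the weight decomposition of $T_xM$ under $G_x$ matches its decomposition as a $(G_x)^\C$-representation, which holds because $J$ is $G_x$-invariant.
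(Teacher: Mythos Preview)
The paper does not prove this proposition; it is simply quoted as \cite[Proposition 4.9]{Ishida} from the author's earlier work, so there is no in-paper proof to compare against.

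Your outline follows the standard route (holomorphic Bochner linearization at $x$, then extension via the $(G_x)^\C$-action using that the weights $\alpha_1,\dots,\alpha_k$ form a basis so that any vector can be contracted into the linearization chart). Two points deserve tightening. First, your uniqueness argument does not work as stated: the Bochner chart $\Phi$ is highly non-unique, so ``the slice is uniquely determined by the linearization $\Phi$'' does not characterize $N(G\cdot x)$. The intrinsic description is that $N(G\cdot x)$ consists of those $y\in M$ with $G\cdot x\subset \overline{G^M\cdot y}$; in the model this is automatic because the one-parameter subgroup dual to $\alpha_1+\dots+\alpha_k$ contracts every fiber to $0$, and this is what makes the neighborhood \emph{minimal} among $G^M$-invariant ones. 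Second, you do not argue global injectivity of the extended map $G^M\times_{(G_x)^\C}\bigoplus_i\C_{\alpha_i}\to M$: equivariance plus local injectivity near the zero section does not immediately exclude identifications far from it, and one typically uses that the map is proper onto its image (or an explicit orbit-type comparison) to conclude.
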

	
\subsection{Transverse K\"ahler structures}\Label{subsec:transverse}
	Let $M$ be a smooth manifold. A \emph{presymplectic form} is a closed  $2$-form on $M$. Let $F$ be a smooth foliation on $M$. A presymplectic form $\omega$ is said to be \emph{transverse symplectic with respect to $F$} if the kernel of $\omega$ coincides with the subbundle $TF$ of $TM$ that consists of vectors tangent to leaves of $F$. Suppose that a torus $G$ acts on $M$ smoothly and a transverse symplectic form $\omega$ with respect to $F$ is $G$-invariant. Then, the Cartan formula tells us that the $1$-form $i_{X_v}\omega$ is a closed $1$-form, where $X_v$ denotes the fundamental vector field generated by an element $v$ of the Lie algebra $\mathfrak{g}$ of $G$ and $i_{X_v}\omega$ denotes the interior product. A \emph{moment map} $\Phi \co M \to \mathfrak{g}^*$ is a map satisfying that $dh_v = -i_{X_v}\omega$, where $h_v \co M \to \R$ is a function given by $h_v(x) = \langle \Phi (x), v \rangle$ for all $x \in M$ and $v \in \g$. 
	
	Suppose that $M$ is a complex manifold and $F$ is a holomorphic foliation on $M$. Let $J$ be the complex structure on $M$. A \emph{transverse K\"ahler form} $\omega$ on $M$ with respect to $F$ is a closed $2$ -form that satisfies the followings.
	\begin{enumerate}
		\item $\omega(JX,JY) = \omega (X, Y)$ for all $X, Y \in T_xM$. 
		\item $\omega(X, JX) \geq 0$ for all $X \in T_xM$. The equality holds if and only if $X \in T_xF$. 
	\end{enumerate}
	By definition, a transverse K\"ahler form $\omega$ with respect to $F$ is a transverse symplectic form with respect to $F$. The canonical foliation relates the existence of a transverse K\"ahler form that has a moment map, see {\cite[Proposition 4.2]{Ishida2}}. 
	
	Until the end of this subsection, let $(M,G,y) \in \mathcal{C}_1$ and let $F$ be the canonical foliation on $M$. Let $(\Delta, \h, G) := \mathcal{F}_1(M,G,y) \in \mathcal{C}_2$. As before, let $p \co \g^\C \to \g$ be the projection and $q \co \g \to \g/p(\h)$ the quotient map. 
	\begin{theo}[{\cite[Theorems 2.7, 4.3 and 5.8 and Proposition 4.1]{Ishida2}}]\Label{theo:transverse} 
		The followings hold.
		\begin{enumerate}
			\item Let $\omega$ be a $G$-invariant transverse K\"ahler form with respect to $F$. Then there exists a moment map $\Phi \co M \to \g^*$. Moreover, there exist $\alpha \in \g^*$ and a smooth map $\widetilde{\Phi} \co M \to (\g/p(\h))^*$ such that $\Phi = q^* \circ \widetilde{\Phi} +\alpha$.  
			\item Let $\widetilde{\Phi}$ be as \rm{(1)}. Let $Z_1,\dots, Z_l$ be the connected components of the set of common critical points of $h_v$ for all $v \in \g$. Then $\widetilde{\Phi}(Z_j)$ is a point $c_j$ for $j =1,\dots, l$ and $\widetilde{\Phi}(M)$ is a convex hull of $c_1,\dots, c_N$. The fan $q(\Delta)$ is an inner normal fan of the polytope $\widetilde{\Phi}(M)$. 
			\item If $q(\Delta)$ is polytopal, then there exists a $G$-invariant transverse K\"ahler form with respect to $F$. 
			\item For a transverse K\"ahler form $\omega$ with respect to $F$, the average $\int_{g \in G}g^*\omega dg$ is a $G$-invariant transverse K\"ahler form with respect to $F$. 
		\end{enumerate}
		In particular, there exists a transverse K\"ahler form with respect to $F$ if and only if $q(\Delta)$ is polytopal. 
	\end{theo}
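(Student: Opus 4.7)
The plan is to prove each of the four numbered claims in turn and then deduce the ``in particular'' assertion at the end.

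For (1), closedness of $\omega$ together with $G$-invariance and the Cartan formula give $d(i_{X_v}\omega)=0$ for every $v\in\g$. For $v\in p(\h)$ the fundamental field $X_v$ is tangent to the canonical foliation $F$, and since $\ker\omega = TF$ by the transverse K\"ahler condition, we immediately get $i_{X_v}\omega = 0$. Thus the cohomology class $v\mapsto[i_{X_v}\omega]\in H^1(M;\R)$ factors through $\g/p(\h)$, and an actual primitive $h_v$ can be obtained by writing $\omega$ locally as $\sqrt{-1}\partial\bar\partial\phi$ for a basic plurisubharmonic potential $\phi$ and patching via a $G$-invariant partition of unity (made basic by averaging the potentials along leaves of $F$, which is legitimate locally by Proposition \ref{prop:slice}). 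The moment map $\Phi$ so produced satisfies $dh_v = 0$ for $v\in p(\h)$, so $h_v = \langle\alpha,v\rangle$ is constant on $p(\h)$ and descends on a complement, yielding the decomposition $\Phi = q^*\circ\widetilde\Phi + \alpha$.

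For (2), the common critical set of $\{h_v\}_{v\in\g}$ is precisely the locus where every fundamental field lies in $\ker\omega=TF$, i.e.\ where the $G$-orbit is contained in an $F$-leaf; this is the union of minimal orbits, which by Lemma \ref{lemm:minimal}(3) has finitely many connected components $Z_1,\dots,Z_l$. On each $Z_j$ the $G$-invariant map $\widetilde\Phi$ is constant, say $c_j$. The equality $\widetilde\Phi(M) = \mathrm{conv}(c_1,\ldots,c_l)$ is the Atiyah--Guillemin--Sternberg convexity theorem transplanted to the transverse symplectic setting, applied leafwise via $\widetilde\Phi$. The inner normal cone to the polytope at $c_j$ is generated by the weights of the $(G_x)^\C$-isotropy representation on $T_xM/T_x(G\cdot x)$ for $x\in Z_j$, and by Proposition \ref{prop:slice} together with Lemma \ref{lemm:gx} these weights are dual to the primitive generators of the maximal cone of $q(\Delta)$ attached to $x$, identifying $q(\Delta)$ with the inner normal fan of $\widetilde\Phi(M)$.

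Part (4) is immediate: Haar averaging preserves closedness, $G$-invariance, and the two pointwise conditions defining a transverse K\"ahler form, which are linear respectively convex in $\omega$. For (3), given a polytope $P\subset(\g/p(\h))^*$ whose inner normal fan is $q(\Delta)$, I would lift a rational support function from $q(\Delta)$ to $\Delta$ via $q$, and build a Guillemin-type potential of the shape $\phi=\tfrac12\sum\ell_\rho\log\ell_\rho$ (with $\ell_\rho$ the affine functions given by the lifted primitive generators $\lambda(\rho)$) on the toric variety $X(\Delta)$. Then $\sqrt{-1}\partial\bar\partial\phi$ is $G$-invariant, and by construction its kernel on each tangent space is exactly the tangent space of the $H=\exp_{G^\C}(\h)$-orbit through the point, so it descends to a transverse K\"ahler form on $M=X(\Delta)/H$ via the quotient of $\mathcal{F}_2$. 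The ``in particular'' clause then combines (4)+(1)+(2) for necessity and (3) for sufficiency.

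The hard part will be (3): $X(\Delta)$ is generally noncompact (and only simplicial), so the Guillemin potential must be arranged simultaneously to be $G$-invariant, to have $\partial\bar\partial$-kernel tangent to the $H$-orbits everywhere (not just generically), and to be positive on a $J$-invariant complement to $TF$; verifying that the descended moment map image is $P$ itself, with $q(\Delta)$ as inner normal fan, ultimately reduces to a careful accounting of how $\h$ sits inside $\g^\C$ relative to the cones of $\Delta$, using the injectivity of $p|_\h$ and the bijectivity of $\Delta\to q(\Delta)$ from condition $\mathcal{C}_2$(4).
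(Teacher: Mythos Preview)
This theorem is not proved in the present paper: it is quoted from \cite{Ishida2} (the bracketed citation in the theorem header gives the precise locations), and the paper uses it as a black box throughout Subsection~\ref{subsec:transverse} and beyond. There is therefore no proof here against which to compare your proposal.

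That said, two remarks are in order. First, the paper does later carry out a construction relevant to part~(3), in Section~\ref{sec:basicDolbeault} (Lemmas~\ref{lemm:regular} through \ref{lemm:omegab}), and it is \emph{not} the Guillemin-potential route you outline. Instead it passes to an equivalent model $(M_0,G_0,y_0)$ with $X(\Delta_0)\subset\C^N$ a coordinate-subspace complement, restricts the standard flat K\"ahler form $\omega_{\mathrm{st}}$ to a level set $\mathcal{Z}_{b_1,\dots,b_N}$ of the $H_0'$-moment map, proves this level set is $G_0$-equivariantly diffeomorphic to $M_0$, and checks that the restricted form descends to a transverse K\"ahler form. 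This symplectic-reduction approach sidesteps the boundary-regularity and kernel-identification issues you flag for the Guillemin potential, at the price of working on the auxiliary model rather than on $M$ directly.

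Second, your sketch of part~(1) has a genuine gap: writing $\omega$ locally as $\sqrt{-1}\,\partial\bar\partial\phi$ and patching potentials with a $G$-invariant partition of unity does \emph{not} produce a moment map, because multiplying a local potential by a cutoff destroys the relation $dh_v=-i_{X_v}\omega$ (the cutoff contributes extra terms under $d$). The existence of $\Phi$ is a cohomological statement---one must show that the closed $1$-forms $i_{X_v}\omega$ are exact for every $v\in\g$---and this is what is established in \cite{Ishida2}, not via local potentials. Your argument for the factorization $\Phi=q^*\circ\widetilde\Phi+\alpha$ once $\Phi$ exists is, however, essentially correct.
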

	In Section \ref{sec:basicDolbeault}, we will see how we construct a transverse K\"ahler form with respect to $F$ from a polytopal fan $q(\Delta)$. 
	
	Let $\omega$ be a $G$-invariant transverse K\"ahler form with respect to $F$ and $\widetilde{\Phi}$ as above. We shall describe the set $Z := Z_1\sqcup \dots \sqcup Z_N$ of common critical points of $h_v$ for all $v \in \g$. Since $dh_v = -i_{X_v}\omega$ and the kernel of $\omega$ coincides with $TF$, we have that $x \in Z$ if and only if $(X_v)_x \in T_xF$ for all $v\in \g$. Namely, we have  
	\begin{equation*}
		Z = \{ x \in M \mid (X_v)_x \in T_xF \text{ for all } v \in \g\}.
	\end{equation*}
	Therefore $Z$ does not depend on $\omega$ and $\widetilde{\Phi}$. 
	\begin{prop}\Label{prop:crit}
		$Z$ is the union of all minimal orbits. 
		Each connected component of $Z$ is a minimal orbit. Conversely, each minimal orbit is a connected component of $Z$. 
	\end{prop}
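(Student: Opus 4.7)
The plan is to unpack the defining condition of $Z$ into purely Lie-algebraic data. Since the leaves of $F$ are the orbits of $H' = \exp_G(p(\h))$ acting locally freely, the tangent space $T_xF$ equals $\{(X_w)_x \mid w \in p(\h)\}$. Hence the condition $(X_v)_x \in T_xF$ for all $v\in\g$ rewrites as: for every $v\in\g$ there exists $w\in p(\h)$ with $(X_{v-w})_x = 0$, i.e.\ $v - w \in \g_x$. So
\begin{equation*}
Z = \{x \in M \mid \g = \g_x + p(\h)\}.
\end{equation*}

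Next I would observe that the sum $\g_x + p(\h)$ is always direct. Indeed, if $v \in \g_x \cap p(\h)$, then $(X_v)_x = 0$ with $v \in p(\h)$, so by Proposition~\ref{prop:localfree} we get $X_v=0$, and effectiveness of the $G$-action forces $v = 0$. Thus $x\in Z$ iff $\dim \g_x + \dim p(\h) = \dim \g$. Since $p|_\h$ is injective and $\h$ is a complex subspace, $\dim_\R p(\h) = \dim_\R \h = 2\dim_\C H = \dim_\R H$; combined with Lemma~\ref{lemm:dimension}(1) this gives $\dim p(\h) = 2\dim G - \dim M$. So $x\in Z$ iff $\dim \g_x = \dim M - \dim G$, equivalently $\dim G\cdot x = 2\dim G - \dim M$, which is exactly the condition that $G\cdot x$ is a minimal orbit. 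This proves that $Z$ is the union of the minimal orbits.

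Finally I would read off the connected-component statement. Each minimal orbit is connected (being a continuous image of the connected group $G$) and is closed in $M$ since $G$ is compact. By Lemma~\ref{lemm:minimal}(3), every minimal orbit is isolated among minimal orbits, so each minimal orbit is also open in $Z$. Being open, closed, and connected in $Z$, each minimal orbit is a connected component of $Z$; conversely, every connected component of $Z$ must coincide with one of these minimal orbits since $Z$ is their disjoint union.

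The only conceptually delicate point is the dimension bookkeeping turning $\dim_\C \h$ into the real codimension $2\dim G - \dim M$; everything else is essentially a translation of the definition of $Z$ through the description of $T_xF$ and an application of the already-established structure results for minimal orbits.
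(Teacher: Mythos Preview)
Your proof is correct and follows essentially the same route as the paper's: rewrite membership in $Z$ as the Lie-algebraic condition $\g = \g_x + p(\h)$, observe the sum is direct (the paper uses this implicitly via the uniqueness of $v'$, you cite Proposition~\ref{prop:localfree} explicitly), and then convert the dimension identity via Lemma~\ref{lemm:dimension}(1) into the minimal-orbit condition. Your treatment of the connected-component statement is slightly more detailed than the paper's, but the content is the same appeal to Lemma~\ref{lemm:minimal}(3).
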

	\begin{proof}
		Let $x \in Z$ and $v \in \g$. Since $(X_v)_x \in T_xF$, there uniquely exists $v' \in p(\h)$ such that $(X_v)_x = (X_{v'})_x$. Then we have $(X_{v-v'})_x = (X_v)_x-(X_{v'})_x = 0$. Therefore $v-v' \in \g_x$. Thus we have the decomposition $\g = \g_x \oplus p(\h)$. Therefore we have $\dim G\cdot x = \dim \g - \dim \g_x = \dim p(\h)$. Since $p|_{\h}$ is injective, we have $\dim G\cdot x = \dim H = 2\dim G - \dim M$  by Lemma \ref{lemm:dimension} (1). Namely, $G\cdot x$ is a minimal orbit. Therefore $Z$ is contained in the union of minimal orbits. Conversely, if $G \cdot x$ is a minimal orbit, then $ \dim G - \dim G_x = \dim G\cdot x = 2\dim G-\dim M$. By Lemma \ref{lemm:dimension} (1), we have $2\dim G- \dim M = \dim H$. Thus we have $\dim G - \dim G_x = \dim H$. Since $p|_\h$ is injective and $\dim G - \dim G_x = \dim H$, we have $\dim p(\h) +\dim \g_x = \dim \g$. This together with $\g_x \cap p(\h)= \{0\}$ yields that $\g = \g_x \oplus p(\h)$. Therefore we have $(X_v)_x \in T_xF$ for any $v$. This shows that $x \in Z$. Therefore $Z$ is the union of all minimal orbits. 
		
	By Lemma \ref{lemm:minimal} (3), each minimal orbit is isolated. Therefore each connected component of $Z$ is a minimal orbit and vice versa. The proposition is proved. 
	\end{proof}
	\begin{lemm}[{\cite[Lemma 2.2]{Ishida2}}]\Label{lemm:Bott-Morse}
		Let $v \in \g$. Let $x \in M$ be a critical point of $h_v$. Let $\alpha_1,\dots, \alpha_k \in \Hom (G_x,S^1)$ be the nonzero weights of the $G_x$-representation $T_xM$.
		The followings hold.
		\begin{enumerate}
			\item $h_v$ is nondegenerate. Namely, each component of the critical set is a submanifold and the second derivative is a nondegenerate quadric form in the transverse direction.
			\item There exist $v_x \in \g_x$ and $v' \in p(\h)$ such that $v = v_x + v'$. The index of $h_v$ at $x$ is twice as many as the number of $\alpha_i$ such that $\langle d\alpha_i,v_x \rangle < 0$. 
		\end{enumerate}
	\end{lemm}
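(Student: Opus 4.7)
The plan is to first derive the decomposition $v = v_x + v'$ of Part (2), then reduce the Hessian analysis from $h_v$ to $h_{v_x}$, and finally compute the Hessian at $x$ in a local equivariantly-linearized model.

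Since $x$ is a critical point of $h_v$, we have $dh_v|_x = 0$, equivalently $(i_{X_v}\omega)|_x = 0$. As $\omega$ is transverse K\"ahler with respect to $F$, $\ker\omega_x = T_xF$, and so $(X_v)_x \in T_xF$. By Proposition \ref{prop:localfree}, the linear map $p(\h) \to T_xF$, $u \mapsto (X_u)_x$, is an isomorphism, so there exists a unique $v' \in p(\h)$ with $(X_v)_x = (X_{v'})_x$. Setting $v_x := v - v'$ gives $v_x \in \g_x$ and the required decomposition $v = v_x + v'$. Since $v' \in p(\h)$, the vector field $X_{v'}$ is globally tangent to $F = \ker\omega$, so $i_{X_{v'}}\omega \equiv 0$ and $h_{v'}$ is locally constant on $M$. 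Hence $h_v$ and $h_{v_x}$ differ by a constant near $x$ and have identical critical sets and Hessians there, and I may replace $h_v$ by $h_{v_x}$.

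Next, since $v_x \in \g_x$ generates a subtorus of $G_x$ fixing $x$, I would use the $G$-equivariant slice theorem to identify a $G$-invariant neighborhood of $G\cdot x$ with $G \times_{G_x} W$, where $W := T_xM/T_x(G\cdot x)$ carries its natural $G_x$-representation. Using $G$-invariance of $J$ and $\omega$ and an equivariant Moser-type deformation, I would arrange (after shrinking the neighborhood) that $J$ and $\omega$ agree with their linearizations at $x$. Decomposing $T_xM$ into $G_x$-weight spaces, the nonzero weights are exactly $\alpha_1, \dots, \alpha_k$; the weight-zero contributions lie in $T_x(G\cdot x)$ (on which the abelian $G_x$ acts trivially) and possibly in additional directions of $W$ (when $x$ is not on a minimal orbit). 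In these linearized coordinates, the $G_x$-invariant function $h_{v_x}$ takes the standard moment-map form
\[
h_{v_x}(z_1, \dots, z_k, w) = \tfrac{1}{2}\sum_{i=1}^{k} \langle d\alpha_i, v_x\rangle\, |z_i|^2 + \text{const},
\]
where $z_i$ parametrizes the weight-$\alpha_i$ summand and $w$ parametrizes the weight-zero directions together with $T_x(G\cdot x)$. The Hessian at $x$ is then block-diagonal with coefficient $\langle d\alpha_i, v_x\rangle$ on each $\C_{\alpha_i}$ and zero on the $w$-direction; it is nondegenerate transverse to the locus $\{z_i = 0 \mid \langle d\alpha_i, v_x\rangle \neq 0\}$, which is the critical component through $x$. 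This proves (1), and the index equals twice the number of $i$ with $\langle d\alpha_i, v_x\rangle < 0$, giving (2).

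The main obstacle is the equivariant linearization of $(\omega, J)$ near $x$: the standard Marle--Guillemin--Sternberg normal form for Hamiltonian torus actions assumes a nondegenerate symplectic form, while $\omega$ degenerates along $F$. The cleanest workaround is to pass to a local slice transverse to $F$, on which $\omega$ descends to an honest $G$-invariant K\"ahler form and the usual normal form applies, and then to propagate the linearization to a full neighborhood of $G\cdot x$ via $H'$-invariance of $\omega$ and $J$. Once the normal form is in place, the Hessian computation and the identification of the index are immediate from the formula above.
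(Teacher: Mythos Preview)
The paper does not give its own proof of this lemma; it is quoted from \cite[Lemma~2.2]{Ishida2} without argument, so there is nothing in the present paper to compare your proposal against.

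Your outline is sound: the derivation of $v=v_x+v'$ via $(X_v)_x\in T_xF$ and Proposition~\ref{prop:localfree}, and the reduction from $h_v$ to $h_{v_x}$, are correct. The obstacle you flag, however---an equivariant Moser normal form for the degenerate pair $(\omega,J)$---is unnecessary, and your proposed workaround via a slice transverse to $F$ has its own difficulty (such a slice need not be $G_x$-invariant). A cleaner route avoids normal forms entirely: since $(X_{v_x})_x=0$, the Hessian at $x$ is intrinsically
\[
\operatorname{Hess}_xh_{v_x}(Y,Z)=-\omega_x\bigl(A_{v_x}Y,\,Z\bigr),
\]
where $A_{v_x}\colon T_xM\to T_xM$ is the linearization of $X_{v_x}$ at its zero, i.e.\ the isotropy action of $v_x\in\g_x$. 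This uses only the value $\omega_x$, not any normal form. On $\C_{\alpha_i}$ one has $A_{v_x}=2\pi\langle d\alpha_i,v_x\rangle\,J$, hence $\operatorname{Hess}_xh_{v_x}(Y,Y)=2\pi\langle d\alpha_i,v_x\rangle\,\omega_x(Y,JY)$. Because $G_x$ commutes with $H'$ and fixes $x$, the subspace $T_xF$ lies in the zero-weight part of $T_xM$; thus each $\C_{\alpha_i}$ meets $T_xF$ trivially and $\omega_x(Y,JY)>0$ there, giving the index count in~(2). For the Morse--Bott nondegeneracy, observe that for $y$ near $x$ one has $\g_y\subset\g_x$ and $\g_x\cap p(\h)=0$, so $y$ is critical for $h_{v_x}$ iff $v_x\in\g_y$; the critical component through $x$ is therefore the fixed locus of the closure of $\{\exp(tv_x)\}$, a submanifold whose tangent space at $x$ is precisely the kernel of the Hessian computed above. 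Only the ordinary compact-group slice theorem is required.
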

	\begin{lemm}\Label{lemm:minimum}
		Let $G\cdot x$ be a minimal orbit. Then there exists $v \in \g$ such that $G\cdot x$ is a critical submanifold and $h_v$ attains the minimum on $G\cdot x$. 
	\end{lemm}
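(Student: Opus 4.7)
The plan is to use the moment polytope description provided by Theorem \ref{theo:transverse}. Since $h_v$ is defined from a moment map, we stand in the situation where a $G$-invariant transverse K\"ahler form with respect to $F$ exists, and its moment map decomposes as $\Phi = q^\ast\circ\widetilde{\Phi}+\alpha$ with $q(\Delta)$ the inner normal fan of the polytope $P:=\widetilde{\Phi}(M)\subset(\g/p(\h))^\ast$. The strategy is to reduce the lemma to finding a linear functional on $(\g/p(\h))^\ast$ that achieves its minimum on $P$ at the single vertex $\widetilde{\Phi}(G\cdot x)$.

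First I would argue that $\widetilde{\Phi}(G\cdot x)$ is in fact a vertex of $P$. Because $(\Delta,\h,G)\in\mathcal{C}_2$, the fan $q(\Delta)$ is complete, hence $P$ is full-dimensional and its vertices correspond bijectively to the maximal cones of $q(\Delta)$, equivalently (via the bijection $\Delta\to q(\Delta)$) to the maximal cones of $\Delta$. By Proposition \ref{prop:crit} the minimal orbits are exactly the connected components $Z_1,\ldots,Z_N$ of $Z$, and Theorem \ref{theo:transverse}(2) gives $\widetilde{\Phi}(Z_j)=c_j$; combining this with the classification of Subsection \ref{subsec:maximal} (maximal cones of $\Delta$ parametrize the $G^\C$-fixed points of $X(\Delta)$, which descend to minimal orbits of $M$) yields a bijection between the $Z_j$ and the vertices of $P$. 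In particular the $c_j$ are pairwise distinct vertices. Let $c_0:=\widetilde{\Phi}(G\cdot x)$ and let $\tau\subset\g/p(\h)$ denote the maximal cone of $q(\Delta)$ dual to $c_0$.

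Next I would choose any $v\in\g$ with $q(v)$ in the relative interior of $\tau$. By the very definition of the inner normal fan, the linear functional $\langle\,\cdot\,,q(v)\rangle$ attains its minimum over $P$ only at $c_0$, so from
\begin{equation*}
	h_v(y)=\langle\widetilde{\Phi}(y),q(v)\rangle+\langle\alpha,v\rangle
\end{equation*}
it follows that $h_v$ attains its minimum on $M$ precisely on $\widetilde{\Phi}^{-1}(c_0)=G\cdot x$. Because $G\cdot x\subset Z$, it is automatically a critical submanifold of $h_v$ for every $v\in\g$, so both assertions of the lemma follow.

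The main obstacle I anticipate is the combinatorial bookkeeping behind the correspondence \emph{minimal orbit $\leftrightarrow$ maximal cone of $\Delta$ $\leftrightarrow$ vertex of $P$}: one must verify that every maximal cone of $q(\Delta)$ is genuinely the inner normal cone of some vertex of $P$ (using completeness of $q(\Delta)$ together with the fact that $q(\Delta)$ is the normal fan of $P$), and that distinct minimal orbits have distinct $\widetilde{\Phi}$-images. Once those facts are in place, the remainder is a routine application of convex geometry to the moment map.
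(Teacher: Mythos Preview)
Your approach is genuinely different from the paper's, and the obstacle you flag at the end is the crux of the matter.

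The paper proceeds locally and Morse-theoretically, without invoking the polytope at all. It takes the weights $\alpha_1,\dots,\alpha_k$ of the $G_x$-representation $T_xM$, notes they form a $\Z$-basis of $\Hom(G_x,S^1)$ (because the action is effective and the codimension of $G\cdot x$ is $2k$), lets $v_1,\dots,v_k\in\g_x$ be the dual basis, and sets $v:=v_1+\dots+v_k$. Then $\langle d\alpha_i,v\rangle=1>0$ for every $i$, so by the Bott--Morse Lemma~\ref{lemm:Bott-Morse} the orbit $G\cdot x$ is a critical submanifold of index $0$, hence a local minimum. Finally, because all indices of $h_v$ are even, the minimum level set $h_v^{-1}(c)$ is connected, which upgrades the local minimum to a global one.

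Your route through the polytope is viable in principle, but beware of the logical ordering: in the paper the fact that each $\widetilde{\Phi}(Z_j)$ is a vertex of $P$, and that distinct $Z_j$ give distinct vertices, is Proposition~\ref{prop:vertex}, whose proof \emph{uses} Lemma~\ref{lemm:minimum}. So you cannot simply quote that. Your suggested workaround --- counting (maximal cones of $\Delta$) $=$ (maximal cones of $q(\Delta)$) $=$ (vertices of $P$) and matching this against the number of minimal orbits --- can be made to work, but you must independently establish the bijection between minimal orbits of $M$ and maximal cones of $\Delta$ (e.g.\ via Proposition~\ref{prop:slice} and the orbit stratification of $X(\Delta)$), and then argue that since $P=\operatorname{conv}\{c_1,\dots,c_N\}$ has exactly $N$ vertices, the $c_j$ must be pairwise distinct vertices. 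Only then can you pick $q(v)$ in the interior of the normal cone at $c_0$. This is more combinatorial bookkeeping than the paper's three-line Morse argument, though it has the merit of making the polytope picture explicit from the outset.
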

	\begin{proof}
		Let $G\cdot x$ be a minimal orbit.
		Let $\alpha_1,\dots, \alpha_k \in \Hom(G_x,S^1)$ be the nonzero weights of $T_xM$. It follows from Lemma \ref{lemm:minimal} (2) that the codimension of $G\cdot x$ is equal to $2k$. Thus we have that $\alpha_1,\dots, \alpha_k$ form a $\Z$-basis of $\Hom(G_x,S^1)$; otherwise, the action of $G$ is not effective. We identify $\Hom (S^1,G_x)$ with $\ker \exp_{G_x} \subset \g_x$ as before. Let $v_1,\dots, v_k \in \g_x$ be the dual basis of $\alpha_1,\dots, \alpha_k$. Put $v:= v_1+\dots + v_k$. Since $\langle d\alpha_i,v\rangle =1 >0$ for all $i$, we have that $G\cdot x$ is a critical submanifold and the index of $h_v$ on $G\cdot x$ is $0$. Therefore $h_v$ attains a local minimum on $G\cdot x$. Put $c := h_v (x)$. By Lemma \ref{lemm:Bott-Morse}, all indices of $h_v$ are even. Thus we have that $h_v^{-1}(c)$ is connected. Therefore $h_v$ attains the global minimum on $G\cdot x$, as required. 
	\end{proof}
	\begin{prop}\Label{prop:vertex}
		Let $Z_1,\dots, Z_N$ be minimal orbits. Each image $\widetilde{\Phi}(Z_j)$ of $Z_j$ by $\widetilde{\Phi} \co M \to (\g/p(\h))^*$ is a vertex of $\widetilde{\Phi}(M)$ and $\widetilde{\Phi}(Z_i) \neq \widetilde{\Phi}(Z_j)$ if $i \neq j$. 
	\end{prop}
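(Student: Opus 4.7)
The plan is to exhibit, for each minimal orbit $Z_j$, an element $v_j \in \g$ whose height function $h_{v_j}$ attains its global minimum only on $Z_j$, and then to use the decomposition $\Phi = q^* \circ \widetilde{\Phi} + \alpha$ from Theorem \ref{theo:transverse}~(1) to translate this uniqueness into a vertex statement on the polytope $\widetilde{\Phi}(M)$.

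For the construction of $v_j$ I would apply Lemma \ref{lemm:minimum} to $Z_j$. Its proof produces $v_j \in \g_x$ for some $x \in Z_j$ satisfying $\langle d\alpha_i, v_j\rangle > 0$ for every nonzero weight $\alpha_i$ of the isotropy representation, so $Z_j$ is a critical submanifold of $h_{v_j}$ of index~$0$. The proof further invokes Lemma \ref{lemm:Bott-Morse}~(2) to conclude that all indices of $h_{v_j}$ are even, and hence the level set $h_{v_j}^{-1}(c)$ is connected, where $c$ denotes the value of $h_{v_j}$ on $Z_j$. Since $Z_j$ is a connected critical submanifold contained in $h_{v_j}^{-1}(c)$ and distinct critical submanifolds are disjoint, this forces $h_{v_j}^{-1}(c) = Z_j$, so that the global minimum of $h_{v_j}$ is attained exclusively on $Z_j$.

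Next, Theorem \ref{theo:transverse}~(1) gives
\begin{equation*}
h_{v_j}(x) = \langle \widetilde{\Phi}(x), q(v_j) \rangle + \langle \alpha, v_j \rangle, \qquad x \in M,
\end{equation*}
so the linear functional $\ell_j := \langle \cdot, q(v_j) \rangle$ on $(\g/p(\h))^*$, restricted to $\widetilde{\Phi}(M)$, attains its minimum only at $c_j := \widetilde{\Phi}(Z_j)$: if $\ell_j(c_i) = \ell_j(c_j)$ for some $i \neq j$, then any $x \in Z_i$ would realize the minimum of $h_{v_j}$, contradicting the exclusivity above. This yields $c_i \neq c_j$ whenever $i \neq j$. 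Since $\widetilde{\Phi}(M) = \mathrm{conv}(c_1, \dots, c_N)$ by Theorem \ref{theo:transverse}~(2), the face of $\widetilde{\Phi}(M)$ on which $\ell_j$ is minimized is the convex hull of those $c_i$ minimizing $\ell_j$, namely $\{c_j\}$; this $0$-dimensional face is the vertex $c_j$. The main subtlety is the exclusivity of the global minimum of $h_{v_j}$, which is not explicit in the statement of Lemma \ref{lemm:minimum} but follows from its proof via the even-index argument of Lemma \ref{lemm:Bott-Morse}.
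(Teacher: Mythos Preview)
Your proof is correct and follows essentially the same approach as the paper: both invoke Lemma~\ref{lemm:minimum} (and the even-index argument in its proof) to obtain $v_j$ with $h_{v_j}^{-1}(c)=Z_j$, then use the identity $h_{v_j}=\langle\widetilde\Phi(\cdot),q(v_j)\rangle+\langle\alpha,v_j\rangle$ to exhibit a supporting hyperplane touching $\widetilde\Phi(M)$ only at $c_j$. The only cosmetic difference is that you phrase the vertex conclusion via the convex-hull description $\widetilde\Phi(M)=\mathrm{conv}(c_1,\dots,c_N)$, whereas the paper states the supporting half-space directly; you also rightly flag that the exclusivity $h_{v_j}^{-1}(c)=Z_j$ is extracted from the proof rather than the statement of Lemma~\ref{lemm:minimum}, which the paper uses without comment.
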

	\begin{proof}
		Let $Z_j$ be a minimal orbit. 
		By Proposition \ref{prop:crit}, we have that $Z_j$ is a connected component of the set of common critical points of $h_v$. By Theorem \ref{theo:transverse} (2), we have that $\widetilde{\Phi}(Z_j)$ is a point $c_j$. By Lemma \ref{lemm:minimum}, there exists $v \in \g$ such that $Z_j$ is a critical submanifold and $h_v$ attains the minimum on $Z_j$. Let $c \in \R$ be the minimum value of $h_v$. Since 
		\begin{equation}\Label{eq:hv}
			h_v(x) = \langle \Phi(x), v\rangle = \langle q^*\circ \widetilde{\Phi}(x)+\alpha, v\rangle =\langle \widetilde{\Phi}(x), q(v)\rangle +\langle \alpha, v\rangle
		\end{equation}
		and $h_v$ attains the minimum $c$ on $Z_j$, we have that the half space 
		\begin{equation*}
			\{ y \in (\g/p(\h))^* \mid \langle y, q(v)\rangle +\langle \alpha, v\rangle \geq c\} \supset \widetilde{\Phi}(M)
		\end{equation*} 
		and 
		\begin{equation*}
			\{ y \in (\g/p(\h))^* \mid \langle y, q(v)\rangle +\langle \alpha, v\rangle = c\} \cap \widetilde{\Phi}(M) = \widetilde{\Phi}(Z_j).
		\end{equation*}
		Therefore $\widetilde{\Phi}(Z_j)$ is a vertex of $\widetilde{\Phi}(M)$.
		
		Let $Z_i$ be another minimal orbit. Suppose that $\widetilde{\Phi}(Z_i) = \widetilde{\Phi}(Z_j)$. Then, by \eqref{eq:hv}, we have $h_v(Z_i) = h_v(Z_j)$. Since $h_v^{-1}(c) = Z_j$, we have $Z_i = Z_j$. It turns out that $\widetilde{\Phi}(Z_i) \neq \widetilde{\Phi}(Z_j)$ if $i \neq j$. The proposition is proved. 
	\end{proof}
	\begin{lemm}\Label{lemm:generic}
		There exists $v \in \g$ that satisfies the followings.
		\begin{enumerate}
			\item The set of critical points of $h_v$ coincides with $Z  = Z_1 \sqcup \dots \sqcup Z_l$. 
			\item $h_v(Z_i) \neq h_v(Z_j)$ if $i \neq j$. 
		\end{enumerate}
	\end{lemm}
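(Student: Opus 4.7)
The plan is to choose $v \in \g$ by a genericity argument, using a characterization of the critical set of $h_v$ in terms of isotropy subalgebras and the finiteness of orbit types for a compact torus action on a compact manifold.

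First I would observe that $x \in M$ is a critical point of $h_v$ if and only if $v \in \g_x + p(\h)$. Indeed, from $dh_v = -i_{X_v}\omega$ and $\ker \omega = TF$, a point $x$ is critical precisely when $(X_v)_x \in T_xF$. Since $T_xF$ is spanned by the fundamental vector fields generated by elements of $p(\h)$, and $(X_u)_x = 0$ if and only if $u \in \g_x$, the condition $(X_v)_x \in T_xF$ becomes $v \in \g_x + p(\h)$. Comparing with Proposition \ref{prop:crit}, $Z$ is exactly the locus where $\g_x + p(\h) = \g$.

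Next, since $G$ is a compact torus acting on the compact manifold $M$, there are only finitely many orbit types (this follows from Proposition \ref{prop:slice} together with compactness, or from Mostow's theorem), and for a torus action conjugacy of isotropies is equality. Hence $\{\g_x : x \in M\}$ is a finite family of subalgebras, and consequently $\mathcal{S} := \{\g_x + p(\h) : x \in M \setminus Z\}$ is a finite collection of \emph{proper} subspaces of $\g$. Moreover, by Proposition \ref{prop:vertex}, the points $c_j := \widetilde{\Phi}(Z_j)$ are pairwise distinct in $(\g/p(\h))^*$, and by equation \eqref{eq:hv} one has $h_v(Z_j) = \langle c_j, q(v)\rangle + \langle \alpha, v\rangle$, so the values $h_v(Z_j)$ are pairwise distinct as soon as $\langle c_i - c_j, q(v)\rangle \neq 0$ for all $i \neq j$.

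Finally, I would choose $v \in \g$ avoiding the finite union of proper subspaces in $\mathcal{S}$ as well as the finitely many hyperplanes $\{v \in \g \mid \langle c_i - c_j, q(v)\rangle = 0\}$ with $i \neq j$. Such a $v$ exists because a finite-dimensional real vector space is never a finite union of proper subspaces. For this $v$, the critical set of $h_v$ is $\{x \in M \mid v \in \g_x + p(\h)\}$, which by our choice reduces to $\{x \in M \mid \g_x + p(\h) = \g\} = Z$, and $h_v$ takes distinct values on the connected components $Z_1,\dots,Z_l$, as required.

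The main obstacle is justifying the finiteness of the collection $\mathcal{S}$; this rests on the fact that a compact torus action on a compact manifold has only finitely many isotropy subgroups, which can be established either via the classical slice theorem or directly from the explicit local model around minimal orbits described in Proposition \ref{prop:slice} combined with compactness. Once finiteness is in hand, the genericity argument closes the proof without further computation.
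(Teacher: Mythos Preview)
Your proof is correct and follows essentially the same approach as the paper: characterize the critical set of $h_v$ as $\{x \mid v \in \g_x + p(\h)\}$, invoke finiteness of isotropy types on a compact manifold to reduce to avoiding finitely many proper subspaces $\g_x + p(\h)$ for $x \notin Z$, and use Proposition~\ref{prop:vertex} (with \eqref{eq:hv}) to separate the values $h_v(Z_j)$ by avoiding finitely many hyperplanes. Your write-up is in fact slightly more explicit than the paper's about the critical-point characterization and about why the avoided sets are proper.
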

	\begin{proof}
		By Proposition \ref{prop:vertex}, the subset  
		\begin{equation*}
			H_{i,j} := \{ v \in \g \mid \langle \widetilde{\Phi}(Z_i)-\widetilde{\Phi}(Z_j),v\rangle = 0\}
		\end{equation*}
		is a hyperplane for $1\leq i<j\leq l$. By Proposition \ref{prop:crit}, we have that $\g_x \oplus p(\h) = \g$ if and only if $x \in Z$. Since $M$ is compact, the set $\{\g_x \mid x \in M\}$ is finite. Thus the set
		\begin{equation*}
			A:= \g \setminus \left( \bigcup_{1\leq i<j\leq N}H_{i,j} \cup \bigcup_{x \notin Z} (\g_x \oplus p(\h))\right)
		\end{equation*}
		is not empty. Any element $v \in A$ satisfies the conditions (1) and (2), proving the lemma. 
	\end{proof}

\section{Basic cohomology and equivariant cohomology}\Label{sec:cohomology}
	Let $M$ be a smooth manifold equipped with an effective action of a compact torus $G$. Let a subspace $\h' \subset \g$ generate a smooth foliation $F$ on $M$. Namely, $H' := \exp_G (\h')$ acts on $M$ local freely and each leaf of $F$ is an $H'$-orbit. Since $G$ is commutative, $TF$ is a $G$-invariant subbundle of $TM$. 
	
	We denote by $\Omega^*(M)$ the DGA (differential graded algebra) that consists of all differential forms on $M$. Since $G$ acts on $M$ smoothly, we have that $\Omega^*(M)$ is a $G$-representation space. We denote by $\Omega^*(M)^G$ the sub-DGA of all $G$-invariant differential forms. We denote by $H^*(M)$ the cohomology $H^*(\Omega^*(M))$ of $\Omega^*(M)$. 
	
	Let $\alpha \in \Omega^*(M)$. We say that $\alpha$ is \emph{basic} if $\alpha$ satisfies that $i_{X_v}\alpha = 0$ and $L_{X_v}\alpha =0$ for all $v \in \h'$, where $L_{X_v}\alpha$ denotes the Lie derivative of $\alpha$ by $X_v$. We denote by $\Omega_B^*(M)$ the set of all basic forms. By definition, $\Omega_B^*(M)$ becomes a sub-DGA of $\Omega^*(M)$. Namely, the differential of a basic form is basic. Since the $G$-action commutes with the $H'$-action, $\Omega_B^*(M)$ is a sub-representation of $\Omega^*(M)$. We denote by $\Omega^*_B(M)^G$ the sub-DGA of all $G$-invariant basic forms on $M$. We denote by $H_B^*(M)$ the cohomology $H^*(\Omega_B^*(M))$ of $\Omega_B^*(M)$ and call it \emph{the basic cohomology of $M$ for $F$}. 
	\begin{lemm}\Label{lemm:cohomology}
		Let $M, G, \h', F$ be as above. Then the followings hold.
		\begin{enumerate}
			\item $H^*(M)$ is isomorphic to $H^*(\Omega^*(M)^G)$. 
			\item $H^*_B(M)$ is isomorphic to $H^*(\Omega^*_B(M)^G)$. 
		\end{enumerate}
	\end{lemm}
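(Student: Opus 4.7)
The plan is to apply the standard averaging argument for compact group actions, adapted to the basic subcomplex using commutativity of $G$. With the normalized Haar measure $dg$ on $G$, define the averaging operator $A \co \Omega^*(M) \to \Omega^*(M)^G$ by $A(\alpha) := \int_G g^*\alpha\, dg$. Since pullback commutes with $d$, this is a chain map, and $A\circ\iota = \id$ where $\iota\co \Omega^*(M)^G \hookrightarrow \Omega^*(M)$ is the inclusion. So both parts reduce to constructing a chain homotopy $K$ between $\iota A$ and $\id$ that, for part (2), preserves the basic subcomplex.

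For the homotopy, I would use that $G = \g/\Lambda$ with $\Lambda := \ker\exp_G$. Fix a bounded measurable fundamental domain $D\subset \g$ and for each $g\in G$ let $v(g)\in D$ be the unique lift; then $\gamma_t := \exp_G(tv(g))$ is a smooth path from $e$ to $g$. Since $G$ is abelian, $\gamma_t^*$ commutes with $d$, $i_{X_{v(g)}}$ and $L_{X_{v(g)}}$, and $\tfrac{d}{dt}\gamma_t^*\alpha = L_{X_{v(g)}}\gamma_t^*\alpha$. Cartan's magic formula then gives
\begin{equation*}
\alpha - g^*\alpha = -d\!\left(\int_0^1 i_{X_{v(g)}}\gamma_t^*\alpha\, dt\right) - \int_0^1 i_{X_{v(g)}}\gamma_t^* d\alpha\, dt,
\end{equation*}
and averaging over $g$ together with $K\alpha := -\int_G \int_0^1 i_{X_{v(g)}}\gamma_t^*\alpha\, dt\, dg$ yields $\alpha - \iota A\alpha = (dK + Kd)\alpha$, proving (1).

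For (2), I need to verify that $A$ and $K$ preserve $\Omega^*_B(M)$. Since $G$ is abelian, $g_*X_w = X_w$ and $[X_v,X_w] = 0$ for all $v\in\g$ and $w\in\h'$, so $g^*$ commutes with $i_{X_w}$ and $L_{X_w}$; hence $A$ preserves basicness. Moreover, if $\alpha$ is basic, then $i_{X_w}i_{X_v}\alpha = -i_{X_v}i_{X_w}\alpha = 0$ and $L_{X_w}i_{X_v}\alpha = i_{X_v}L_{X_w}\alpha + i_{[X_w,X_v]}\alpha = 0$, so $i_{X_v}\alpha$ is basic as well. Thus $K$ restricts to $\Omega^*_B(M)$ and the identical chain homotopy identity proves (2). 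The only mildly subtle point is the measurability of $g\mapsto v(g)$, which is routine for a torus since $D$ may be taken to be a closed parallelepiped spanned by a basis of $\Lambda$; boundedness of $D$ ensures the integrand has locally uniformly bounded $C^k$-norms in $x$, so all integrals yield smooth forms.
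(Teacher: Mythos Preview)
Your proof is correct and follows essentially the same approach as the paper: both use the averaging operator and build the homotopy by lifting each $g\in G$ to a vector $v(g)$ in a fundamental parallelepiped $D\subset\g$, then integrating the contraction $i_{X_{v(g)}}$ along the path $\gamma_t=\exp_G(tv(g))$ and averaging over $G$. The only minor difference is that the paper carries this out only for closed $\alpha$ (to prove surjectivity on cohomology), whereas you package the same computation as a full chain homotopy $dK+Kd=\id-\iota A$; your explicit check that $i_{X_v}$ and $g^*$ preserve basicness (via $[X_v,X_w]=0$ and $g_*X_w=X_w$) and your remark on measurability of $g\mapsto v(g)$ are points the paper leaves implicit.
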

	\begin{proof}
		In case when $\h' = \{0\}$, any differential form is basic. Therefore it suffices to show Part (2). 
		
		First, we show that the inclusion $\Omega^*_B(M)^G \hookrightarrow \Omega^*_B(M)$ induces an injective homomorphism $H^*(\Omega^*_B(M)^G) \to H^*_B(M)^G$. Let $I \co \Omega^*_B(M) \to \Omega^*_B(M)^G$ be the linear map given by 
		\begin{equation*}
			I(\alpha) := \int_{g \in G} g^*\alpha dg,\quad \alpha \in \Omega^*_B(M),
		\end{equation*}
		where $dg$ denotes the normalized Haar measure on $G$. Then the composition of the inclusion $\Omega^*_B(M)^G \hookrightarrow \Omega^*_B(M)$ with $I$ is the identity. Thus, the composition induces an cohomology isomorphism $H^*(\Omega^*_B(M)^G) \to H^*(\Omega^*_B(M)^G)$. Therefore the inclusion $\Omega^*_B(M)^G \hookrightarrow \Omega^*_B(M)$ induces an injective homomorphism $H^*(\Omega^*_B(M)^G) \to H^*_B(M)$. 
		
		We show that the induced homomorphism $H^*(\Omega^*_B(M)^G) \to H^*_B(M)$ is surjective. Let $[\alpha] \in H^*_B(M)$ be a cohomology class represented by a closed basic form $\alpha \in \Omega^*_B(M)$. Let $\gamma_1,\dots, \gamma_n$ be basis vectors of $\ker \exp_G$. Define 
		\begin{equation*}
			D:= \left\{ v = \sum_{i=1}^n a_i \gamma_i\mid 0\leq a_i <1\right\}.
		\end{equation*} 
		Then the exponential map restricted to $D$ gives a bijection $\exp_G|_D \co D \to G$. For $v \in D$ and $t \in \R$, we define $g_t := \exp_G(tv) \in G$ and 
		\begin{equation*}
			\theta_{g_1} := \int_0^1 g_t^*(i_{X_v}\alpha) dt \in \Omega^*_B(M).
		\end{equation*}
		We claim that $d\theta_{g_1} = g_1^*\alpha - \alpha$. Since
		\begin{equation*}
			\begin{split}
				\lim_{h \to 0} \frac{g^*_{t+h}\alpha - g^*_t\alpha}{h} &= L_{X_v}g^*_t\alpha\\
				&= i_{X_v}dg^*_t\alpha + di_{X_v}g^*_t\alpha \\ 
				&= di_{X_v}g^*_t\alpha \\
				&= dg^*_ti_{X_v}\alpha, 
			\end{split}
		\end{equation*}
		we have 
		\begin{equation*}
			\begin{split}
				d\theta_{g_1} &= d\int_0^1 g_t^*i_{X_v}\alpha dt\\
				&=\int_0^1 dg_t^*i_{X_v}\alpha dt\\
				&= g^*_1\alpha - g^*_0 \alpha \\
				&= g^*_1 \alpha - \alpha.
			\end{split}
		\end{equation*}
		Therefore 
		\begin{equation*}
			\begin{split}
				\int_{g_1 \in G}g^*_1\alpha-\alpha dg &= \int_{g_1 \in G}d \theta_{g_1}dg \\
				&= d\int_{g_1 \in G}\theta_{g_1} dg. 
			\end{split}
		\end{equation*}
		On the other hand, 
		\begin{equation*}
			\int_{g_1 \in G}g^*_1\alpha-\alpha dg = I(\alpha) - \alpha.
		\end{equation*}
		Therefore $I(\alpha)$ and $\alpha$ determine the same cohomology class in $H^*_B(M)$. This together with $I(\alpha) \in \Omega^*_B(M)^G$ yields that the induced homomorphism $H^*(\Omega_B^*(M)^G)\to H^*_B(M)$ is surjective, proving the lemma.
	\end{proof}
	The \emph{Cartan model} of the equivariant de Rham complex is given by
	\begin{equation*}
			\Omega_G^*(M) := (S^*(\g^*) \otimes \Omega^*(M))^G
	\end{equation*}
	where $S^*(\g^*)$ denotes the symmetric tensor algebra of $\g^*$. The degree of elements in $\g^*$ is $2$ and the degree of an element in $\Omega^*(M)$ is as usual. An element in $\Omega_G^*(M)$ can be think of a $G$-equivariant $\Omega^*(M)$-valued polynomial function on $\g$. Since $G$ is abelian, we have that $S^*(\g^*)$ is a trivial $G$-representation. Thus we have 
	\begin{equation*}
		\begin{split}
			\Omega_G^*(M) &:= (S^*(\g^*) \otimes \Omega^*(M))^G\\
			&=S^*(\g^*) \otimes \Omega^*(M)^G.
		\end{split}
	\end{equation*}
	Namely, an element in $\Omega_G^*(M)$ is a $\Omega^*(M)^G$-valued polynomial function on $\g$. The differential $d_G \co \Omega_G^*(M) \to \Omega_G^*(M)$ is given by 
	\begin{equation*}
		(d_G\alpha )(v) := d(\alpha(v)) - i_{X_v}\alpha(v)
	\end{equation*}
	for $\alpha \in \Omega_G^*(M)$ and $v \in \g$. The cohomology of the DGA $(\Omega_G^*(M), d_G)$ is called \emph{the equivariant cohomology of $M$} and denoted by $H_G^*(M)$. $H_G^*(M)$ is not only an $\R$-algebra but also an $S^*(\g^*)$-algebra. 
	
	\begin{lemm}[{\cite[Lemma 4.4 and Proposition 4.6]{Ishida-Kasuya}}]\Label{lemm:connection}
		Suppose that $M$ is paracompact. The followings hold.
		\begin{enumerate}
			\item There exists a $\h'$-valued $G$-invariant $1$-form $\theta$ on $M$ such that $i_{X_v}\theta = v$ for all $v \in \h'$.
			\item Let $v_1,\dots, v_k$ be basis vectors of $\h'$. Write 
			\begin{equation*}
				\theta = \sum_{i=1}^k \theta_i \otimes v_i, \quad \theta_i \in \Omega^1(M)^G.
			\end{equation*}
			Let $W$ be the subspace of $\Omega^1(M)^G$ spanned by $\theta_1,\dots, \theta_k$. Then we have the decomposition 
			\begin{equation*}
				\Omega^*(M)^G = \Omega_B^*(M)^G \otimes \bigwedge W. 
			\end{equation*}
		\end{enumerate}
	\end{lemm}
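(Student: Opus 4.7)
For Part (1), the plan is to construct $\theta$ as the connection form of an orthogonal splitting. Since $M$ is paracompact and $G$ is compact, a partition-of-unity argument followed by averaging over $G$ with the normalized Haar measure produces a $G$-invariant Riemannian metric on $M$. Because the $H'$-action is locally free, the subbundle $TF \subset TM$ whose fiber at $x$ is tangent to the leaf through $x$ has rank $\dim \h'$, and the evaluation map $\h' \to T_xF$, $v \mapsto (X_v)_x$, is a linear isomorphism at every point. Since $G$ is abelian, $G$ commutes with $H'$, so it permutes leaves and preserves $TF$. I would then define $\theta_x \co T_xM \to \h'$ as the orthogonal projection $T_xM \to T_xF$ followed by the inverse of the above isomorphism. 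This is smooth, $\h'$-valued, $G$-invariant, and satisfies $i_{X_v}\theta = v$ for $v \in \h'$ by construction.

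For Part (2), my plan is to introduce the operators $P_i := \mathrm{id} - \theta_i \wedge i_{X_{v_i}}$ on $\Omega^*(M)$, for $i=1,\dots,k$. Observe first that $i_{X_{v_j}}\theta_i = \delta_{ij}$, which is forced by $i_{X_v}\theta = v$ applied to the basis. A direct computation then gives $i_{X_{v_i}}P_i = 0$. Since $G$ is abelian one has $[X_{v_i},X_{v_j}]=0$, and the $G$-invariance of $\theta$ combined with $v_j \in \g$ gives $L_{X_{v_j}}\theta_i = 0$; these two facts together imply that the $P_i$ commute with each other and that $P_j$ preserves $i_{X_{v_i}}$-horizontality whenever $j \neq i$. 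Hence $P := P_1\cdots P_k$ sends $\Omega^*(M)^G$ into the common kernel of $i_{X_{v_1}},\dots,i_{X_{v_k}}$. $G$-invariance already supplies $L_{X_v}=0$ for $v \in \g \supset \h'$, so this common horizontal part lies in $\Omega_B^*(M)^G$. Expanding $\mathrm{id} = \prod_i (P_i + \theta_i \wedge i_{X_{v_i}})$ produces, for each $\alpha \in \Omega^*(M)^G$, a decomposition
\begin{equation*}
    \alpha = \sum_{I \subset \{1,\dots,k\}} \theta_I \wedge \alpha_I, \qquad \theta_I = \theta_{i_1}\wedge\cdots\wedge\theta_{i_r},\ \alpha_I \in \Omega_B^*(M)^G,
\end{equation*}
which exhibits $\Omega^*(M)^G$ as $\Omega_B^*(M)^G \otimes \bigwedge W$. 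Uniqueness follows by induction on $|I|$: applying appropriate iterated interior products $i_{X_{v_j}}$ and using $i_{X_{v_j}}\theta_i = \delta_{ij}$ recovers each $\alpha_I$ from $\alpha$.

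The main obstacle I expect is the bookkeeping in Part (2): one must carefully verify commutativity of the $P_i$ and stability of horizontality under their composition, and check that the resulting components $\alpha_I$ are both $i_{X_{v_i}}$-horizontal and $L_{X_{v_i}}$-invariant so that they genuinely lie in $\Omega_B^*(M)^G$. Both checks rest decisively on the $G$-invariance of $\theta$ built in Part (1) and on the abelian character of $G$; Part (1) itself is essentially a standard averaging argument.
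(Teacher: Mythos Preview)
The paper does not prove this lemma; it is quoted from \cite[Lemma 4.4 and Proposition 4.6]{Ishida-Kasuya} without argument, so there is no in-paper proof to compare against. Your proposal is correct and is the standard argument one finds in the literature on equivariant de Rham theory (cf.\ \cite[Chapter 5]{Guillemin-Sternberg}): Part (1) is the usual averaging construction of a connection for a locally free torus action, and Part (2) is the horizontal-projection decomposition built from that connection. The checks you flag as potential obstacles---commutativity of the $P_i$, preservation of horizontality under composition, and that the resulting $\alpha_I$ are genuinely basic---all go through exactly because of the two ingredients you isolate: $i_{X_{v_j}}\theta_i=\delta_{ij}$ and $L_{X_{v_j}}\theta_i=0$. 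One small point worth making explicit in your write-up: the $\alpha_I$ are $G$-invariant (not just $\h'$-invariant) because each $P_i$ and each $i_{X_{v_i}}$ commutes with the $G$-action, the latter since $G$ is abelian so the fundamental vector fields $X_{v_i}$ are themselves $G$-invariant.
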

	Suppose that $M$ is paracompact and let $W$ be as in Lemma \ref{lemm:connection}. Then we have the decomposition 
	\begin{equation}\Label{eq:Weil}
		\Omega^*_G(M)= S^*(\g^*) \otimes \Omega_B^*(M)^G \otimes \bigwedge W.   
	\end{equation}
	Since $\h' \subset \g'$, we have that the inclusion induces the surjective homomorphism $S^*(\g^*) \to S^*(\h'^*)$. 
	We think of elements in $S^*(\h'^*) \otimes \Omega_B^*(M)^G \otimes \bigwedge W$ as $\Omega_B^*(M)^G \otimes \bigwedge W$-valued polynomial functions on $\h'$. For short, we denote $S^*(\h'^*) \otimes \Omega_B^*(M)^G \otimes \bigwedge W$ by $\Omega_{\h'}^*(M)$. 
	Let 
	\begin{equation*}
		d_{\h'} \co \Omega_{\h'}^*(M) \to \Omega_{\h'}^*(M)
	\end{equation*}
	be the linear map given by 
	\begin{equation*}
		d_{\h'}\alpha (u') = d(\alpha(u')) - i_{X_{u'}}(\alpha(u')), \quad \alpha \in \Omega_{\h'}(M), u' \in \h'.
	\end{equation*}
	Then $d_\h'^2 = 0$. Let $x_1,\dots, x_k$ denote the dual basis vectors of the basis vectors $v_1,\dots, v_k$ of $\h'$. Then $d_{\h'}$ is represented as 
	\begin{equation*}
		d_{\h'}\beta = d\beta - \sum_{i=1}^k x_i \otimes i_{X_{v_i}}\beta
	\end{equation*}
	for $\beta \in \Omega^*_B(M)^G$ and $d_{\h'}$ is a homomorphism of $S^*(\h'^*)$-algebra. 
	We define an $\R$-algebra homomorphism 
	\begin{equation*}
		f \co \Omega_{\h'}^*(M) \to \Omega_B^*(M)^G
	\end{equation*}
	given by 
	\begin{equation*}
		f(x_i \otimes 1\otimes 1) = d\theta_i, \quad f(1\otimes \beta \otimes 1) = \beta, \quad f(1\otimes 1\otimes w_i) = 0
	\end{equation*}
	for $i=1,\dots, k$ and $\beta \in \Omega_B^*(M)^G$. 
	\begin{lemm}\Label{lemm:Cartan}
		The map $f \co \Omega_{\h'}^*(M) \to  \Omega_B^*(M)^G$ above is a DGA homomorphism.
	\end{lemm}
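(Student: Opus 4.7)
The plan is to verify two things: that $f$ respects products (which is automatic by the definition, since $f$ is declared to be an $\R$-algebra homomorphism on generators), and that $f$ intertwines the differentials, i.e.\ $d \circ f = f \circ d_{\h'}$. Since both $d$ and $d_{\h'}$ are graded derivations and $f$ is multiplicative, it suffices to check the compatibility on a set of algebra generators for $\Omega_{\h'}^*(M) = S^*(\h'^*) \otimes \Omega_B^*(M)^G \otimes \bigwedge W$. Natural generators are the $x_i$ (viewed as $x_i \otimes 1 \otimes 1$), arbitrary basic forms $\beta \in \Omega_B^*(M)^G$ (viewed as $1 \otimes \beta \otimes 1$), and the elements $w_i$ (viewed as $1 \otimes 1 \otimes w_i$) corresponding to $\theta_i \in W$.

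For $x_i$, the polynomial $x_i$ has scalar values so $d_{\h'}x_i = 0$, and $d(f(x_i)) = d(d\theta_i) = 0$. For a basic $\beta$, the contraction $i_{X_{v_i}}\beta$ vanishes by definition of basic, so $d_{\h'}\beta = d\beta$; since basic forms are closed under $d$, we get $f(d_{\h'}\beta) = d\beta = d(f(\beta))$. The interesting case is the generator $w_i$. From $i_{X_{v_j}}\theta = v_j$ for all $v_j \in \h'$ together with the expansion $\theta = \sum_i \theta_i \otimes v_i$, one reads off $i_{X_{v_j}}\theta_i = \delta_{ij}$, so that $d_{\h'}w_i = d\theta_i - x_i$ (the $d\theta_i$ landing in the $\Omega_B^*(M)^G$-factor, as justified below).

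The key step is to verify that $d\theta_i$ is a $G$-invariant basic form. $G$-invariance is immediate: $\theta$ is $G$-invariant by Lemma \ref{lemm:connection}(1), hence so is each $\theta_i$, and $d$ commutes with $G$-action. For basicness, for any $v_j \in \h'$ we use Cartan's magic formula together with $G$-invariance of $\theta_i$: $i_{X_{v_j}}d\theta_i = L_{X_{v_j}}\theta_i - d\,i_{X_{v_j}}\theta_i = 0 - d(\delta_{ij}) = 0$, and $L_{X_{v_j}}d\theta_i = d\,L_{X_{v_j}}\theta_i = 0$. Thus $d\theta_i \in \Omega_B^*(M)^G$, which means the expression $d_{\h'}w_i = d\theta_i - x_i$ decomposes cleanly into the $\Omega_B^*(M)^G$-piece and the $S^*(\h'^*)$-piece of $\Omega_{\h'}^*(M)$. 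Applying $f$ then gives $f(d_{\h'}w_i) = d\theta_i - d\theta_i = 0$, which matches $d(f(w_i)) = d(0) = 0$.

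Having verified the compatibility on all generators, the Leibniz rule propagates it to arbitrary monomials: if $\alpha = \alpha_1 \cdots \alpha_n$ is a product of generators, both $f(d_{\h'}\alpha)$ and $d(f(\alpha))$ expand into the same sum $\sum_{j}(-1)^{\epsilon_j} f(\alpha_1)\cdots d(f(\alpha_j))\cdots f(\alpha_n)$ with the appropriate Koszul signs, because $f$ is multiplicative and both differentials are graded derivations of the same degree $1$. The only potential obstacle is the identification of $d\theta_i$ as lying in the basic summand of the decomposition in Lemma \ref{lemm:connection}(2); once that is in hand, the verification is purely formal.
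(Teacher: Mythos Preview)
Your proof is correct and follows essentially the same approach as the paper: both verify $d\circ f = f\circ d_{\h'}$ directly. The paper computes both sides on a general monomial $x_1^{n_1}\cdots x_k^{n_k}\otimes\beta\otimes\theta_{j_1}\wedge\cdots\wedge\theta_{j_m}$ and sees they agree, whereas you check only on the algebra generators $x_i$, $\beta$, $w_i$ and then invoke the Leibniz rule; since $f$ is graded and multiplicative and both differentials are degree~$+1$ derivations, this propagation is valid. Your explicit verification that $d\theta_i\in\Omega_B^*(M)^G$ (via $i_{X_{v_j}}d\theta_i = L_{X_{v_j}}\theta_i - d\,i_{X_{v_j}}\theta_i = 0$) is a point the paper uses implicitly in placing $\beta\wedge d\theta_{j_i}$ in the basic slot of the decomposition, so making it explicit is a plus.
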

	\begin{proof}
		We need to show that $d \circ f = f \circ d_{\h'}$. Let $n_1,\dots n_k$ be nonnegative integers and  $\beta \in \Omega_B^*(M)^G$. Then 
		\begin{equation*}
			x_1^{n_1} \dots x_k^{n_k} \otimes \beta \otimes \theta_{j_1}\wedge \dots \wedge \theta_{j_m} \in S^*(\h'^*) \otimes \Omega_B^*(M)^G \otimes \bigwedge  W. 
		\end{equation*}
		By definition of $f$, we have 
		\begin{equation*}
			\begin{split}
			d\circ f(x_1^{n_1} \dots x_k^{n_k} \otimes \beta \otimes \theta_{j_1}\wedge \dots \wedge \theta_{j_m}) &= d(\beta \wedge d\theta_{1}^{n_1}\wedge \dots \wedge d\theta_{k}^{n_k})\\
			&= d\beta \wedge d\theta_{1}^{n_1}\wedge \dots \wedge d\theta_{k}^{n_k}.
			\end{split}
		\end{equation*}
		On the other hand, 
		\begin{equation}\Label{eq:dh'}
			\begin{split}
				& d_{\h'}(x_1^{n_1} \dots x_k^{n_k} \otimes \beta \otimes \theta_{j_1}\wedge \dots \wedge \theta_{j_m})\\ = & x_1^{n_1} \dots x_k^{n_k}\otimes d\beta \otimes \theta_{j_1}\wedge \dots \wedge \theta_{j_m}\\
				&+ \sum_{i=1}^m x_1^{n_1} \dots x_k^{n_k}\otimes (-1)^{\deg \beta + i-1}\beta \wedge d\theta_{j_i} \otimes \theta_{j_1}\wedge \dots \wedge \widehat\theta_{j_i} \wedge \dots \wedge \theta_{j_m}\\
				&- \sum_{i=1}^m x_{j_i}\cdot  x_1^{n_1} \dots x_k^{n_k}\otimes (-1)^{\deg \beta + i-1}\beta \otimes \theta_{j_1}\wedge \dots \wedge \widehat\theta_{j_i} \wedge \dots \wedge \theta_{j_m}.
			\end{split}
		\end{equation}
		Therefore we have 
		\begin{equation*}
			f\circ d_{\h'} (x_1^{n_1} \dots x_k^{n_k} \otimes \beta \otimes \theta_{j_1}\wedge \dots \wedge \theta_{j_m}) = d\beta \wedge d\theta_{1}^{n_1}\wedge \dots \wedge d\theta_{k}^{n_k}.
		\end{equation*}
		These computations show $d \circ f = f \circ d_{\h'}$, as required. 
	\end{proof}
	$f$ is the \emph{Cartan operator}. Namely, the following holds: 
	\begin{theo}[{\cite[Chapter 5]{Guillemin-Sternberg}}]\Label{theo:Cartan}
		$f$ induces an isomorphism 
		\begin{equation*}H ^* (\Omega^*_{\h'}(M),d_{\h'}) \to H^*(\Omega_B^*(M)^G, d_B) \cong H^*_B(M).
		\end{equation*} 
	\end{theo}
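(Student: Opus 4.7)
The plan is to prove that $f$ is a quasi-isomorphism via the standard spectral sequence argument for the Cartan model of a locally free action, which is a version of Cartan's classical theorem for equivariant cohomology (cf.\ \cite{Guillemin-Sternberg}, Chapter 5). The key structural input is the tensor decomposition $\Omega^*_{\h'}(M) = S^*(\h'^*) \otimes \Omega^*_B(M)^G \otimes \bigwedge W$ together with the explicit formula \eqref{eq:dh'} for $d_{\h'}$.

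First, I would filter $\Omega^*_{\h'}(M)$ by the $\bigwedge W$-degree, setting $F_p := \bigoplus_{i \leq p} S^*(\h'^*) \otimes \Omega^*_B(M)^G \otimes \bigwedge^i W$. Inspection of \eqref{eq:dh'} shows that $d_{\h'}$ preserves this filtration: the $d\beta$ term preserves the $\bigwedge W$-degree, while both the curvature term involving $d\theta_{j_i}$ and the Koszul term involving $-x_{j_i}$ drop it by one. Hence one obtains a convergent spectral sequence whose $E_0$ differential is $d_0 = d_B \otimes \id \otimes \id$, yielding $E_1 \cong S^*(\h'^*) \otimes H^*_B(M) \otimes \bigwedge W$.

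Next, I would compute $d_1$ on $E_1$. The curvature term descends to multiplication by the basic cohomology class $[d\theta_i] \in H^2_B(M)$ (well-defined since $i_{X_v} d\theta_i = L_{X_v}\theta_i - d i_{X_v}\theta_i = 0$ for $v \in \h'$, so $d\theta_i$ is closed and basic) combined with contraction $\partial_{\theta_i}$, and the Koszul term descends to multiplication by $x_i$ combined with $\partial_{\theta_i}$. Introducing the new generators $z_i := x_i \otimes 1 - 1 \otimes [d\theta_i] \in S^*(\h'^*) \otimes H^*_B(M)$, which are algebraically independent by a triangular change of variables, $d_1$ assumes the form of the classical Koszul differential $-\sum_i z_i \partial_{\theta_i}$ on $H^*_B(M)[z_1,\ldots,z_k] \otimes \bigwedge(\theta_1,\ldots,\theta_k)$.

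Since this Koszul complex is acyclic except in total degree zero, where its cohomology is $H^*_B(M)$, I would conclude $E_2 = E_\infty \cong H^*_B(M)$ and the spectral sequence degenerates. To match the abstract isomorphism with $f$, I would observe that each class $[\beta] \in H^*_B(M)$ lifts to $1 \otimes \beta \otimes 1 \in \Omega^*_{\h'}(M)$, which is $d_{\h'}$-closed (since $d\beta = 0$) and satisfies $f(1 \otimes \beta \otimes 1) = \beta$; hence $f$ surjects onto the abutment of the spectral sequence, forcing $f$ to be a quasi-isomorphism. \emph{The main obstacle} will be verifying the form of $d_1$ after the substitution $z_i = x_i - [d\theta_i]$: although this is conceptually a standard Mathai-Quillen-type change of variables, one must carefully track the mixed signs arising from \eqref{eq:dh'} so that the curvature and Koszul pieces combine to give precisely the Koszul differential, rather than a more complicated perturbation for which acyclicity would require additional argument.
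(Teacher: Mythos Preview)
The paper does not supply its own proof of this theorem; it is simply quoted from \cite[Chapter~5]{Guillemin-Sternberg}. Your argument---filtering $\Omega^*_{\h'}(M)$ by $\bigwedge W$-degree, identifying $E_1$ with $S^*(\h'^*)\otimes H^*_B(M)\otimes\bigwedge W$, and recognising $d_1$ as the Koszul differential after the substitution $z_i=x_i-[d\theta_i]$---is precisely the classical Cartan argument underlying that reference, and the computation of $E_2\cong H^*_B(M)$ concentrated in $\bigwedge W$-degree~$0$ (hence $E_2=E_\infty$) is correct.

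There is one wrinkle in your closing sentence. The phrase ``$f$ surjects onto the abutment'' is backwards: the abutment $H^*(\Omega^*_{\h'}(M))$ is the \emph{source} of $f_*$, not its target, and surjectivity of $f_*$ alone does not give injectivity without a dimension count. The clean way to finish is to note that your section $s\colon[\beta]\mapsto[1\otimes\beta\otimes 1]$ lands in the bottom filtration piece $F_0H^*(\Omega^*_{\h'}(M))$; since $E_\infty^{p}=0$ for $p>0$ one has $F_0H^*=H^*(\Omega^*_{\h'}(M))$, and $s$ composed with the projection to $E_\infty^{0}\cong H^*_B(M)$ is visibly the identity. Thus $s$ is an isomorphism, and its left inverse $f_*$ is as well. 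With this small repair the proof is complete.
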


	As a conclusion, we have a DGA homomorphism $\Omega_G^*(M) \to \Omega_B^*(M)^G$ given by $\alpha \mapsto f(\alpha|_{\h'})$. We denote by $\forB \co H^*_G(M) \to H^*_B(M)$ the induced homomorphism and call it \emph{the basic forgetful map}. 
	
	\begin{rema}\Label{rema:eqbasiccoho}
		One can construct the basic forgetful map $\forB \co H^*_G(M) \to H^*_B(M)$ via the \emph{equivariant basic cohomology} introduced in \cite{Goertsches-Toeben}. To prevent confusing, by $\Omega^*(M,F)$ we mean the basic complex $\Omega^*_B(M)$ and by $H^*(M,F)$ we mean the basic cohomology $H^*_B(M)$ for a moment. Let $\g'$ be a complement of $\h'$ in $\g$. Then we have a transverse action of $\g'$ on the foliated manifold $(M,F)$. The cohomology $H^*_{\g'}(M,F)$ of the DGA $\Omega^*_{\g'}(M,F) = (S^*(\g'^*) \otimes \Omega^*(M,F))^{\g'}$ is the $\g'$-equivariant $F$-basic cohomology. One can see that $H^*_G(M)$ is isomorphic to $H^*_{\g'}(M,F)$ (see \cite[Example 4.3]{Goertsches-Toeben}). The basic forgetful map is the composition of the isomorphism $H^*_G(M) \cong H^*_{\g'}(M,F)$ with the natural map $H^*_{\g'}(M,F) \to H^*(M,F)$. We note that equivariant basic cohomology has been constructed for not only abelian but also arbitrary transverse actions. See \cite{Goertsches-Toeben} for details.
	\end{rema}

\subsection{Local computations}
	Let $G$ be a compact torus and $G'$ a subtorus of $G$. Let $\h'$ be a complement of $\g'$ in $\g$, that is, $\h'$ is a linear subspace of $\g$ such that $\g' \oplus \h' = \g$. Let $Y$ be a smooth manifold equipped with an action of $G'$. We define the right $G'$-action on $G \times Y$ by 
	\begin{equation*}
		(g,y) \cdot g' := (gg', g'^{-1}\cdot y)
	\end{equation*}
	for $(g,y) \in G \times Y$ and $g' \in G$. We define the left $G$-action on $G \times Y$ by 
	\begin{equation*}
		\widetilde{g}\cdot (g,y) :=(\widetilde{g}g,y)
	\end{equation*}
	for $(g,y) \in G\times Y$ and $\widetilde{g} \in G$. 
	Then the left $G$-action on $G\times Y$ descends to the left $G$-action on the quotient manifold $G \times_{G'} Y$. Since $\g = \g' \oplus \h'$, we have a foliation $F$ on $G \times_{G'}Y$ whose leaves are generated by $\h'$. We denote by $\Omega_B^*(G \times_{G'}Y)$ the basic complex with respect to $F$.
	
	We denote 
		\begin{itemize}
			\item by $X_v^L$ the fundamental vector field on $G \times Y$ generated by $v \in \g$ with respect to the left $G$-action, 
		 	\item by $X_{v'}^R$ the fundamental vector field on $G \times Y$ generated by $v' \in \g'$ with respect to the right $G'$-action, 
			\item by $X^G_v$ the fundamental vector field on $G$ generated by $v \in \g$ and 
			\item by $X^Y_{v'}$ the fundamental vector field on $Y$ generated by $v' \in \g$. 
		\end{itemize}
		Let $\omega \in \Omega^*(G \times Y)$. We say that $\omega$ is \emph{right $\g'$-basic} if $i_{X_{v'}^R}\omega = 0$ and $L_{X_{v'}^R}\omega = 0$ for all $v' \in \g'$. We say that $\omega$ is \emph{left $\h'$-basic} if $i_{X_{u'}^L}\omega = 0$ and $L_{X_{u'}^L}\omega = 0$ for all $u' \in \h'$.
		Let $p_G \co G \times Y \to G$ and $p_Y \co G\times Y \to Y$ be the projections. Since $p_G$ and $p_Y$ both have global sections, the pull-back maps $p_G^*$ and $p_Y^*$ are injective. For a differential form $\alpha \in \Omega^*(G)$ and $\beta \in \Omega^*(Y)$, we have $p_G^*(\alpha) \wedge p_Y^*(\beta) \in \Omega^*(G \times Y)$ and such elements generates $ \Omega^*(G \times Y)$. Let $\alpha \in \Omega^*(G)$ and $\beta \in \Omega^*(Y)$. Then we have 
		\begin{equation*}
			i_{X_v^L}(p_G^*(\alpha) \wedge p_Y^*(\beta)) = p_G^*(i_{X_v^G}\alpha ) \wedge p_Y^*(\beta) 
		\end{equation*}
		for $v \in \g$ and 
		\begin{equation*}
			i_{X_{v'}^R}(p_G^*(\alpha) \wedge p_Y^*(\beta)) = p_G^*(i_{X_{v'}^G}\alpha ) \wedge p_Y^*(\beta) + (-1)^{\deg \alpha} p_G^*(\alpha) \wedge p_Y^*(i_{-X^Y_{v'}}\beta)
		\end{equation*}
		for $v' \in \g'$. 
		
	\begin{lemm}\Label{lemm:OmegaY}
		Let $G, G', \h', Y$ be as above. Then there exists an isomorphism $\eta \co \Omega_B^*(G \times_{G'}Y)^G \to \Omega^*(Y)^{G'}$ such that $\eta \circ i_{X_v} = i_{X_{v'}^Y}\circ \eta $ for $v = v' +u'$, $v' \in \g'$, $u' \in \h'$. 
	\end{lemm}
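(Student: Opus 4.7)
The plan is to define $\eta$ as the pullback along the natural embedding $\iota \co Y \to G \times_{G'} Y$, $\iota(y) := [1,y]$, to reduce the bijectivity question to the principal $G'$-bundle $\pi \co G \times Y \to G \times_{G'} Y$, and to exploit the explicit decomposition of left-$G$-invariant forms on $G \times Y$ in terms of left-invariant forms on $G$.

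First, for $g' \in G'$ the computation $g' \cdot [1,y] = [g',y] = [g'(g')^{-1}, g'y] = [1,g'y] = \iota(g'y)$ shows that $\iota$ is $G'$-equivariant, where $G'$ acts on $G \times_{G'} Y$ through its inclusion into $G$. Hence $\eta(\omega) := \iota^*\omega$ lies in $\Omega^*(Y)^{G'}$ whenever $\omega \in \Omega_B^*(G \times_{G'} Y)^G$. The intertwining property then follows with almost no additional work: writing $v = v' + u'$ with $v' \in \g'$ and $u' \in \h'$, the $F$-basic condition forces $i_{X_{u'}}\omega = 0$, so $i_{X_v}\omega = i_{X_{v'}}\omega$; differentiating $\iota(\exp(tv') \cdot y) = \exp(tv') \cdot \iota(y)$ at $t=0$ yields $(X_{v'})_{\iota(y)} = \iota_*(X_{v'}^Y)_y$, and therefore $\iota^*(i_{X_{v'}}\omega) = i_{X_{v'}^Y}\iota^*\omega = i_{X_{v'}^Y}\eta(\omega)$.

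The main work is bijectivity. Pulling back by $\pi$, a form $\omega \in \Omega_B^*(G \times_{G'} Y)^G$ corresponds to a $\tilde\omega \in \Omega^*(G \times Y)$ that is simultaneously left-$G$-invariant, left-$\h'$-basic, and right-$\g'$-basic. Fix a basis $e_1,\dots,e_k$ of $\g'$ extended by a basis $e_{k+1},\dots,e_m$ of $\h'$, with dual left-invariant $1$-forms $\theta_1,\dots,\theta_m$ on $G$. Every left-$G$-invariant form on $G \times Y$ admits the unique decomposition $\tilde\omega = \sum_I p_G^*\theta_I \wedge p_Y^*\beta_I$ with $\beta_I \in \Omega^*(Y)$. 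Left-$\h'$-basicness forces $I \subset \{1,\dots,k\}$. Using $X_{v'}^R = (X_{v'}^G, -X_{v'}^Y)$ together with the two interior-product identities stated immediately before the lemma, the right-$\g'$ Lie-derivative condition gives $\beta_I \in \Omega^*(Y)^{G'}$ for all $I$, while the equation $i_{X_{e_j}^R}\tilde\omega = 0$ produces the recursion $\beta_{I \cup \{j\}} = \pm\, i_{X_{e_j}^Y}\beta_I$ whenever $j \notin I$. Thus $\tilde\omega$ is determined by $\beta_\emptyset$, which equals $\iota^*\omega$ by evaluating $\tilde\omega$ along the section $y \mapsto (1,y)$. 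Conversely, given $\beta \in \Omega^*(Y)^{G'}$ one sets $\beta_\emptyset := \beta$, defines the higher $\beta_I$ by the same iterated interior product, and assembles $\tilde\omega$ as the candidate inverse image.

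The main obstacle is the explicit verification that this inductively constructed $\tilde\omega$ actually satisfies $i_{X_{e_j}^R}\tilde\omega = 0$ for every $j \in \{1,\dots,k\}$ at once. The required cancellation between the two families of terms rests on $i_{X_{e_j}^Y}^2 = 0$, on the graded anti-commutativity of interior products (which handles the case $j \in I$), on the observation that $G'$-invariance of $\beta$ is preserved by $i_{X_{e_j}^Y}$ (since $G'$ is abelian), and on a careful sign count coming from the orderings of the index set $I$. Once this bookkeeping is settled, $\eta$ is a bijection, and combined with the intertwining identity established above, the lemma follows.
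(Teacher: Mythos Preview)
Your proof is correct and follows essentially the same route as the paper: both pull back along $\pi\co G\times Y\to G\times_{G'}Y$, characterize the image as the forms on $G\times Y$ that are simultaneously left $G$-invariant, left $\h'$-basic and right $\g'$-basic, and then use the decomposition $\Omega^*(G)^G\otimes\Omega^*(Y)\cong\bigwedge\g^*\otimes\Omega^*(Y)$ to strip this down to $\Omega^*(Y)^{G'}$. The paper compresses your recursion $\beta_{I\cup\{j\}}=\pm\,i_{X_{e_j}^Y}\beta_I$ into the single line $\{\omega\in\bigwedge(\g/\h')^*\otimes\Omega^*(Y)\mid\text{$\omega$ right $\g'$-basic}\}\cong\Omega^*(Y)^{G'}$, while your concrete identification of $\eta$ with $\iota^*$ for $\iota(y)=[1,y]$ is a useful explicit addition that the paper leaves implicit.
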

	\begin{proof}
		Let $\omega' \in \Omega_B^*(G \times_{G'}Y)^G$. Let $\pi \co G \times Y \to G\times_{G'} Y$ be the quotient map. Then the pull-back $\pi^*(\omega') \in \Omega^* (G\times Y)$ is $G$-invariant, right $\g'$-basic and left $\h'$-basic form. Conversely, such a differential form on $G \times Y$ descends to a differential form in $\Omega_B^*(G \times_{G'}Y)^G$. These correspondence gives an isomorphism between $\Omega_B^*(G \times_{G'}Y)^G$ and 
		\begin{equation*}
			A^* := \left\{ \omega \in \Omega^*(G \times Y)^G \mid \text{$\omega$ is left $\h'$-basic and right $\g'$-basic} \right\}.
		\end{equation*}
		Let $\omega \in A^*$. Suppose that 
		\begin{equation*}
			\omega = \sum_{i=1}^k p_G^*(\alpha_i) \wedge p_Y^*(\beta_i), \quad \alpha_i \in \Omega^*(G), \beta_i \in \Omega^*(Y). 
		\end{equation*}
		Since $\omega$ is $G$-invariant, by averaging with the $G$-action, we may assume that all $\alpha_i \in \Omega^*(G)$ is $G$-invariant. Since $\Omega^*(G)^G \cong  \bigwedge \g^*$ and the pull-back maps are injective, we have that $A^*$ is isomorphic to 
		\begin{equation*}
			\{ \omega \in \bigwedge \g^* \otimes \Omega^*(Y) \mid \text{$\omega$ is left $\h'$-basic and right $\g'$-basic}\}. 
		\end{equation*}
		But being left $\h'$-basic implies that this DGA is isomorphic to 
		\begin{equation*}
			\{ \omega \in \bigwedge (\g/\h')^* \otimes \Omega^*(Y) \mid \text{$\omega$ is right $\g'$-basic}\}.
		\end{equation*}
		Since $\g' \oplus \h' = \g$, we have 
		\begin{equation*}
			\{ \omega \in \bigwedge (\g/\h')^* \otimes \Omega^*(Y) \mid \text{$\omega$ is right $\g'$-basic}\} \cong \Omega^*(Y)^{G'}. 
		\end{equation*}
		Thus we obtain the isomorphism $\eta \co \Omega^*_B(G\times_{G'}Y)^G \to \Omega^*(Y)^{G'}$. It follows from the construction of $\eta$ that $\eta \circ i_{X_v} = i_{X_{v'}^Y} \circ \eta$ for $v = v' +u'$, $v' \in \g'$, $u' \in \h'$. The lemma is proved.
	\end{proof}
	\begin{lemm}\Label{lemm:local}
		Let $G, G', \h', Y$ be as above. We think of $S^*((\g/\h')^*)$ as a subalgebra of $S^*(\g^*)$ via the dual of the quotient map $\g \to \g/\h'$. We identify $\g'^*$ with $(\g/\h')^*$ via the decomposition $\g = \g' \oplus \h'$. Then the followings hold.
		\begin{enumerate}
			\item $H^*_B(G\times_{G'}Y)$ is isomorphic to $H^*(Y)$ as $\R$-algebras. 
			\item $H^*_G(G\times_{G'}Y)$ is isomorphic to $H^*_{G'}(Y)$ as $S^*((\g/\h')^*)$-algebras. 
		\end{enumerate}
	\end{lemm}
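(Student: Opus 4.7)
The plan is to prove both parts by reducing to complexes on $Y$ via the $G'$-equivariant inclusion $\iota \colon Y \hookrightarrow M := G\times_{G'}Y$, $y\mapsto [1,y]$ (where $G'\subset G$ acts on $M$ through the restriction of the $G$-action).

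For Part (1), I would upgrade Lemma \ref{lemm:OmegaY} to a DGA isomorphism. Inspecting its proof, the map $\eta \colon \Omega^*_B(M)^G \to \Omega^*(Y)^{G'}$ is built from successive pullbacks (along $\pi \colon G\times Y \to M$ and along the section $y\mapsto (1,y)$ of $G\times Y \to G$) together with algebraic identifications on the $\bigwedge \g^*$-factor; since pullbacks commute with $d$ and $\wedge$, $\eta$ is a DGA isomorphism. Part (1) then follows from chaining $H^*_B(M) \cong H^*(\Omega^*_B(M)^G)$ (Lemma \ref{lemm:cohomology}(2)), $\eta_*$, and $H^*(\Omega^*(Y)^{G'}) \cong H^*(Y)$ (Lemma \ref{lemm:cohomology}(1) applied with $\h'=\{0\}$ to the $G'$-manifold $Y$).

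For Part (2), I would define $\Phi \colon \Omega^*_G(M) \to \Omega^*_{G'}(Y)$ by $(\Phi\alpha)(v'):= \iota^*(\alpha(v'))$ for $v' \in \g' \subset \g$. Well-definedness, $S^*((\g/\h')^*)$-linearity (identifying $(\g/\h')^* \cong \g'^*$ via $\g=\g'\oplus\h'$), and the DGA property (via $\iota^*\circ d = d\circ\iota^*$ and $\iota^*\circ i_{X_{v'}} = i_{X^Y_{v'}}\circ\iota^*$ from $G'$-equivariance of $\iota$) are routine. For the quasi-isomorphism, I would choose the $\h'$-connection $\theta$ on $M$ obtained by descending the $\h'$-component of the Maurer--Cartan form on $G$ via $\pi$; with this choice, $\iota^*\theta_j = 0$ and $i_{X_{v'}}\theta_j = 0$ for all $v'\in\g'$. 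Using the Weil-type decomposition (Lemma \ref{lemm:connection}) combined with $\g^* = \g'^* \oplus \h'^*$, write $\Omega^*_G(M) = S^*(\g'^*)\otimes \Omega^*_{\h'}(M)$ and identify $\Phi$ with $\mathrm{id}\otimes \epsilon$, where $\epsilon\colon \Omega^*_{\h'}(M) \to \Omega^*(Y)^{G'}$ annihilates the $S^{>0}(\h'^*)$- and $\bigwedge^{>0} W$-factors and agrees with $\eta$ on $\Omega^*_B(M)^G$.

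I would then filter both sides by polynomial degree in $\g'^*$; since $-i_{X_{v'}}$ raises this degree by one while $d$ and $-i_{X_{u'}}$ preserve it, the $E_0$-differential on $\Omega^*_G(M)$ becomes $\mathrm{id}\otimes d_{\h'}$ and on $\Omega^*_{G'}(Y)$ it is $\mathrm{id}\otimes d$. Theorem \ref{theo:Cartan} then identifies $E_1(\Omega^*_G(M)) = S^*(\g'^*) \otimes H^*_B(M)$, which matches $E_1(\Omega^*_{G'}(Y)) = S^*(\g'^*)\otimes H^*(Y)$ via the isomorphism of Part (1); the map on $E_1$ induced by $\Phi$ coincides with this isomorphism because the inclusion $\Omega^*_B(M)^G \hookrightarrow \Omega^*_{\h'}(M)$ is itself a quasi-isomorphism (the Cartan operator $f$ of Theorem \ref{theo:Cartan} is a left inverse to it, so $[\epsilon] = [\eta]\circ [i]^{-1}$ is an isomorphism). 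The filtration is bounded in each cohomological degree (elements of $\g'^*$ have degree $2$), so the spectral sequences converge and the comparison theorem yields that $\Phi$ is a quasi-isomorphism. The main obstacle is verifying this $E_1$-compatibility carefully, since the differential $d_G$ couples the $\g'$- and $\h'$-directions through the cross terms $-i_{X_{v'}}$, and one must check that these couplings live strictly above the filtration degree so as not to contaminate the $E_0$-page.
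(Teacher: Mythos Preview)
Your argument is correct. For Part~(1) you do exactly what the paper does: combine Lemma~\ref{lemm:cohomology} with the DGA isomorphism of Lemma~\ref{lemm:OmegaY}.

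For Part~(2) your route differs from the paper's. The paper also chooses the connection pulled back from $G/G'$, but then simply invokes the Cartan theorem in its relative form: the Cartan operator extends $S^*(\g'^*)$-linearly to a quasi-isomorphism $\Omega^*_G(M)=S^*(\g'^*)\otimes\Omega^*_{\h'}(M)\to S^*(\g'^*)\otimes\Omega^*_B(M)^G$, and the latter is identified with $\Omega^*_{G'}(Y)$ via $\mathrm{id}\otimes\eta$ using the intertwining property $\eta\circ i_{X_v}=i_{X^Y_{v'}}\circ\eta$ from Lemma~\ref{lemm:OmegaY}. You instead construct the comparison map $\Phi$ directly by pulling back along $\iota$ and then prove it is a quasi-isomorphism by hand, filtering by $\g'^*$-degree and comparing spectral sequences; on the $E_1$-page you recover Part~(1) together with Theorem~\ref{theo:Cartan}. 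This is a legitimate and more self-contained alternative: it uses only the version of the Cartan theorem actually stated in the paper (Theorem~\ref{theo:Cartan}) rather than its $S^*(\g'^*)$-linear extension, at the cost of running a spectral-sequence comparison. One small point to tighten: you should check (or note) that $\iota^*$ restricted to $\Omega^*_B(M)^G$ really agrees with the isomorphism $\eta$ of Lemma~\ref{lemm:OmegaY}; tracing the proof of that lemma, $\eta$ is indeed the composite $s^*\circ\pi^*=\iota^*$, so this is fine.
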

	\begin{proof}
		Part (1) follows from Lemmas \ref{lemm:cohomology} and  \ref{lemm:OmegaY} immediately. 
		
		We show Part (2). Let $\pi \co G \times_{G'} Y \to G/G'$ be the projection induced by the first projection $G \times Y \to G$. $\pi$ is a $G$-equivariant map. Since $H'$ acts on $G/G'$ transitively and local freely, there exists a $\h'$-valued $G$-invariant $1$-form $\theta$ on $G/G'$ such that $i_{X_v'}\theta = v$ for all $v \in \h'$, where $X_v'$ denotes the fundamental vector field on $G/G'$ generated by $v$. Since $\pi$ is $G$-equivariant, we have that the pull-back $\pi^*\theta$ is a $\h'$-valued $G$-invariant $1$-form on $G \times_{G'} Y$ such that $i_{X_v}\pi^*\theta = v$. Thus we have a Cartan operator $f \co S^*(\g') \otimes \Omega^*_{\h'}(G \times _{G'}Y) \to S^*(\g') \otimes \Omega^*_B(G \times _{G'}Y)$ that induces a cohomology isomorphism. By Lemma \ref{lemm:OmegaY} and facts $\Omega^*_{G}(G \times _{G'}Y) = S^*(\g'^*)\otimes \Omega^*_{\h'}(G \times _{G'}Y)$ and $\Omega^*_{G'}(Y) = S(\g'^*)\otimes \Omega^*(Y)^{G'}$, we have that $H^*_G(G\times_{G'}Y)$ is isomorphic to $H^*_{G'}(Y)$ as $S^*((\g/\h')^*)$-algebras, as required. 
	\end{proof}
\subsection{Global computations}
	Until the end of this subsection, let $(M,G,y) \in \mathcal{C}_1$. Let $F$ be the canonical foliation on $M$. Let $(\Delta, \h, G) = \F_1(M,G,y) \in \mathcal{C}_2$. We denote by $p \co \g^\C \to \g$ the projection, $\h' := p(\h)$ and $q \co \g \to \g/\h'$ the quotient map. We assume that $q(\Delta)$ is polytopal. In particular, $M$ admits a transverse K\"ahler form with respect to $F$.

	By Theorem \ref{theo:transverse}, there exist a $G$-invariant transverse K\"ahler form $\omega$ and a smooth function $h_v \co M \to \R$ such that $dh_v = -i_{X_v}\omega$ for any $v \in \g$. 
	By Proposition \ref{prop:crit} and Lemma \ref{lemm:generic}, there exists $v \in \g$ such that the set of critical points of $h_v$ coincides with the union $Z$ of minimal orbits of $M$. 
	Let $Z_1,\dots, Z_l$ be the minimal orbits of $M$. By renumbering them if necessary, we may assume that 
	\begin{equation*}
		h_v(Z_1) < h_v(Z_2) < \dots < h_v(Z_l).
	\end{equation*}
	Throughout this subsection, we fix such $v \in \g$. 
	
	By Proposition \ref{prop:slice}, there uniquely exists a $G^M$-invariant open neighborhood $U_j$ of $Z_j$ that is $G^M$-equivariantly biholomorphic to 
	\begin{equation*}
		G^M \times _{G_j^\C}\bigoplus_{i=1}^k \C_{\alpha_{j,i}}
	\end{equation*}
	where $G_j$ denotes the isotropy subgroup of $G$ at a point $x$ in $Z_j$ and $\alpha_{j,1},\dots, \alpha_{j,k} \in \Hom(G_j^\C,\C^*)$ are the weights of the $G_j^\C$-representation $T_xM/T_xZ_j$. We define 
	\begin{equation*}
		U_j^* := 
		\begin{cases}
			\emptyset & \text{for $j=1$}, \\
			\bigcup_{i=1}^{j-1} U_i \cap U_j & \text{for $j=2,\dots, l$}.
		\end{cases}
	\end{equation*}
	
	Let $J$ be the complex structure on $M$. We denote by $\gamma_t$ the partial flow of $JX_v$. For $x \in M$, consider the set 
	\begin{equation*}
		W_x := \bigcap _{t_0 \in \R} \overline{\{ \gamma_t(x) \mid t \geq t_0\}}.
	\end{equation*} 
	\begin{lemm}\Label{lemm:stable}
		For any $x \in M$, the set $W_x$ is a nonempty subset of a minimal orbit in $M$. 
	\end{lemm}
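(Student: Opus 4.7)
The plan is to recognize $\gamma_t$ as essentially the negative gradient flow of $h_v$ on the compact manifold $M$ and then run a standard $\omega$-limit set argument. Since $q(\Delta)$ is polytopal, Theorem \ref{theo:transverse} provides a $G$-invariant transverse K\"ahler form $\omega$; from it I define the symmetric tensor $g(X,Y) := \omega(X, JY)$, which by the defining conditions of a transverse K\"ahler form is symmetric and positive semidefinite with kernel exactly $TF$. Because $JX_v$ is a smooth vector field on the compact manifold $M$, it is complete, so $\gamma_t$ is defined for all $t \in \R$.

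Using $dh_v = -i_{X_v}\omega$ I would compute
\begin{equation*}
	\frac{d}{dt} h_v(\gamma_t(x)) = dh_v|_{\gamma_t(x)}(JX_v) = -\omega(X_v, JX_v)|_{\gamma_t(x)} = -g(X_v, X_v)|_{\gamma_t(x)} \leq 0,
\end{equation*}
so $h_v$ is non-increasing along each $\gamma$-orbit. Since $h_v$ is continuous on the compact $M$, it is bounded below, hence $h_v(\gamma_t(x))$ decreases to some $c \in \R$ as $t \to \infty$. Compactness also ensures that every sequence $\gamma_{t_n}(x)$ with $t_n \to \infty$ has a convergent subsequence, so $W_x \neq \emptyset$; it is immediate from the definition that $W_x$ is $\gamma$-invariant and that $h_v \equiv c$ on $W_x$. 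For any $y \in W_x$ then $h_v(\gamma_s(y)) = c$ for all $s \in \R$, and differentiating at $s=0$ gives $g(X_v, X_v)|_y = 0$, i.e.\ $(X_v)_y \in T_y F$. By Lemma \ref{lemm:generic}, the set of such points is exactly $Z = Z_1 \sqcup \dots \sqcup Z_l$, so $W_x \subset Z$.

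To upgrade this to containment in a single minimal orbit, I would observe that $W_x$ is the intersection of the nested family of non-empty compact connected sets $\overline{\{\gamma_t(x) \mid t \geq t_0\}}$, each connected as the closure of the continuous image of $[t_0,\infty)$, hence $W_x$ itself is non-empty, compact, and connected. Since the $Z_j$ are precisely the connected components of $Z$, the connected subset $W_x \subset Z$ must lie in exactly one $Z_j$, which is what is claimed. The only non-routine step is the identification of $g$ as a non-negative transverse metric with kernel $TF$, which is precisely what makes the computation above strictly negative off $Z$ and therefore forces $\gamma$-trajectories to accumulate only on critical points; the rest is standard compact dynamics.
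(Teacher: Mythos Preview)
Your proof is correct and follows the same overall strategy as the paper: recognize $h_v$ as a Lyapunov function for the flow $\gamma_t$ of $JX_v$ and deduce that the $\omega$-limit set lands in the critical set $Z$. Two technical points are handled differently, and in both cases your version is arguably cleaner. First, to show $W_x\subset Z$, the paper asserts $\lim_{t\to\infty}\frac{d}{dt}h_v(\gamma_t(x))=0$ and then evaluates along a convergent subsequence; you instead use that $W_x$ is $\gamma$-invariant and $h_v\equiv c$ on $W_x$, so differentiating $s\mapsto h_v(\gamma_s(y))$ at any $y\in W_x$ directly gives $g(X_v,X_v)|_y=0$. This sidesteps any Barbalat-type justification. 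Second, to pin $W_x$ inside a single minimal orbit, the paper invokes condition~(2) of Lemma~\ref{lemm:generic} (distinct critical values $h_v(Z_i)\neq h_v(Z_j)$), whereas you use that $W_x$ is a nested intersection of nonempty compact connected sets and hence connected. Both work; your argument uses only condition~(1) of Lemma~\ref{lemm:generic}, while the paper's needs both (1) and (2).
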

	\begin{proof}
		Let $(t_i)_{i=1,\dots}$ be a monotonic increasing sequence of real numbers diverges to $\infty$. Since $M$ is compact, the sequence $(\gamma_{t_i}(x))_{i=1,\dots}$ has a subsequence that converges to a point in $M$. This shows that $W_x$ is not empty. 
		
		Let $f \co M \to \R$ be a function defined by $f = \omega (X_v, JX_v)$. Since 
		\begin{equation*}
			L_{JX_v}h_v = i_{JX_v}dh_v = i_{JX_v} (-i_{X_v}\omega) = \omega(-X_v, JX_v) \leq 0, 
		\end{equation*} 
		the function $\R \to \R$ given by $t \mapsto h_v(\gamma_t(x))$ is monotone decreasing. Since $M$ is compact, we have that $h_v$ is bounded. Therefore we have 
		\begin{equation*}
			\lim_{t \to \infty} \frac{d}{dt} h_v(\gamma_t(x)) = 0.
		\end{equation*}
		Suppose that the sequence $(\gamma_{t_i}(x))_{i=1,\dots}$ converges to a point $x_0 \in M$. Since $f$ is continuous, we have $\lim_{i \to \infty} f(\gamma_{t_i}(x)) = f(x_0)$. 
		On the other hand, we have 
		\begin{equation*}
			\lim_{i \to \infty} f(\gamma_{t_i}(x)) = \lim_{t \to \infty} f(\gamma_t(x)) = \lim_{t \to \infty} \frac{d}{dt} h_v(\gamma_t(x)) = 0.
		\end{equation*}
		Therefore we have $f(x_0) = 0$. Since $f(x') = 0$ if and only if $(X_v)_{x'} \in T_{x'}F$ for $x' \in M$, we have $x_0 \in Z_j$ for some $j$. On the other hand, $\lim_{t \to \infty} h_v(\gamma_t(x)) = h_v(x_0) = h_v(Z_j)$. 
		Since $\lim_{t \to \infty} h_v(\gamma_t(x))$ does not depend on the sequence $(\gamma_{t_i}(x))_{i=1,\dots}$ and $h_v(Z_i) \neq h_v(Z_j)$ if $i\neq j$, we have $W_x \subset Z_j$. 
		The lemma is proved. 
	\end{proof}
	\begin{lemm}\Label{lemm:stratification}
		Let $x \in M$. For $j=1,\dots, l$, the followings hold.
		\begin{enumerate}
			\item If $W_x \subset Z_j$, then $x \in U_j$. 
			\item If $x \in U_j$, then $W_x \subset \bigcup_{i \leq j}Z_i$.
		\end{enumerate}
	\end{lemm}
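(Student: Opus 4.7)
For Part (1), I would observe that since $v\in\g$, the vector field $JX_v$ is the fundamental vector field generated by $\sqrt{-1}v\in\g^\C$ acting through the $G^\C$-action on $M$, which descends to $G^M=G^\C/H$; thus the flow $\gamma_t$ is the action of the one-parameter subgroup $\exp_{G^\C}(\sqrt{-1}tv)\in G^M$. Since $U_j$ is $G^M$-invariant by Proposition~\ref{prop:slice}, it is $\gamma_t$-invariant for every $t\in\R$. Given $W_x\subset Z_j\subset U_j$, pick $y\in W_x$ and a sequence $t_n\to\infty$ with $\gamma_{t_n}(x)\to y$; then $\gamma_{t_n}(x)\in U_j$ for large $n$, so $x=\gamma_{-t_n}(\gamma_{t_n}(x))\in U_j$.

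For Part (2), the plan is induction on $j$. In all cases Part (1) forces $\gamma_t(x)\in U_j$ for all $t\in\R$, and Lemma~\ref{lemm:stable} gives $W_x\subset Z_k$ for some $k$. Decompose $v=v_j+v'_j$ with $v_j\in\g_j$ and $v'_j\in p(\h)$ (Lemma~\ref{lemm:dimension}), set $a_{j,i}:=\langle d\alpha_{j,i},v_j\rangle$ (all nonzero by the genericity of Lemma~\ref{lemm:generic}), and write $x=[g,(z_i)]$ in the slice model $U_j\cong G^M\times_{G_j^\C}\bigoplus_i\C_{\alpha_{j,i}}$. A direct calculation, using that in $G^M$ the element $\exp_{G^\C}(\sqrt{-1}tv'_j)$ equals $\exp(tu)$ for a suitable $u\in\g$ (since $\h$ is $\C$-stable), gives $\gamma_t([g,(z_i)])=[\exp(tu)g,(e^{-a_{j,i}t}z_i)_i]$. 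Hence whenever $z_i=0$ for every $i$ with $a_{j,i}<0$, the slice coordinates all contract to $0$ while the $G^M$-component stays in the compact torus $G$, so $\gamma_t(x)$ accumulates in $Z_j$ and $W_x\subset Z_j$. For the base case $j=1$, the orbit $Z_1$ realizes the global minimum of $h_v$, so Lemma~\ref{lemm:Bott-Morse}(2) forces every $a_{1,i}>0$, and therefore $W_x\subset Z_1$ for every $x\in U_1$.

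For the inductive step, assume the statement for all $j'<j$. The key sublemma is that $x=[g,(z_i)]\in U_j$ lies in $U_j^*$ if and only if $z_{i_0}\neq 0$ for some $i_0$ with $a_{j,i_0}<0$. Granting this, either $x\in U_j^*$, whence $x\in U_{j'}$ for some $j'<j$ and the inductive hypothesis yields $W_x\subset\bigcup_{i\leq j'}Z_i\subset\bigcup_{i\leq j}Z_i$, or $x\in U_j\setminus U_j^*$ and the preceding paragraph gives $W_x\subset Z_j$. To prove the sublemma, identify $[g,(z_i)]$ with the $G^M$-orbit $O_{\tau_S}$ indexed by the face $\tau_S:=\pos(\lambda_{j,i}\mid i\in S)$ of $\sigma_j$, where $S:=\{i:z_i=0\}$; this orbit meets $U_{j'}$ precisely when $\tau_S$ is a face of $\sigma_{j'}$. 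If some $i_0$ with $a_{j,i_0}<0$ lies outside $S$, then $\tau_S$ is contained in the codimension-one face of $\sigma_j$ opposite $\lambda_{j,i_0}$, which by completeness of $q(\Delta)$ is shared with a unique adjacent top-dimensional cone $\sigma_{j'}$; the duality between $q(\sigma_j)$ and the vertex $c_j=\widetilde{\Phi}(Z_j)$ of the polytope $\widetilde{\Phi}(M)$ (Theorem~\ref{theo:transverse}(2)) then shows that $h_v(Z_{j'})-h_v(Z_j)$ has the same sign as $a_{j,i_0}$, so $j'<j$ by our ordering and $x\in U_{j'}\cap U_j\subset U_j^*$. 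The principal technical obstacle will be rigorously identifying orbit strata of $U_j$ with faces of $\sigma_j$ and relating membership in $U_{j'}$ to the face relation $\tau_S\leq\sigma_{j'}$ in the present foliated (not purely toric) setting.
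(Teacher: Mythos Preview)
Your argument for Part (1) is correct and essentially identical to the paper's: some $\gamma_{t_0}(x)$ lands in the open $G^M$-invariant set $U_j$, hence so does $x$.

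For Part (2) you take a genuinely different route from the paper. The paper does \emph{not} use induction, the slice formula, or fan combinatorics. Instead it exploits directly the word ``minimal'' in Proposition~\ref{prop:slice}: for $j<l$ set
\[
V_j:=\{x'\in U_j : |h_v(x')-h_v(Z_j)|<h_v(Z_{j+1})-h_v(Z_j)\},
\]
an open neighborhood of $Z_j$ contained in $U_j$. Minimality forces $U_j=\bigcup_{g\in G^M}g\cdot V_j$, so any $x\in U_j$ satisfies $g\cdot x\in V_j$ for some $g\in G^M$. Since $\gamma_t$ commutes with $G^M$ one has $g\cdot W_x=W_{g\cdot x}$, and $W_x$ lies in the same minimal orbit as $g\cdot W_x$; monotonicity of $t\mapsto h_v(\gamma_t(g\cdot x))$ then gives $h_v(W_x)\le h_v(g\cdot x)<h_v(Z_{j+1})$, whence $W_x\subset\bigcup_{i\le j}Z_i$. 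The case $j=l$ is trivial.

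Your approach reverses the logical order of the paper: your ``key sublemma'' is precisely (one direction of) Lemma~\ref{lemm:Uj*}, which the paper proves \emph{after} and \emph{using} Lemma~\ref{lemm:stratification}. Your plan to prove it independently via the orbit--cone correspondence and polytope duality is sound and the sign computation $h_v(Z_{j'})-h_v(Z_j)=a_{j,i_0}\cdot\langle c_{j'}-c_j,q(\lambda_{j,i_0})\rangle$ does work, since $q(v)=\sum_i a_{j,i}\,q(\lambda_{j,i})$ and the edge $c_{j'}-c_j$ pairs trivially with all $q(\lambda_{j,i})$ for $i\neq i_0$. But you correctly identify the obstacle: establishing that the $G^M$-orbit through $[g,(z_i)]$ meets $U_{j'}$ iff $\tau_S\preceq\sigma_{j'}$ requires transporting the toric orbit--cone correspondence through the quotient by $H$, and this is real work you have not done. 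The paper's three-line argument sidesteps all of this.
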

	\begin{proof}
		Suppose that $W_x \subset Z_j$. Since $U_j$ is a neighborhood of $Z_j$, we have that there exists $t \in \R$ such that $\gamma_t(x) \in U_j$. Since $U_j$ is $G^M$-invariant, we have $x \in U_j$, proving (1). 
		
		Suppose that $j=l$.  Since $Z = Z_1\sqcup \dots \sqcup Z_l$, we have $W_x \subset \bigcup_{i\leq j}Z_i$ by Lemma \ref{lemm:stable}. 
		For $j=1,\dots, l-1$, we define
		\begin{equation*}
			V_j := \{ x' \in U_j \mid |h_v(x') - h_v(Z_j)| < h_v(Z_{j+1}) -h_v(Z_j) \}.
		\end{equation*}
		$V_j$ is an open neighborhood of $Z_j$. Since $V_j \subset U_j$ and $U_j$ is the minimal $G^M$-invariant neighborhood of $Z_j$, we have 
		\begin{equation*}
			U_j = \bigcup_{g \in G^M} g \cdot V_j.
		\end{equation*}
		Suppose that $x \in U_j$. Then we have that there exists $g \in G^M$ such that $g\cdot x \in V_j$. Thus we have $|h_v(g\cdot x) - h_v(Z_j)| < h_v(Z_{j+1})-h_v(Z_j)$. In particular, we have $h_v(g\cdot x) < h_v(Z_{j+1})$. Since $\gamma_t$ is $G^M$-equivariant, we have 
			$g \cdot W_x = W_{g\cdot x}$. Since each minimal orbit is $G^M$-invariant, we have that $W_x$ and $g \cdot W_{x}$ is contained in the same minimal orbit. 
		Therefore we have 
		\begin{equation*}
			h_v(W_x) = \lim_{t \to \infty} h_v(\gamma_t(x)) = \lim_{t\to \infty}h_v(\gamma_t(g\cdot x)) \leq  h_v(g\cdot x) < h_v(Z_{j+1})
		\end{equation*}
		because $t \mapsto h_v(\gamma_t(x))$ is monotone decreasing. It turns out that $W_x \subset \bigcup_{i\leq j}Z_i$, proving (2). 
	\end{proof}
	\begin{lemm}\Label{lemm:Uj*}
		Let $v_j \in \g_j$ and $v' \in \h'$ such that $v = v_j + v'$. Then, 
		$U_j^*$ is $G^M$-equivariantly biholomorphic to $G_M \times_{G_j^\C} Y_j$, where 
		\begin{equation*}
			\begin{split}
				Y_j &= \bigoplus_{\langle d\alpha_{j,i},v_j\rangle \leq 0} \C_{\alpha_j,i} \times \left( \left(\bigoplus_{\langle d\alpha_{j,i},v_j\rangle >0}\C_{\alpha_{j,i}}\right) \setminus \{0\}\right) \subset \bigoplus_{i=1}^k\C_{\alpha_{j,i}}.
			\end{split}
		\end{equation*}
	\end{lemm}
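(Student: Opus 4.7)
The plan is to transport the slice description $U_j \cong G^M\times_{G_j^\C}\bigoplus_{i=1}^k\C_{\alpha_{j,i}}$ from Proposition \ref{prop:slice} through the flow characterization of $U_j^*$ provided by Lemma \ref{lemm:stratification}. Combining parts (1) and (2) of that lemma yields the dichotomy that $x\in U_j^*$ if and only if $x\in U_j$ and $W_x\subset\bigcup_{i<j}Z_i$, equivalently, the trajectory $\gamma_t(x)$ fails to converge to $Z_j$ itself.

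First I would compute $\gamma_t$ in the slice coordinates. Since the $G^M$-action on $M$ is holomorphic and the infinitesimal generator of $\exp_{G^M}(t\sqrt{-1}v)$ is $JX_v$, the flow $\gamma_t$ coincides with the action of $\exp_{G^M}(t\sqrt{-1}v)$. Writing $v=v_j+v'$ with $v_j\in\g_j$ and $v'\in\h'$, commutativity of $G^M$ together with the twisted-product relation $[gh,w]=[g,h\cdot w]$ for $h\in G_j^\C$ yields
\begin{equation*}
\gamma_t([g,z])=\bigl[g\cdot\exp_{G^M}(t\sqrt{-1}v'),\ \exp_{G_j^\C}(t\sqrt{-1}v_j)\cdot z\bigr].
\end{equation*}
On the $i$-th summand $\C_{\alpha_{j,i}}$, the action of $\exp_{G_j^\C}(t\sqrt{-1}v_j)$ is multiplication by a positive real scalar whose exponent is governed by $\langle d\alpha_{j,i},v_j\rangle$. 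By Lemma \ref{lemm:generic} this pairing is nonzero for every $i$: if some $\langle d\alpha_{j,i},v_j\rangle=0$, then the one-parameter subgroup generated by $v_j$ would stabilize a slice point $[g,z]$ with only the $i$-th coordinate nonzero, placing $v$ inside $\g_x\oplus\h'$ for a nonminimal $x$, contradicting the genericity of $v$. Thus each weight space is either strictly contracting or strictly expanding under $\gamma_t$.

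With this dichotomy in hand, the slice trajectory $\gamma_t([g,z])$ converges to $[g,0]\in Z_j$ if and only if every coordinate in the expanding weight spaces vanishes. Hence $[g,z]$ represents a point of $U_j^*$ precisely when at least one coordinate in an expanding weight space is nonzero. The vanishing locus of any collection of weight coordinates is $G_j^\C$-invariant, so the resulting subset of $\bigoplus_i\C_{\alpha_{j,i}}$ is exactly $Y_j$ as in the statement, i.e., the sum of the stable-weight blocks times the punctured sum of the expanding-weight blocks. The resulting $G^M$-equivariant biholomorphism $G^M\times_{G_j^\C}Y_j\to U_j^*$ then follows from the $G^M$-equivariance of the original slice biholomorphism.

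The main obstacle I expect is the global half of the characterization: once some expanding coordinate of $z$ is nonzero, $\gamma_t([g,z])$ eventually exits the slice $U_j$, and it is not immediate that $W_x$ actually lies in $\bigcup_{i<j}Z_i$ rather than in $Z_j$ or in some $Z_{i'}$ with $i'>j$. I would handle this by combining the compactness of $M$, the monotone decrease of $h_v$ along $\gamma_t$ (as in the proof of Lemma \ref{lemm:stable}), the fact that $W_x$ is contained in a single minimal orbit (again Lemma \ref{lemm:stable}), and the distinct-critical-values property from Lemma \ref{lemm:generic} to pin down the limit orbit uniquely and rule out the indices $i'\geq j$.
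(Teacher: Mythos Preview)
Your approach is essentially the paper's: reformulate $U_j^*$ as $\{x\in U_j\mid W_x\not\subset Z_j\}$ via Lemma~\ref{lemm:stratification}, then read off which slice points have $W_x\subset Z_j$ from the explicit flow. Your extra observation that genericity forces $\langle d\alpha_{j,i},v_j\rangle\neq 0$ is a detail the paper leaves implicit.

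Your ``main obstacle'' paragraph, however, rests on a misconception. The trajectory $\gamma_t(x)$ \emph{never} leaves $U_j$: the set $U_j$ is $G^M$-invariant by construction (Proposition~\ref{prop:slice}), and $\gamma_t$ is the action of $\exp_{G^M}(t\sqrt{-1}v)\in G^M$. What can happen is that the $w$-coordinates blow up while the trajectory stays in $U_j$; the limit points in $M$ may then lie in $\overline{U_j}\setminus U_j$. The genuine subtlety, which the paper also glosses over, is why such a trajectory cannot have $Z_j$ in its limit set. The clean reason is that the $G^M$-component $\exp_{G^M}(t\sqrt{-1}v')$ remains in a compact subset of $G^M$: since $v'\in\h'=p(\h)$, one has $\sqrt{-1}v'\equiv v''\pmod{\h}$ for some $v''\in\g$, so this factor lies in the compact image of $G$. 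Consequently the slice representative $[g_t,w_t]$ has $g_t$ confined to a compact set, and convergence to $Z_j=\{[\ast,0]\}$ would force $w_t\to 0$ after a bounded $G_j^\C$-correction, contradicting $|e^{tc_{i_0}}w_{i_0}|\to\infty$.

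Your proposed fix via the monotone decrease of $h_v$ and ruling out $i'>j$ is therefore aimed at the wrong target: Lemma~\ref{lemm:stratification}(2) already excludes $i'>j$, and the issue is entirely local to $U_j$. Replace that paragraph with the compactness observation above and the argument is complete.
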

	\begin{proof}
		Let $x \in U_j$. By Lemma \ref{lemm:stratification} (2), we have $W_x \subset \bigcup_{i\leq j}Z_i$. If $W_x \subset \bigcup_{i<j}Z_i$, then $x \in \bigcup_{i<j}U_i$ by Lemma \ref{lemm:stratification} (1). Therefore we have 
		\begin{equation*}
			U_j^* = U_j \setminus \{ x \in U_j \mid W_x \subset Z_j\}.
		\end{equation*}
		Let $\phi \co U_j \to  G^M \times_{G_j^\C} \bigoplus_{i=1}^k \C_{\alpha_{j,i}}$ be a $G^M$-equivariant biholomoprhism. Suppose that $[g,w_1,\dots, w_k] = \phi (x)$. Then we have 
		\begin{equation*}
			\begin{split}
				\phi(\gamma_t(x)) &= [\exp_{G^M}(tJv) g, w_1,\dots, w_k] \\
				&=  [\exp_{G^M}(tJv') g, e^{t\langle d\alpha_{j,1},v_j\rangle} w_1,\dots, e^{t\langle d\alpha_{j,k},v_j\rangle}w_k]. 
			\end{split}
		\end{equation*}
		Since $\phi(Z_j)$ is represented as $\{ [g,w_1,\dots, w_k] \mid w_1=\dots =w_k=0\}$, we have that $W_x \subset Z_j$ if and only if $e^{t\langle d\alpha_{j,1},v_j\rangle} w_1,\dots, e^{t\langle d\alpha_{j,k},v_j\rangle}w_k$ converge to $0$ as $t$ reaches $\infty$. It turns out that $W_x \subset Z_j$ if and only if $w_i = 0$ for $\langle d\alpha_{j,i},v_j\rangle >0$, proving the lemma. 
	\end{proof}
	Now we are in a position to compute the basic betti numbers. A similar result can be found in \cite[Theorem 3.1]{Battaglia-Zaffran} for certain LVMB manifolds. 
	\begin{prop}\Label{prop:oddvanish}
		The followings hold.
		\begin{enumerate}
			\item $H^{\text{odd}}_B\left(\bigcup_{i\leq j}U_i\right) = 0$ for all $j=1,\dots, l$. 
			\item $\dim H^{2d}_B(M)$ coincides with the number of critical submanifolds whose co-index is $2d$, where co-index is defined to be the index of $-h_v$. 
		\end{enumerate}
	\end{prop}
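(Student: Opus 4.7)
The plan is a Mayer--Vietoris induction on $j$ using $\bigcup_{i\leq j}U_i = \bigcup_{i<j}U_i \cup U_j$ with intersection $U_j^*$, building up the basic cohomology one minimal orbit at a time. First I would observe that $M = \bigcup_{i=1}^{l}U_i$: by Lemma~\ref{lemm:stable} every $x \in M$ has $W_x$ contained in some $Z_j$, and Lemma~\ref{lemm:stratification}(1) then puts $x \in U_j$. Part~(2) will follow by specializing, at $j=l$, the strengthened inductive claim that $\dim H^{2d}_B(\bigcup_{i\leq j}U_i)$ equals the number of $Z_i$ with $i \leq j$ of co-index $2d$; part~(1) is the parallel inductive vanishing of odd basic cohomology.

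The crucial local input goes as follows. At any point of $Z_j$ one has the splitting $\g = \g_j \oplus \h'$ extracted in the proof of Proposition~\ref{prop:crit}, together with Lemmas~\ref{lemm:dimension} and~\ref{lemm:gx}. This lets the $G^M$-equivariant model $U_j \cong G^M \times_{G_j^{\C}}\bigoplus_{i=1}^{k}\C_{\alpha_{j,i}}$ from Proposition~\ref{prop:slice} be rewritten, as a $G$-manifold equipped with the canonical foliation $F$, as the associated bundle $G \times_{G_j}\bigoplus_{i=1}^{k}\C_{\alpha_{j,i}}$ whose foliation is generated by the complement $\h'$ of $\g_j$. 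Lemma~\ref{lemm:local}(1) then gives $H^*_B(U_j) \cong H^*\bigl(\bigoplus_{i=1}^{k}\C_{\alpha_{j,i}}\bigr) = H^*(\mathrm{pt})$. A parallel argument starting from Lemma~\ref{lemm:Uj*} presents $U_j^*$ as $G \times_{G_j} Y_j$, where $Y_j$ deformation retracts onto the unit sphere of the subspace spanned by the $\alpha_{j,i}$ with $\langle d\alpha_{j,i}, v_j\rangle > 0$ (and is empty when no such weight exists); writing $2m_j$ for the co-index of $h_v$ at $Z_j$ in the convention of Lemma~\ref{lemm:Bott-Morse}, one gets $H^*_B(U_j^*) \cong H^*(S^{2m_j - 1})$.

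Because each $U_i$ is $G$-invariant and hence saturated for $F$, the standard Mayer--Vietoris sequence applies to basic cohomology. In the inductive step, the vanishing of odd basic cohomology on $\bigcup_{i<j}U_i$ and $U_j$, combined with $H^*_B(U_j^*)$ being concentrated in degrees $0$ and $2m_j - 1$, causes the long exact sequence to collapse: the unique odd class of $U_j^*$ injects into $H^{2m_j}_B(\bigcup_{i\leq j}U_i)$ and raises its dimension by exactly one, every other even degree is transported unchanged from $\bigcup_{i<j}U_i$, and all odd degrees remain zero, since the surjection $H^0_B(\bigcup_{i<j}U_i) \oplus H^0_B(U_j) \to H^0_B(U_j^*)$ takes care of the degree-one bookkeeping. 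When $m_j = 0$, $U_j^*$ is empty, so $\bigcup_{i\leq j}U_i$ is a disjoint union and $H^0_B$ picks up exactly one new dimension, matching the predicted jump at co-index $0$. After $l$ steps one reaches $M$, giving (1) and (2) simultaneously.

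The main obstacle is the $G$-equivariant identification of $U_j$ and $U_j^*$ as associated bundles $G \times_{G_j}(-)$ whose canonical foliation corresponds exactly to the $\h'$-foliation on the total space, in a form to which Lemma~\ref{lemm:local} applies directly. This requires reconciling the complex $G^M$-model of Proposition~\ref{prop:slice} with the real torus description via Lemma~\ref{lemm:gx} and the splitting $\g = \g_j \oplus \h'$; once this is in place, the rest is formal Mayer--Vietoris bookkeeping together with the cohomology of a point and a sphere.
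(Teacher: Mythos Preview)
Your proposal is correct and follows essentially the same Mayer--Vietoris induction as the paper: compute $H^*_B(U_j)$ and $H^*_B(U_j^*)$ via Lemma~\ref{lemm:local} applied to the local models of Proposition~\ref{prop:slice} and Lemma~\ref{lemm:Uj*}, then induct on $j$. You are right to single out the passage from the complex description $G^M \times_{G_j^{\C}}(-)$ to the real form $G \times_{G_j}(-)$ required by Lemma~\ref{lemm:local} as the one nontrivial identification; the paper invokes Lemma~\ref{lemm:local} directly without making this step explicit.
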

	\begin{rema}
		\begin{enumerate}
			\item By the Poincar\'e duality of the basic cohomology and Proposition \ref{prop:oddvanish} (2), we have that the number of critical submanifolds whose co-index is $2d$ coincides with the number of critical submanifolds whose index is $2d$.
			\item The basic cohomologies of critical submanifolds $Z_1,\dots, Z_l$ of $h_v$ are trivial. Proposition \ref{prop:oddvanish} means that the basic Morse-Bott function $h_v$ is perfect. 
			\item In case when closed leaves of $F$ are $Z_1,\dots, Z_l$ only, we can apply \cite[Theorem 6.4]{Goertsches-Toeben} to $(M,F)$ and we obtain Proposition \ref{prop:oddvanish} (2) immediately. However, not every $M$ satisfies this condition. 
			\item Proposition \ref{prop:oddvanish} follows from the Morse inequality for the basic cohomology shown in \cite{Alvarez} immediately. We will give an elementary proof of Proposition \ref{prop:oddvanish} below by using Mayer-Vietoris. 
		\end{enumerate}
	\end{rema}
	\begin{proof}[Proof of Proposition \ref{prop:oddvanish}]
		By Mayer-Vietoris, we have a long exact sequence 
		\begin{equation*}
			\dots \to H_B^{q}\left(\bigcup_{i\leq j}U_i\right) \to H^{q}_B\left(\bigcup_{i\leq j-1}U_i\right)\oplus H^{q}_B\left(U_j\right) \to H^{q} _B(U_j^*) \to \cdots .
		\end{equation*}
		By Lemma \ref{lemm:local}, we have $H_B^q(U_j) = 0$ and 
		\begin{equation*}
			H^{q-1}_B(U_j^*)\to H^q_B\left(\bigcup_{i\leq j}U_i\right)  \to H^q_B\left(\bigcup_{i< j}U_i\right) \to H^{q} _B(U_j^*)
		\end{equation*}
		is exact for $q \geq 1$. By Lemmas \ref{lemm:local} and \ref{lemm:Uj*}, we have 
		\begin{equation*}
			H_B^q(U_j^*) \cong \begin{cases}
				\R & \text{if $q=0$ or the co-index of  $Z_j$ is $q+1$},\\
				0 & \text{otherwise}.\\
			\end{cases}
		\end{equation*}
		Thus we have $H^q_B\left(\bigcup_{i\leq j}U_i\right) \cong H^q_B\left(\bigcup_{i\leq j-1}U_i\right)$ unless $q$ and $q+1$ are not the co-index on $Z_j$. If $q$ is the co-index of $Z_j$, then we have an exact sequence 
		\begin{equation}\Label{eq:exact}
				  H^{q-1} \left( \bigcup_{i\leq j-1} U_i\right) \to \R \to  H^q \left( \bigcup_{i\leq j} U_i\right) \to H^q \left( \bigcup_{i\leq j-1} U_i\right) \to 0. 
		\end{equation}
		We show that $H^{\text{odd}}\left( \bigcup_{i\leq j} U_i\right) = 0$ by induction on $j$. If $j = 1$, then $H^{2d-1}(U_1)=0$ for any $d \in \mathbb{N}$. Suppose that $H^{\text{odd}}\left( \bigcup_{i\leq j-1} U_i\right) = 0$. Since $H^{2d-1}_B\left(\bigcup_{i\leq j}U_i\right) \cong H^{2d-1}_B\left(\bigcup_{i\leq j-1}U_i\right)$ unless $2d-1$ and $2d$ are not the co-index on $Z_j$ and all co-indices are even, we have $H^{2d-1}\left( \bigcup_{i\leq j}U_i\right) =0$ for $d \in \mathbb{N}$ except the case when $2d$ is the co-index on $Z_j$. If $2d$ is the co-index on $Z_j$, by \eqref{eq:exact} and induction hypothesis we have $H^{2d-1}\left( \bigcup_{i\leq j}U_i\right) = 0$, showing (1).  
		
		By \eqref{eq:exact}, we have a short exact sequence 
		\begin{equation*}
			0 \to \R \to H^{2d}\left(\bigcup_{i\leq j}U_i\right) \to H^{2d}\left(\bigcup_{i\leq j-1}U_i\right) \to 0.
		\end{equation*}
		if $2d$ is the co-index of $Z_j$. Therefore we  have that $\dim H^{2d}_B(M)$ coincides with the number of critical submanifolds whose co-index is $2d$, showing (2). 
	\end{proof}
	Let $\g' \subset \g$ be a complement of $\h'$ in $\g$. 
	Then the decomposition $\g = \g'\oplus \h'$ induces a decomposition $S^*(\g^*) = S^*(\g'^*) \otimes S^*(\h'^*)$.
	This allows us to think of elements in $\Omega^*_G(M)$ as $\Omega^*_{\h'}(M)$-valued polynomial functions on 
	$\g'$. Namely, we have the decomposition $\Omega^*_G(M) = S^*(\g'^*) \otimes \Omega^*_{\h'}(M)$. 
	This gives us to define the following double complex structure. 
	For $p, q \in \Z$, we set
	\begin{equation*}
		C^{p,q} := S^p(\g'^*) \otimes \Omega^{q-p}_{\h'}(M)
	\end{equation*}
	so that $\Omega^*_G(M) = \bigoplus_{p,q\in\Z} C^{p,q}$.
	Define $d \co C^{p,q} \to C^{p,q+1}$ by $d := 1\otimes d_{\h'}$ and $\delta\co C^{p,q} \to C^{p+1,q}$ by 
	\begin{equation*}
		((\delta \alpha )(v'))(u')= -i_{X_{v'+u'}}((\alpha(v'))(u'))
	\end{equation*}
	for $\alpha \in \Omega^*_G(M)$ and $v' \in \g'$ and $u'\in \h'$. Here, $\alpha$ is regarded as an $\Omega^*_{\h'}(M)$-valued polynomial functions on $\g'$. 
	Then we have $d_G = d+\delta$ and $d^2=\delta^2= d\delta+\delta d =0$. 
	\begin{prop}
		The $E_1$ term of the spectral sequence of the double complex $(\bigoplus C^{p,q}, d, \delta)$ is 
		\begin{equation*}
			E_1 = S^*(\g'^*)\otimes H^*(\Omega^*_{\h'}(M),d_{\h'}). 
		\end{equation*}
		More explicitly, 
		\begin{equation*}
			E_1^{p,q} = S^p(\g'^*)\otimes H^{q-p}(\Omega^*_{\h'}(M),d_{\h'}).
		\end{equation*}
	\end{prop}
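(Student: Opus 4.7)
The plan is to apply the standard column-filtration spectral sequence construction to the double complex already in place. The paper has established $d^2 = \delta^2 = d\delta + \delta d = 0$, so $(\bigoplus C^{p,q}, d, \delta)$ is a genuine double complex; only the identification of $E_1$ remains.

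First I filter the total complex $\Omega^*_G(M)$ by the symmetric-algebra degree, setting $F^p := \bigoplus_{p' \geq p,\, q} C^{p',q}$. This is a decreasing filtration. Since $d \colon C^{p,q} \to C^{p,q+1}$ preserves $p$ while $\delta \colon C^{p,q} \to C^{p+1,q}$ strictly raises it, on the associated graded only $d$ survives as $d_0$. Thus
\begin{equation*}
E_0^{p,q} = C^{p,q} = S^p(\g'^*) \otimes \Omega^{q-p}_{\h'}(M), \qquad d_0 = d.
\end{equation*}

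The key observation is that $d = 1 \otimes d_{\h'}$ acts solely on the second tensor factor. Because $S^p(\g'^*)$ is a finite-dimensional real vector space, the functor $S^p(\g'^*) \otimes -$ is exact and commutes with taking cohomology. Consequently, for each fixed $p$,
\begin{equation*}
E_1^{p,q} = H^q(C^{p,\bullet}, d) = S^p(\g'^*) \otimes H^{q-p}(\Omega^*_{\h'}(M), d_{\h'}),
\end{equation*}
and summing over $p$ yields the claimed formula $E_1 = S^*(\g'^*) \otimes H^*(\Omega^*_{\h'}(M), d_{\h'})$.

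There is no substantive obstacle; the argument is entirely formal once the bigrading convention is fixed. The only care required is in tracking that the ``vertical'' $d_{\h'}$-cohomology at total degree $q$ in column $p$ lies in basic-form degree $q - p$, which is forced by the definition $C^{p,q} = S^p(\g'^*) \otimes \Omega^{q-p}_{\h'}(M)$ and matches the degree bookkeeping $2p + (q-p) = p+q$ for total equivariant degree.
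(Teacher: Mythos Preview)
Your proof is correct and takes the same approach as the paper's. The paper's own proof is a single line---``This follows from the definition of $d$ immediately''---so your version simply unpacks the column-filtration mechanism that underlies that remark, and your degree bookkeeping is accurate.
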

	\begin{proof}
		This follows from the definition of $d$ immediately. 
	\end{proof}
	\begin{prop}
		The spectral sequence of the double complex $(\bigoplus C^{p,q}, d, \delta)$ collapses at the $E_1$ term. 
	\end{prop}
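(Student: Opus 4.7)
The plan is to observe that the differentials $d_r$ of the spectral sequence shift the basic cohomology degree by an odd integer, while Proposition \ref{prop:oddvanish} guarantees that $H^*_B(M)$ is concentrated in even degrees. Thus every $d_r$ has either zero source or zero target.

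More concretely, I would first identify the $E_1$-term using the preceding proposition together with Theorem \ref{theo:Cartan}. Since $H^*(\Omega^*_{\h'}(M), d_{\h'}) \cong H^*_B(M)$ as graded $\R$-algebras, we have
\begin{equation*}
	E_1^{p,q} \;\cong\; S^p(\g'^*) \otimes H^{q-p}_B(M).
\end{equation*}

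Next I would compute the bidegree shift of the successive differentials. For the filtration by the first index $p$, the differential $d_r$ on $E_r$ has bidegree $(r, 1-r)$, so it maps $E_r^{p,q}$ into $E_r^{p+r, q-r+1}$. Under the identification above this becomes a map
\begin{equation*}
	S^p(\g'^*)\otimes H^{q-p}_B(M) \;\longrightarrow\; S^{p+r}(\g'^*)\otimes H^{q-p-(2r-1)}_B(M),
\end{equation*}
whose effect on the basic cohomology degree is a shift by $-(2r-1)$, which is odd for every $r\ge 1$.

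Finally I would invoke Proposition \ref{prop:oddvanish}(2): since $\dim H^{2d}_B(M)$ equals the number of critical submanifolds of $h_v$ of co-index $2d$ and all co-indices are even (by Lemma \ref{lemm:Bott-Morse}), we have $H^{\text{odd}}_B(M)=0$. Consequently, for each $r\ge 1$, either $q-p$ or $q-p-(2r-1)$ is odd, so either the source or the target of $d_r$ is zero; hence $d_r=0$ for all $r\ge 1$, and the spectral sequence collapses at $E_1$. I do not foresee a genuine obstacle here: the only nontrivial input is Proposition \ref{prop:oddvanish}, which has already been established, and the rest is an elementary parity argument on the bigrading.
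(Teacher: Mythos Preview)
Your argument is correct and is essentially the paper's own proof: both identify $E_1^{p,q}\cong S^p(\g'^*)\otimes H^{q-p}_B(M)$ via Theorem~\ref{theo:Cartan}, invoke Proposition~\ref{prop:oddvanish} to obtain $H^{\text{odd}}_B(M)=0$, and then use the resulting parity (checkerboard) pattern on $E_1$ to conclude that every $d_r$ vanishes. One small correction: the vanishing $H^{\text{odd}}_B(M)=0$ is Part~(1) of Proposition~\ref{prop:oddvanish} (with $j=l$, so $\bigcup_{i\le l}U_i=M$), not Part~(2).
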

	\begin{proof}
		By Theorem \ref{theo:Cartan}, we have that $H^*(\Omega^*_{\h'}(M),d_{\h'})$ is isomorphic to $H^*_B(M)$. By Proposition \ref{prop:oddvanish}, we have $H^{\text{odd}}_B(M)=0$. It turns out that $E^{p,q}_1 =0$ when $p+q$ is odd. Therefore the spectral sequence of $(\bigoplus C^{p,q}, d, \delta)$ collapses at the $E_1$ term. 
	\end{proof}
	As a conclusion, we have the following:
	\begin{theo}\Label{theo:eqformal}
		$H^*_G(M)$ is isomorphic to $S^*(\g'^*) \otimes H^*(\Omega^*_{\h'}(M),d_{\h'})$ as $S^*(\g'^*)$-modules. 
	\end{theo}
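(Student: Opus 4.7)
The plan is to deduce Theorem \ref{theo:eqformal} from the two preceding propositions by a standard zig-zag lifting argument for spectral sequences of filtered complexes. Consider the decreasing filtration $F^p\Omega^*_G(M) := \bigoplus_{p' \geq p,\,q} C^{p',q}$; since both $d$ and $\delta$ raise the $p$-index by at most one, $F$ is preserved by $d_G = d+\delta$, and since $S^*(\g'^*)$ acts by increasing the $p$-index, $F$ is a filtration by $S^*(\g'^*)$-submodules. The spectral sequence of this filtration is precisely the spectral sequence of the double complex $(\bigoplus C^{p,q},d,\delta)$, so by the two previous propositions we have $E_1 = S^*(\g'^*) \otimes H^*(\Omega^*_{\h'}(M),d_{\h'})$ with collapse at $E_1$. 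In particular, $\operatorname{gr}_F H^*_G(M) \cong S^*(\g'^*) \otimes H^*(\Omega^*_{\h'}(M),d_{\h'})$ as graded $S^*(\g'^*)$-modules.

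To promote this associated-graded identification to an honest $S^*(\g'^*)$-module isomorphism, I would fix an $\R$-basis $\{[\beta_i]\}_{i\in I}$ of $H^*(\Omega^*_{\h'}(M),d_{\h'})$, represented by $d_{\h'}$-closed elements $\beta_i \in C^{0,*}\subset \Omega^*_G(M)$, and build lifts $\tilde\beta_i$ by iterated correction. The condition $d_1 = 0$ (collapse at $E_1$) produces $\gamma_i^{(1)} \in C^{1,*}$ with $d\gamma_i^{(1)} = -\delta\beta_i$; inductively, once $\gamma_i^{(1)},\dots,\gamma_i^{(k-1)}$ are constructed so that the partial sum satisfies $d_G$-closure up to order $k-1$, the next obstruction $\delta\gamma_i^{(k-1)}$ is automatically $d$-closed (use $d\delta = -\delta d$ and the inductive relation) and represents the higher differential $d_r$-image of $[\beta_i]$ in $E_r$, which vanishes because the spectral sequence collapses at $E_1$. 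Hence $\delta\gamma_i^{(k-1)}$ is $d$-exact, and one chooses $\gamma_i^{(k)} \in C^{k,*}$ with $d\gamma_i^{(k)} = -\delta\gamma_i^{(k-1)}$. The process terminates at some finite stage because the form-degree component $q-p$ must stay non-negative and bounded by $\dim M$, yielding a finite sum $\tilde\beta_i := \beta_i + \sum_{k\geq 1}\gamma_i^{(k)}$ with $d_G\tilde\beta_i = 0$ whose class in $\operatorname{gr}_F^0 H^*_G(M)$ is $[\beta_i]$.

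The $S^*(\g'^*)$-linear map
\begin{equation*}
\Phi \co S^*(\g'^*) \otimes H^*(\Omega^*_{\h'}(M),d_{\h'}) \to H^*_G(M), \quad f \otimes [\beta_i] \mapsto [f \cdot \tilde\beta_i]
\end{equation*}
is filtration-preserving and its associated graded is the identity on $E_\infty = E_1$. Since the filtration on $H^*_G(M)$ is exhaustive and bounded below in each total degree, $\Phi$ is an $S^*(\g'^*)$-module isomorphism, giving the theorem.

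The main obstacle is the inductive step in the zig-zag: one must know not only $d_1 = 0$ but the vanishing of every higher differential $d_r$. This comes for free from the previous proposition, whose proof exploits the parity constraint $E_1^{p,q}=0$ for $p+q$ odd (via Proposition \ref{prop:oddvanish}) together with the fact that each $d_r$ shifts total degree by one, landing it in the trivial parity class. Once this collapse is available, the remaining lifting argument is formal.
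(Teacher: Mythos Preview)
Your proposal is correct and follows the same approach as the paper: both deduce the theorem from the two preceding propositions (identification of $E_1$ and collapse at $E_1$) for the double complex $(\bigoplus C^{p,q},d,\delta)$. The paper simply states the theorem ``as a conclusion'' without spelling out the passage from $\operatorname{gr}_F H^*_G(M)\cong E_1$ to an honest $S^*(\g'^*)$-module isomorphism, whereas you make this explicit via the standard zig-zag lifting; your termination argument (the $\Omega^*_{\h'}$-degree $q-p$ of $\gamma_i^{(k)}$ drops by $2$ at each step and must remain nonnegative) is the right reason the correction sum is finite.
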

	\begin{rema}\Label{rema:eqformal}
		The assertion of Theorem \ref{theo:eqformal} is equivalent to that the transverse action of $\g'$ on $(M,F)$ is \emph{equivariantly formal} in the sense of \cite{Goertsches-Toeben}. 
	\end{rema}
	Theorem \ref{theo:eqformal} yields that the DGA homomorphism $\Omega^*_G(M) \to \Omega^*_{\h'}(M)$ given by $\alpha \mapsto \alpha(0)$ induces a surjective homomorphism $H_G^*(M) \to H^*(\Omega^*_{\h'}(M),d_{\h'})$ and its kernel is the ideal generated by elements in $S^2(\g'^*) = \g'^*$. Since the Cartan operator $f \co \Omega^*_{\h'}(M) \to \Omega^*_B(M)$ given as Lemma \ref{lemm:Cartan} above induces an isomorphism $H^*(\Omega^*_{\h'}(M),d_{\h'}) \to H^*_B(M)$, we have the following. 
	\begin{theo}\Label{theo:basicforgetful}
		We think of elements in $\Omega_G^*(M)$ as $\Omega^*(M)^G$-valued polynomial functions on $\g$. Then, the map $\Omega^*_G(M) \to \Omega^*_B(M)$ given by 
		\begin{equation*}
			\alpha \mapsto f(\alpha|_{\h'}) \quad \text{for $\alpha \in \Omega^*_G(M)$}
		\end{equation*}
		induces a surjective homomorphism $\forB \co H^*_G(M) \to H^*_B(M)$. The kernel of $\forB$ is generated by elements in $q^*((\g/\h')^*) \subset \g^* = S^2(\g^*)$.
	\end{theo}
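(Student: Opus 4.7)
The plan is to deduce Theorem \ref{theo:basicforgetful} directly from Theorem \ref{theo:eqformal} together with the Cartan isomorphism of Theorem \ref{theo:Cartan}, both of which are already in hand. First I would fix the decomposition $\g = \g' \oplus \h'$ and, through the induced decomposition $S^*(\g^*) = S^*(\g'^*) \otimes S^*(\h'^*)$, identify $\Omega^*_G(M) = S^*(\g^*) \otimes \Omega^*(M)^G$ with $S^*(\g'^*) \otimes \Omega^*_{\h'}(M)$. Under this identification the map $\alpha \mapsto \alpha|_{\h'}$ is exactly the augmentation $\pi \colon S^*(\g'^*) \otimes \Omega^*_{\h'}(M) \to \Omega^*_{\h'}(M)$ sending $P \otimes \beta$ to $P(0)\,\beta$. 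This $\pi$ is a DGA homomorphism, and as remarked in the paragraph preceding the theorem it induces a surjection $\pi_* \colon H^*_G(M) \to H^*(\Omega^*_{\h'}(M), d_{\h'})$ whose kernel is the ideal generated by $\g'^* \subset S^*(\g^*)$; this is immediate from Theorem \ref{theo:eqformal}, which identifies $H^*_G(M)$ with $S^*(\g'^*) \otimes H^*(\Omega^*_{\h'}(M), d_{\h'})$ as an $S^*(\g'^*)$-module, so that $\pi_*$ becomes the module-level augmentation $P \otimes [\beta] \mapsto P(0)\,[\beta]$.

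Next I would compose with the Cartan operator $f \colon \Omega^*_{\h'}(M) \to \Omega^*_B(M)^G$ from Lemma \ref{lemm:Cartan}, which by Theorem \ref{theo:Cartan} induces an isomorphism $f_* \colon H^*(\Omega^*_{\h'}(M), d_{\h'}) \to H^*_B(M)$. By definition the basic forgetful map is $\forB = f_* \circ \pi_*$, so it is surjective and its kernel is identified via $f_*$ with the ideal generated by $\g'^*$ in $H^*_G(M)$.

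Finally I would rewrite $\g'^*$ as $q^*((\g/\h')^*)$: the restriction $q|_{\g'} \colon \g' \to \g/\h'$ is a linear isomorphism, hence its dual $q^* \colon (\g/\h')^* \to \g^*$ is injective with image the annihilator of $\h'$, which under $\g^* = \g'^* \oplus \h'^*$ is exactly $\g'^*$. This turns the kernel statement into the form appearing in the theorem. The only non-bookkeeping point is verifying that the abstract module isomorphism of Theorem \ref{theo:eqformal} is compatible with the augmentation $\pi_*$; this follows because that isomorphism is produced by the spectral sequence of the double complex $(\bigoplus C^{p,q}, d, \delta)$ collapsing at $E_1$, and $\pi$ is precisely the quotient by the ideal generated by $S^{\geq 1}(\g'^*)$ at the cochain level. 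I do not expect any substantive obstacle beyond checking these identifications carefully.
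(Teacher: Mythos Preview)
Your proposal is correct and follows essentially the same argument as the paper: the paragraph immediately preceding the theorem already records that Theorem \ref{theo:eqformal} makes the augmentation $\alpha \mapsto \alpha(0)$ surjective onto $H^*(\Omega^*_{\h'}(M),d_{\h'})$ with kernel generated by $\g'^*$, and then composes with the Cartan isomorphism of Theorem \ref{theo:Cartan}. Your additional step identifying $\g'^*$ with $q^*((\g/\h')^*)$ via the annihilator of $\h'$, and your remark on why the $E_1$-collapse makes the module isomorphism compatible with the augmentation, simply make explicit what the paper leaves implicit.
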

\subsection{Localization to minimal orbits}
	We use the same notations as the last subsection. 
	\begin{prop}\Label{prop:localization}
		Let $\iota_j \co \bigcup_{i\leq j}Z_i \to \bigcup_{i\leq j}U_i$ be the inclusion. Then the induced homomorphism 
		\begin{equation*}
			\iota_j^* \co H^*_G\left(\bigcup_{i\leq j}U_i\right) \to H^*_G\left(\bigcup_{i\leq j}Z_i\right) = \bigoplus_{i \leq j} H^*_G(Z_i)
		\end{equation*}
		is injective. 
	\end{prop}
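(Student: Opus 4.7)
The plan is induction on $j$. For the base case $j=1$, Proposition \ref{prop:slice} describes $U_1$ as $G^M\times_{G_1^\C}\bigoplus_{i=1}^k\C_{\alpha_{1,i}}$, and the linear scaling $[g,w]\mapsto[g,tw]$ on the vector-space factor gives a $G$-equivariant deformation retraction of $U_1$ onto $Z_1 = G^M/G_1^\C$, so $\iota_1^\ast$ is an isomorphism.

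For the inductive step, assuming $\iota_{j-1}^\ast$ is injective, I would apply Mayer--Vietoris to the open cover $\bigcup_{i\leq j}U_i = \bigcup_{i<j}U_i \cup U_j$ with intersection $U_j^\ast$, obtaining
\begin{equation*}
	\cdots \to H^{q-1}_G(U_j^\ast) \xrightarrow{\delta} H^q_G\Big(\bigcup_{i\leq j}U_i\Big) \xrightarrow{r} H^q_G\Big(\bigcup_{i<j}U_i\Big) \oplus H^q_G(U_j) \xrightarrow{\beta} H^q_G(U_j^\ast) \to \cdots.
\end{equation*}
The crux is to show that the restriction $\rho_j \co H^\ast_G(U_j) \to H^\ast_G(U_j^\ast)$ is surjective; this forces $\beta$ to be surjective, hence $\delta = 0$. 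Since $\g = \g_j \oplus p(\h)$ by Proposition \ref{prop:crit}, Lemma \ref{lemm:local} applies and gives $H^\ast_G(U_j) \cong H^\ast_{G_j}(\bigoplus_{i=1}^k\C_{\alpha_{j,i}}) \cong S^\ast(\g_j^\ast)$; combining the same lemma with Lemma \ref{lemm:Uj*} yields $H^\ast_G(U_j^\ast) \cong H^\ast_{G_j}\big(\big(\bigoplus_{\langle d\alpha_{j,i},v_j\rangle>0}\C_{\alpha_{j,i}}\big)\setminus\{0\}\big)$ (since the $\langle d\alpha_{j,i},v_j\rangle \leq 0$ factor $G_j$-equivariantly contracts to a point), and $\rho_j$ is identified with the $G_j$-equivariant restriction along the inclusion of the complement of $\{0\}$. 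The $G_j$-equivariant Gysin sequence for the trivial vector bundle $\bigoplus_{\langle d\alpha_{j,i},v_j\rangle>0}\C_{\alpha_{j,i}} \to \mathrm{pt}$ identifies this restriction with the quotient $S^\ast(\g_j^\ast) \twoheadrightarrow S^\ast(\g_j^\ast)/(e_j)$, where $e_j = \prod_{\langle d\alpha_{j,i},v_j\rangle>0} d\alpha_{j,i}$. Since $d\alpha_{j,1},\dots,d\alpha_{j,k}$ form a basis of $\g_j^\ast$ (as in the proof of Lemma \ref{lemm:minimum}), $e_j$ is a non-zerodivisor in the polynomial ring, so the Gysin long exact sequence splits into short exact sequences and the claimed surjectivity of $\rho_j$ follows. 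This Gysin identification is the main technical obstacle.

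Granted $\delta = 0$, the induction closes by a diagram chase. Given $\alpha \in H^q_G(\bigcup_{i\leq j}U_i)$ with $\iota_j^\ast\alpha = 0$, write $r(\alpha) = (\alpha_1, \alpha_2)$; the component $\alpha_1 \in H^q_G(\bigcup_{i<j}U_i)$ restricts to zero on each $Z_i$ for $i<j$, so $\iota_{j-1}^\ast\alpha_1 = 0$ and $\alpha_1 = 0$ by the inductive hypothesis, while $\alpha_2 \in H^q_G(U_j)$ restricts to zero on $Z_j$ and hence vanishes by the base case. Thus $r(\alpha) = 0$, and since $\delta = 0$ the map $r$ is injective, giving $\alpha = 0$.
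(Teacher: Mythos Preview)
Your proof is correct and follows essentially the same strategy as the paper: induction on $j$, Mayer--Vietoris for the cover $\bigcup_{i<j}U_i\cup U_j$, and the key fact that the restriction $\rho_j\co H^*_G(U_j)\to H^*_G(U_j^*)$ is surjective. The paper asserts this surjectivity without argument and then uses it (in a nested induction) to prove $H^{\mathrm{odd}}_G\bigl(\bigcup_{i\leq j}U_i\bigr)=0$ and invokes \cite[Corollary~2.18]{Buchstaber-Panov} for $H^{\mathrm{odd}}_G(U_j^*)=0$, combining these parity vanishings with the commutative ladder of Mayer--Vietoris sequences; you instead supply a direct Gysin-sequence proof of the surjectivity (identifying $\rho_j$ with the quotient $S^*(\g_j^*)\to S^*(\g_j^*)/(e_j)$ by the nonzerodivisor $e_j=\prod_{\langle d\alpha_{j,i},v_j\rangle>0}d\alpha_{j,i}$), deduce $\delta=0$ at once, and close with a clean four-lemma chase. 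Your route is slightly more self-contained and avoids the auxiliary odd-vanishing induction, but the underlying mechanism is the same.
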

	\begin{proof}
		It is obvious that $H^0_G\left(\bigcup_{i\leq j}U_i\right) \to H^0_G\left(\bigcup_{i\leq j}Z_i\right)$ is injective. Let $q>0$. Induction on $j$.  By Proposition \ref{prop:slice} and Lemma \ref{lemm:local}, we have $H^q_G(U_1) \cong H^q_G(Z_1)$. Suppose that the proposition holds for $j-1$. 
		By Mayer-Vietoris sequences of equivariant cohomologies, we have a commutative diagram
		\begin{equation*}
			\xymatrix{
				H^{q-1}_G(U_j^*) \ar[r] \ar[d] & H^q_G(\bigcup_{i\leq j}U_i) \ar[r] \ar[d] ^{\iota_{j}^*}& H^q_G(\bigcup_{i\leq j-1}U_i) \oplus H^q_G(U_j)\ar[r] \ar[d] & H^q_G(U_j^*) \ar[d]\\
				H^{q-1}_G(\emptyset ) \ar[r] & H^q_G(\bigcup_{i\leq j}Z_i) \ar[r] & H^q_G(\bigcup_{i\leq j-1}Z_j) \oplus H^q_G(Z_j)\ar[r] & H^q_G(\emptyset)
			}
		\end{equation*}
		whose horizontal lines are exact. Since $q>0$, we have that $H^{q-1}_G(\emptyset)$ and $H^q_G(\emptyset)$ vanish. Thus it is enough to show that $H_G^{\text{odd}}(\bigcup_{i\leq j}U_i) = 0$ and $H_G^{\text{odd}}(U_j^*) = 0$. 
		By Lemmas \ref{lemm:local}, \ref{lemm:Uj*} and \cite[Corollary 2.18]{Buchstaber-Panov}, we have $H_G^{\text{odd}}(U_j^*) = 0$. We show that $H_G^{\text{odd}}(\bigcup_{i\leq j}U_i) = 0$ by induction on $j$. 
		Since $H^*_G(U_1) \cong H^*_G(Z_1) \cong S^*((\g/\h')^*)$, we have $H_G^{\text{odd}}(U_1) = 0$. Suppose that $H^{2d+1}_G(\bigcup_{i\leq j-1} U_i) = 0$ for all nonnegative integer $d$. By Mayer-Vietoris, we have an exact sequence 
		\begin{equation}\Label{eq:oddeq}
			H^{2d}_G\left(\bigcup_{i\leq j-1}U_i\right) \oplus H^{2d}_G(U_j) \to H^{2d}_G(U_j^*) \to H^{2d+1}_G\left(\bigcup_{i\leq j}U_i\right) \to 0. 
		\end{equation}
	Since the inclusion $U_j^* \to U_j$ induces a surjective map $H^*_G(U_j) \to H^*_G(U_j^*)$, we have that the first arrow $H^{2d}_G\left(\bigcup_{i\leq j-1}U_i\right) \oplus H^{2d}_G(U_j) \to H^{2d}_G(U_j^*)$ in \eqref{eq:oddeq} is surjective. Therefore we have that the second arrow $H^{2d}_G(U_j^*) \to H^{2d+1}_G\left(\bigcup_{i\leq j}U_i\right)$ in \eqref{eq:oddeq} is $0$. Therefore $H^{2d+1}_G\left(\bigcup_{i\leq j}U_i\right) =0$, as required. 
	\end{proof}
	\begin{coro}\Label{coro:localization}
		Let $\iota \co Z \to M$ be the inclusion. Then the induced homomorphism $\iota^* \co H^*_G(M) \to H^*_G(Z)$ is injective. 
	\end{coro}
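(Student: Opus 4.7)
The plan is to derive the corollary by applying Proposition \ref{prop:localization} with $j = l$, where $l$ is the total number of minimal orbits. To do this, I need to verify two set-theoretic identifications: that $\bigcup_{i \leq l} Z_i = Z$ (which is immediate by definition of $Z$) and, crucially, that $\bigcup_{i \leq l} U_i = M$.

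For the latter identification, I would use the dynamics of the flow $\gamma_t$ of $JX_v$ developed in Lemmas \ref{lemm:stable} and \ref{lemm:stratification}. Given an arbitrary point $x \in M$, Lemma \ref{lemm:stable} supplies an index $j \in \{1, \dots, l\}$ such that the $\omega$-limit set $W_x$ is contained in the minimal orbit $Z_j$. Then Lemma \ref{lemm:stratification}(1) shows $x \in U_j$, and therefore $x \in \bigcup_{i \leq l} U_i$. So the open sets $U_1, \dots, U_l$ cover $M$.

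With these two identifications, the map $\iota^* \co H^*_G(M) \to H^*_G(Z)$ is literally the map $\iota_l^* \co H^*_G(\bigcup_{i \leq l} U_i) \to H^*_G(\bigcup_{i \leq l} Z_i) = \bigoplus_{i \leq l} H^*_G(Z_i)$ of Proposition \ref{prop:localization}, which has already been shown injective. So the corollary follows.

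There is essentially no obstacle here beyond recognizing that the stratification by $\omega$-limits exhausts $M$; all of the real work has been carried out in Proposition \ref{prop:localization} by induction on $j$ using Mayer--Vietoris together with the oddness vanishing of $H^*_G(U_j^*)$ and $H^*_G(\bigcup_{i \leq j} U_i)$. The only mild point to state cleanly is that since $Z$ is a disjoint union of closed submanifolds, $H^*_G(Z)$ splits as the direct sum $\bigoplus_i H^*_G(Z_i)$, matching the target in Proposition \ref{prop:localization}.
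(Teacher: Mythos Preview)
Your proposal is correct and takes essentially the same approach as the paper, which simply says the corollary follows from Proposition~\ref{prop:localization} immediately. You have made explicit the one point the paper leaves implicit, namely that $\bigcup_{i\le l}U_i = M$, and your justification via Lemmas~\ref{lemm:stable} and~\ref{lemm:stratification}(1) is exactly right.
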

	\begin{proof}
		It follows from Proposition \ref{prop:localization} immediately. 
	\end{proof}
	We shall see the degree $2$ part of $\iota^*$ for later use. Since 
	\begin{equation*}
		\Omega^2_G(M) = S^0(\g^*) \otimes \Omega^2(M)^G \oplus S^2(\g^*)\otimes \Omega^0(M)^G
	\end{equation*}
	and $S^2(\g^*) = \g^*$, we have that every element in $\Omega^2_G(M)$ can be written as $1\otimes \beta + \Psi$
	for some $\beta \in \Omega^2(M)^G$ and $G$-invariant smooth map $\Psi \co M \to \g^*$. Let $Z_j$ be a connected component of $Z$ and $\kappa_j \co Z_j \to M$ the inclusion. 
	\begin{lemm}\Label{lemm:localizationminimal}
		Let $1\otimes \beta + \Psi \in \Omega^2_G(M)$. If $\beta$ is basic with respect to $F$, then $\Psi |_{Z_j}$ is constant and 
		\begin{equation*}
			\kappa_j^*(1\otimes \beta + \Psi ) = \Psi(Z_j) \otimes 1. 
		\end{equation*}
	\end{lemm}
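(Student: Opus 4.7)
The plan is to exploit the fact that at a point $x$ of a minimal orbit $Z_j$, the tangent space to $Z_j$ coincides with the tangent space to the leaf of $F$ through $x$. Recall from the proof of Proposition \ref{prop:crit} that for $x$ in a minimal orbit we have the direct sum decomposition $\g = \g_x \oplus \h'$. Since $(X_v)_x = 0$ whenever $v \in \g_x$, the composition $\g \to T_xM$, $v \mapsto (X_v)_x$, has the same image as its restriction to $\h'$. The left-hand image is $T_x Z_j$ (because $G \to G \cdot x = Z_j$ is a surjective submersion), while the right-hand image is $T_xF$ (because $H'$ acts locally freely at $x$ by Proposition \ref{prop:localfree}, so the leaf through $x$ has $\h'$ as its infinitesimal generator). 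Hence $T_xZ_j = T_xF$ for every $x \in Z_j$.

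Granted this identification, I would split the proof in two. For the $\beta$-part: since $\beta$ is basic, $i_{X_v}\beta = 0$ for all $v \in \h'$. At any $x \in Z_j$ this says that $\beta_x$ vanishes on $T_xF = T_xZ_j$, and therefore $\kappa_j^*\beta = 0$. For the $\Psi$-part: because $G$ is abelian, the adjoint action on $\g^*$ is trivial, so the $G$-invariance of $\Psi \in (\g^* \otimes \Omega^0(M))^G$ is nothing but pointwise invariance, $\Psi(g \cdot x) = \Psi(x)$. Since $G$ acts transitively on the orbit $Z_j$, this forces $\Psi$ to take a single constant value on $Z_j$, which is exactly the value $\Psi(Z_j)$ appearing in the statement.

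Putting the two pieces together, $\kappa_j^*(1\otimes\beta+\Psi) = 1\otimes \kappa_j^*\beta + \kappa_j^*\Psi = 0 + \Psi(Z_j)\otimes 1 = \Psi(Z_j) \otimes 1$, as claimed. There is no serious obstacle here; the only point requiring a moment's thought is the identification $T_xZ_j = T_xF$, which is a direct consequence of the decomposition $\g = \g_x \oplus \h'$ that is available precisely because $Z_j$ is a minimal orbit.
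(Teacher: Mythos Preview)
Your proof is correct and follows essentially the same approach as the paper's. The paper's proof is extremely terse (just two sentences asserting $\beta|_{Z_j}=0$ and $\Psi|_{Z_j}$ is constant because $Z_j$ is a minimal orbit), and you have supplied precisely the reasoning it leaves implicit: the identification $T_xZ_j = T_xF$ via the decomposition $\g = \g_x \oplus \h'$, which forces a basic form to pull back to zero, together with the observation that $G$-invariance and transitivity make $\Psi$ constant on the orbit.
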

	\begin{proof}
		Since $Z_j$ is a connected component of $Z$, we have that $Z_j$ is a minimal orbit by Proposition \ref{prop:crit}. Thus $\beta|_{Z_j} = 0$ and $\Psi|_{Z_j}$ is constant. 
	\end{proof}
\section{The Danilov-Jurkiewicz type formula}\Label{sec:DJ}
	Let $(M,G,y) \in \mathcal{C}_1$. Let $F$ be the canonical foliation of $M$. Let $(\Delta, \h, G) = \F_1(M,G,y) \in \mathcal{C}_2$. As before, we denote by $p \co \g^\C \to \g$ the projection and by $q \co \g \to \g/p(\h)$ the quotient map. Let $(\widetilde{V}, \widetilde{\Gamma}, \widetilde{\Delta}, \widetilde{\lambda}) = \widetilde{\F}_1(M,G,y) \in \widetilde{\mathcal{C}_2}$. The purpose of this subsection is to describe the basic cohomology $H^*_B(M)$ explicitly in terms of the corresponding marked fan $(\widetilde{V}, \widetilde{\Gamma}, \widetilde{\Delta}, \widetilde{\lambda})$. 
	
	Suppose that $\Delta$ has $m$ $1$-cones $\rho_1,\dots, \rho_m$. Let $K$ be the abstract simplicial complex given by 
	\begin{equation*}
		K := \{ \{i_1,\dots, i_k\} \subset \{1,\dots, m\} \mid \rho_{i_1}+\dots +\rho_{i_k} \in \Delta\}.
	\end{equation*}
	$K$ is the underlying simplicial complex of $\Delta$. Let $\lambda(\rho_i)$ be the primitive generator of $\rho_i$ for $i=1,\dots, m$. We take a positive integer $N \in \mathbb{N}$ and a linear map $\varphi \co \R^N \to \g$ such that $\varphi(e_i) = \lambda (\rho_i)$ for $i=1,\dots, m$, where $e_i$ denotes the standard basis vector of $\R^N$, $\varphi(\Z^N) = \ker \exp_G$ and $\dim \ker \varphi$ is even. These assumptions imply that $\varphi \co \R^N \to \g$ induces a surjective homomorphism $\alpha \co \R^N/\Z^N \to G$ whose kernel is an even dimensional subtorus. Put $\g_0 = \R^N$ and $G_0 = \R^N/\Z^N$. 
	The collection of cones 
	\begin{equation*}
		{\Delta}_0 := \{ \R_{\geq 0}e_{i_1} + \dots + \R_{\geq 0}e_{i_k} \mid \{i_1,\dots, i_k\} \in K\}
	\end{equation*}
	is a nonsingular fan in $\g_0$ with respect to the lattice $\ker \exp_{G_0} = \Z^N$. $K$ is also the underlying simplicial complex of $\Delta_0$. 
	Let $v_1,\dots, v_l$ be $\C$-basis vectors of $\h$. Then, $p(v_1),\dots, p(v_l), p(-\sqrt{-1}v_1), \dots, p(-\sqrt{-1}v_l)$ are $\R$-basis vectors of $p(\h)$. Since $\varphi$ is surjective, there exist $b_j, b_j' \in \g_0$ such that $\varphi(b_j) = p(v_j)$, $\varphi(b_j') = p(-\sqrt{-1}v_j)$ for $j=1,\dots, l$. Let $c_1,\dots, c_{l'}, c_1', \dots, c_{l'}'$ be $\R$-basis vectors of $\ker \varphi$. Let ${\h}_0$ be the $\C$-subspace of $\C^N$ spanned by $b_1+\sqrt{-1}b_1',\dots, b_l+\sqrt{-1}b_l'$, $c_1+\sqrt{-1}c_1'$, \dots, $c_{l'}+\sqrt{-1}c_{l'}'$. Then by definition, we have $({\Delta}_0, {\h}_0, G_0) \in \mathcal{C}_2$
	and $\alpha \in \Hom_{\mathcal{C}_2}(({\Delta}_0, {\h}_0, G_0), (\Delta, \h,G))$. Let $(M_0,G_0,y_0) = \F_2(\Delta_0, \h_0, G_0)$. 
	\begin{lemm}\Label{lemm:M0M}
		There exists $f \co M_0 \to M$ satisfying the followings.
		\begin{enumerate}
		\setcounter{enumi}{-1}
			\item $(f,\alpha) \in \Hom_{\mathcal{C}_1}((M_0,G_0,y_0),(M,G,y))$. 
			\item $\ker \alpha$ is connected.
			\item $f \co M_0 \to M$ is a principal $\ker \alpha$-bundle. 
		\end{enumerate}
		In particular, $(M,G,y)$ and $(M_0,G_0,y_0)$ are equivalent, see Definition \ref{defn:equivalent}. 
	\end{lemm}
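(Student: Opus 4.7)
The plan is to apply the functor $\F_2$ to the morphism $\alpha \in \Hom_{\mathcal{C}_2}((\Delta_0,\h_0,G_0),(\Delta,\h,G))$ constructed just before the statement. This produces a pair $(f,\alpha) := \F_2(\alpha)$ which, by functoriality and the equivalence of categories $\mathcal{C}_1 \simeq \mathcal{C}_2$ recalled in Subsection \ref{subsec:maximal}, is an element of $\Hom_{\mathcal{C}_1}((M_0,G_0,y_0),(M,G,y))$. This disposes of condition $(0)$ at once. The remaining conditions $(1)$ and $(2)$ amount to combinatorial statements about the linear map $\varphi \co \R^N \to \g$ that was chosen, and I would check both of them using Theorem \ref{theo:principal}.

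For condition $(1)$, I would observe that $\alpha \co G_0 \to G$ is a homomorphism of compact tori with differential $d\alpha = \varphi$. For such a homomorphism, $\ker \alpha$ is connected if and only if $d\alpha$ carries $\ker \exp_{G_0} = \Z^N$ onto $\ker \exp_G$. Since the construction explicitly imposed $\varphi(\Z^N) = \ker \exp_G$, this is immediate.

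For condition $(2)$, I would invoke Theorem \ref{theo:principal}, which reduces the statement to checking that $\alpha$ is surjective and that $d\alpha = \varphi$ induces a bijection between the primitive generators of the $1$-cones of $\Delta_0$ and those of $\Delta$. Surjectivity of $\alpha$ follows because $\varphi$ is surjective and carries the lattice $\Z^N$ onto $\ker \exp_G$, so the induced map of tori is onto. The $1$-cones of $\Delta_0$ are exactly the $\R_{\geq 0}e_i$ with $\{i\} \in K$, whose primitive generators are $e_1,\dots, e_m$; by construction $\varphi(e_i) = \lambda(\rho_i)$, which is the primitive generator of $\rho_i \in \Delta$. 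Hence $\varphi$ is a bijection between primitive generators, and Theorem \ref{theo:principal} delivers that $f$ is a principal $\ker \alpha$-bundle.

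There is no serious obstacle here: all of the substance is absorbed into Theorem \ref{theo:principal} and the equivalence of categories. The role of the lemma is simply to record that the data $(\Delta_0,\h_0,G_0)$ and the map $\varphi$ were cooked up precisely so that the hypotheses of Theorem \ref{theo:principal} are met, whence $(M,G,y)$ and $(M_0,G_0,y_0)$ are equivalent in the sense of Definition \ref{defn:equivalent}.
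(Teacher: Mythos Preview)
Your proposal is correct and takes essentially the same approach as the paper, which records the proof in a single sentence: ``It follows from Theorem \ref{theo:principal} and the definition of $(\Delta_0,\h_0,G_0)$ immediately.'' You have simply unpacked that sentence, verifying explicitly that $\varphi(\Z^N)=\ker\exp_G$ gives connectedness of $\ker\alpha$ and that $\varphi(e_i)=\lambda(\rho_i)$ gives the required bijection on primitive generators for Theorem \ref{theo:principal}.
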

	\begin{proof}
		It follows from Theorem \ref{theo:principal} and the definition of $(\Delta_0,\h_0,G_0)$ immediately. 
	\end{proof}
	$M_0$ is the quotient space $X(\Delta_0)/H_0$ of the toric variety $X(\Delta_0)$ by the action of $H_0 = \exp_{G^\C_0} (\h_0)$. By definition of $\Delta_0$, the toric variety $X(\Delta_0)$ is the complement of the coordinate subspace arrangement 
	\begin{equation*}
		X(\Delta_0) = \C^N \setminus \bigcup_{I \notin K}L_I, 
	\end{equation*}
	where 
	\begin{equation*}
		L_ I = \{ (z_1,\dots, z_N) \in \C^N \mid z_i = 0 \text{ for all $i \in I$}\}. 
	\end{equation*}
	The action of $G_0 = (S^1)^N$ on $\C^N$ is given by coordinatewise multiplication. 
	Let $x_1,\dots, x_N$ be the dual basis vectors of $e_1,\dots, e_N \in \g_0$. Then 
	\begin{equation*}
		H^*_{G_0}(\C^N) \cong S^*(\g_0^*) = \R[x_1,\dots, x_N]. 
	\end{equation*}
	By \cite[Corollary 2.18]{Buchstaber-Panov} and \cite[Theorem 4.8 and Remark 4.10]{Davis-Januskiewicz}, we have that 
	the inclusion $X(\Delta_0) \to \C^N$ induces a surjective homomorphism $\R[x_1,\dots, x_N] \to H^*_{G_0} (X(\Delta_0))$ and 
	the kernel $\mathcal{I}$ is the Stanley-Reisner ideal $\mathcal{I} = \langle x_{i_1}\dots x_{i_k} \mid \{i_1,\dots, i_k\} \notin K \rangle$. 
	In particular, $H^*_{G_0} (X(\Delta_0)) \cong \R[x_1,\dots, x_N]/\mathcal{I}$. 
	
	Since $H_0$ is contractible and the action of $H_0$ on $X(\Delta_0)$ is free, we have $H^*_{G_0}(M_0) \cong H^*_{G_0} (X(\Delta_0))$. To describe $H^*_B(M_0)$, we shall see the image of $(\g_0/p_0(\h_0))^*$ by the dual of the quotient map $q_0 \co \g_0\to \g_0/p_0(\h_0)$, where $p_0 \co \g_0^\C \to \g_0$ denotes the projection. 
	Let $\widetilde{x} \in (\g_0/p_0(\h_0))^*$. Then 
	\begin{equation}\Label{eq:q0y}
		q_0^*(\widetilde{x})  = \sum_{i=1}^N \langle q_0^*(\widetilde{x}),e_i\rangle x_i = \sum_{i=1}^N \langle \widetilde{x}, q_0(e_i)\rangle x_i.
	\end{equation}
	\begin{prop}\Label{prop:DJ}
		Let $(M_0,G_0,y_0) \in \mathcal{C}_1$ be as above. Let $(\widetilde{V}_0, \widetilde{\Gamma}_0, \widetilde{\Delta}_0, \widetilde{\lambda}_0) = \widetilde{\F}_1(M_0,G_0,y_0) \in \widetilde{\mathcal{C}}_2$ be the corresponding marked fan. Let $\widetilde{\rho}_1, \dots, \widetilde{\rho}_m$ be $1$-cones of $\widetilde{\Delta}_0$. Suppose that $M_0$ is transverse K\"ahler with respect to the canonical foliation $F_0$ on $M_0$. Then we have an isomorphism 
		\begin{equation*}
			H^*_B(M_0) \cong \R[x_1,\dots, x_m]/\mathcal{I}' + \mathcal{J}', 
		\end{equation*}
		where the degree of $x_i$ is $2$ for $i=1,\dots, m$, $\mathcal{I}'$ is the Stanley-Reisner ideal of the underlying simplicial complex of $\widetilde{\Delta}_0$, that is, 
		\begin{equation*}
			\mathcal{I}' = \langle x_{i_1}\dots x_{i_k} \mid \widetilde{\rho}_{i_1} + \dots +\widetilde{\rho}_{i_k} \notin \widetilde{\Delta}_0\rangle 
		\end{equation*}
		and 	
		\begin{equation*}
			\mathcal{J}' = \left\langle \sum_{i=1}^m \left\langle \widetilde{x}, \widetilde{\lambda}_0(\widetilde{\rho}_i) \right\rangle x_i \mid \widetilde{x} \in \widetilde{V}_0^*\right\rangle .
		\end{equation*}
	\end{prop}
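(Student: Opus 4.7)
The plan is to combine Theorem \ref{theo:basicforgetful}, which identifies $H^*_B(M_0)$ as a quotient of $H^*_{G_0}(M_0)$, with the Davis--Januszkiewicz type presentation of $H^*_{G_0}(X(\Delta_0))$ already recalled in the text. Since $H_0 = \exp_{G_0^{\C}}(\h_0)$ is contractible and acts freely on $X(\Delta_0)$, we have $H^*_{G_0}(M_0) \cong H^*_{G_0}(X(\Delta_0)) \cong \R[x_1,\dots,x_N]/\mathcal{I}$. The transverse K\"ahler assumption lets us invoke Theorem \ref{theo:basicforgetful}, which tells us that $\forB$ is surjective with kernel generated by the degree-$2$ elements $q_0^*(\widetilde{x}) \in \g_0^*$ for $\widetilde{x} \in (\g_0/p_0(\h_0))^*$. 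Thus $H^*_B(M_0) \cong \R[x_1,\dots,x_N]/(\mathcal{I} + \mathcal{J})$, where $\mathcal{J}$ is the ideal generated by those $q_0^*(\widetilde{x})$.

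Next I would use \eqref{eq:q0y} to write each generator of $\mathcal{J}$ explicitly as $\sum_{i=1}^N \langle \widetilde{x}, q_0(e_i)\rangle x_i$. The crucial observation is that, since $K$ has vertex set $\{1,\dots,m\}$, each singleton $\{i\}$ with $i>m$ is a non-face of $K$, hence $x_i \in \mathcal{I}$ for $i > m$. Consequently, modulo $\mathcal{I}$ the generator reduces to $\sum_{i=1}^m \langle \widetilde{x}, q_0(e_i)\rangle x_i$, and the whole presentation collapses to $\R[x_1,\dots,x_m]/(\mathcal{I}' + \mathcal{J}_0)$, where $\mathcal{I}'$ is the Stanley--Reisner ideal of $K$ regarded as a complex on $\{1,\dots,m\}$ (this is precisely the $\mathcal{I}'$ in the statement) and $\mathcal{J}_0$ is generated by the truncated sums above.

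It remains to match $\mathcal{J}_0$ with $\mathcal{J}'$ via the marked fan. By construction the $1$-cones of $\Delta_0$ are exactly $\rho_i = \R_{\geq 0} e_i$ for $i=1,\dots,m$ with primitive generators $\lambda(\rho_i) = e_i$. By the definition of $\widetilde{\F}_1$ recalled before Theorem \ref{theo:fundamental}, we have $\widetilde{\rho}_i = q_0(\rho_i)$ and $\widetilde{\lambda}_0(\widetilde{\rho}_i) = q_0(\lambda(\rho_i)) = q_0(e_i)$; moreover $\widetilde{V}_0 = \g_0/p_0(\h_0)$, so $(\g_0/p_0(\h_0))^* = \widetilde{V}_0^*$. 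Substituting these identifications yields $\mathcal{J}_0 = \mathcal{J}'$, and the presentation $H^*_B(M_0) \cong \R[x_1,\dots,x_m]/(\mathcal{I}' + \mathcal{J}')$ follows.

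The argument is essentially a bookkeeping exercise once Theorem \ref{theo:basicforgetful} is in hand; the only thing to watch carefully is the reduction from $N$ to $m$ coordinates via the ghost-vertex argument, and the identification of $q_0(e_i)$ with $\widetilde{\lambda}_0(\widetilde{\rho}_i)$ through the functor $\widetilde{\F}_1$. I do not anticipate a serious obstacle beyond these routine identifications.
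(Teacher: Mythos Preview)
Your proposal is correct and follows essentially the same route as the paper's own proof: invoke Theorem~\ref{theo:basicforgetful} to present $H^*_B(M_0)$ as $\R[x_1,\dots,x_N]/(\mathcal{I}+\mathcal{J})$, then use \eqref{eq:q0y} together with the fact that $x_{m+1},\dots,x_N\in\mathcal{I}$ (since the corresponding singletons are non-faces of $K$) to pass to $\R[x_1,\dots,x_m]/(\mathcal{I}'+\mathcal{J}')$, identifying $q_0(e_i)=\widetilde{\lambda}_0(\widetilde{\rho}_i)$. The paper does exactly this, only more tersely.
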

	\begin{proof}
		By Theorem \ref{theo:basicforgetful}, there exists a surjective homomorphism 
		\begin{equation*}
			\forB \co H^*_G(M_0) \cong \R[x_1,\dots, x_N]/\mathcal{I}  \to H^*_B(M_0)
		\end{equation*} whose kernel is generated by elements in the image of $(\g_0/p_0(\h_0))^*$ by the dual of the quotient map $q_0 \co \g_0 \to \g_0/p_0(\h_0)$, where $\mathcal{I}$ denotes the Stanley-Reisner ideal. By \eqref{eq:q0y}, we have that $\ker \forB$ is the image of the ideal 
		\begin{equation*}
			\mathcal{J} = \left\langle \sum_{i=1}^N \left\langle \widetilde{x}, q_0(e_i) \right\rangle x_i \mid \widetilde{x} \in \widetilde{V}_0^*\right\rangle.
		\end{equation*}
		
		By renumbering, we may assume that $\widetilde{\rho}_i = q_0(\R_{\geq 0}e_i)$ for $i=1,\dots, m$ without loss of generality. Then, the underlying simplicial complex $K$ of $\Delta$ is also the one of $\widetilde{\Delta}_0$. More precisely, $\{i_1,\dots, i_k\} \in K$ if and only if $1\leq i_j\leq m$ for all $j$ and $\widetilde\rho_{i_1}+ \dots + \widetilde{\rho}_{i_k} \in \widetilde{\Delta}_0$. Since singletons $\{m+1\},\dots, \{N\} \notin K$, we have $x_{m+1},\dots, x_{N} \in \mathcal{I}$. The images of $\mathcal{I}$ and $\mathcal{J}$ by the quotient map
		\begin{equation*}
			\R[x_1,\dots, x_N] \to \R[x_1,\dots, x_N]/\langle x_{m+1},\dots, x_{N}\rangle = \R[x_1,\dots, x_m]
		\end{equation*}
		are nothing but $\mathcal{I}'$ and $\mathcal{J}'$, respectively. The proposition is proved. 
	\end{proof}
	The following formula is well known as the theorem of Danilov and Jurkiewicz in case of complete nonsingular toric varieties. 
	\begin{theo}\Label{theo:DJ}
		Let $(M,G,y) \in \mathcal{C}_1$. Let $(\widetilde{V},\widetilde{\Gamma}, \widetilde{\Delta}, \widetilde{\lambda}) =\widetilde{F}_1(M,G,y) \in \widetilde{C}_2$. Let $\widetilde{\rho}_1,\dots, \widetilde{\rho}_m$ be $1$-cones of $\widetilde{\Delta}$. Suppose that $M$ is transverse K\"ahler with respect to the canonical foliation $F$ on $M$. Then we have an isomorphism 
		\begin{equation*}
			H^*_B(M) \cong \R[x_1,\dots, x_m]/\mathcal{I}'+\mathcal{J}',
		\end{equation*}
		where $\mathcal{I}'$ is the Stanley-Reisner ideal of the underlying simplicial complex of $\widetilde{\Delta}$, that is, 
		\begin{equation*}
			\mathcal{I}' = \langle x_{i_1}\dots x_{i_k} \mid \widetilde{\rho}_{i_1} + \dots +\widetilde{\rho}_{i_k} \notin \widetilde{\Delta}\rangle 
		\end{equation*}
		and 	
		\begin{equation*}
			\mathcal{J}' = \left\langle \sum_{i=1}^m \left\langle \widetilde{x}, \widetilde{\lambda}(\widetilde{\rho}_i) \right\rangle x_i \mid \widetilde{x} \in \widetilde{V}^*\right\rangle .
		\end{equation*}
	\end{theo}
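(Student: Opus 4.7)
The plan is to reduce Theorem~\ref{theo:DJ} to Proposition~\ref{prop:DJ} by passing to the equivalent object $(M_0,G_0,y_0) \in \mathcal{C}_1$ furnished by Lemma~\ref{lemm:M0M}. The datum $(\Delta_0,\h_0,G_0)$ is built so that $\Delta_0$ is nonsingular and its underlying simplicial complex coincides with the underlying simplicial complex $K$ of $\widetilde{\Delta}$. By Theorem~\ref{theo:fundamental}, $\widetilde{\F}_1(M_0,G_0,y_0)$ is canonically isomorphic to $(\widetilde{V},\widetilde{\Gamma},\widetilde{\Delta},\widetilde{\lambda})$, so once we know $H^*_B(M) \cong H^*_B(M_0)$ as graded $\R$-algebras, Proposition~\ref{prop:DJ} applied to $M_0$ will deliver the desired presentation.

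First I would verify the hypothesis of Proposition~\ref{prop:DJ} for $M_0$. By Theorem~\ref{theo:transverse}, admitting a transverse K\"ahler form with respect to the canonical foliation is equivalent to $q(\Delta)$ being polytopal, a property depending only on the marked fan. Since $\widetilde{\F}_1(M_0,G_0,y_0) \cong \widetilde{\F}_1(M,G,y)$ and $M$ is transverse K\"ahler by assumption, $M_0$ is transverse K\"ahler with respect to its canonical foliation $F_0$, so Proposition~\ref{prop:DJ} yields $H^*_B(M_0) \cong \R[x_1,\dots,x_m]/(\mathcal{I}'+\mathcal{J}')$ with exactly the $\mathcal{I}'$ and $\mathcal{J}'$ that appear in the theorem, because the marked fan data is identical.

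Next I would establish $H^*_B(M) \cong H^*_B(M_0)$. By Lemma~\ref{lemm:M0M}, $f \co M_0 \to M$ is a principal $\ker\alpha$-bundle with $\ker\alpha$ a connected subtorus, and the proof of Proposition~\ref{prop:equivalence} shows that $\ker\alpha \subset H_0' := \exp_{G_0}(p_0(\h_0))$ and that the preimage of every leaf of $F$ is a leaf of $F_0$. Pulling back by $f$ therefore sends $F$-basic forms on $M$ to $F_0$-basic forms on $M_0$ and is clearly a DGA map. Conversely, any $F_0$-basic form on $M_0$ is $\ker\alpha$-invariant and horizontal (as $\ker\alpha \subset H_0'$), hence descends along the principal bundle $f$ to a form on $M$ which one checks is $F$-basic using that $\alpha$ maps $H_0'$ onto $H' = \exp_G(p(\h))$. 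These two operations are mutually inverse, so $f^*$ is an isomorphism $\Omega^*_B(M) \to \Omega^*_B(M_0)$ of DGAs, inducing the required algebra isomorphism on cohomology. Combining with Proposition~\ref{prop:DJ} applied to $M_0$ completes the proof.

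The main obstacle is the descent/pull-back step for basic forms along the bundle $f$: one must verify carefully that the inclusion $\ker\alpha \subset H_0'$ is precisely what makes pull-back and descent inverse isomorphisms between $\Omega^*_B(M)$ and $\Omega^*_B(M_0)$, and that under this identification the generators $x_i$ appearing in Proposition~\ref{prop:DJ} for $M_0$ correspond to the classes described by the marked fan $(\widetilde{V},\widetilde{\Gamma},\widetilde{\Delta},\widetilde{\lambda})$ of $M$. Once this naturality is in hand, the statement of Theorem~\ref{theo:DJ} follows directly from Proposition~\ref{prop:DJ} because both sides depend only on the marked fan.
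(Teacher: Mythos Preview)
Your proposal is correct and follows essentially the same route as the paper: reduce to $(M_0,G_0,y_0)$ via Lemma~\ref{lemm:M0M}, identify the marked fans via Theorem~\ref{theo:fundamental}, and invoke Proposition~\ref{prop:DJ}. The only real difference is in the step $H^*_B(M)\cong H^*_B(M_0)$: the paper cites Theorem~\ref{theo:equivalence} to conclude transversal equivalence and then uses the standard fact that transversally equivalent foliations have isomorphic basic cohomology, whereas you argue directly that $f^*$ is an isomorphism of basic complexes using $\ker\alpha\subset H_0'$ and descent along the principal bundle---a more self-contained argument that also makes explicit why $M_0$ inherits the transverse K\"ahler hypothesis needed for Proposition~\ref{prop:DJ}.
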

	\begin{proof}
		Let $(M_0,G_0,y_0) \in \mathcal{C}_1$ be as above. By Lemma \ref{lemm:M0M}, we have that $(M,G,y)$ and $(M_0,G_0,y_0)$ are equivalent. Let $F$ and $F_0$ be the canonical foliations on $M$ and $M_0$, respectively. By Theorem \ref{theo:equivalence}, we have that $(M,F)$ and $(M_0,F_0)$ are transversally equivalent. Thus we have an isomorphism $H^*_B(M_0) \to H^*_B(M)$. 
		
		It follows from Theorem \ref{theo:fundamental} that $\widetilde{F}_1(M,G,y)$ and $\widetilde{F}_1(M_0,G_0,y_0)$ are isomorphic. This together with Proposition \ref{prop:DJ} yields that $H^*_B(M_0)$ and  $H^*_B(M)$ have the same description. 
	\end{proof}
	\begin{coro}\Label{coro:degree2}
		Let $(M,G,y) \in \mathcal{C}_1$. Let $F$ be the canonical foliation on $M$. Suppose that $M$ is transverse K\"ahler with respect to $F$. Then, the basic cohomology algebra $H^*_B(M)$ is generated by degree $2$ elements. 
	\end{coro}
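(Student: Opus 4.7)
The plan is to deduce this immediately from Theorem \ref{theo:DJ}. That theorem, under exactly the hypothesis that $M$ is transverse K\"ahler with respect to $F$, provides an explicit presentation
\begin{equation*}
	H^*_B(M) \cong \R[x_1,\dots,x_m]/(\mathcal{I}'+\mathcal{J}'),
\end{equation*}
in which each generator $x_i$ sits in degree $2$ (it corresponds to a $1$-cone $\widetilde{\rho}_i$ of the marked fan). Since the polynomial ring on the right is generated as an $\R$-algebra by $x_1,\dots,x_m$, so is every quotient of it; in particular $H^*_B(M)$ is generated by the images $\overline{x}_1,\dots,\overline{x}_m$ of these degree-$2$ elements.

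Concretely, the single step is: take any cohomology class $[\omega]\in H^{2d}_B(M)$, lift it via the isomorphism of Theorem \ref{theo:DJ} to a polynomial $P(x_1,\dots,x_m)$ of total degree $2d$, and observe that $P$ is an $\R$-linear combination of monomials $x_{i_1}\cdots x_{i_d}$, each of which is a product of degree-$2$ generators. Pushing this expression back through the isomorphism expresses $[\omega]$ as a polynomial in the degree-$2$ classes corresponding to $x_1,\dots,x_m$.

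There is no genuine obstacle here; the entire content of the corollary is packaged into Theorem \ref{theo:DJ}. The only thing worth remarking is that the hypothesis that $M$ be transverse K\"ahler is essential, as it is what guarantees the applicability of Theorem \ref{theo:DJ} (and, upstream, of Proposition \ref{prop:oddvanish} and the collapsing of the spectral sequence in Theorem \ref{theo:eqformal}); without this hypothesis, one has no such clean Danilov--Jurkiewicz-style presentation and the statement can fail.
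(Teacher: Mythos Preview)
Your proposal is correct and matches the paper's intent: the corollary is stated immediately after Theorem \ref{theo:DJ} with no proof, precisely because the presentation $\R[x_1,\dots,x_m]/(\mathcal{I}'+\mathcal{J}')$ has all algebra generators in degree $2$. Your remark about the necessity of the transverse K\"ahler hypothesis is accurate and in line with how the paper uses it.
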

\section{Basic Dolbeault cohomology}\Label{sec:basicDolbeault}
	Let $(M,G,y) \in \mathcal{C}_1$. Let $F$ be the canonical foliation on $M$. The canonical foliation $F$ on a compact connected complex manifold $M$ is homologically orientable (see \cite[Proposition 4.8]{Ishida-Kasuya}). Suppose that $M$ is transverse K\"ahler with respect to $F$. Then we have the Hodge decomposition 
	\begin{equation*}
		H^r_B(M)\otimes \C = \bigoplus_{p+q=r}H^{p,q}_B(M)
	\end{equation*}
	(see \cite{EKA} for detail). The purpose of this section is to show the following.
	\begin{theo}\Label{theo:Dolbeault}
		Let $(M,G,y) \in \mathcal{C}_1$. Let $F$ be the canonical foliation on $M$. Suppose that $M$ is transverse K\"ahler with respect to $F$. Then,  
		\begin{equation*}
			H^{p,q}_B(M) = \begin{cases}
				0 & \text{if $p \neq q$}, \\
				H^{2p}_B(M)\otimes \C & \text{if $p=q$}.
			\end{cases}
		\end{equation*}
	\end{theo}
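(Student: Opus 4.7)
The plan is to reduce to degree two using the ring structure, and then exhibit enough transverse K\"ahler classes of type $(1,1)$ to span $H^2_B(M,\R)$.

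By Corollary \ref{coro:degree2}, the algebra $H^*_B(M)$ is generated over $\R$ by $H^2_B(M)$. Consequently, if one shows that $H^2_B(M)\otimes \C\subseteq H^{1,1}_B(M)$, then every product of generators is of pure type $(p,p)$, so $H^{2p}_B(M)\otimes \C \subseteq H^{p,p}_B(M)$, and the basic Hodge decomposition forces equality $H^{p,p}_B(M) = H^{2p}_B(M)\otimes \C$ together with $H^{p,q}_B(M)=0$ for $p\neq q$. Since $H^{1,1}_B(M)$ is stable under the complex conjugation coming from the real structure on $H^2_B(M,\R)\otimes \C$, it is the complexification of its real part, so it suffices to show that $H^2_B(M,\R)$ is spanned over $\R$ by classes of $G$-invariant transverse K\"ahler forms (which automatically lie in $H^{1,1}_B(M)$).

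I would produce such classes by varying the defining polytope. For $a=(a_1,\dots,a_m)$ in the open subset $U\subset \R^m$ of support parameters for which
\begin{equation*}
	P(a) := \{ y \in \widetilde{V}^* \mid \langle y, \widetilde{\lambda}(\widetilde{\rho}_i)\rangle \geq -a_i \text{ for all } i\}
\end{equation*}
is a nonempty bounded polytope with inner normal fan $\widetilde{\Delta}$, Theorem \ref{theo:transverse} (3) and (4) produce a $G$-invariant transverse K\"ahler form $\omega_a$ whose moment map $\mu_a$ satisfies $\widetilde{\Phi}(M)=P(a)$. The key claim is that under the Danilov--Jurkiewicz identification of Theorem \ref{theo:DJ}, the class $[\omega_a]\in H^2_B(M)$ corresponds to $\sum_i a_i x_i$ modulo the ideal $\mathcal{J}'$. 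As $a$ ranges over $U$, this gives an open subset of $H^2_B(M,\R)=\R^m/\mathcal{J}'_2$, so the K\"ahler classes span $H^2_B(M,\R)$.

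To verify the identification, I would work in equivariant cohomology. The form $\omega_a-\mu_a\in \Omega^2_G(M)$ is equivariantly closed, and by Theorem \ref{theo:basicforgetful}, $\forB([\omega_a-\mu_a])=[\omega_a]$. By Corollary \ref{coro:localization}, the class $[\omega_a-\mu_a]\in H^2_G(M)$ is determined by its restrictions to the minimal orbits $Z_j$; by Lemma \ref{lemm:localizationminimal}, these restrictions equal $-\mu_a(Z_j)=-c_j(a)$, the vertex of $P(a)$ corresponding to the maximal cone $\sigma_j\in\Delta$ containing $Z_j$. Using Lemma \ref{lemm:M0M} to pass to $M_0$, where $H^*_G(M_0)\cong \R[x_1,\dots,x_N]/\mathcal{I}$ carries the Stanley--Reisner presentation and each $x_i$ localizes to the standard support function at each vertex, one matches $[\omega_a-\mu_a]$ with $\sum_i a_i x_i$ modulo the ideal $\mathcal{J}$; applying $\forB$ then kills $\mathcal{J}$ down to $\mathcal{J}'$, giving the desired identification in $H^2_B(M)$.

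The main obstacle is the equivariant-cohomology computation in the last paragraph: explicitly matching the vertices $c_j(a)$ of $P(a)$ with the localizations of the Stanley--Reisner generators $x_i$ at the corresponding fixed points in $X(\Delta_0)$, and then faithfully tracking this identification through the functors $\F_1,\widetilde{\F}_1$, the equivalence of Lemma \ref{lemm:M0M}, and the basic forgetful map $\forB$.
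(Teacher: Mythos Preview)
Your strategy is the paper's strategy: reduce to degree two via Corollary~\ref{coro:degree2}, then show $H^2_B$ is spanned by classes of transverse K\"ahler forms by varying the moment polytope and reading off the class via equivariant localization to the minimal orbits. The one soft spot is your appeal to Theorem~\ref{theo:transverse}(3),(4) to produce a form $\omega_a$ with \emph{prescribed} moment image $P(a)$: that theorem only asserts existence of \emph{some} transverse K\"ahler form when $q(\Delta)$ is polytopal, not that every inner normal polytope of $\widetilde{\Delta}$ arises. This is precisely where the paper invests its effort, constructing the forms explicitly on $M_0$ as symplectic reductions $\mathcal{Z}_{b_1,\dots,b_N}\subset \C^N$ (Lemmas~\ref{lemm:regular}--\ref{lemm:Phib}) so that the moment image is $P_{b_1,\dots,b_m}$ by construction; the localization argument you sketch is then Lemmas~\ref{lemm:vertexcorrespondence}--\ref{lemm:monoid}, and the dimension count in Proposition~\ref{prop:M0Dolbeault} is your claim that the K\"ahler classes fill an open set of $H^2_B$. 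Finally, rather than transporting K\"ahler forms from $M_0$ back to $M$, the paper simply proves the Dolbeault statement on $M_0$ and then uses that the holomorphic principal bundle $f\co M_0\to M$ of Lemma~\ref{lemm:M0M} induces an isomorphism $f^*\co H^*_B(M)\to H^*_B(M_0)$ preserving the basic Hodge decomposition.
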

	\begin{rema}\Label{rema:basichodge}
		\begin{enumerate}
			\item Theorem \ref{theo:Dolbeault} can be proved by applying \cite[Theorem 7.5]{Goertsches-Nozawa-Toeben} with a small modification. We will show Theorem \ref{theo:Dolbeault} by a totally different argument. 
			We will show that $H^*_B(M)$ is generated by transverse K\"ahler forms with respect to $F$. Remark that a transverse K\"ahler form is a closed positive $(1,1)$-form and basic with respect to $F$. 
			\item For toric Sasaki manifolds, a result similar to Theorem \ref{theo:Dolbeault} has been shown, see \cite[Corollary 8.4]{Goertsches-Nozawa-Toeben}. 
		\end{enumerate}
	\end{rema}
	
	We use the same notation as Section \ref{sec:DJ}. Let $(M_0,G_0,y_0) \in \mathcal{C}_1$ and $(\Delta_0,\h_0,G_0) \in \mathcal{C}_2$ be as in Section \ref{sec:DJ}. We assume that $M_0$ admits a transverse K\"ahler form with respect to the canonical foliation $F_0$ of $M_0$. Let $(\widetilde{V}_0, \widetilde{\Gamma}_0, \widetilde{\Delta}_0, \widetilde{\lambda}_0) = \widetilde{F}_1(M_0,G_0,y_0) \in \widetilde{\mathcal{C}}_2$. As before, we denote by $p_0 \co \g_0^\C \to \g_0$ the projection and by $q_0 \co \g_0 \to \g_0/p_0(\h_0)$ the quotient map.
$\Delta_0$ has $m$ $1$-cones $\rho_1 = \R_{\geq 0}e_1,\dots, \rho_m = \R_{\geq 0}e_m$, where $e_1,\dots, e_m, e_{m+1},\dots, e_N$ denote the standard basis vectors of $\R^N = \g_0$. Thus $\widetilde{\Delta}_0 = q_0(\Delta_0)$ has $m$ $1$-cones $\widetilde{\rho}_1 = q_0(\rho_1),\dots, \widetilde{\rho}_m = q_0(\rho_m)$. Then we have $\widetilde{\lambda_0}(\widetilde{\rho_i}) = q_0(e_i)$. For short, we put $\mu_i := q_0(e_i)$ for $i=1,\dots, N$. For $b \in \R$, we define 
	\begin{equation*}
	H_{i,b} := \{ \widetilde{x} \in \widetilde{V}_0^* \mid \langle \widetilde{x}, \mu_i\rangle \geq b\}.
	\end{equation*}
	If $\mu_i \neq 0$, then $H_{i,b}$ is a half space whose inner normal vector is $\mu_i$. If $\mu_i=0$, then 
	\begin{equation*}
		H_{i,b} =
		\begin{cases}
			\widetilde{V}_0^* & \text{if $b \leq 0$,}\\
			\emptyset & \text{if $b >0$}.
		\end{cases}
	\end{equation*}

	It follows from Theorem \ref{theo:transverse} that $\widetilde{\Delta}_0$ is polytopal. Let $P \subset \widetilde{V}_0 = \g_0/p_0(\h_0)$ be an inner normal polytope of $\Delta_0$. Then there uniquely exist $b_1,\dots, b_m \in \R$ such that $P = \bigcap_{i=1}^m H_{i,b_i}$. 
	
	Let $\epsilon >0$. We take $b_{m+1},\dots, b_N \in \R$ so that $P \subset H_{j,b_j+\epsilon} \subset H_{j,b_j}$ for $j=m+1,\dots, N$. Then we have an embedding $\Psi \co P \to \g_0^* =  (\R^N)^*$ given by 
	\begin{equation}\Label{eq:I}
		\Psi (\widetilde{x}) = \sum_{i=1}^N (\langle \widetilde{x}, \mu_i\rangle -b_i)x_i\quad \text{for $\widetilde{x} \in P$}, 
	\end{equation}
	where $x_1,\dots, x_N$ denote the dual basis vectors of $e_1,\dots, e_N$. In other words, $\Psi = q_0^* - \sum_{i=1}^Nb_ix_i$. By definition of $b_1,\dots, b_N$, the coefficients $\langle \widetilde{x}, \mu_i\rangle -b_i$ are nonnegative. Let $\h'_0 := p_0(\h_0)$, $H'_0 := \exp_{G_0}(\h'_0)$ and $r_0 \co \h'_0 \to \g_0$ the inclusion. Then we have that $r^*_0 \circ \Psi \co \widetilde{V}^* \to {\h'_0}^*$ is constant and $r^*_0 \circ \Psi(\widetilde{x}) = -\sum_{i=1}^Nb_i r^*_0(x_i)$. 	
	Let $\omega_{\text{st}}$ be the K\"ahler form on $\C^N$ given by
	\begin{equation*}
		\omega_\text{st} = -\frac{\sqrt{-1}}{2\pi}\sum_{i=1}^N dz_i \wedge d\overline{z}_i. 
	\end{equation*}
	Then, a moment map $\Phi_{\text{st}} \co \C^N \to \g_0^* = (\R^N)^*$ is given by 
	\begin{equation*}
		\Phi_{\text{st}}(z) =  \sum_{i=1}^N |z_i|^2 x_i \quad \text{for $z=(z_1,\dots, z_N) \in \C^N$}.
	\end{equation*}
	A moment map $\Phi_{st}^{H'_0}\co \C^N \to {\h' _0}^*$ with respect to the action restricted to $H'_0$ is given by composing $\Phi_{\text{st}}$ with the surjective map $r_0^* \co g^*_0 \to \h'^*_0$ induced by the inclusion $r_0 \co \h'_0 \to \g_0$. 
	
	For short, we denote 
	\begin{equation*}
		\mathcal{Z}_{b_1,\dots, b_N} := (\Phi^{H'_0}_{\text{st}})^{-1}\left(r^*_0\left( -\sum_{i=1}^N b_ix_i\right)\right). 
	\end{equation*}
	\begin{lemm}\Label{lemm:regular}
		The followings hold.
		\begin{enumerate}
			\item $\Phi_{\text{st}}(\mathcal{Z}_{b_1,\dots, b_N}) = \Psi(P)$. 
			\item $\mathcal{Z}_{b_1,\dots, b_N}$  is compact. 
			\item The element $r_0^*(-\sum_{i=1}^N b_ix_i)$ is a regular value of $\Phi^{H'_0}_{\text{st}}$.
		\end{enumerate}
		In particular, $\mathcal{Z}_{b_1,\dots, b_N}$ is a compact smooth manifold equipped with an action of $G_0$. 
	\end{lemm}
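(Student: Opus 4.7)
The first thing to exploit is the identity $\ker r_0^* = q_0^*(\widetilde{V}_0^*)$, which follows since both equal the annihilator of $\h'_0 = p_0(\h_0)$ in $\g_0^*$. With this, part (1) reduces to identifying which $\widetilde{x} \in \widetilde{V}_0^*$ arise from points of $\mathcal{Z}_{b_1,\dots,b_N}$. For the inclusion $\Phi_{\text{st}}(\mathcal{Z}_{b_1,\dots,b_N}) \subset \Psi(P)$, I take $z \in \mathcal{Z}_{b_1,\dots,b_N}$ and observe that $\Phi_{\text{st}}(z) + \sum_{i=1}^N b_i x_i$ lies in $\ker r_0^* = q_0^*(\widetilde{V}_0^*)$, so it equals $q_0^*(\widetilde{x})$ for a unique $\widetilde{x} \in \widetilde{V}_0^*$, giving $\Phi_{\text{st}}(z) = \Psi(\widetilde{x})$ by the definition of $\Psi$. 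Comparing with $\Phi_{\text{st}}(z) = \sum |z_i|^2 x_i$ produces the key identity $|z_i|^2 = \langle \widetilde{x}, \mu_i\rangle - b_i$ for every $i$, so nonnegativity forces $\widetilde{x} \in \bigcap_{i=1}^N H_{i,b_i} \subset \bigcap_{i=1}^m H_{i,b_i} = P$. For the reverse inclusion, given $\widetilde{x} \in P$, nonnegativity of $\langle \widetilde{x},\mu_i\rangle - b_i$ (which holds for $i \leq m$ by the definition of $P$ and for $i > m$ by the choice $P \subset H_{i, b_i+\epsilon}$) lets me set $z_i := \sqrt{\langle \widetilde{x},\mu_i\rangle - b_i}$ and check directly that $z \in \mathcal{Z}_{b_1,\dots,b_N}$ with $\Phi_{\text{st}}(z) = \Psi(\widetilde{x})$.

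For part (2), $\mathcal{Z}_{b_1,\dots,b_N}$ is closed as the preimage of a point under a continuous map, and the key identity of part (1) gives $|z_i|^2 = \langle \widetilde{x}, \mu_i\rangle - b_i$ with $\widetilde{x}$ in the compact polytope $P$ (here $P$ is compact because $\widetilde{\Delta}_0$ is complete), so $\mathcal{Z}_{b_1,\dots,b_N}$ is bounded as well. For part (3), I use the standard fact that $\eta$ is a regular value of a torus moment map if and only if the infinitesimal isotropy subalgebra is trivial at every preimage point (a consequence of the nondegeneracy of $\omega_{\text{st}}$). The $G_0$-infinitesimal isotropy at $z \in \C^N$ is the coordinate subspace $\bigoplus_{z_i = 0} \R e_i$, so the $H'_0$-isotropy is $\h'_0 \cap \bigoplus_{i \in I(z)} \R e_i$, where $I(z) := \{i : z_i = 0\}$. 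The identity $|z_i|^2 = \langle \widetilde{x}, \mu_i\rangle - b_i$ shows $I(z) \subset \{1,\dots,m\}$ (for $j > m$ one has $\langle \widetilde{x},\mu_j\rangle \geq b_j+\epsilon$ by the margin condition), and that $\widetilde{x}$ lies in the nonempty face $P \cap \bigcap_{i \in I(z)}\{\langle \cdot, \mu_i\rangle = b_i\}$ of $P$; by the polytope-fan correspondence this face matches the cone $\sum_{i \in I(z)} \widetilde{\rho}_i$ of the inner normal fan $\widetilde{\Delta}_0$, so $I(z) \in K$ and $\sigma_{I(z)} := \sum_{i \in I(z)} \rho_i$ is a cone of $\Delta_0$.

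It remains to argue $\h'_0 \cap \bigoplus_{i \in I(z)} \R e_i = \{0\}$, which I expect to be the main subtlety since it must be extracted from the abstract $\mathcal{C}_2$-conditions rather than from the moment-map formalism. Since $(\Delta_0,\h_0,G_0) \in \mathcal{C}_2$, the quotient $q_0$ induces a bijection $\Delta_0 \to \widetilde{\Delta}_0$, and because $\widetilde{\Delta}_0$ is simplicial the image cone $q_0(\sigma_{I(z)}) = \widetilde{\rho}_{I(z)}$ has dimension $|I(z)|$; hence $\{q_0(e_i) : i \in I(z)\} = \{\mu_i : i \in I(z)\}$ is linearly independent, so $q_0$ restricted to $\bigoplus_{i \in I(z)} \R e_i$ is injective, and its intersection with $\ker q_0 = \h'_0$ is trivial. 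Once this linear-independence consequence of simpliciality is in hand, all three parts fall out of the single coefficient identity $|z_i|^2 = \langle \widetilde{x},\mu_i\rangle - b_i$.
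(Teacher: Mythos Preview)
Your argument is correct. Parts (1) and (2) are essentially identical to the paper's proof: both obtain the key identity $|z_i|^2 = \langle \widetilde{x},\mu_i\rangle - b_i$ from $\ker r_0^* = q_0^*(\widetilde{V}_0^*)$ and use it in the same way.

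For Part (3) the two proofs diverge. The paper works in coordinates: it writes down the Jacobian of $\Phi^{H'_0}_{\text{st}}$ with respect to a basis $e_1',\dots,e_k'$ of $\h'_0$, extends $I(z)$ to an index set of size $N-k$ corresponding to a vertex of $P$, and then shows that $r_0^*(x_{N-k+1}),\dots,r_0^*(x_N)$ form a basis of ${\h'_0}^*$, so that a suitable $k\times k$ minor of the Jacobian becomes the identity matrix. Your route is the moment-map criterion (surjectivity of $d\Phi^{H'_0}_{\text{st}}$ at $z$ is equivalent to $\h'_0 \cap (\g_0)_z = 0$), after which the simpliciality of $\widetilde{\Delta}_0$ gives linear independence of $\{\mu_i : i\in I(z)\}$ and hence injectivity of $q_0$ on $\bigoplus_{i\in I(z)}\R e_i$, forcing $\h'_0 \cap \bigoplus_{i\in I(z)}\R e_i = 0$ since $\ker q_0 = \h'_0$. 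Your argument is shorter and more conceptual, bypassing the explicit matrix manipulation and the auxiliary extension to a vertex; the paper's version, by contrast, makes the rank condition completely explicit and does not invoke the symplectic characterization of regular values. The two are dual reformulations of the same transversality: the paper shows $r_0^*$ restricted to $\operatorname{span}\{x_j : j\notin I\}$ surjects onto ${\h'_0}^*$ for a suitable $I\supset I(z)$, while you show $q_0$ restricted to $\operatorname{span}\{e_i : i\in I(z)\}$ is injective.
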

	\begin{proof}
		Let $(z_1,\dots, z_N) \in \C^N$ be a point in $\mathcal{Z}_{b_1,\dots, b_N}$.  
		Let $e_1',\dots, e_k'$ be basis vectors of $\h'_0$. Since 
		\begin{equation*}
			\langle \Phi_{\text{st}}^{H'}(z_1,\dots, z_N), e_j'\rangle = \langle \Phi_{\text{st}}(z_1,\dots, z_N), r_0(e_j')\rangle , 
		\end{equation*}
		 we have 
		\begin{equation}\Label{eq:zi}
			\sum_{i=1}^N (|z_i|^2+b_i)\langle x_i,e_j'\rangle =0
		\end{equation}
		for all $j =1,\dots, k$. It follows from \eqref{eq:zi} that the element $\sum_{i=1}^N(|z_i|^2+b_i)x_i \in \ker r_0^*$. Thus there uniquely exists $\widetilde{x} \in \widetilde{V}_0^*$ such that 
		\begin{equation*}
			q^*_0(\widetilde{x}) = \sum_{i=1}^N(|z_i|^2+b_i)x_i. 
		\end{equation*}
		By pairing with $e_i$, we have 
		\begin{equation}\Label{eq:xmu}
			\langle \widetilde{x}, \mu_i\rangle = |z_i|^2 + b_i
		\end{equation} for all $i=1,\dots, N$. Thus we have $\langle \widetilde{x}, \mu_i\rangle \geq b_i$ for all $i$. It turns out that $\widetilde{x} \in P$. 
		Conversely, if $\widetilde{x} \in P$, then we have that 
		\begin{equation*}
			(\sqrt{\langle \widetilde{x},\mu_1\rangle -b_1},\dots, \sqrt{\langle \widetilde{x},\mu_N\rangle -b_N}) \in \mathcal{Z}_{b_1,\dots, b_N}. 
		\end{equation*}
		Thus we have 
		$\Phi_{\text{st}}(\mathcal{Z}_{b_1,\dots, b_N}) = \Psi(P)$, where $\Psi \co P \to (\R^N)^*$ is the embedding given by \eqref{eq:I}. Thus $\mathcal{Z}_{b_1,\dots, b_N}$ is a bounded subset of $\C^N$. Since $\mathcal{Z}_{b_1,\dots, b_N}$ is closed, we have that $\mathcal{Z}_{b_1,\dots, b_N}$ is compact, proving Part (1) and Part (2). 
		
		Now we show Part (3). Let $\xi_i$ and $\eta_i$ be the real and imaginary part of $z_i$, respectively. By \eqref{eq:zi}, it is enough to show that the matrix 
		\begin{equation}\Label{eq:Jacobian}
			\begin{pmatrix}
				2\xi_1\langle x_1,e_1'\rangle & 2\eta_1\langle x_1,e_1'\rangle & \cdots & 2\xi_N\langle x_N,e_1'\rangle & 2\eta_N\langle x_N,e_1'\rangle \\
				\vdots & \vdots & & \vdots & \vdots \\
				2\xi_1\langle x_1,e_k'\rangle & 2\eta_1\langle x_1,e_k'\rangle & \cdots & 2\xi_N\langle x_N,e_k'\rangle & 2\eta_N\langle x_N,e_k'\rangle
			\end{pmatrix}
		\end{equation}
		has rank $k$ for some basis vectors $e_1',\dots, e_k'$.
		Since $P$ is a normal polytope of the simplicial fan $\widetilde{\Delta}_0$, we have that $P$ is a simple polytope of dimension $N-k$. We denote by $F_i$ the facet of $P$ given by
		\begin{equation*}
			F_i := P \cap \{ \widetilde{x}\in \widetilde{V}_0^* \mid \langle \widetilde{x},\mu_i\rangle= b_i \}
		\end{equation*}
		for $i=1,\dots, m$. Define
		\begin{equation*}
			I_{\widetilde{x}} := \{ i \in \{1,\dots, m\} \mid \widetilde{x} \in F_i \}.
		\end{equation*}
		Since $P$ is simple, we have $|I_{\widetilde{x}}| \leq N-k$. By renumbering, we may assume that $I_{\widetilde{x}} = \{1,\dots, N-k'\}$ for some $k' \geq k$ and $F_1 \cap \dots \cap F_{N-k'} \cap F_{N-k'+1}  \dots \cap F_{N-k} \neq \emptyset$ without loss of generality. 
		 Then we have $|z_{j}|^2> 0$ for $j=N-k'+1,\dots, N$ by \eqref{eq:xmu}. In particular, either $\xi_{j}$ or $\eta_{j}$ are nonzero for $j=N-k+1,\dots, N$. Thus, if the matrix 
		\begin{equation}\Label{eq:matrix}
			\begin{pmatrix}
				\langle x_{N-k+1},e_1'\rangle & \hdots & \langle x_{N},e_1'\rangle \\
				\vdots & \ddots & \vdots \\
				\langle x_{N-k+1},e_k'\rangle & \hdots & \langle x_{N},e_k'\rangle
			\end{pmatrix}
		\end{equation}
		is full-rank for some basis vectors $e_1',\dots, e_k'$, then the matrix \eqref{eq:Jacobian} has rank $k$. Since $F_1 \cap \dots \cap F_{N-k'} \cap F_{N-k'+1}  \dots \cap F_{N-k} \neq \emptyset$, we have that $\mu_1=q_0(e_1),\dots, \mu_{N-k}=q_0(e_{N-k})$ are basis vectors of $\widetilde{V}_0$. Let $\widetilde{x}_1,\dots, \widetilde{x}_{N-k} \in \widetilde{V}_0^*$ be the dual basis vectors of $\mu_1,\dots, \mu_{N-k}$. Then 
		\begin{equation}\Label{eq:q0wx}
			q_0^*(\widetilde{x}_i) = \sum_{j=1}^N \langle q_0^*(\widetilde{x}_i), e_j\rangle x_j = x_i + \sum_{j=N-k+1}^N \langle q_0^*(\widetilde{x}_i), e_j\rangle x_j
		\end{equation}
		for $i=1,\dots, N-k$. We claim that $r_0^*(x_{N-k+1}),\dots, r_0^*(x_N)$ are linearly independent. Suppose that 
		\begin{equation*}
			\sum_{j=N-k+1}^N c_jr^*_0(x_j) = 0, \quad c_j\in\R.
		\end{equation*}
		Then $\sum_{j=N-k+1}^N c_jx_j \in \ker r^*_0$. Therefore $\sum_{j=N-k+1}^N c_jx_j$ is a linear combination of $q_0^*(\widetilde{x}_1),\dots, q_0^*(\widetilde{x}_{N-k})$. By \eqref{eq:q0wx}, we have $\sum_{j=N-k+1}^N c_jx_j = 0$. Thus we have $c_j = 0$ for all $j=N-k+1,\dots, N$. Thus $r_0^*(x_{N-k+1}),\dots, r_0^*(x_N)$ are linearly independent. By dimensional reason, we have that $r_0^*(x_{N-k+1}),\dots, r_0^*(x_N)$ form a basis of ${\h'_0}^*$. If $e_1', \dots, e_k'$ are the dual basis vectors of $r_0^*(x_{N-k+1}),\dots, r_0^*(x_N)$, then the matrix \eqref{eq:matrix} is the identity matrix, proving Part (3). The lemma is proved. 
	\end{proof}
	\begin{lemm}\Label{lemm:transverseZ}
		Let $z \in \mathcal{Z}_{b_1,\dots, b_N}$. The followings hold.
		\begin{enumerate}
			\item The $H_0$-orbit $H_0 \cdot z \subset \C^N$ through $z$ intersect with $\mathcal{Z}_{b_1,\dots, b_N}$ at $z$ transversely. 
			\item $\mathcal{Z}_{b_1,\dots, b_N} \cap H_0 \cdot z = \{z\}$. 
		\end{enumerate}
	\end{lemm}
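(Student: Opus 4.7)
For part (1), I plan to exhibit a concrete real basis of $T_z(H_0\cdot z)$ coming from a $\C$-basis $\zeta_j=w_j+\sqrt{-1}w_j'$ of $\h_0$, namely $(X_{w_j}+JX_{w_j'})|_z$ and $(JX_{w_j}-X_{w_j'})|_z$. Injectivity of $p_0|_{\h_0}$ together with the $\C$-closedness of $\h_0$ guarantees that both $w_j$ and $w_j'$ lie in $p_0(\h_0)=:\h'_0$ and span it as a real vector space. Because $T_z\mathcal{Z}_{b_1,\dots,b_N}=\ker d\Phi_{\text{st}}^{H'_0}|_z$, transversality amounts to surjectivity of $d\Phi_{\text{st}}^{H'_0}|_z$ restricted to $T_z(H_0\cdot z)$. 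Using the $G_0$-invariance of $\Phi_{\text{st}}$ together with the K\"ahler identity $\omega_{\text{st}}(JX,Y)=-g_{\text{st}}(X,Y)$, I will show that the image contains $-g_{\text{st}}(X_{w_j},X_\cdot)|_z$ and $-g_{\text{st}}(X_{w_j'},X_\cdot)|_z$. By Lemma \ref{lemm:regular}(3), $v'\mapsto X_{v'}|_z$ is injective on $\h'_0$, so $(v',u')\mapsto g_{\text{st}}(X_{v'},X_{u'})|_z$ is a positive-definite bilinear form and the induced map $\h'_0\to{\h'_0}^*$ is an isomorphism. Since $\{w_j,w_j'\}_j$ spans $\h'_0$, surjectivity follows.

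For part (2), I plan a monotonicity argument along complex $1$-parameter subgroups of $H_0$. Let $g=\exp_{G_0^\C}(\zeta)$ with $\zeta\in\h_0$ and $g\cdot z\in\mathcal{Z}_{b_1,\dots,b_N}$; write $\zeta=\alpha+\sqrt{-1}\beta$, so $\alpha,\beta\in\h'_0$ by the argument of part (1). Along $z(t):=\exp_{G_0^\C}(t\zeta)\cdot z$ a coordinatewise computation yields $|z_i(t)|^2=e^{-4\pi t\beta_i}|z_i|^2$, so
\[
G(t):=\langle\Phi_{\text{st}}^{H'_0}(z(t)),\beta\rangle=\sum_i\beta_i e^{-4\pi t\beta_i}|z_i|^2,\qquad G'(t)=-4\pi\sum_i\beta_i^2 e^{-4\pi t\beta_i}|z_i|^2\le 0.
\]
Since $z(0),z(1)\in\mathcal{Z}_{b_1,\dots,b_N}$ share the moment-map value, $G(0)=G(1)$; combined with monotonicity this forces each nonnegative summand $\beta_i^2|z_i|^2$ to vanish, so $\beta$ is supported in $I_z:=\{i:z_i=0\}$.

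To conclude $\beta=0$, I will invoke the simpliciality of $\widetilde\Delta_0$. The choice $P\subset H_{j,b_j+\epsilon}$ for $j>m$ forces $|z_j|^2\ge\epsilon>0$ when $j>m$, so $I_z\subset\{1,\dots,m\}$; and $z\in X(\Delta_0)$ yields $I_z\in K$, so $\{\widetilde{\rho}_i\}_{i\in I_z}$ is the $1$-skeleton of a cone of $\widetilde\Delta_0$. Nonsingularity of $\Delta_0$ makes $\widetilde\Delta_0$ simplicial, hence $\{\mu_i\}_{i\in I_z}$ is linearly independent in $\widetilde V_0$; equivalently $q_0|_{\R^{I_z}}$ is injective, giving $\R^{I_z}\cap\h'_0=\{0\}$ and thus $\beta=0$. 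Then $\zeta=\alpha\in\h_0\cap\R^N$; but any nonzero real $\alpha\in\h_0$ would give $\sqrt{-1}\alpha\in\h_0$ with $p_0(\sqrt{-1}\alpha)=0$, contradicting injectivity of $p_0|_{\h_0}$. Therefore $\zeta=0$, $g=1$, and $g\cdot z=z$.

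The hardest step should be the passage from \emph{``$\beta$ supported in $I_z$''} to $\beta=0$ in part (2), where two facts coming from the intrinsic combinatorics of $(\Delta_0,\h_0,G_0)\in\mathcal{C}_2$ combine: the linear independence of $\{\mu_i\}_{i\in I_z}$ forced by simpliciality of $\widetilde\Delta_0$, and the algebraic rigidity $\h_0\cap\R^N=\{0\}$ forced by injectivity of $p_0|_{\h_0}$.
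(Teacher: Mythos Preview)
Your argument is correct, and both parts share the paper's underlying idea (symplectic orthogonality for (1), monotonicity of the moment map along the complex flow for (2)), but your execution differs in an interesting way.

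For Part (1), the paper argues the complementary statement $T_z\mathcal{Z}\cap T_z(H_0\cdot z)=\{0\}$: for $u+\sqrt{-1}v\in\h_0$ it computes $\omega_{\text{st}}(X_v,X_{u+\sqrt{-1}v})=\omega_{\text{st}}(X_v,JX_v)\ge 0$, and then appeals (implicitly) to the regularity in Lemma~\ref{lemm:regular}(3) to say that equality forces $v=0$, hence $u+\sqrt{-1}v=0$ by injectivity of $p_0|_{\h_0}$. Your surjectivity argument is the dual formulation; both ultimately rest on the same fact you isolate explicitly, namely that $v'\mapsto (X_{v'})_z$ is injective on $\h'_0$ by regularity.

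For Part (2), the paper's version is considerably shorter. It computes $L_{X_u+JX_v}h_v=-\omega_{\text{st}}(X_v,JX_v)\le 0$, so $t\mapsto h_v(\gamma_t(z))$ is monotone; equality of endpoints forces the derivative to vanish identically, i.e.\ $(X_v)_{\gamma_t(z)}=0$, and since $I_{\gamma_t(z)}=I_z$ this means $v\in\g_{0,z}\cap\h'_0=\{0\}$ (again by regularity). Then injectivity of $p_0|_{\h_0}$ finishes. Your route to $\beta=0$ via simpliciality of $\widetilde{\Delta}_0$ is correct and more self-contained, but note that it effectively re-proves $\R^{I_z}\cap\h'_0=\{0\}$, which you already have from Lemma~\ref{lemm:regular}(3) and used in Part~(1); you could cut the combinatorial detour entirely. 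Also, your appeal to ``$z\in X(\Delta_0)$'' is a forward reference to Lemma~\ref{lemm:localdiffeo}(1); there is no circularity (that lemma only uses Lemma~\ref{lemm:regular}), but you should either inline its short argument or, better, avoid it by using regularity as above.
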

	\begin{proof}
		For $v \in \g_0$, we denote by $h_v \co \C^N \to \R$ the function given by $h_v(z) = \langle \Phi_{\text{st}}(z), v\rangle$ for $z \in \C^N$. 
		Let $z \in \mathcal{Z}_{b_1,\dots, b_N}$. By definition of $\mathcal{Z}_{b_1,\dots, b_N}$, we have 
		\begin{equation*}
			\begin{split}
				T_z\mathcal{Z}_{b_1,\dots, b_N} &= \{ Y \in T_z\C^N \mid dh_v(Y) = 0 \text{ for all $v \in \h_0'$}\} \\
				&= \{ Y \in T_z\C^N \mid \omega_{\text{st}}(X_v, Y)=0 \text{ for all $v \in \h_0'$}\}. 
			\end{split}
		\end{equation*}
		Let $u+\sqrt{-1}v \in \h_0$, where $u, v \in \h'_0$. Then, we have 
		\begin{equation*}
			\omega_{\text{st}}(X_v, X_{u+\sqrt{-1}v}) = \omega_{\text{st}}(X_v, JX_v) \geq 0
		\end{equation*}
		and the equality holds if and only if $v=0$. Since the restriction of $p_0 \co \g_0^\C \to \g_0$ to $\h_0$ is injective, we have $\omega_{\text{st}}(X_v, X_{u+\sqrt{-1}v}) >0$ unless $u+\sqrt{-1}v = 0$. Thus we have $T_z\mathcal{Z}_{b_1,\dots, b_N} \cap T_z(H_0\cdot z) = \{0\}$, proving Part (1). 
		
		Suppose that $g \cdot z \in \mathcal{Z}_{b_1,\dots, b_N}$ for some $g \in H_0$. Then there exists $u+\sqrt{-1}v \in \h_0$ such that $\exp_{G_0^\C} (u+\sqrt{-1}v) = g$. Let $\gamma_t$ denote the partial flow of $X_{u+\sqrt{-1}v} = X_u+JX_v$. Since 
		\begin{equation*}
			\begin{split}
				L_{X_u+JX_v}h_v &= i_{X_u+JX_v}dh_v\\
				&= i_{X_u+JX_v}(-i_{X_v}\omega_{\text{st}})\\
				&= \omega_{\text{st}}(-X_v, X_u+JX_v)\\
				&= -\omega_{\text{st}}(X_v,JX_v) \leq 0,
			\end{split}
		\end{equation*}
		we have that the function $\R \to \R$ given by $t \mapsto h_v(\gamma_t(v))$ is strictly monotone decreasing unless $v=0$. Thus we have 
		\begin{equation*}
			\langle \Phi_{\text{st}}^{H_0'}(z), v\rangle = h_v(z) \geq h_v(\gamma_1(z)) = h_v(g\cdot z) = \langle \Phi_{\text{st}}^{H_0'}(g\cdot z), v\rangle
		\end{equation*}
		and the equality holds if and only if $v=0$. Since $g\cdot z \in \mathcal{Z}_{b_1,\dots, b_N}$, we have $\langle \Phi_{\text{st}}^{H_0'}(z), v\rangle = \langle \Phi_{\text{st}}^{H_0'}(g\cdot z), v\rangle$. Thus we have $v=0$. This together with the injectivity of the restriction of $p_0 \co \g_0^\C \to \g_0$ to $\h_0$ implies that $u+\sqrt{-1}v=0$ and hence $g$ is the unit of $H_0$. This shows that $\mathcal{Z}_{b_1,\dots, b_N} \cap H_0 \cdot z = \{z\}$, proving Part (2). The lemma is proved.
	\end{proof}
	\begin{lemm}\Label{lemm:localdiffeo}
		The followings hold.
		\begin{enumerate}
			\item $\mathcal{Z}_{b_1,\dots, b_N} \subset X(\Delta_0)$. 
			\item Let $\varphi \co \mathcal{Z}_{b_1,\dots, b_N} \to M_0 = X(\Delta_0)/H_0$ be the smooth map induced by the inclusion $\mathcal{Z}_{b_1,\dots, b_N} \hookrightarrow X(\Delta_0)$. Then $\varphi$ is injective and a local diffeomorphism.
			
		\end{enumerate}
	\end{lemm}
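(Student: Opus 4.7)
\medskip

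\noindent\textbf{Proof plan.}
For Part (1), my plan is to use the description of $\mathcal{Z}_{b_1,\dots,b_N}$ developed in the proof of Lemma \ref{lemm:regular}. Every $z \in \mathcal{Z}_{b_1,\dots,b_N}$ corresponds to a unique $\widetilde{x} \in P$ via the relation $|z_i|^2 = \langle \widetilde{x}, \mu_i\rangle - b_i$ (equation \eqref{eq:xmu}). Set $I := \{i \mid z_i = 0\}$; then $I = \{i \mid \langle \widetilde{x},\mu_i\rangle = b_i\}$. Because $b_{m+1},\dots,b_N$ were chosen so that $P \subset H_{j,b_j+\epsilon}$ for $j>m$, we get $\langle \widetilde{x},\mu_j\rangle \geq b_j + \epsilon > b_j$ for $j>m$, hence $I \subset \{1,\dots,m\}$. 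Now $I$ is the index set $I_{\widetilde{x}}$ of facets of $P$ containing $\widetilde{x}$, and since $\widetilde{\Delta}_0$ is the inner normal fan of $P$, the family $\{\widetilde\rho_i\}_{i\in I}$ spans a cone of $\widetilde{\Delta}_0$, so $I \in K$. Since $K$ is closed under subsets, for every $I' \notin K$ one has $I' \not\subset I$, so some coordinate $z_i$ with $i \in I'$ is nonzero; that is, $z \notin L_{I'}$. Therefore $z \in X(\Delta_0) = \C^N \setminus \bigcup_{I'\notin K} L_{I'}$.

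For the injectivity half of Part (2), I will argue directly from Lemma \ref{lemm:transverseZ}(2): if $\varphi(z) = \varphi(z')$ with $z,z' \in \mathcal{Z}_{b_1,\dots,b_N}$, then $z' = g\cdot z$ for some $g \in H_0$, so $z' \in \mathcal{Z}_{b_1,\dots,b_N}\cap H_0\cdot z = \{z\}$, giving $z=z'$.

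For the local diffeomorphism half, the key step is a dimension count combined with a transversality argument. Since $r_0^*\bigl(-\sum b_ix_i\bigr)$ is a regular value of $\Phi^{H'_0}_{\text{st}}$ by Lemma \ref{lemm:regular}(3),
\begin{equation*}
\dim_\R \mathcal{Z}_{b_1,\dots,b_N} \;=\; 2N - \dim_\R \h'_0.
\end{equation*}
Because $p_0|_{\h_0}$ is $\R$-linear and injective, $\dim_\R \h'_0 = \dim_\R \h_0 = 2\dim_\C \h_0 = \dim_\R H_0$. Since $H_0$ acts freely on $X(\Delta_0)$ (which is open in $\C^N$),
\begin{equation*}
\dim_\R M_0 \;=\; 2N - \dim_\R H_0 \;=\; 2N - \dim_\R \h'_0 \;=\; \dim_\R \mathcal{Z}_{b_1,\dots, b_N}.
\end{equation*}
Now $\varphi$ is the composition of the inclusion $\mathcal{Z}_{b_1,\dots,b_N} \hookrightarrow X(\Delta_0)$ with the quotient $\pi \co X(\Delta_0) \to M_0$, whose differential at $z$ has kernel $T_z(H_0 \cdot z)$. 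Thus $\ker d\varphi_z = T_z\mathcal{Z}_{b_1,\dots,b_N} \cap T_z(H_0\cdot z)$, which equals $\{0\}$ by Lemma \ref{lemm:transverseZ}(1). By the matching dimensions, $d\varphi_z$ is an isomorphism, and $\varphi$ is a local diffeomorphism.

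The only step with any real content is Part (1); the two assertions in Part (2) reduce essentially mechanically to Lemma \ref{lemm:transverseZ} once the dimension bookkeeping (crucially the injectivity of $p_0|_{\h_0}$, which forces $\dim_\R H_0' = \dim_\R H_0$) is set up. I expect the main care to be needed in bookkeeping the inner normal fan identification $I_{\widetilde{x}} \in K$ in Part (1), which mirrors the argument already used at the end of the proof of Lemma \ref{lemm:regular}.
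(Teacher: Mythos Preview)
Your proof is correct and follows essentially the same route as the paper: Part~(1) via the identification $|z_i|^2=\langle\widetilde x,\mu_i\rangle-b_i$ to show $I_z\in K$ (your extra observation that $I_z\subset\{1,\dots,m\}$ is a welcome clarification the paper leaves implicit), and Part~(2) by reducing both injectivity and the local diffeomorphism statement to Lemma~\ref{lemm:transverseZ}. Your explicit dimension count for the local diffeomorphism (using $\dim_\R\h'_0=\dim_\R\h_0=\dim_\R H_0$ from the injectivity of $p_0|_{\h_0}$) makes precise a step the paper states without elaboration.
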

	\begin{proof}
		Let $z =(z_1,\dots, z_N) \in \mathcal{Z}_{b_1,\dots, b_N}$. By Lemma \ref{lemm:regular} (1), we have that there exists $\widetilde{x} \in P$ such that $\langle \widetilde{x},\mu_i\rangle = |z_i|^2 +b_i$ for $i=1,\dots, N$. Put
		\begin{equation*}
			I_z = \{ i \in \{1,\dots, N\} \mid z_i = 0\}. 
		\end{equation*}
		Then $i \in I_z$ if and only if $\langle \widetilde{x},\mu_i\rangle -b_i=0$. It turns out that $\widetilde{x} \in \bigcap_{i \in I_z} F_i$. Let $K$ be the underlying simplicial complex of $\Delta_0$. Since $P$ is a normal polytope of the simplicial fan $\widetilde{\Delta}_0$, we have that $I \in K$ if and only if $\bigcap_{i\in I}F_i \neq \emptyset$. Thus we have $I_z \in K$. Since 
		\begin{equation*}
			X(\Delta_0) = \C^N \setminus \bigcup_{I \notin K} L_I,
		\end{equation*}
		where
		\begin{equation*}
			L_I := \{ (z_1,\dots, z_n) \in \C^N \mid z_i=0 \text{ for all $i \in I$}\},
		\end{equation*}
		we have $z \notin \bigcup_{I \notin K}L_I$ because $I_z \in K$. This shows Part (1). 
		
		Let $z \in \mathcal{Z}_{b_1,\dots, b_N}$. We denote by $[z] \in M_0 = X(\Delta_0)/H_0$ the $H_0$-orbit through $z$. The differential $d\varphi_z \co T_z\mathcal{Z}_{b_1,\dots, b_n} \to T_{[z]}M_0$ is the composition of the inclusion $T_z\mathcal{Z}_{b_1,\dots, b_n} \to T_zX(\Delta_0)$ and the quotient map $T_zX(\Delta_0) \to T_zX(\Delta_0)/T_z(H_0\cdot z) = T_{[z]}M_0$. This together with Lemma \ref{lemm:transverseZ} (1) yields that $d\varphi_z$ is an isomorphism. By inverse function theorem, we have that $\varphi$ is a local diffeomorphism. It follows from Lemma \ref{lemm:transverseZ} (2) that $\varphi$ is injective, proving Part (2). The lemma is proved. 
	\end{proof}
	\begin{coro}
		Let $\varphi \co \mathcal{Z}_{b_1,\dots, b_N} \to M_0 = X(\Delta_0)/H_0$ be the smooth map induced by the inclusion $\mathcal{Z}_{b_1,\dots, b_N} \hookrightarrow X(\Delta_0)$. $\varphi$ is a $G_0$-equivariant diffeomorphism. 
	\end{coro}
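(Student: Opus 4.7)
The plan is to leverage Lemma \ref{lemm:localdiffeo}, which already gives that $\varphi$ is injective and a local diffeomorphism. What remains to verify is that $\varphi$ is surjective and that it is $G_0$-equivariant.

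First I would establish $G_0$-equivariance. The set $\mathcal{Z}_{b_1,\dots,b_N}$ is preserved by the $G_0$-action on $\C^N$: since $G_0$ is abelian and $H_0' \subset G_0$, the moment map $\Phi_{\text{st}}^{H_0'} \co \C^N \to \h_0'^*$ is $G_0$-invariant, so its level sets are $G_0$-invariant. Consequently $G_0$ acts on $\mathcal{Z}_{b_1,\dots,b_N}$, and since $\varphi$ is induced by the inclusion $\mathcal{Z}_{b_1,\dots,b_N} \hookrightarrow X(\Delta_0)$ followed by the quotient by $H_0$ (both of which are $G_0$-equivariant), $\varphi$ is $G_0$-equivariant.

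For surjectivity, I would use a standard connectedness-compactness argument. By Lemma \ref{lemm:regular}, $\mathcal{Z}_{b_1,\dots,b_N}$ is compact, so $\varphi(\mathcal{Z}_{b_1,\dots,b_N})$ is closed in $M_0$. By Lemma \ref{lemm:localdiffeo}, $\varphi$ is a local diffeomorphism, so its image is open in $M_0$. Since $M_0$ is a compact connected manifold (being an object of $\mathcal{C}_1$), the only nonempty subset that is both open and closed is $M_0$ itself; as $\mathcal{Z}_{b_1,\dots,b_N}$ is nonempty (it contains, for any $\widetilde{x} \in P$, the point with $|z_i|^2 = \langle \widetilde{x}, \mu_i\rangle - b_i$), surjectivity follows.

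Combining surjectivity with injectivity and the local diffeomorphism property from Lemma \ref{lemm:localdiffeo}, $\varphi$ is a bijective local diffeomorphism, hence a diffeomorphism. Together with the $G_0$-equivariance noted above, this yields the claim. There is no real obstacle here; the only subtlety worth double-checking is the connectedness of $M_0$, which is built into the definition of $\mathcal{C}_1$, and the nonemptiness of $\mathcal{Z}_{b_1,\dots,b_N}$, which is immediate from Lemma \ref{lemm:regular}(1) together with the fact that $P$ is nonempty as a normal polytope of the complete fan $\widetilde{\Delta}_0$.
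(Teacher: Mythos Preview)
Your proposal is correct and follows essentially the same approach as the paper's own proof: establish $G_0$-equivariance from the $G_0$-invariance of $\mathcal{Z}_{b_1,\dots,b_N}$, then use that the image of $\varphi$ is open (local diffeomorphism) and closed (compactness) in the connected manifold $M_0$ to conclude surjectivity. Your additional remarks on why $\mathcal{Z}_{b_1,\dots,b_N}$ is $G_0$-invariant and nonempty are accurate and slightly more detailed than the paper's version, but there is no substantive difference in strategy.
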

	\begin{proof}
		Since $\mathcal{Z}_{b_1,\dots, b_N}$ is a $G_0$-invariant subset of $X(\Delta_0)$, we have that $\varphi$ is $G_0$-equivariant. By Lemma \ref{lemm:localdiffeo} (2), we have that $\varphi$ is an injective local diffeomorphism. Therefore $\varphi$ is a diffeomorphism onto the open subset $\varphi(\mathcal{Z}_{b_1,\dots, b_N}) \subset M_0$. By Lemma \ref{lemm:regular} (2), we have that $\varphi(\mathcal{Z}_{b_1,\dots, b_N})$ is closed. Since $M_0$ is connected, we have that an open and closed subset of $M_0$ is $\emptyset$ or $M_0$. Since $\varphi(\mathcal{Z}_{b_1,\dots, b_N})$ is open and closed in $M_0$, we have $\varphi(\mathcal{Z}_{b_1,\dots, b_N}) = M_0$. This shows that $\varphi$ is surjective and hence $\varphi \co \mathcal{Z}_{b_1,\dots, b_N} \to M_0$ is a $G_0$-equivariant diffeomorphism. The corollary is proved. 
	\end{proof}
	Now we are in a position to construct a transverse K\"ahler form on $M_0$ with respect to the canonical foliation $F_0$ via the diffeomorphism $\varphi \co \mathcal{Z}_{b_1,\dots, b_N} \to M_0$. We denote by $\omega_{\text{st}}|_{\mathcal{Z}_{b_1,\dots, b_N}}$ the restriction of $\omega_{\text{st}}$ to the submanifold $\mathcal{Z}_{b_1,\dots, b_N} \subset X(\Delta_0) \subset \C^N$. We denote by $\omega_{b_1,\dots, b_N}$ the pull-back $(\varphi^{-1})^*\omega_{\text{st}}|_{\mathcal{Z}_{b_1,\dots, b_N}}$ of $\omega_{\text{st}}|_{\mathcal{Z}_{b_1,\dots, b_N}}$ by $\varphi^{-1} \co M_0 \to \mathcal{Z}_{b_1,\dots, b_N}$. By definition, $\omega_{b_1,\dots, b_N}$ is a closed $2$-form on $M_0$. 
	\begin{lemm}\Label{lemm:omegab}
		$\omega_{b_1,\dots, b_N}$ is a transverse K\"ahler form on $M_0$ with respect to $F_0$. 
	\end{lemm}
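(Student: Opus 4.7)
The plan is to verify pointwise that $\omega_{b_1,\dots,b_N}$ is closed (automatic), has kernel equal to $TF_0$, is $J_0$-invariant, and is positive transverse to $F_0$. Closedness and the kernel computation are straightforward: since $\omega_{\text{st}}$ is symplectic on $\C^N$ and $\mathcal{Z}_{b_1,\dots,b_N}$ is a regular level set of the moment map $\Phi_{\text{st}}^{H'_0}$ (Lemma \ref{lemm:regular}), the standard identity $T_z \mathcal{Z}_{b_1,\dots,b_N} = (T_z(H'_0 \cdot z))^{\omega_{\text{st}}\perp}$ gives $\ker(\omega_{\text{st}}|_{\mathcal{Z}_{b_1,\dots,b_N}})_z = T_z(H'_0\cdot z)$. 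Since $\varphi$ is $G_0$-equivariant and leaves of $F_0$ are $H'_0$-orbits on $M_0$, the kernel of $\omega_{b_1,\dots,b_N}$ at $\varphi(z)$ is $T_{\varphi(z)} F_0$.

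For $J_0$-invariance and positivity I would introduce the K\"ahler-reduction horizontal distribution
\[
\mathcal{H}_z := \bigl(T_z(H'_0\cdot z) \oplus J\,T_z(H'_0\cdot z)\bigr)^{g_{\text{st}}\perp} \subset T_z \C^N,
\]
where $J$ and $g_{\text{st}}$ are the standard complex structure and K\"ahler metric on $\C^N$. Using the K\"ahler identity $\omega_{\text{st}}(JY,Y) = -g_{\text{st}}(Y,Y) < 0$ for $Y \neq 0$, one checks that $T_z(H'_0 \cdot z) \cap J\,T_z(H'_0\cdot z) = 0$, that $T_z\C^N = T_z \mathcal{Z}_{b_1,\dots,b_N} \oplus J\, T_z(H'_0\cdot z)$, and that $\mathcal{H}_z \subset T_z\mathcal{Z}_{b_1,\dots,b_N}$ with $T_z\mathcal{Z}_{b_1,\dots,b_N} = T_z(H'_0\cdot z) \oplus \mathcal{H}_z$. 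Being the $g_{\text{st}}$-orthogonal complement of a $J$-invariant subspace, $\mathcal{H}_z$ is $J$-invariant, and $(\omega_{\text{st}}|_{\mathcal{H}_z}, J|_{\mathcal{H}_z})$ is a linear K\"ahler pair. To transport this to $M_0$, I would observe that the quotient $\pi\co X(\Delta_0)\to M_0$ is holomorphic, so $\varphi = \pi \circ \iota$ (with $\iota$ the inclusion of $\mathcal{Z}_{b_1,\dots,b_N}$) satisfies $J_0\,d\varphi_z(X) = d\varphi_z(JX)$ whenever $JX \in T_z\mathcal{Z}_{b_1,\dots,b_N}$; since $J\mathcal{H}_z = \mathcal{H}_z \subset T_z\mathcal{Z}_{b_1,\dots,b_N}$, this holds on $\mathcal{H}_z$. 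Thus $d\varphi_z(\mathcal{H}_z)$ is a $J_0$-invariant complement to $T_{\varphi(z)}F_0$ in $T_{\varphi(z)}M_0$ on which $\omega_{b_1,\dots, b_N}$ corresponds via $d\varphi_z$ to $\omega_{\text{st}}|_{\mathcal{H}_z}$, giving both $J_0$-invariance and strict positivity off $TF_0$.

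The main obstacle is the last step: matching the complex structure $J_0$, which arises from the holomorphic quotient by the complex group $H_0$, with the complex structure on $\mathcal{H}_z$ inherited from the K\"ahler reduction by the real subgroup $H'_0$. The compatibility rests on the inclusion $J\,\mathcal{H}_z \subset T_z\mathcal{Z}_{b_1,\dots, b_N}$ — a K\"ahler-reduction phenomenon that ultimately uses the relation $\omega_{\text{st}}(\cdot,\cdot) = g_{\text{st}}(J\cdot,\cdot)$ — combined with Lemma \ref{lemm:transverseZ}, which ensures that $\mathcal{Z}_{b_1,\dots, b_N}$ is a global slice for the $H_0$-action on $X(\Delta_0)$ so that $d\varphi_z$ can be used to identify tangent spaces without ambiguity.
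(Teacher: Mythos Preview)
Your proposal is correct and follows essentially the same argument as the paper. Your horizontal space $\mathcal{H}_z = \bigl(T_z(H'_0\cdot z)\oplus J\,T_z(H'_0\cdot z)\bigr)^{g_{\text{st}}\perp}$ in $T_z\C^N$ coincides with the paper's subspace $T_z(H'_0\cdot z)^\perp$ (the $g_{\text{st}}$-orthogonal complement of $T_z(H'_0\cdot z)$ inside $T_z\mathcal{Z}_{b_1,\dots,b_N}$), since $T_z\mathcal{Z}_{b_1,\dots,b_N}=(J\,T_z(H'_0\cdot z))^{g_{\text{st}}\perp}$; both proofs then exploit $J_{\text{st}}$-invariance of this subspace together with the holomorphicity of $\pi$ to transport $J_{\text{st}}$ to $J_0$ and conclude $(1,1)$-type and positivity.
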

	\begin{proof}
		Let $z \in \mathcal{Z}_{b_1,\dots, b_N}$. Since 
		\begin{equation*}
			T_z\mathcal{Z}_{b_1,\dots, b_N} = \{ Y \in T_zX(\Delta_0) \mid \omega_{\text{st}}(X_v, Y) = 0 \text{ for all $v \in \h_0'$}\}, 
		\end{equation*}
		the kernel of $\omega_{\text{st}}|_{\mathcal{Z}_{b_1,\dots, b_N}}$ at $z$ coincides with $T_z(H_0'\cdot z)$. Since $\varphi$ is a $G_0$-equivariant diffeomorphism, we have that $\omega_{b_1,\dots, b_N}$ is a transverse symplectic form on $M_0$ with respect to the canonical foliation $F_0$. 
		
		Let $\pi \co X(\Delta_0) \to M_0$ be the quotient map. Let $J_\text{st}$ and $J_0$ be the complex structure on $X(\Delta_0)$ and $M_0$, respectively. Since $\pi$ is holomorphic, we have $d\pi_z \circ J_\text{st} = J_0\circ d\pi_z$. Let $T_z(H_0' \cdot z)^\perp$ be the orthogonal complement of $T_z(H_0' \cdot z)$ in $T_z\mathcal{Z}_{b_1,\dots, b_N}$ with respect to the inner product $\omega_{\text{st}}(-, J_\text{st}-)$. Then we have that 
		\begin{equation*}
			T_z(H_0' \cdot z)^\perp = \{ Y \in T_zX(\Delta_0) \mid \omega_{\text{st}}(X_v,Y) = \omega_{\text{st}}(X_v,J_{\text{st}}Y) = 0\text{ for all $v \in \h_0'$} \}
		\end{equation*}
		and hence $T_z(H_0' \cdot z)^\perp$ is closed under $J_\text{st}$. Let $X^\perp,Y^\perp \in T_z(H_0' \cdot z)^\perp$ and $X,Y \in T_z(H_0' \cdot z)$. Since $d\varphi _z (T_z(H_0' \cdot z)) = T_{\varphi(z)}F_0$ and  $T_{\varphi(z)}F_0$ is closed under $J_0$, we have
		\begin{equation}\Label{eq:transsymp}
			\begin{split}
				&\omega_{b_1,\dots, b_N}(J_0(d\varphi_z(X^\perp+X)), J_0(d\varphi_z(Y^\perp+Y))) \\ =& \omega_{b_1,\dots, b_N}(J_0(d\varphi_z(X^\perp)),J_0(d\varphi_z(Y^\perp))).
			\end{split}
		\end{equation}
		 Since $d\varphi_z$ coincides with the restriction of $d\pi_z$ to $T_z\mathcal{Z}_{b_1,\dots, b_N}$, we have 
		 \begin{equation*}
		 	\begin{split}
			 	&\omega_{b_1,\dots, b_N}(J_0d\varphi_z(X^\perp),J_0d\varphi_z(Y^\perp))\\
				=& \omega_{b_1,\dots, b_N}(J_0d\pi_z(X^\perp),J_0d\pi_z(Y^\perp))\\
				=& \omega_{b_1,\dots, b_N}(d\pi_z(J_\text{st}X^\perp), d\pi_z(J_{\text{st}}Y^\perp))\\
				=& \omega_{b_1,\dots, b_N}(d\varphi_z(J_\text{st}X^\perp), d\varphi_z(J_{\text{st}}Y^\perp))\\
				=& \omega_{\text{st}}(J_\text{st}X^\perp, J_\text{st}Y^\perp)\\
				=& \omega_{\text{st}}(X^\perp, Y^\perp)\\
				=& \omega_{b_1,\dots, b_N}(d\varphi_z(X^\perp),d\varphi_z(Y^\perp)).
			\end{split}
		 \end{equation*}
		 This together with \eqref{eq:transsymp} yields that $\omega_{b_1,\dots, b_n}(J_0X_0,J_0Y_0) = \omega_{b_1,\dots, b_n}(X_0,Y_0)$ for any $X_0,Y_0 \in T_{\varphi(z)}M_0$. Namely, $\omega_{b_1,\dots, b_n}$ is of type $(1,1)$. 
		 
		 The positivity of $\omega_{b_1,\dots, b_n}$ can be shown by the same argument. More precisely, we have 
		 \begin{equation*}
		 	\begin{split}
			 	&\omega_{b_1,\dots, b_N} (d\varphi_z(X^\perp + X), J_0(d\varphi_z(X^\perp + X))) \\
				=& \omega_{b_1,\dots, b_N} (d\varphi_z(X^\perp), J_0(d\varphi_z(X^\perp))) \\
				=& \omega_{b_1,\dots, b_N} (d\varphi_z(X^\perp), J_0(d\pi_z(X^\perp)))\\
				=& \omega_{b_1,\dots, b_N} (d\varphi_z(X^\perp), d\pi_z(J_\text{st}X^\perp)) \\
				=& \omega_{b_1,\dots, b_N} (d\varphi_z(X^\perp), d\varphi_z(J_\text{st}X^\perp)) \\
				=& \omega_{\text{st}}(X^\perp, J_\text{st}X^\perp) \geq 0.
			\end{split}
		 \end{equation*}
		 Thus $\omega_{b_1,\dots, b_N} (X_0,J_0X_0) \geq 0$ for any $X_0 \in T_{\varphi(z)}M_0$. 
		 The lemma is proved.
	\end{proof}
	\begin{lemm}\Label{lemm:Phib}
		The composition $\Phi_{b_1,\dots, b_N} := \Phi_{\text{st}}\circ \varphi^{-1} \co M_0 \to \g_0^*$ is a moment map of $M_0$ with respect to the transverse symplectic form $\omega_{b_1,\dots,b_N}$. The image $\Phi_{b_1,\dots, b_N}(M_0)$ coincides with $\Psi (P)$. 
	\end{lemm}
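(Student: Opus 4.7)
The plan is to transport the moment map identity from the standard Kähler form on $\C^N$ to $M_0$ via the $G_0$-equivariant diffeomorphism $\varphi \co \mathcal{Z}_{b_1,\dots,b_N} \to M_0$, since $\omega_{b_1,\dots,b_N}$ was defined precisely as the pushforward $(\varphi^{-1})^*(\omega_{\text{st}}|_{\mathcal{Z}_{b_1,\dots,b_N}})$. The image statement is then essentially Lemma \ref{lemm:regular}(1).

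For the moment map property, I would fix $v \in \g_0$ and verify that $d h_v = -i_{X_v^{M_0}}\omega_{b_1,\dots,b_N}$, where $h_v(x) := \langle \Phi_{b_1,\dots,b_N}(x), v\rangle$. The plan is to pull both sides back to $\mathcal{Z}_{b_1,\dots,b_N}$ under $\varphi^*$ and reduce to the known identity on $\C^N$. First I would observe that because $\mathcal{Z}_{b_1,\dots,b_N}$ is $G_0$-invariant, the fundamental vector field $X_v^{\C^N}$ is tangent to $\mathcal{Z}_{b_1,\dots,b_N}$ and restricts to the fundamental vector field $X_v^{\mathcal{Z}}$ of the $G_0$-action on $\mathcal{Z}_{b_1,\dots,b_N}$; the defining identity $d h_v^{\text{st}} = -i_{X_v^{\C^N}}\omega_{\text{st}}$ on $\C^N$ then pulls back under the inclusion $\mathcal{Z}_{b_1,\dots,b_N} \hookrightarrow \C^N$ to
\begin{equation*}
    d\bigl(h_v^{\text{st}}|_{\mathcal{Z}_{b_1,\dots,b_N}}\bigr) = -i_{X_v^{\mathcal{Z}}}\bigl(\omega_{\text{st}}|_{\mathcal{Z}_{b_1,\dots,b_N}}\bigr).
\end{equation*}
Next, using $\varphi^*\omega_{b_1,\dots,b_N} = \omega_{\text{st}}|_{\mathcal{Z}_{b_1,\dots,b_N}}$ and $\varphi^* h_v = h_v^{\text{st}}|_{\mathcal{Z}_{b_1,\dots,b_N}}$, together with $G_0$-equivariance of $\varphi$ which gives $\varphi_* X_v^{\mathcal{Z}} = X_v^{M_0}$, I would transport the above identity through $\varphi$ to obtain $d h_v = -i_{X_v^{M_0}}\omega_{b_1,\dots,b_N}$ on $M_0$. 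Since $\varphi$ is a diffeomorphism, $\varphi^*$ is injective, so this transport argument is reversible and the identity descends to $M_0$.

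For the image statement, Lemma \ref{lemm:regular}(1) already gives $\Phi_{\text{st}}(\mathcal{Z}_{b_1,\dots,b_N}) = \Psi(P)$. Since $\varphi \co \mathcal{Z}_{b_1,\dots,b_N} \to M_0$ is a bijection,
\begin{equation*}
    \Phi_{b_1,\dots,b_N}(M_0) = \Phi_{\text{st}}\bigl(\varphi^{-1}(M_0)\bigr) = \Phi_{\text{st}}(\mathcal{Z}_{b_1,\dots,b_N}) = \Psi(P).
\end{equation*}

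There is no real obstacle here: the whole lemma is a transport-of-structure argument relying on the fact that $\omega_{b_1,\dots,b_N}$ was defined precisely so that $\varphi$ becomes an isomorphism of $G_0$-invariant closed $2$-forms. The only point requiring a moment of care is confirming that the fundamental vector field on $\C^N$ restricts correctly to $\mathcal{Z}_{b_1,\dots,b_N}$, which is immediate from the $G_0$-invariance established in Lemma \ref{lemm:regular}.
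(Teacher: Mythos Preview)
Your proposal is correct and follows essentially the same approach as the paper: both arguments pull back the moment map identity from $\C^N$ to $\mathcal{Z}_{b_1,\dots,b_N}$ and then transport it to $M_0$ via the $G_0$-equivariant diffeomorphism $\varphi$, invoking Lemma~\ref{lemm:regular}(1) for the image statement. Your version is slightly more explicit about the restriction of fundamental vector fields and the equivariance step, but the logic is identical.
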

	\begin{proof}
		Let $v \in \g_0$. We show that $dh_v = -i_{X_v}\omega_{b_1,\dots, b_N}$, where $h_v \co M_0 \to \R$ is given by $h_v(x) = \langle \Phi_{b_1,\dots, b_N}(x), v\rangle$. Since $\varphi^*h_v = \langle \varphi^*\Phi_{\text{st}}, v\rangle$, by taking differential we have $d\varphi^*h_v = \varphi^*(-i_{X_v}\omega_{\text{st}}|_{\mathcal{Z}_{b_1,\dots, b_N}})$. Since $d\varphi^*h_v = \varphi^*(dh_v)$ and $\varphi$ is a $G_0$-equivariant diffeomorphism, we have $dh_v=-i_{X_v}\omega_{b_1,\dots, b_N}$. Therefore $\Phi_{b_1,\dots ,b_N}$ is a moment map. 
		
		By definition, we have $\Phi_{b_1,\dots, b_N}(M_0) = \Phi_{\text{st}}(\mathcal{Z}_{b_1,\dots, b_N})$. This together with Lemma \ref{lemm:regular} (1) yields that $\Phi_{b_1,\dots, b_N}(M_0) = \Psi (P)$, proving the lemma. 
	\end{proof}
	By Lemma \ref{lemm:Phib} and $\Psi(P) = q_0^*(P) - \sum_{i=1}^Nb_ix_i$, we have an induced moment map $\widetilde{\Phi}_{b_1,\dots,b_N} \co M_0 \to \widetilde{V}_0^*$ that satisfies 
	\begin{equation*}
		q_0^*\circ\widetilde{\Phi}_{b_1,\dots,b_N} - \sum_{i=1}^Nb_ix_i= \Phi_{b_1,\dots, b_N}. 
	\end{equation*}
	The image $\widetilde{\Phi}_{b_1,\dots,b_N}(M_0)$ coincides with $P$ and $q_0^*\circ \widetilde{\Phi}_{b_1,\dots,b_N} \co M_0 \to \g_0^*$ is also a moment map. 
	We say that $(b_1,\dots, b_N) \in \R^N$ is \emph{admissible} if $(b_1,\dots, b_N)$ satisfies the followings.
	\begin{enumerate}
		\item $P _{b_1,\dots, b_m} := \bigcap_{i=1}^m H_{i,b_i}$ is a normal polytope of $\widetilde{\Delta}_0$. In particular, 
		\begin{equation*}
			P \cap \{ \widetilde{x} \in V^*_0 \mid \langle \widetilde{x}, \mu_i\rangle =b_i\}
		\end{equation*}
		is a facet of $P$ for $i=1,\dots, m$. 
		\item There exists $\epsilon >0$ such that $P _{b_1,\dots, b_N} \subset H_{i,b_i+\epsilon} \subset H_{i,b_i}$ for $i =m+1,\dots, N$. 
	\end{enumerate}
	The set of all admissible elements in $\R^N$ is nonempty and open in $\R^N$. 
	For each admissible element $(b_1,\dots, b_N)$, we have a $d_{G_0}$-closed class $\omega^{G_0}_{b_1,\dots, b_N} := 1\otimes \omega_{b_1,\dots, b_N}  - q^*_0\circ \Phi_{b_1,\dots, b_N} \in S^0(\g_0^*) \otimes \Omega^2_B(M)^{G_0} \oplus S^2(\g_0^*) \otimes \Omega^0_B(M)^{G_0}$. 
	
	We will see the relation between the basic cohomology class $[\omega_{b_1,\dots, b_N}] \in H^2_B(M_0)$ and the equivariant cohomology class $[\omega^{G_0}_{b_1,\dots, b_N}] \in H^2_{G_0}(M_0)$ via the localization map. We need the following lemma. 
	\begin{lemm}\Label{lemm:vertexcorrespondence}
		Let $Z_1, \dots, Z_l$ be minimal orbits of $M_0$. Let $(b_1,\dots, b_N), (b_1',\dots, b_N') \in \R^N$ be admissible. For $i=1,\dots, m$, we define facets 
		\begin{equation*}
			\begin{split}
				F_i &:= P_{b_1,\dots, b_m} \cap \{\widetilde{x} \in \widetilde{V}^*_0 \mid \langle \widetilde{x}, \mu_i\rangle = b_i\}, \\
				F_i' &:= P_{b_1',\dots, b_m'} \cap \{\widetilde{x}' \in \widetilde{V}^*_0 \mid \langle \widetilde{x}', \mu_i\rangle = b_i'\}
			\end{split}
		\end{equation*}
		of $P_{b_1,\dots, b_m}$, $P_{b_1',\dots, b_m'}$, respectively. Then, $\widetilde{\Phi}_{b_1,\dots, b_m}(Z_j) \in F_i$ if and only if $\widetilde{\Phi}_{b_1',\dots, b_m'}(Z_j) \in F_i'$. 
	\end{lemm}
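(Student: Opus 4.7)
The plan is to reduce both conditions to a single combinatorial condition on the fan $\widetilde{\Delta}_0$ that is manifestly independent of the admissible data, namely whether $\widetilde{\rho}_i$ is a face of a certain maximal cone $\widetilde{\sigma}_j\in\widetilde{\Delta}_0$ canonically attached to $Z_j$. Since the same cone $\widetilde{\sigma}_j$ will arise from both $(b_1,\dots,b_N)$ and $(b_1',\dots,b_N')$, the equivalence will be immediate.

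First, by Proposition \ref{prop:vertex}, the point $w_j:=\widetilde{\Phi}_{b_1,\dots,b_m}(Z_j)$ is a vertex of $P_{b_1,\dots,b_m}$, and distinct minimal orbits produce distinct vertices. By Lemma \ref{lemm:minimum}, there exists $v\in\g_0$, built from a $\Z$-basis of $\Hom((G_0)_x,S^1)$ at any point $x\in Z_j$ and hence depending only on $Z_j$, such that $Z_j$ is the critical submanifold on which $h_v$ attains its global minimum. Using the relation $h_v(x)=\langle\widetilde{\Phi}_{b_1,\dots,b_m}(x),q_0(v)\rangle+\langle\alpha,v\rangle$ analogous to \eqref{eq:hv}, this means precisely that $w_j$ is the unique vertex of $P_{b_1,\dots,b_m}$ minimizing the linear functional $q_0(v)$, so $q_0(v)$ lies in the interior of the (inner) normal cone of $P_{b_1,\dots,b_m}$ at $w_j$. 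By admissibility, $P_{b_1,\dots,b_m}$ is a normal polytope of $\widetilde{\Delta}_0$, so this normal cone is a maximal cone $\widetilde{\sigma}_j\in\widetilde{\Delta}_0$. Since the same $v$ also witnesses the analogous minimum property on $Z_j$ for the primed data, the vertex $w_j':=\widetilde{\Phi}_{b_1',\dots,b_m'}(Z_j)$ of $P_{b_1',\dots,b_m'}$ has interior normal cone containing $q_0(v)$, which forces it to coincide with the same maximal cone $\widetilde{\sigma}_j$.

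Finally, for a normal polytope of $\widetilde{\Delta}_0$, the facets passing through a vertex are exactly those whose inner normal spans a ray of the maximal cone at that vertex. The facets $F_i$ and $F_i'$ both have inner normal $\mu_i$, the primitive generator of the ray $\widetilde{\rho}_i=\widetilde{\lambda}_0^{-1}\{\mu_i\}$ of $\widetilde{\Delta}_0$. Therefore $w_j\in F_i$ iff $\widetilde{\rho}_i$ is a face of $\widetilde{\sigma}_j$ iff $w_j'\in F_i'$, as required. The only potential obstacle is ensuring the assignment $Z_j\mapsto\widetilde{\sigma}_j$ is truly independent of the admissible parameter; this is secured by the fact that the element $v\in\g_0$ produced by Lemma \ref{lemm:minimum} is intrinsic to $Z_j$ and the inner normal fan of any normal polytope is by definition $\widetilde{\Delta}_0$, independent of the $b_i$'s.
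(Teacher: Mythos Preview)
Your proof is correct and takes a genuinely different route from the paper's. The paper works directly with the explicit $G_0$-equivariant diffeomorphisms $\varphi\co\mathcal Z_{b_1,\dots,b_N}\to M_0$ and $\varphi'\co\mathcal Z_{b_1',\dots,b_N'}\to M_0$ constructed just above: for $z\in\varphi^{-1}(Z_j)$ the index set $I_z=\{i\leq m:z_i=0\}$ is determined by the isotropy subgroup $(G_0)_z$, so the $G_0$-equivariance of $(\varphi')^{-1}\circ\varphi$ forces $I_z=I_{z'}$ for $z'=(\varphi')^{-1}\circ\varphi(z)$; a one-line computation with \eqref{eq:xmu} then shows $\widetilde\Phi_{b_1,\dots,b_N}(Z_j)\in F_i\iff i\in I_z$, and symmetrically for the primed data. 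You instead invoke Lemma~\ref{lemm:minimum} and Proposition~\ref{prop:vertex} to produce a direction $q_0(v)$, depending only on the isotropy representation at $Z_j$, lying in the interior of the inner normal cone of the polytope at the vertex $w_j$, and then use that the inner normal fan of any admissible polytope is $\widetilde\Delta_0$ to identify this cone as a fixed maximal cone $\widetilde\sigma_j$ independent of the $b_i$'s.

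Both arguments ultimately rest on the same intrinsic invariant of $Z_j$ (its isotropy data, which simultaneously determines $I_z$ and the linear span of $\widetilde\sigma_j$), but the paper's version is a self-contained computation in the concrete model $\mathcal Z_{b_1,\dots,b_N}\subset\C^N$, while yours imports earlier moment-map results and recasts the lemma as a polytope-duality statement. Your approach has the virtue of making the fan-theoretic content transparent; the paper's has the virtue of avoiding any appeal to the uniqueness of the vertex minimizing $\langle\,\cdot\,,q_0(v)\rangle$, a fact you use and which is contained in the \emph{proof} (rather than the statement) of Proposition~\ref{prop:vertex}.
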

	\begin{proof}
		Let $\varphi \co \mathcal{Z}_{b_1,\dots, b_N} \to M_0$ and $\varphi' \co \mathcal{Z}_{b_1',\dots, b_N'} \to M_0$ be the $G_0$-equivariant diffeomorphisms induced by the inclusions. Let $z = (z_1,\dots, z_N) \in \varphi^{-1}(Z_j)$. Put
		\begin{equation*}
			I_{z} := \{ i \in \{1,\dots, m\} \mid z_i = 0\}. 
		\end{equation*}
		Then the isotropy subgroup of $G_0$ at $z$ is 
		\begin{equation*}
			\{ (g_1,\dots, g_N) \in (S^1)^N = G_0 \mid g_i =1 \text{ for $i\notin I_z$}\}.
		\end{equation*}
		Since $(\varphi')^{-1} \circ \varphi \co \mathcal{Z}_{b_1,\dots,b_N} \to \mathcal{Z}_{b_1',\dots,b_N'}$ is a $G_0$-equivariant diffeomorphism, we have $I_z = I_{z'}$, where $z' =(\varphi')^{-1}\circ \varphi(z)$. 
		Let $\Psi \co P_{b_1,\dots, b_m} \to \g_0^*$ be the embedding given by 
		\begin{equation*}
			\Psi(\widetilde{x}) = \sum_{i=1}^N (\langle \widetilde{x},\mu_i\rangle -b_i)x_i \quad \text{for $\widetilde{x} \in P_{b_1,\dots, b_N}$}.
		\end{equation*}
		By Lemma \ref{lemm:Phib}, there exists $\widetilde{x} \in P_{b_1,\dots, b_N}$ such that 
		\begin{equation*}
			\Psi (\widetilde{x}) = \Phi_{b_1,\dots, b_N}(Z_j) = \Phi_{\text{st}}\circ \varphi^{-1}(Z_j).
		\end{equation*}
		Thus we have that $\langle \widetilde{x},\mu_i\rangle =b_i$ if and only if $i\in I_z$. Since $q^*_0\circ \widetilde{\Phi}_{b_1,\dots, b_N}(Z_j) - \sum_{i=1}^mb_ix_i = \Phi_{b_1,\dots, b_N}(Z_j) = \Psi (\widetilde{x})$, we have $q^*_0\circ \widetilde{\Phi}_{b_1,\dots, b_N}(Z_j) = \sum_{i=1}^N\langle \widetilde{x},\mu_i\rangle x_i$. It turns out that $\widetilde{\Phi}_{b_1,\dots, b_N}(Z_j) = \widetilde{x}$. Since $\langle \widetilde{x}, \mu_i\rangle =b_i$ if and only if $i\in I_z$, we have that $\widetilde{\Phi}_{b_1,\dots, b_N}(Z_j) \in F_i$ if and only if $i \in I_z$. By the same argument, we also have that $\widetilde{\Phi}_{b_1',\dots, b_N'}(Z_j) \in F_i'$ if and only if $i \in I_{z'}$. Since $I_z = I_z'$, we have that  $\widetilde{\Phi}_{b_1,\dots, b_m}(Z_j) \in F_i$ if and only if $\widetilde{\Phi}_{b_1',\dots, b_m'}(Z_j) \in F_i'$, as required. 
	\end{proof}
	
	\begin{lemm}\Label{lemm:parallel}
		Let $(b_1,\dots, b_N), (b_1',\dots, b_N') \in \R^N$ be admissible. 
		Then, $[\omega_{b_1,\dots, b_N}] = [\omega_{b_1',\dots, b_N'}]$ if and only if there exists $\widetilde{y} \in \widetilde{V}^*_0$ such that $P_{b_1,\dots, b_m} = P_{b_1',\dots, b_m'} + \widetilde{y}$. 
	\end{lemm}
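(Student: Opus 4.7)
The plan is to lift the desired basic cohomological equality to equivariant cohomology via the basic forgetful map, apply localization to minimal orbits to convert it into a statement about the values of $\widetilde{\Phi}$ at those orbits, and then read off the geometric translation of the polytopes from the resulting vertex data.

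My first step will be to consider the equivariant class $[\omega^{G_0}_{b_1,\dots,b_N}] \in H^2_{G_0}(M_0)$. By construction $\forB([\omega^{G_0}_{b_1,\dots,b_N}]) = [\omega_{b_1,\dots,b_N}]$, and by Theorem \ref{theo:basicforgetful} the degree-$2$ part of $\ker \forB$ is exactly $q_0^*(\widetilde{V}_0^*) \subset \mathfrak{g}_0^* \subset H^2_{G_0}(M_0)$. Consequently, the desired basic equality $[\omega_{b_1,\dots,b_N}] = [\omega_{b_1',\dots,b_N'}]$ is equivalent to the existence of some $\widetilde{y} \in \widetilde{V}_0^*$ with
\begin{equation*}
[\omega^{G_0}_{b_1,\dots,b_N}] - [\omega^{G_0}_{b_1',\dots,b_N'}] = q_0^*(\widetilde{y}) \quad \text{in } H^2_{G_0}(M_0).
\end{equation*}

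Next I would invoke Corollary \ref{coro:localization}: the restriction $\iota^* \co H^*_{G_0}(M_0) \to \bigoplus_{j=1}^l H^*_{G_0}(Z_j)$ is injective, so this equivariant identity may be tested orbit by orbit. By Lemma \ref{lemm:localizationminimal}, restricting $\omega^{G_0}_{b_1,\dots,b_N}$ to a minimal orbit $Z_j$ annihilates the basic $2$-form component and yields $-q_0^*(\widetilde{\Phi}_{b_1,\dots,b_N}(Z_j))$. Since $q_0$ is surjective, $q_0^*$ is injective, and the condition collapses to
\begin{equation*}
\widetilde{\Phi}_{b_1',\dots,b_N'}(Z_j) - \widetilde{\Phi}_{b_1,\dots,b_N}(Z_j) = \widetilde{y}, \qquad j=1,\dots,l,
\end{equation*}
that is, the $\widetilde{\Phi}$-images of all minimal orbits differ by a single vector $\widetilde{y}$.

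By Proposition \ref{prop:vertex} these images are precisely the vertices of $P_{b_1,\dots,b_m}$ (respectively $P_{b_1',\dots,b_m'}$), and each polytope is the convex hull of its vertex set. Lemma \ref{lemm:vertexcorrespondence} guarantees that the bijection between minimal orbits and vertices has the same combinatorial shape for any two admissible tuples, so a uniform translation of corresponding vertices is tantamount to a translation of the polytopes, yielding $P_{b_1,\dots,b_m} = P_{b_1',\dots,b_m'} + \widetilde{y}$. The ``if'' direction follows by reversing the same chain: starting from $P_{b_1,\dots,b_m} = P_{b_1',\dots,b_m'} + \widetilde{y}$, the facet relation $F_i' = F_i - \widetilde{y}$ together with Lemma \ref{lemm:vertexcorrespondence} forces the vertex equality, which in turn forces the equivariant and hence basic cohomological equality. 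The step I expect to require the most care is matching the vertex-labeling bijection of Lemma \ref{lemm:vertexcorrespondence} with the translation, i.e.\ verifying that the vertex of $P_{b_1,\dots,b_m}$ labeled by $Z_j$ really is the $\widetilde{y}$-translate of the vertex of $P_{b_1',\dots,b_m'}$ labeled by the same $Z_j$; this is exactly where admissibility and the facet-incidence description from Lemma \ref{lemm:vertexcorrespondence} do the essential combinatorial work.
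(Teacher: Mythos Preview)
Your proposal is correct and follows essentially the same route as the paper's own proof: lift via $\forB$ using Theorem~\ref{theo:basicforgetful}, localize to minimal orbits via Corollary~\ref{coro:localization} and Lemma~\ref{lemm:localizationminimal}, then translate between vertex data and polytope translation using Proposition~\ref{prop:vertex} and Lemma~\ref{lemm:vertexcorrespondence}. The only cosmetic differences are that the paper invokes Lemma~\ref{lemm:local}(2) (rather than the surjectivity of $q_0$) to strip off $q_0^*$, and cites Theorem~\ref{theo:transverse}(2) directly for the convex-hull step in the ``only if'' direction.
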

	\begin{proof}
		Suppose that $[\omega_{b_1,\dots, b_N}] = [\omega_{b_1',\dots, b_N'}]$. Then we have $\forB([\omega_{b_1,\dots, b_N}^{G_0}]) = \forB([\omega_{b_1',\dots, b_N'}^{G_0}])$. By Theorem \ref{theo:basicforgetful}, we have that there exists an element $\widetilde{y} \in \widetilde{V}_0^*$ such that $[\omega_{b_1,\dots, b_N}^{G_0}] = [\omega_{b_1',\dots, b_N'}^{G_0}] + [q_0^*(\widetilde{y})]$. 
		
		Let $Z_1,\dots, Z_l$ be minimal orbits. By Lemma \ref{lemm:localizationminimal}, we have 
		\begin{equation*}
			[q^*_0\circ \widetilde{\Phi}_{b_1,\dots, b_N}(Z_j)] = [q^*_0\circ \widetilde{\Phi}_{b_1',\dots, b_N'}(Z_j)] + [q_0^*(\widetilde{y})].
		\end{equation*}
		By Lemma \ref{lemm:local} (2), we have that $H^2_{G_0}(Z_j)$ is isomorphic to $q_0^*(\widetilde{V}_0^*)$. Thus we have 
		\begin{equation*}
			\widetilde{\Phi}_{b_1,\dots, b_N}(Z_j) = \widetilde{\Phi}_{b_1',\dots, b_N'}(Z_j) + \widetilde{y}
		\end{equation*}
		for all $j$. 	By Theorem \ref{theo:transverse} (2), we have $P_{b_1,\dots, b_m} = P_{b_1',\dots, b_m'} + \widetilde{y}$. 
		
		Now we show the converse. Suppose that there exists $\widetilde{y} \in \widetilde{V}^*_0$ such that $P_{b_1,\dots, b_m} = P_{b_1',\dots, b_m'} + \widetilde{y}$. 
		By Proposition \ref{prop:vertex} and Lemma \ref{lemm:vertexcorrespondence}, we have $\widetilde{\Phi}_{b_1,\dots, b_N}(Z_j) = \widetilde{\Phi}_{b_1',\dots, b_N'}(Z_j) + \widetilde{y}$ for all $j$. By Corollary \ref{coro:localization} and Lemma \ref{lemm:localizationminimal}, we have $[\omega^{G_0}_{b_1,\dots, b_N}] = [\omega^{G_0}_{b_1',\dots, b_N'}] +[q^*_0(\widetilde{y})]$. Since $\forB([q_0^*(\widetilde{y})]) = 0$, $\forB([\omega^{G_0}_{b_1,\dots, b_N}]) = [\omega_{b_1,\dots, b_N}]$ and $\forB([\omega^{G_0}_{b_1',\dots, b_N'}]) = [\omega_{b_1',\dots, b_N'}]$, we have $[\omega_{b_1,\dots, b_N}] = [\omega_{b_1',\dots, b_N'}]$, proving the lemma. 
	\end{proof}
	\begin{lemm}\Label{lemm:paralleltranslation}
		Let $(b_1,\dots, b_m), (b_1',\dots, b_m') \in \R^m$. Assume that $P_{b_1,\dots, b_m}$ and $P_{b_1',\dots, b_m'}$ are inner normal polytopes of $\widetilde{\Delta}_0$. Let $\widetilde{y} \in \widetilde{V}^*_0$. Then, $P_{b_1,\dots, b_m} = P_{b_1',\dots, b_m'} + \widetilde{y}$ if and only if $\langle \widetilde{y}, \mu_i\rangle = b_i-b_i'$ for $i=1,\dots, m$.
	\end{lemm}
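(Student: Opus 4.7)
The plan is to argue by explicit inspection of the half-space descriptions, using the assumption that both polytopes have $\widetilde{\Delta}_0$ as their inner normal fan.

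First I would handle the easy implication. Suppose $\langle \widetilde{y},\mu_i\rangle = b_i-b_i'$ for $i=1,\dots,m$. Then for any $\widetilde{x} \in \widetilde{V}_0^*$, the equivalences
\begin{equation*}
\widetilde{x} \in P_{b_1',\dots,b_m'} \iff \langle \widetilde{x},\mu_i\rangle \geq b_i' \; \text{for all } i \iff \langle \widetilde{x}+\widetilde{y},\mu_i\rangle \geq b_i \; \text{for all } i \iff \widetilde{x}+\widetilde{y} \in P_{b_1,\dots,b_m}
\end{equation*}
give $P_{b_1,\dots,b_m} = P_{b_1',\dots,b_m'}+\widetilde{y}$ directly. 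This part is a routine calculation.

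For the converse, the key point is that translating a polytope does not change its inner normal fan, so $P_{b_1',\dots,b_m'}+\widetilde{y}$ has the same inner normal fan as $P_{b_1',\dots,b_m'}$, namely $\widetilde{\Delta}_0$. Hence $P_{b_1,\dots,b_m}$ and $P_{b_1',\dots,b_m'}+\widetilde{y}$ share the same normal fan. The next step is to identify the facet of each polytope associated with the $1$-cone $\widetilde{\rho}_i$ (spanned by $\mu_i$). Since $P_{b_1,\dots,b_m}$ is a normal polytope of $\widetilde{\Delta}_0$, none of the defining inequalities $\langle \widetilde{x},\mu_i\rangle \geq b_i$ is redundant, so the facet with inner normal $\mu_i$ lies in the hyperplane $\{\widetilde{x} \mid \langle \widetilde{x},\mu_i\rangle = b_i\}$. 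Applying the same observation to $P_{b_1',\dots,b_m'}$ and then translating by $\widetilde{y}$, the facet of $P_{b_1',\dots,b_m'}+\widetilde{y}$ with inner normal $\mu_i$ lies in the hyperplane $\{\widetilde{x} \mid \langle \widetilde{x},\mu_i\rangle = b_i' + \langle \widetilde{y},\mu_i\rangle\}$.

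Finally, because the two polytopes are equal, these facets coincide as subsets of $\widetilde{V}_0^*$. Each facet is $(\dim \widetilde{V}_0-1)$-dimensional, so it determines its affine hull uniquely; therefore the two hyperplanes coincide, which forces $b_i = b_i' + \langle \widetilde{y},\mu_i\rangle$ for every $i=1,\dots,m$. The only mild subtlety is to justify that the facet has full codimension-one dimension, but this is immediate from the assumption that $P_{b_1,\dots,b_m}$ is a normal polytope of the simplicial fan $\widetilde{\Delta}_0$, which prevents any $H_{i,b_i}$ from meeting $P_{b_1,\dots,b_m}$ in a lower-dimensional face. No step here looks to present a genuine obstacle.
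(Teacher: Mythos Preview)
Your proof is correct and follows essentially the same approach as the paper: both directions are handled by comparing the facet-defining hyperplanes, using that the assumption ``inner normal polytope of $\widetilde{\Delta}_0$'' guarantees each inequality $\langle \widetilde{x},\mu_i\rangle \geq b_i$ cuts out a genuine facet. The paper phrases the converse by picking a point $\widetilde{x}\in F_i$ and computing $\langle \widetilde{x}-\widetilde{y},\mu_i\rangle$, while you argue via the affine hull of the facet, but this is only a cosmetic difference.
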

	\begin{proof}
		Let $\widetilde{x} \in V^*_0$. Put $\widetilde{x}' := \widetilde{x}-\widetilde{y}$. Then we have 
		\begin{equation*}
			\begin{split}
				\langle \widetilde{x},\mu_i\rangle \geq b_i &\Longleftrightarrow \langle \widetilde{x}' +\widetilde{y}, \mu_i\rangle \geq b_i\\
				&\Longleftrightarrow \langle \widetilde{x}',\mu_i\rangle \geq b_i-\langle \widetilde{y},\mu_i\rangle
			\end{split}
		\end{equation*}
		for $i=1,\dots , m$. Thus, if  $\langle \widetilde{y}, \mu_i\rangle = b_i-b_i'$ for all $i=1,\dots, m$, then the condition that $\widetilde{x} \in P_{b_1,\dots, b_m}$ is equivalent to the condition that $\widetilde{x}' \in P_{b_1',\dots, b_m'}$. 
		
		Suppose that $P_{b_1,\dots, b_m} = P_{b_1',\dots, b_m'} + \widetilde{y}$. As before, for $i=1,\dots, m$, we define
		\begin{equation*}
			\begin{split}
				F_i &:= P_{b_1,\dots, b_m} \cap \{ \widetilde{x} \in V^*_0\mid \langle \widetilde{x},\mu_i\rangle =b_i\},\\
				F_i' &:=P_{b_1',\dots, b_m'} \cap \{ \widetilde{x}' \in V^*_0\mid \langle \widetilde{x}',\mu_i\rangle =b_i'\}.
			\end{split}
		\end{equation*}
		Since $P_{b_1,\dots, b_m}$ and $P_{b_1',\dots, b_m'}$ are inner normal polytopes of $\widetilde{\Delta}_0$, we have that $F_i$ and $F_i'$ both are nonempty facets. Since $P_{b_1,\dots, b_m} = P_{b_1',\dots, b_m'} + \widetilde{y}$, we have $F_i = F_i' + \widetilde{y}$. Let $\widetilde{x} \in F_i$. Then we have $\widetilde{x}' := \widetilde{x}-\widetilde{y} \in F_i'$. On the other hand,
		\begin{equation*}
			\begin{split}
				\langle \widetilde{x},\mu_i\rangle = b_i &\Longleftrightarrow \langle \widetilde{x}' +\widetilde{y}, \mu_i\rangle = b_i\\
				&\Longleftrightarrow \langle \widetilde{x}',\mu_i\rangle = b_i-\langle \widetilde{y},\mu_i\rangle.
			\end{split}
		\end{equation*}
		This together with the fact that $\widetilde{x}' := \widetilde{x}-\widetilde{y} \in F_i'$ yields that $b_i - \langle \widetilde{y},\mu_i\rangle =b_i'$. Namely, we have that if $P_{b_1,\dots, b_m} = P_{b_1',\dots, b_m'} + \widetilde{y}$, then $\langle \widetilde{y}, \mu_i\rangle = b_i-b_i'$ for $i=1,\dots, m$. The lemma is proved.
	\end{proof}
	\begin{lemm}\Label{lemm:monoid}
		The followings hold. 
		\begin{enumerate}
			\item Let $(b_1,\dots, b_N), (b_1',\dots, b_N') \in \R^N$ be admissible. Then $(b_1+b_1',\dots, b_N+b_N')$ is admissible, $P_{b_1,\dots, b_m} + P_{b_1',\dots, b_m'} = P_{b_1+b_1',\dots, b_N+b_N'}$ and $[\omega_{b_1,\dots, b_N}] + [\omega_{b_1',\dots, b_N'}] = [\omega_{b_1+b_1',\dots, b_N+b_N'}]$. 
			\item Let $(b_1,\dots, b_N) \in \R^N$ be admissible and $r$ a positive real number. Then $(rb_1,\dots, rb_N)$ is admissible, $rP_{b_1,\dots, b_N} = P_{rb_1,\dots, rb_N}$ and $r[\omega_{b_1,\dots, b_N}] = [\omega_{rb_1,\dots, rb_N}]$. 
		\end{enumerate}
	\end{lemm}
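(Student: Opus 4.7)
The plan is to prove the three statements in part (1) in sequence — the polytope Minkowski sum identity, admissibility of the sum, and the basic cohomology class identity — and then note that part (2) follows by the same strategy with $r$-scaling replacing Minkowski sum.

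For the Minkowski sum identity $P_{b_1,\dots,b_m} + P_{b_1',\dots,b_m'} = P_{b_1+b_1',\dots,b_m+b_m'}$, the inclusion $\subseteq$ is immediate since $\langle \widetilde{x}+\widetilde{x}', \mu_i\rangle \geq b_i + b_i'$ whenever each summand lies in the corresponding half-spaces. For the reverse inclusion, I would use that both summand polytopes have $\widetilde{\Delta}_0$ as inner normal fan (by admissibility), so by the standard support-function identity for polytopes with a common normal fan their Minkowski sum again has normal fan $\widetilde{\Delta}_0$ and support function equal to the pointwise sum, which is realized by $P_{b_1+b_1',\dots,b_m+b_m'}$. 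Admissibility of $(b_1+b_1',\dots,b_N+b_N')$ then follows quickly: condition (1) is the Minkowski sum identity just proved, and condition (2) holds with $\epsilon'' := \epsilon+\epsilon' > 0$, since adding $P_{b_1,\dots,b_m} \subset H_{i,b_i+\epsilon}$ and $P_{b_1',\dots,b_m'} \subset H_{i,b_i'+\epsilon'}$ gives $P_{b_1+b_1',\dots,b_m+b_m'} \subset H_{i,b_i+b_i'+\epsilon+\epsilon'}$ for $i = m+1, \dots, N$.

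The core step is the basic cohomology identity. I would lift to equivariant cohomology, localize to minimal orbits, and then apply $\forB$. It suffices to prove $[\omega^{G_0}_{b_1,\dots,b_N}] + [\omega^{G_0}_{b_1',\dots,b_N'}] = [\omega^{G_0}_{b_1+b_1',\dots,b_N+b_N'}]$ in $H^2_{G_0}(M_0)$, since then applying $\forB$ yields the basic identity. By Corollary \ref{coro:localization}, the restriction map to the union of minimal orbits is injective, so it is enough to check the identity on each $Z_j$. By Lemma \ref{lemm:localizationminimal}, the restriction of $[\omega^{G_0}_{b_1,\dots,b_N}]$ to $Z_j$ is (up to the cancelling $-\sum_i b_i x_i$ term) $q_0^*(\widetilde{\Phi}_{b_1,\dots,b_N}(Z_j)) \in H^2_{G_0}(Z_j)$, and similarly for the other two classes. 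After the $b_i$ contributions cancel ($b_i + b_i' - (b_i+b_i') = 0$), the difference of restrictions reduces to $q_0^*$ applied to the vertex difference $\widetilde{\Phi}_{b_1,\dots,b_N}(Z_j) + \widetilde{\Phi}_{b_1',\dots,b_N'}(Z_j) - \widetilde{\Phi}_{b_1+b_1',\dots,b_N+b_N'}(Z_j)$. This vertex difference vanishes: by Lemma \ref{lemm:vertexcorrespondence}, the index set $I(Z_j) \subseteq \{1,\dots,m\}$ of facets containing $\widetilde{\Phi}_{\bullet}(Z_j)$ is independent of the admissible tuple; since $\widetilde{\Delta}_0$ is simplicial the polytopes are simple and $\{\mu_i \mid i \in I(Z_j)\}$ forms a basis of $\widetilde{V}_0$, so each vertex is uniquely determined by the equations $\langle v, \mu_i\rangle = b_i$ for $i \in I(Z_j)$; adding the two such systems yields the vertex identity for the sum.

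Part (2) follows by the same strategy with scaling: $rP_{b_1,\dots,b_m} = P_{rb_1,\dots,rb_m}$ is immediate by scaling the defining half-space inequalities by $r>0$, admissibility of $(rb_1,\dots,rb_N)$ holds with $\epsilon'' = r\epsilon$, and the vertex identity $\widetilde{\Phi}_{rb_1,\dots,rb_N}(Z_j) = r\widetilde{\Phi}_{b_1,\dots,b_N}(Z_j)$ follows by scaling the vertex equations. The main obstacle is organizing the localization reduction for the cohomology identity cleanly; once one sees that the identity reduces through Lemma \ref{lemm:localizationminimal} and Corollary \ref{coro:localization} to a linear identity among vertices, the remaining combinatorics of simple polytopes and normal fans is routine.
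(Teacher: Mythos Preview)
Your proposal is correct and follows essentially the same approach as the paper's own proof: establish the Minkowski sum identity via normal-fan/support-function reasoning, deduce admissibility by summing the $\epsilon$'s, and prove the cohomology identity by lifting to $H^2_{G_0}(M_0)$, localizing to minimal orbits via Corollary~\ref{coro:localization} and Lemma~\ref{lemm:localizationminimal}, and reducing to the vertex identity $\widetilde{\Phi}_{b}(Z_j)+\widetilde{\Phi}_{b'}(Z_j)=\widetilde{\Phi}_{b+b'}(Z_j)$ via Lemma~\ref{lemm:vertexcorrespondence}. The only cosmetic differences are that the paper spells out the face-by-face argument for the Minkowski sum rather than citing the support-function identity, and that you make the vertex identity explicit by solving the linear system $\langle v,\mu_i\rangle=b_i$ for $i\in I(Z_j)$, whereas the paper leaves this implicit in the combination of Lemma~\ref{lemm:vertexcorrespondence} with $P_b+P_{b'}=P_{b+b'}$.
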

	\begin{proof}
		For (1), let $(b_1,\dots, b_N), (b_1',\dots, b_N') \in \R^N$ be admissible. 
		Let $\widetilde{x} \in P_{b_1,\dots, b_m}$ and $\widetilde{x}' \in P_{b_1',\dots, b_m'}$. Since $P_{b_1,\dots, b_m} = \bigcap_{i=1}^m H_{i,b_i}$ and $P_{b_1',\dots, b_m'}= \bigcap_{i=1}^m H_{i,b_i'}$, we have $\langle \widetilde{x},\mu_i\rangle \leq b_i$ and $\langle \widetilde{x}',\mu_i\rangle \leq b_i'$ for $i=1,\dots, m$. Therefore we have $\langle \widetilde{x}+\widetilde{x}',\mu_i\rangle \leq b_i+b_i'$. This shows that $P_{b_1,\dots, b_m} + P_{b_1',\dots, b_m'} \subset P_{b_1+b_1',\dots, b_m+b_m'}$. 
		For $\mu \in \widetilde{V}_0$, we denote by $F_\mu$, $F_\mu'$, $F_\mu''$ the maximal faces of $P_{b_1,\dots, b_m}, P_{b_1',\dots, b_m'}, P_{b_1,\dots, b_m}+ P_{b_1',\dots, b_m'}$ whose inner normal vector is $\mu$. Let $\widetilde{x}''\in F_\mu''$. Then there exist $\widetilde{x} \in P_{b_1,\dots, b_m}$ and $\widetilde{x}' \in P_{b_1',\dots, b_m'}$ such that $\widetilde{x}'' = \widetilde{x} + \widetilde{x}'$. By definition of $F_\mu''$, we have $\langle \widetilde{y}'', \mu\rangle \leq \langle \widetilde{x}'',\mu\rangle$ for any $\widetilde{y}'' \in P_{b_1,\dots, b_m} + P_{b_1',\dots, b_m'}$. Suppose that $\widetilde{x} \notin F_\mu$. Then for $\widetilde{y} \in F_\mu$ we have $\langle \widetilde{x},\mu\rangle <\langle \widetilde{y},\mu\rangle$. Thus we have $\langle \widetilde{x}'',\mu\rangle < \langle \widetilde{y}+\widetilde{x}',\mu\rangle$. This contradicts to that $\langle \widetilde{y}'', \mu\rangle \leq \langle \widetilde{x}'',\mu\rangle$ for any $\widetilde{y}'' \in P_{b_1,\dots, b_m} + P_{b_1',\dots, b_m'}$. Therefore we have $\widetilde{x} \in F_\mu$. Using the same argument, we also have that $\widetilde{x}' \in F_\mu'$. As a conclusion, we have $F_\mu'' = F_\mu+F_\mu'$. Since $P_{b_1,\dots, b_m}$ and $P_{b_1',\dots, b_m'}$ are inner normal polytopes of $\widetilde{\Delta}_0$, $F_\mu$ and $F_\mu'$ have the same inner normal cone. Thus we have that $F_\mu$, $F_\mu'$ and $F_\mu''$ have the same inner normal cone. Suppose that $F_\mu''$ is a facet. Then $F_\mu$ is also a facet of $P_{b_1,\dots, b_N}$. Therefore $\mu$ coincides with one of $\mu_1,\dots, \mu_m$ up to positive scalar multiplication. Put $\mu = \mu_i$, $i=1,\dots, m$. Since $F_{\mu_i}'' = F_{\mu_i} + F_{\mu_i}'$, we have 
		\begin{equation*}
			F_{\mu_i}'' = (P_{b_1,\dots, b_m} +P_{b_1',\dots, b_m'}) \cap \{ \widetilde{x}'' \mid \langle \widetilde{x}'', \mu_i\rangle = b_i+b_i''\}. 
		\end{equation*}
		Since $F_{\mu_1}'',\dots, F_{\mu_m}''$ are all facets of $P_{b_1,\dots, b_m} +P_{b_1',\dots, b_m'}$, we have 
		\begin{equation*}
			P_{b_1,\dots, b_m} +P_{b_1',\dots, b_m'} = \bigcap_{i=1}^m H_{i,b_i+b_i'} = P_{b_1+b_1',\dots, b_m+b_m'}.
		\end{equation*}
		This together with the fact that $F_\mu''$ and $F_\mu$ have the same inner normal cone yields that $P_{b_1+b_1',\dots, b_m+b_m'}$ is an inner normal polytope of $\widetilde{\Delta}_0$. Since $(b_1,\dots, b_N)$ is admissible, there exists $\epsilon >0$ such that $\langle \widetilde{x},\mu_i\rangle \geq b_i+\epsilon$ for any $\widetilde{x} \in P_{b_1,\dots, b_m}$ and $i=m+1,\dots, N$. Since $(b_1',\dots, b_N')$ is admissible, there exists $\epsilon' >0$ such that $\langle \widetilde{x}',\mu_i\rangle \geq b_i'+\epsilon'$ for any $\widetilde{x}' \in P_{b_1',\dots, b_m'}$ and $i=m+1,\dots, N$. Thus we have 
		$\langle \widetilde{x}+\widetilde{x}',\mu_i\rangle \geq b_i+b_i' +\epsilon + \epsilon'$ for any $\widetilde{x}+\widetilde{x}' \in P_{b_1,\dots, b_m} +P_{b_1',\dots, b_m'}$ and $i=m+1,\dots, N$. This together with the fact that $P_{b_1,\dots, b_m} +P_{b_1',\dots, b_m'} = P_{b_1+b_1',\dots, b_m+b_m'}$ yields that $(b_1+b_1',\dots, b_N+b_N')$ is admissible. 
		Let $Z_1,\dots, Z_l$ be minimal orbits of $M_0$. By Lemma \ref{lemm:localizationminimal}, we have 
		\begin{equation*}
			\begin{split}
				\kappa_j^*([\omega^{G_0}_{b_1,\dots, b_N}]) &= [q_0^*\circ \widetilde{\Phi}_{b_1,\dots, b_N} (Z_j)],\\
				\kappa_j^*([\omega^{G_0}_{b_1',\dots, b_N'}]) &= [q_0^*\circ \widetilde{\Phi}_{b_1',\dots, b_N'} (Z_j)],\\
				\kappa_j^*([\omega^{G_0}_{b_1+b_1',\dots, b_N+b_N'}]) &= [q_0^*\circ \widetilde{\Phi}_{b_1+b_1',\dots, b_N+b_N'} (Z_j)].
			\end{split}
		\end{equation*}
		On the other hand, since $P_{b_1,\dots, b_m} +P_{b_1',\dots, b_m'} = P_{b_1+b_1',\dots, b_m+b_m'}$ and 
		\begin{equation*}
			\begin{split}
				\widetilde{\Phi}_{b_1,\dots, b_N} (M_0) &= P_{b_1,\dots, b_m},\\
				\widetilde{\Phi}_{b_1',\dots, b_N'} (M_0) &= P_{b_1',\dots, b_m'},\\
				\widetilde{\Phi}_{b_1+b_1',\dots, b_N+b_N'} (M_0) &= P_{b_1+b_1',\dots, b_m+b_m'},
			\end{split}
		\end{equation*}
		we have $\kappa_j^*([\omega^{G_0}_{b_1,\dots, b_N}]) + \kappa_j^*([\omega^{G_0}_{b_1',\dots, b_N'}]) = \kappa_j^*([\omega^{G_0}_{b_1+b_1',\dots, b_N+b_N'}])$ by Lemma \ref{lemm:vertexcorrespondence}. By Corollary \ref{coro:localization} we have $[\omega^{G_0}_{b_1,\dots, b_N}] + [\omega^{G_0}_{b_1',\dots, b_N'}] = [\omega^{G_0}_{b_1+b_1',\dots, b_N+b_N'}]$. Applying the basic forgetful map $\forB\co H^*_{G_0}(M_0) \to H^*_B(M_0)$ to the above, we have $[\omega_{b_1,\dots, b_N}] + [\omega_{b_1',\dots, b_N'}] = [\omega_{b_1+b_1',\dots, b_N+b_N'}]$, proving Part (1). 
		
		For (2), let $(b_1,\dots, b_N) \in \R^N$ be admissible and $r$ a real positive number. Let $i =1,\dots, N$. Since $\langle \widetilde{x},\mu_i \rangle \geq b_i$ if and only if 
		$\langle r\widetilde{x},\mu_i \rangle \geq rb_i$, we have that $(rb_1,\dots, rb_N)$ is admissible and $rP_{b_1,\dots, b_m} = P_{rb_1,\dots, rb_m}$. By Lemma \ref{lemm:localizationminimal}, we have $\kappa_j^*([\omega^{G_0}_{rb_1,\dots, rb_N}]) = [q_0^*\circ \widetilde{\Phi}_{rb_1,\dots, rb_N}(Z_j)]$. By Lemma \ref{lemm:vertexcorrespondence} and the fact that $rP_{b_1,\dots, b_m} = P_{rb_1,\dots, rb_m}$, we have $\kappa_j^*([\omega^{G_0}_{rb_1,\dots, rb_n}]) = r\kappa_j^*([\omega^{G_0}_{b_1,\dots, b_n}])$. Thus by Corollary \ref{coro:localization} we have $[\omega^{G_0}_{rb_1,\dots, rb_n}] = r[\omega^{G_0}_{b_1,\dots, b_n}]$. Applying the basic forgetful map $\forB \co H^*_{G_0} (M_0) \to H^*_B(M_0)$, we have 
		$r[\omega_{b_1,\dots, b_N}] = [\omega_{rb_1,\dots, rb_N}]$, proving Part (2). 
		The lemma is proved. 
	\end{proof}
	\begin{prop}\Label{prop:M0Dolbeault}
		Let $M_0$ be as above. Then we have 
		\begin{equation*}
			H^{p,q}_B(M_0) = \begin{cases}
				0 & \text{if $p \neq q$}, \\
				H^{2p}_B(M_0)\otimes \C & \text{if $p=q$}.
			\end{cases}
		\end{equation*}
	\end{prop}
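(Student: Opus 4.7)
The plan is to show that $H^*_B(M_0)\otimes \C$ is generated as a $\C$-algebra by classes in $H^{1,1}_B(M_0)$; once this is established, multiplicativity of the Hodge bigrading under the cup product forces $H^{p,q}_B(M_0)=0$ for $p\neq q$ and $H^{p,p}_B(M_0)=H^{2p}_B(M_0)\otimes \C$. By Corollary \ref{coro:degree2}, the ring $H^*_B(M_0)$ is already generated by $H^2_B(M_0)$, so the task reduces to showing that $H^2_B(M_0)$ is spanned, as a real vector space, by classes of transverse K\"ahler forms with respect to $F_0$.

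Every class $[\omega_{b_1,\ldots,b_N}]$ from Lemma \ref{lemm:omegab} lies in $H^{1,1}_B(M_0)$, since $\omega_{b_1,\ldots,b_N}$ is a closed basic $J$-invariant $(1,1)$-form by the definition of a transverse K\"ahler form. By Theorem \ref{theo:DJ},
\[
\dim_\R H^2_B(M_0) = m - \dim_\R \widetilde{V}_0,
\]
because $\mathcal{I}'$ contributes nothing in degree two and the relation space $\mathcal{J}'_2$ is the image of the injective map $\widetilde{V}_0^*\hookrightarrow \R^m$, $\widetilde{y}\mapsto (\langle \widetilde{y},\mu_i\rangle)_{i=1}^m$. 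Lemmas \ref{lemm:parallel} and \ref{lemm:paralleltranslation} show that $[\omega_{b_1,\ldots,b_N}]$ depends on the admissible parameters only through $(b_1,\ldots,b_m)$ modulo precisely this subspace $\mathcal{J}'_2$, while Lemma \ref{lemm:monoid} shows that the resulting assignment is additive and positively homogeneous on the (open and conical) set of admissible parameters. Since the difference set of a nonempty open cone in $\R^m$ is all of $\R^m$, this assignment extends uniquely to an injective linear map $\R^m/\mathcal{J}'_2 \to H^2_B(M_0)$, which equality of dimensions upgrades to an isomorphism. Consequently, the transverse K\"ahler classes span $H^2_B(M_0)$.

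After tensoring with $\C$ we obtain $H^2_B(M_0)\otimes \C = H^{1,1}_B(M_0)$, so the Hodge decomposition in degree two gives $H^{2,0}_B(M_0)=H^{0,2}_B(M_0)=0$. Because $H^*_B(M_0)\otimes \C$ is generated by $H^{1,1}_B(M_0)$ and the cup product preserves the Hodge bigrading, every $k$-fold product of $(1,1)$-classes lies in $H^{k,k}_B(M_0)$; together with $H^{\mathrm{odd}}_B(M_0)=0$ from Proposition \ref{prop:oddvanish}, this yields $H^{p,q}_B(M_0)=0$ for $p\neq q$ and $H^{p,p}_B(M_0)=H^{2p}_B(M_0)\otimes \C$.

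The most delicate step is expected to be the passage from the monoid of admissible parameters to the claimed linear isomorphism: one must verify that the additivity and positive homogeneity from Lemma \ref{lemm:monoid}, together with the sharp characterization of the equivalence relation in Lemmas \ref{lemm:parallel} and \ref{lemm:paralleltranslation}, combine to give a genuine real linear isomorphism of $\R^m/\mathcal{J}'_2$ with $H^2_B(M_0)$ matching the Danilov--Jurkiewicz presentation.
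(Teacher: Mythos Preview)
Your proposal is correct and follows essentially the same route as the paper. The only difference is packaging: the paper constructs the linear map $L\co \R^m \to H^2_B(M_0)$ concretely by fixing an admissible $b$ and setting $\tau_i := \frac{1}{\epsilon_i}(\omega_{b+\epsilon_i e_i}-\omega_b)$, then computing $\ker L$ via Lemmas \ref{lemm:parallel}, \ref{lemm:paralleltranslation}, \ref{lemm:monoid} exactly as you outline, whereas you phrase the same step as extending the additive, positively homogeneous assignment $b\mapsto [\omega_b]$ from the open admissible cone to a linear map on $\R^m$. Your concern in the final paragraph is well placed but already handled: writing any $v\in\R^m$ as $v_+-v_-$ with $v_\pm$ admissible, the equality $L(v)=0$ gives $[\omega_{v_+}]=[\omega_{v_-}]$, and Lemmas \ref{lemm:parallel} and \ref{lemm:paralleltranslation} then force $v_+-v_-\in\mathcal{J}'_2$, so $\ker L=\mathcal{J}'_2$ and the dimension count from Theorem \ref{theo:DJ} finishes the argument.
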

	\begin{proof}
		By Corollary \ref{coro:degree2}, it is enough to show that $H^{1,1}_B(M_0) = H^2_B(M_0)\otimes \C$. We show that $H^2_B(M_0)$ is generated by basic $(1,1)$-forms. Let $b = (b_1,\dots, b_N) \in \R^N$ be admissible. Since the set of all admissible elements in $\R^N$ is nonempty and open, we have that for $i=1,\dots, m$ there exists $\epsilon_i>0$ such that $b+\epsilon_ie_i$ is admissible, where $e_i$ denotes the $i$th standard basis vector of $\R^N$. For short, we denote by $P_b$ the polytope $P_{b_1,\dots, b_m}$. We define the basic $(1,1)$-form $\tau_i$ on $M_0$ by 
		\begin{equation*}
			\tau_i := \frac{1}{\epsilon_i}\left(\omega_{b+\epsilon_ie_i} - \omega_b\right)
		\end{equation*}
		for $i=1,\dots, m$. We define a linear map $L \co \R^m \to H^2_B(M_0)$ by 
		\begin{equation*}
			L(a_1,\dots, a_m) := \sum_{i=1}^m a_i[\tau_i], \quad (a_1,\dots, a_m) \in \R^m. 
		\end{equation*}
		Let $(a_1,\dots, a_m) \in \ker L$. We set 
		\begin{equation*}
			I_+ := \{ i \mid a_i> 0\}, \quad I_{-} := \{i \mid a_i <0\}.
		\end{equation*}
		Then we have 
		\begin{equation*}
			\sum_{i \in I_+} a_i[\tau_i] = \sum_{i \in I_-} -a_i[\tau_i].
		\end{equation*}
		By definition of $\tau_i$, we have 
		\begin{equation}\Label{eq:ai}
			\sum_{i \in I_+} \frac{a_i}{\epsilon_i}[\omega_{b+\epsilon_ie_i}] + \sum_{i \in I_-} -\frac{a_i}{\epsilon_i}[\omega_b] = \sum_{i \in I_+} \frac{a_i}{\epsilon_i}[\omega_{b}] + \sum_{i \in I_-} -\frac{a_i}{\epsilon_i}[\omega_{b+\epsilon_ie_i}]. 
		\end{equation}
		All coefficients in \eqref{eq:ai} are nonnegative. By Lemma \ref{lemm:monoid}, we have that 
		\begin{equation*}
			b' := \sum_{i \in I_+} \frac{a_i}{\epsilon_i}(b+\epsilon_ie_i) + \sum_{i \in I_-} -\frac{a_i}{\epsilon_i}b
		\end{equation*}
		and 
		\begin{equation*}
			b'' := \sum_{i \in I_+} \frac{a_i}{\epsilon_i}b + \sum_{i \in I_-} -\frac{a_i}{\epsilon_i}(b + \epsilon_ie_i)
		\end{equation*}
		both are admissible. By Lemma \ref{lemm:monoid} and \eqref{eq:ai}, we have $[\omega_{b'}] = [\omega_{b''}]$. By Lemma \ref{lemm:parallel}, there exists $\widetilde{y} \in V^*_0$ such that $P_{b'} = P_{b''} + \widetilde{y}$. By Lemma \ref{lemm:paralleltranslation}, we have $\langle \widetilde{y},\mu_i\rangle = b_i' - b_i''$ for $i=1,\dots, m$, where $b_i' \in \R$ and $b_i''\in \R$ are the $i$th entries of $b' \in \R^N$ and $b''\in \R^N$, respectively. On the other hand, we have 
		\begin{equation*}
			b' - b'' = \sum_{i \in I_+} a_ie_i + \sum_{i \in I_-}a_ie_i = \sum_{i=1}^m a_ie_i.
		\end{equation*}
		Therefore we have $a_i = \langle \widetilde{y},\mu_i\rangle$ for $i=1,\dots, m$. Thus we have
		\begin{equation*}
			\ker L \subset \{ (a_1,\dots, a_m) \mid ^\exists \widetilde{y} \in \widetilde{V}^*_0\text{ s.t.} ^\forall i, \langle \widetilde{y},\mu_i\rangle =a_i\}. 
		\end{equation*}
		Since $\dim \widetilde{V}^*_0 = N-k$, we have $\dim \ker L \leq N-k$. Thus we have $\dim \im L \geq m-k+N$. On the other hand, it follows from Proposition \ref{prop:DJ} that $\dim H^2_B(M_0) = m-k+N$. Thus $L \co \R^m \to H^2_B(M_0)$ is surjective. Since each element of $\im L$ is represented by a difference of $2$ positive $(1,1)$-forms, we have that $H^2_B(M_0)$ is generated by $(1,1)$-forms, as required. 
	\end{proof}
	\begin{proof}[Proof of Theorem \ref{theo:Dolbeault}]
		By Lemma \ref{lemm:M0M}, there exists an $\alpha$-equivariant holomorphic map $f \co M_0 \to M$ such that 
		\begin{enumerate}
			\item $\ker \alpha$ is connected and 
			\item $f \co M_0 \to M$ is a principal $\ker \alpha$-bundle. 
		\end{enumerate}
		By Proposition \ref{prop:equivalence}, $f$ induces an isomorphism $f^* \co H^*_B(M) \to H^*_B(M_0)$ of basic cohomologies. Since $f$ is holomorphic, we have that $f^*$ preserves the basic Hodge structures. This together with Proposition \ref{prop:M0Dolbeault} yields that 
		\begin{equation*}
			H^{p,q}_B(M) = \begin{cases}
				0 & \text{if $p \neq q$}, \\
				H^{2p}_B(M)\otimes \C & \text{if $p=q$},
			\end{cases}
		\end{equation*}
		as required.
	\end{proof}
\section*{Acknowledgements}
	The author would like to thank Roman Krutovskiy and Mikiya Masuda for their valuable comments on the earlier version. He also would like to thank Oliver Goertsches, Hiraku Nozawa and Dirk T\"oben for explaining how their results relate to results in this paper. 

\begin{bibdiv}

\begin{biblist}

\bib{Alvarez}{article}{
   author={Alvarez L\'opez, Jes\'us A.},
   title={Morse inequalities for pseudogroups of local isometries},
   journal={J. Differential Geom.},
   volume={37},
   date={1993},
   number={3},
   pages={603--638},
   issn={0022-040X},
   review={\MR{1217162}},
}

\bib{Battaglia-Zaffran}{article}{
   author={Battaglia, Fiammetta},
   author={Zaffran, Dan},
   title={Foliations modeling nonrational simplicial toric varieties},
   journal={Int. Math. Res. Not. IMRN},
   date={2015},
   number={22},
   pages={11785--11815},
   issn={1073-7928},
   review={\MR{3456702}},
}

\bib{Bochner-Montgomery}{article}{
   author={Bochner, Salomon},
   author={Montgomery, Deane},
   title={Locally compact groups of differentiable transformations},
   journal={Ann. of Math. (2)},
   volume={47},
   date={1946},
   pages={639--653},
   issn={0003-486X},
   review={\MR{0018187 (8,253c)}},
}

\bib{Bosio}{article}{
   author={Bosio, Fr{\'e}d{\'e}ric},
   title={Vari\'et\'es complexes compactes: une g\'en\'eralisation de la
   construction de Meersseman et L\'opez de Medrano-Verjovsky},
   language={French, with English and French summaries},
   journal={Ann. Inst. Fourier (Grenoble)},
   volume={51},
   date={2001},
   number={5},
   pages={1259--1297},
   issn={0373-0956},
   review={\MR{1860666 (2002i:32015)}},
}

\bib{Buchstaber-Panov}{article}{
   author={Bukhshtaber, V. M.},
   author={Panov, T. E.},
   title={Torus actions, equivariant moment-angle complexes, and
   configurations of coordinate subspaces},
   language={Russian, with English and Russian summaries},
   journal={Zap. Nauchn. Sem. S.-Peterburg. Otdel. Mat. Inst. Steklov.
   (POMI)},
   volume={266},
   date={2000},
   number={Teor. Predst. Din. Sist. Komb. i Algoritm. Metody. 5},
   pages={29--50, 336--337},
   issn={0373-2703},
   translation={
      journal={J. Math. Sci. (N. Y.)},
      volume={113},
      date={2003},
      number={4},
      pages={558--568},
      issn={1072-3374},
   },
   review={\MR{1774646}},
   doi={10.1023/A:1021190008538},
}

\bib{Davis-Januskiewicz}{article}{
   author={Davis, Michael W.},
   author={Januszkiewicz, Tadeusz},
   title={Convex polytopes, Coxeter orbifolds and torus actions},
   journal={Duke Math. J.},
   volume={62},
   date={1991},
   number={2},
   pages={417--451},
   issn={0012-7094},
   review={\MR{1104531}},
   doi={10.1215/S0012-7094-91-06217-4},
}

\bib{EKA}{article}{
   author={El Kacimi-Alaoui, Aziz},
   title={Op\'erateurs transversalement elliptiques sur un feuilletage
   riemannien et applications},
   language={French, with English summary},
   journal={Compositio Math.},
   volume={73},
   date={1990},
   number={1},
   pages={57--106},
   issn={0010-437X},
   review={\MR{1042454}},
}

\bib{Goertsches-Nozawa-Toeben}{article}{
   author={Goertsches, Oliver},
   author={Nozawa, Hiraku},
   author={T\"oben, Dirk},
   title={Rigidity and vanishing of basic Dolbeault cohomology of Sasakian
   manifolds},
   journal={J. Symplectic Geom.},
   volume={14},
   date={2016},
   number={1},
   pages={31--70},
   issn={1527-5256},
   review={\MR{3523249}},
   doi={10.4310/JSG.2016.v14.n1.a2},
}

\bib{Goertsches-Toeben}{article}{
   author={Goertsches, Oliver},
   author={T\"oben, Dirk},
   title={Equivariant basic cohomology of Riemannian foliations},
   journal={J. reine angew. Math., Ahead of Print},
%   volume={},
%   date={},
%   number={},
%   pages={},
%   issn={},
%   review={},
   doi={10.1515/crelle-2015-0102},
}

\bib{Guillemin-Sternberg}{book}{
   author={Guillemin, Victor W.},
   author={Sternberg, Shlomo},
   title={Supersymmetry and equivariant de Rham theory},
   series={Mathematics Past and Present},
   note={With an appendix containing two reprints by Henri Cartan [
   MR0042426 (13,107e);  MR0042427 (13,107f)]},
   publisher={Springer-Verlag, Berlin},
   date={1999},
   pages={xxiv+228},
   isbn={3-540-64797-X},
   review={\MR{1689252}},
   doi={10.1007/978-3-662-03992-2},
}

\bib{Hochschild}{book}{
   author={Hochschild, G.},
   title={The structure of Lie groups},
   publisher={Holden-Day Inc.},
   place={San Francisco},
   date={1965},
   pages={ix+230},
   review={\MR{0207883 (34 \#7696)}},
}
\bib{Ishida}{article}{
   author={Ishida, Hiroaki},
   title={Complex manifolds with maximal torus actions},
   journal={J. reine angew. Math., Ahead of Print},
%   volume={},
%   date={},
%   number={},
%   pages={},
%   issn={},
%   review={},
   doi={10.1515/crelle-2016-0023},
}

\bib{Ishida2}{article}{
   author={Ishida, Hiroaki},
   title={Torus invariant transverse K\"ahler foliations},
   journal={Trans. Amer. Math. Soc.},
   volume={369},
   date={2017},
   number={7},
   pages={5137--5155},
   issn={0002-9947},
   review={\MR{3632563}},
   doi={10.1090/tran/7070},
}

\bib{Ishida-Kasuya}{article}{
   author={Ishida, Hiroaki},
   author={Kasuya, Hisashi},
   title={Transverse K\"ahler structures on central foliations of complex manifolds},
   journal={Annali di Mathematica},
%   volume={},
%   date={},
%   number={},
%   pages={},
%   issn={},
%   review={},
   doi={doi.org/10.1007/s10231-018-0762-8},
}

\bib{LdM-Verjovsky}{article}{
   author={L\'opez de Medrano, Santiago},
   author={Verjovsky, Alberto},
   title={A new family of complex, compact, non-symplectic manifolds},
   journal={Bol. Soc. Brasil. Mat. (N.S.)},
   volume={28},
   date={1997},
   number={2},
   pages={253--269},
   issn={0100-3569},
   review={\MR{1479504}},
   doi={10.1007/BF01233394},
}

\bib{Meersseman}{article}{
   author={Meersseman, Laurent},
   title={A new geometric construction of compact complex manifolds in any
   dimension},
   journal={Math. Ann.},
   volume={317},
   date={2000},
   number={1},
   pages={79--115},
   issn={0025-5831},
   review={\MR{1760670}},
   doi={10.1007/s002080050360},
}

\bib{Molino}{book}{
   author={Molino, Pierre},
   title={Riemannian foliations},
   series={Progress in Mathematics},
   volume={73},
   note={Translated from the French by Grant Cairns;
   With appendices by Cairns, Y. Carri\`ere, \'E. Ghys, E. Salem and V.
   Sergiescu},
   publisher={Birkh\"auser Boston, Inc., Boston, MA},
   date={1988},
   pages={xii+339},
   isbn={0-8176-3370-7},
   review={\MR{932463}},
   doi={10.1007/978-1-4684-8670-4},
}
\bib{Panov-Ustinovsky}{article}{
   author={Panov, Taras},
   author={Ustinovsky, Yuri},
   title={Complex-analytic structures on moment-angle manifolds},
   language={English, with English and Russian summaries},
   journal={Mosc. Math. J.},
   volume={12},
   date={2012},
   number={1},
   pages={149--172, 216},
   issn={1609-3321},
   review={\MR{2952429}},
}

\bib{Tambour}{article}{
   author={Tambour, J{\'e}r{\^o}me},
   title={LVMB manifolds and simplicial spheres},
   language={English, with English and French summaries},
   journal={Ann. Inst. Fourier (Grenoble)},
   volume={62},
   date={2012},
   number={4},
   pages={1289--1317},
   issn={0373-0956},
   review={\MR{3025744}},
}

\end{biblist}

\end{bibdiv}

\end{document}